\newenvironment{widthitemize}{\list{\textbullet}{\labelwidth=2ex\leftmargin=2.25ex}}{\endlist}
\def\supertiny{\fontsize{3}{4}\selectfont}
\numberwithin{equation}{section}
\newtheorem{MainTheorem}{Theorem}
\newtheorem{theorem}{Theorem}[section]
\newtheorem{proposition}[theorem]{Proposition}
\newtheorem{lemma}[theorem]{Lemma}
\newtheoremstyle{customnumberedtheorem}{}{}{\em}{}{\bfseries}{.}{ }{\thmname{#1}\thmnumber{ #3}} % Fa servir el parametre opcional (nota) per a la numeraci\'o
\theoremstyle{customnumberedtheorem}
\newtheorem{CustomNumberedExample}{Example}
\newtheoremstyle{definitionwithbfnote}{}{}{}{}{\bfseries}{.}{ }{\thmname{#1}\thmnumber{ #2}\thmnote{ (#3)}}
\theoremstyle{definitionwithbfnote}
\newtheorem{definition}[theorem]{Definition}
\newtheorem{remark}[theorem]{Remark}
\newtheorem{example}[theorem]{Example}
\newcommand{\openthickbox}{\leavevmode\hbox to.77778em{%
  \hfil\vrule width.175em
  \vbox to.675em{\hrule width.26em height.175em\vfil\hrule height.175em}%
  \vrule width.175em\hfil}}
\newcommand{\qedple}{\leavevmode\unskip\penalty9999\hbox{}\nobreak\hfill\quad\hbox{\openthickbox}}
\newcommand{\TeoremaAmbFinalMarcat}[1]{\expandafter\gdef\csname end#1\endcsname{\qedple\@endtheorem}}
\DeclareRobustCommand\vdotsdouble{\raisebox{-7.75\p@}{\vbox{\baselineskip4\p@\lineskiplimit\z@\kern6\p@\hbox{.}\hbox{.}\hbox{.}\hbox{.}\hbox{.}\hbox{.}}}}
\def\@map#1#2[#3]{\mbox{$#1 \colon #2 \longrightarrow #3$}}
\def\map#1#2{\@ifnextchar [{\@map{#1}{#2}}{\@map{#1}{#2}[#2]}}
\def\@chull#1[#2]{\ensuremath{\langle #1 \rangle\mkern-2mu_{_{\mbox{\supertiny$#2$}}}}}
\def\chull#1{\@ifnextchar [{\@chull{#1}}{\ensuremath{\langle #1 \rangle}}}
\newsavebox{\@cont@ent@}\newlength{\@h@t@}\newlength{\@d@t@}%
\newcommand{\spnorm}[2][\empty]{\begingroup\setbox\@cont@ent@\hbox{\ensuremath{#2}}%
  \setlength\@h@t@{\ht\@cont@ent@}\advance\@h@t@\dp\@cont@ent@\advance\@h@t@3.4pt%%
  \setlength\@d@t@{\dp\@cont@ent@}\advance\@d@t@1.8pt%%
  \mathopen{\rule[-\@d@t@]{0.9pt}{\@h@t@}\mkern2.8mu\rule[-\@d@t@]{0.45pt}{\@h@t@}}%
  \mkern2.2mu#2\mkern2.3mu%
  \mathclose{\rule[-\@d@t@]{0.45pt}{\@h@t@}\mkern2.8mu\rule[-\@d@t@]{0.9pt}{\@h@t@}}%
  \mathchoice{\ }{\mkern1.75mu}{\mkern1mu}{\mkern0.7mu}
  \ifx#1\empty\else\advance\@d@t@0.75pt\mathchoice{\mkern-4.25mu}{}{}{}\raisebox{-\@d@t@}{$\scriptscriptstyle#1$}\fi%
\endgroup}
\def\@ls#1#2{\@mathmeasure\z@\displaystyle{#2}\@mathmeasure\@ne\displaystyle{#1}\box\@ne\box\z@}
\def\gtso#1{\mathrel{\@ls{_{#1}}{>}}}
\def\geso#1{\mathrel{\@ls{_{#1}}{\ge}}}
\def\ltso#1{\mathrel{<_{#1}}}
\def\leso#1{\mathrel{\le_{#1}}}
\def\Sho{\mbox{\tiny\textup{Sh}}}
\newenvironment{autocase}[2][Case]{\case{#1 #2.}}{\endcase}
\newcommand{\N}{\ensuremath{\mathbb{N}}}
\newcommand{\Z}{\ensuremath{\mathbb{Z}}}
\newcommand{\Q}{\ensuremath{\mathbb{Q}}}
\newcommand{\R}{\ensuremath{\mathbb{R}}}
\newcommand{\SI}{\ensuremath{\mathbb{S}^1}}
\newcommand{\C}{\ensuremath{\mathbb{C}}}
\DeclareMathOperator{\bc}{\textsf{\upshape BdCof\,}}
\DeclareMathOperator{\sbc}{\textsf{\upshape StrBdCof\,}}
\DeclareMathOperator{\len}{\textsf{\upshape len}}
\DeclareMathOperator{\Per}{\textsf{\upshape Per}}
\DeclareMathOperator{\Orb}{\textsf{\upshape Orb}}
\DeclareMathOperator{\Rot}{\textsf{\upshape Rot}}
\DeclareMathOperator{\Int}{\textsf{\upshape Int}}
\DeclareMathOperator{\Clos}{\textsf{\upshape Clos}}
\DeclareMathOperator{\Card}{\textsf{\upshape Card}}
\DeclareMathOperator{\Succ}{\textsf{\upshape Succ}}
\DeclareMathOperator{\sbcset}{\textsf{\upshape sBC}}
\DeclareMathOperator{\eexp}{\textsf{\upshape e}}
\newcommand{\dol}[1][1]{\ensuremath{\mathcal{L}_{#1}}}
\newcommand{\succs}[1]{\ensuremath{\Succ\left(#1\right)}}
\newcommand{\set}[2]{\ensuremath{\left\{#1 \,\colon #2\right\}}}
\newcommand{\floor}[1]{\ensuremath{\left\lfloor #1 \right\rfloor}}
\newcommand{\BIclass}[1]{\ensuremath{\llbracket #1\rrbracket}}
\newcommand{\bigBIclass}[1]{\ensuremath{\bigl\llbracket #1\bigr\rrbracket}}
\newcommand{\BIgraph}[1]{\ensuremath{\mathopen{\bigl\langle\mkern-7.8mu\bigl\langle\mspace{-6.5mu}\bigl\langle} #1 \mathclose{\bigr\rangle\mspace{-6.5mu}\bigr\rangle\mkern-7.8mu\bigr\rangle}}}
\newcommand{\emap}[1]{\ensuremath{\eexp(#1)}}
\newcommand{\bigemap}[1]{\ensuremath{\eexp\bigl(#1\bigr)}}
\newcommand{\bigeta}[1]{\ensuremath{\eta\bigl(#1\bigr)}}
\newcommand{\etaemap}[1]{\ensuremath{\eta(\emap{#1})}}
\def\calB{\mathcal{B}}
\newcommand{\SBI}[1][Q]{\ensuremath{\calB(#1)}}
\newcommand{\bigSBI}[1]{\ensuremath{\calB\bigl(#1\bigr)}}
\newcommand{\andq}[1][and]{\ensuremath{\quad\text{#1}\quad}}
\providecommand{\abs}[1]{\ensuremath{\left\lvert#1\right\rvert}}
\newcommand{\modulo}[2]{\{#1\}\sb{#2}}
\newcommand{\evalat}[1]{\bigr\rvert\sb{#1}}
\title[Entropy, periods and transitivity]{Topological entropy, sets of periods and transitivity for graph maps}
\author[Ll. Alsed\`a]{Llu\'{\i}s Alsed\`a}
\address{Departament de Matem\`atiques and Centre de Recerca Matem\`atica,
Edifici Cc,
Universitat Aut\`onoma de Bar\-ce\-lo\-na,
08913 Cerdanyola del Vall\`es,
Barcelona,
Spain}
\email{alseda@mat.uab.cat}
\email{llalseda@crm.cat}
\author[L. Bordignon]{Liane Bordignon}
\address{Departamento de Matem\'atica,
Universidade Federal de S\~{a}o Carlos,
S\~{a}o Carlos, S\~{a}o Paulo,
Brasil}
\email{liane@dm.ufscar.br}
\author[J. Groisman]{Jorge Groisman}
\address{IMERL,
Facultad de Ingenier\'{\i}a,
Universidad de la Rep\'ublica,
Montevideo,
Uruguay}
\email{jorgeg@fing.edu.uy}
\thanks{\textit{Acknowledgements:}\ The authors have been partially supported by MINECO grant numbers MTM2014-52209-C2-1-P and MTM2017-86795-C3-1-P.
        Llu\'{\i}s Alsed\`a acknowledges financial support from the Spanish Ministry of Economy and Competitiveness,
        through the ``Mar\'{\i}a de Maeztu'' Programme for Units of Excellence in R\&D (MDM-2014-0445).
        The second author has been partially supported by CNPq--Brasil.}
\subjclass{Primary: 37B40, 54H20, 37E45, 37E25}
\keywords{Topological entropy, sets of periods, total transitivity, boundary of cofiniteness, rotation sets, graph maps}
\date{June 21, 2018}
\begin{document}
\begin{abstract}
Transitivity, the existence of periodic points and positive
topological entropy can be used to characterize complexity in
dynamical systems. It is known that for graphs that are not trees, for
every $\varepsilon>0,$ there exist (complicate) totally transitive maps
(then with cofinite set of periods) such that the topological entropy
is smaller than $\varepsilon$ (simplicity). First we will show by
means of three examples that for any graph that is not a tree the
relatively simple maps (with small entropy) which are totally
transitive (and hence robustly complicate) can be constructed so that
the set of periods is also relatively simple. To numerically measure
the complexity of the set of periods we introduce a notion of a
\emph{boundary of cofiniteness\/}. Larger boundary of cofiniteness means
simpler set of periods. With the help of the notion of boundary of
cofiniteness we can state precisely what do we mean by extending the
entropy simplicity result to the set of periods: \emph{there exist
relatively simple maps such that the boundary of cofiniteness is
arbitrarily large (simplicity) which are totally transitive (and hence
robustly complicate)\/}. Moreover, we will show that, the above
statement holds for arbitrary continuous degree one circle maps.
\end{abstract}
%%%%%%%%%%%%%%%%%%%%%%%%%%%%%%%%%%%%%%%%%%%%%%%%%%%%%%%%%%%%%%%%%%%%%%
\maketitle
%%%%%%%%%%%%%%%%%%%%%%%%%%%%%%%%%%%%%%%%%%%%%%%%%%%%%%%%%%%%%%%%%%%%%%
%%%%%%%%%%%%%%%%%%%%%%%%%%%%%%%%%%%%%%%%%%%%%%%%%%%%%%%%%%%%%%%%%%%%%%
%%%%%%%%%%%%%%%%%%%%%%%%%%%%%%%%%%%%%%%%%%%%%%%%%%%%%%%%%%%%%%%%%%%%%%
\section{Introduction}

Transitivity, the existence of infinitely many periods and
positive topological entropy often characterize the complexity in
dynamical systems.
This paper aims at showing that totally transitive maps on graphs,
despite of being complicate in the above sense can have somewhat simple
sets of periods (simplicity with respect to topological entropy was
already known).
To be more precise and to state the main results of the paper we need
to introduce some basic notation.

Let $X$ be a topological space and let $\map{f}{X}$ be a map.
A point $x\in X$ is called a \emph{periodic point of $f$ of period $n$ \/}
if $f^n(x) = x$ and $n$ is the minimum positive integer with this property.
The set of all positive integers $n$ such that $f$ has a periodic point
of period $n$ is denoted by $\Per(f).$
A set of periods is called \emph{cofinite\/} if its complement
(on $\N$) is finite or, equivalently, it contains all positive
integers larger than a given period.

Let $X$ be a compact space and let $\map{f}{X}$ be a continuous map.
The \emph{topological entropy\/} of $f$ is defined as in \cite{AKM} and
denoted by $h(f).$

\begin{definition}
Given a topological space $X,$ a continuous map {\map{f}{X}}
is called \emph{transitive\/} if for every pair of open subsets of $X,$
$U$ and $V,$ there is a positive integer $n$ such that
$f^n(U)\cap V\ne \emptyset.$

It is well known that when $X$ has no isolated point, transitivity is
equivalent to the existence of a dense orbit (see for instance
\cite{KS}).

A map $f$ is called \emph{totally transitive\/} if all
iterates of $f$ are transitive.
\end{definition}

We are interested in totally transitive maps on graphs.
A \emph{(topological) graph\/} is a connected compact Hausdorff space
for which there exists a finite non-empty subset whose complement is
the disjoint union of a finite number subsets, each of them
homeomorphic to an open interval of the real line.
A \emph{tree\/} is a graph without circuits or, equivalently,
a uniquely arcwise connected graph.
A continuous map from a graph to itself will be called a \emph{graph map\/}.

A transitive graph map has positive topological entropy and dense
set of periodic points \cite{Bkh83, Bkh87, ARR2003-survey, adrr2},
with the only exception of an irrational rotation on the circle.
Thus, in view of \cite{BBCDS} every transitive map on a graph is chaotic
in the sense of Devaney (except, again, for an irrational rotation on
the circle).
Moreover, from \cite[Main~Theorem]{adrr} we have

\begin{theorem}\label{theoremtotallytransitivefromadrr}
Let $G$ be a graph and let $f$ be a continuous transitive map from $G$
to itself which has periodic points.
Then the following statements are equivalent:
\begin{enumerate}[(a)]
\item $f$ is totally transitive.
\item $\Per(f)$ is cofinite in $\N.$
\end{enumerate}
\end{theorem}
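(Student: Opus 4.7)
My plan is to prove the two implications separately: (b)$\Rightarrow$(a) by a contrapositive argument using a cyclic decomposition of $G$, and (a)$\Rightarrow$(b) by combining total transitivity with positive topological entropy to produce periodic points of every sufficiently large period.

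For (b)$\Rightarrow$(a), I will assume that $f$ is transitive but \emph{not} totally transitive, and let $k\ge 2$ be the smallest integer for which $f^k$ fails to be transitive. The core step is to establish a cyclic splitting $G = G_0 \cup G_1 \cup \cdots \cup G_{k-1}$ into closed subsets with pairwise disjoint interiors satisfying $f(G_i) = G_{(i+1)\bmod k}$, and such that $f^k|_{G_i}$ is transitive. I would construct the $G_i$'s from the closure of an $f^k$-orbit that fails to be dense, pushing it forward by $f$ to obtain $k$ cyclically related closed sets whose union is $G$ by transitivity of $f$; here connectedness of $G$ and the density of periodic points of $f$ (guaranteed by the cited Blokh results) ensure these sets tile $G$ with finite overlap. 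Once the splitting is in place, any periodic point lying in the interior of some $G_i$ has its $f$-orbit moving cyclically through the pieces, so its period is divisible by $k$; only the finitely many points in the overlap $\bigcup_{i\ne j}(G_i\cap G_j)$ can escape this restriction. Thus $\Per(f)\subseteq k\N\cup F$ for some finite set $F$, which is not cofinite.

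For (a)$\Rightarrow$(b), total transitivity gives that every iterate $f^n$ is transitive and hence, by the cited results, has positive topological entropy and a dense set of periodic points. In particular $h(f)>0$, which via standard graph-map machinery produces an $f$-invariant set on which $f$ is semi-conjugate to a full one-sided subshift, so that periods of $f$ form a large subset of $\N$. I would then upgrade ``large subset'' to ``cofinite'' by showing that the arithmetic obstructions that can keep certain periods out of $\Per(f)$ (the parity-type restrictions arising from the cyclic splittings described above) are exactly ruled out by total transitivity: if some $n$ failed to be a period of $f$ for arbitrarily large $n$, one could extract a nontrivial cyclic decomposition of $G$ as in the previous paragraph, contradicting transitivity of all iterates $f^n$.

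The main obstacle in both directions is keeping track of the \emph{exact} period of a periodic point rather than merely ``a period dividing $n$''. In (b)$\Rightarrow$(a) this shows up because boundary points in $G_i\cap G_j$ need not have $k$-divisible period, so the finite exceptional set $F$ must be controlled via the geometry of the graph at its branching points. In (a)$\Rightarrow$(b) it shows up because horseshoes and subshifts naturally produce periodic points whose period divides a given integer, not equals it, so the step from ``many periods'' to ``all but finitely many periods'' requires a delicate analysis of the Markov structure subordinated to the totally transitive dynamics. For general graphs, where the splitting pieces $G_i$ may themselves be nontrivial graphs, this analysis is substantially harder than on the interval, and is precisely the content of the work carried out in \cite{adrr}.
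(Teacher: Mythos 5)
You should first be aware that the paper contains no proof of this statement to compare against: Theorem~\ref{theoremtotallytransitivefromadrr} is quoted directly as the Main Theorem of \cite{adrr}, so your argument has to stand on its own. Your outline of (b)$\Rightarrow$(a) is the standard argument and is essentially sound: if some iterate $f^k$ of a transitive graph map is not transitive, one extracts a regular periodic decomposition $G=G_0\cup\cdots\cup G_{m-1}$, $m\ge 2$, with $f(G_i)=G_{(i+1)\bmod m}$ and the pieces meeting pairwise in finitely many points; every periodic orbit avoiding that finite set has period divisible by $m$, and only finitely many periodic orbits meet it, so $\Per(f)\subset m\N\cup F$ with $F$ finite, which is not cofinite. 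The construction of the decomposition needs more care than ``push forward the closure of a non-dense $f^k$-orbit'' (one works with the orbit of an open set and must verify that the pieces are subgraphs with finite pairwise intersections), but this is the easy half and your sketch is on the right track.

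The genuine gap is in (a)$\Rightarrow$(b). Positive entropy together with a semiconjugacy of some invariant set onto a full shift only controls periods up to divisibility: horseshoes for iterates $f^n$ produce periodic points whose periods are multiples of $n$, which gives sets of periods containing tails of arithmetic progressions, never cofiniteness by itself --- a difficulty you yourself acknowledge. Your proposed upgrade, namely ``if infinitely many integers were missing from $\Per(f)$ one could extract a nontrivial cyclic decomposition of $G$, contradicting transitivity of every $f^n$,'' is precisely the nontrivial implication of the theorem, and it is asserted rather than proved: nothing in your construction manufactures a non-transitive iterate out of a sparse set of periods, and in general the absent periods of transitive degree-one circle maps are dictated by rotation-interval constraints (exactly the mechanism exploited throughout Section~\ref{sectionexamples} of the paper), not by periodic decompositions, so the implication cannot be obtained as a formal converse of the argument in the other direction. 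A complete proof has to show that total transitivity plus the existence of periodic points forces topological mixing for graph maps and then use the mixing/specification-type machinery on graphs to realize every sufficiently large integer \emph{exactly} as a period; that is the content of \cite{adrr}, to which you explicitly defer. As written, the proposal establishes only the easy direction (b)$\Rightarrow$(a).
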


Thus, \emph{totally transitive maps on graphs are complicate since they
have positive topological entropy, are chaotic in the sense of Devaney
and have cofinite set of periods.}

However, from \cite{arr} we know that for every graph that is not a
tree and for every $\varepsilon > 0,$ there exists a totally transitive
map such that its topological entropy is positive but smaller than
$\varepsilon.$
Thus, \emph{transitive maps on graphs may be relatively simple
because they may have arbitrarily small positive topological
entropy.}

\begin{remark}
This result is valid only for graphs that are not trees since from
\cite{ablm} we know that for any tree $T$ and any transitive map
$\map{f}{T},$
\[
h(f) \ge \frac{\log 2}{\textsf{\upshape E}(T)},
\]
where $\textsf{\upshape E}(T)$ denotes the number of endpoints of $T.$
\end{remark}

The aim of this paper is to study whether the simplicity
phenomenon described above, that happens for the topological entropy,
can be extended to the set of periods. More precisely,
\emph{is it true that when a totally transitive graph map
has small positive topological entropy it also has
simple set of periods (and in particular small ``cofinite part''
of the set of periods)?}

To measure the size of the set of periods and, in particular,
of its ``cofinite part'' we introduce the notion of
\emph{boundary of cofiniteness\/}.
For clarity we will also introduce three auxiliary notions:
\begin{itemize}
\item For every $L \in \N,$ we define
\emph{the set of successors of $L$}, denoted by $\succs{L},$
as the set $\set{k\in \N}{k \ge L}.$

\item Let $f$ be a graph map whose set of periods is cofinite.
The \emph{strict boundary of cofiniteness of $f$},
denoted by $\sbc(f),$ is defined as the smallest positive integer
$n$ such that $\Per(f) \supset \succs{n}.$
Accordingly, the set $\succs{\sbc(f)}$ will be called the
\emph{cofinite part of $\Per(f)$}.

\item Given a graph map $f$ whose set of periods is cofinite
we define the set
\begin{multline*}
   \sbcset(f) := \Bigl\{L \in \Per(f)  \,\colon
        L > 2,\ L-1 \notin \Per (f)\text{ and}\\
        \Card\bigl(\{1,\ldots, L-2\}\cap \Per(f)\bigr) \le 2 \log_2(L-2)
   \Bigr\}.
\end{multline*}
Observe that every $L \in \Per(f),$ $L > 2,$ such that $L-1 \notin \Per(f)$
must satisfy $L \le \sbc(f).$
Hence, $\sbcset(f) \subset \left\{1,2,\dots, \sbc(f)\right\}$ is finite.
\end{itemize}

\begin{definition}[boundary of cofiniteness]\label{SCB}
Let $f$ be a graph map whose set of periods is cofinite.
When $\sbcset(f) \ne \emptyset$ we define the
\emph{boundary of cofiniteness of $f$ \/} as
the number $\bc(f) := \max \sbcset(f).$
Observe that
\begin{widthitemize}
\item $\bc(f)$ is not defined if and only if the set $\sbcset(f)$ is empty.
\item Whenever $\bc(f)$ is defined we have $\bc(f) \le \sbc(f).$
      Moreover $\bc(f) < \sbc(f)$ if and only if $\sbc(f) \notin \sbcset(f).$
\end{widthitemize}
\end{definition}

Now let us see that \emph{larger boundary of cofiniteness implies
simpler set of periods}:
On the one hand, the facts that $\bc(f) \le \sbc(f)$ and
$\succs{\sbc(f)}$ is the  cofinite part of $\Per(f)$
imply that $\bc(f)$ is a lower bound of the beginning of the
cofinite part of $\Per(f)$
(in particular, the larger it is $\bc(f)$ the smaller it is the
cofinite part of $\Per(f)$).
On the other hand,
\[
 \Card\bigl(\{1,\ldots, \bc(f)-2\}\cap \Per(f)\bigr) \le 2 \log_2(\bc(f)-2)
\]
is equivalent to
\begin{eqnarray*}
 \textsf{DensLowPer}_f(\bc(f))
    & :=  & \frac{\Card\bigl(\{1,\ldots, \bc(f)-2\}\cap \Per(f)\bigr)}{\bc(f)-2}\\
    & \le & 2\frac{\log_2(\bc(f)-2)}{\bc(f)-2},
\end{eqnarray*}
where for every $L \in \sbcset(f),$
$\textsf{DensLowPer}_f(L)$ denotes the \emph{density of the
$L-$low periods of $f$} which, by definition, are the
periods of $f$ which are smaller than $L.$
Consequently, again, the larger it is $\bc(f)$ the smaller it is
the density of the $\bc(f)-$low periods of $f$ because
\[
      \frac{\log_2(x)}{x}\text{ is decreasing for $x \ge 2$}
      \andq
      \lim_{x\to\infty} \frac{\log_2(x)}{x} = 0.
\]

\begin{remark}
The reason for defining $\bc(f):=\max \sbcset(f)$ is that,
among all elements of the set $\sbcset(f),$ $\max \sbcset(f)$ is the
one that gives
the largest possible lower bound of the beginning of the cofinite part
of the set of periods
and \emph{simultaneously}
the smallest possible density of the set of $\bc(f)-$low periods of $f.$
\end{remark}

\begin{remark}\label{NotSBC}
The boundary of cofiniteness cannot be replaced by the
much simpler notion of strict boundary of cofiniteness.
Indeed, since the condition
\[
 \Card\bigl(\{1,\ldots, \bc(f)-2\}\cap \Per(f)\bigr) \le 2 \log_2(\bc(f)-2)
\]
need not be verified by $\sbc(f)$ (when $\sbc(f) \notin \sbcset(f)$),
the density of the $\sbc(f)-$low periods of $f$
could be large (in fact, even, arbitrarily close to one),
contradicting the simplicity of the set of periods.
\end{remark}

Now we can state precisely what do we mean by extending the entropy
simplicity phenomenon described above to the set of periods:
\emph{is it true that there exist totally transitive (and hence
dynamically complicate) graph maps with arbitrarily large boundary of
cofiniteness?}

We start by illustrating the above statement with three examples
for arbitrary graphs which are not trees.

The rotation interval of a circle map of degree one is a fundamental
tool to determine its set of periods.
We will define this object in Section~\ref{DTR}, where will describe
in detail its relation with the set of periods of the map under
consideration.
In what follows we will denote the set of all liftings of all
continuous circle maps of degree  one by $\dol,$ and
the rotation interval of $F\in \dol$ by $\Rot(F).$
These notions are well known and,
to improve the readability of this rather long introduction,
their definitions are written in detail in Subsection~\ref{RotTheor}.

\begin{example}[the dream example]\label{exampleBCNintroduction}
For every positive integer $n \ge 3$ there exists $f_n,$
a totally transitive continuous circle map of degree one having a
lifting $F_n \in \dol$ such that
$\Rot(F_n) = \left[\tfrac{1}{2n-1}, \tfrac{2}{2n-1}\right],$
$\Per(f_n) = \succs{n}$ and
$\lim_{n\to\infty} h(f_n) = 0.$
Hence, $\bc(f_n) = \sbc(f_n) = n$ and, consequently,
$\lim_{n\to\infty} \bc(f_n) = \infty$.
\smallskip

Furthermore, given any graph $G$ with a circuit, the sequence of maps
$\{f_n\}_{n\ge 5}$ can be extended to a sequence of
continuous totally transitive self maps of $G,$ $\{g_n\}_{n\ge 5},$
such that $\Per(g_n) = \Per(f_n)$ and $\lim_{n\to\infty} h(g_n) = 0.$
\end{example}

\begin{remark}\label{rem:exampleBCNintroduction}
In this example there are no $\bc(g_n)-$low periods.
Hence, $\sbc(g_n)$ is enough to control the complexity of the set of periods.
\end{remark}

\begin{example}[with persistent fixed low periods]\label{examplefirstexamplebcnintroduction}
For every positive integer $n \in \set{4k+1, 4k-1}{k\in \N}$
there exists $f_n,$ a totally transitive continuous circle map
of degree one having a lifting $F_n \in \dol$ such that
$\Rot(F_n) = \left[\tfrac{1}{2}, \tfrac{n+2}{2n}\right],$
$\lim_{n\to\infty} h(f_n) = 0,$
\[
  \Per (f_n) = \{ 2\} \cup \set{q\ \text{odd}}{2k+1 \le q \le n-2} \cup  \succs{n}
\]
and $\bc(f_n)$ exists and verifies $2k+1 \le \bc(f_n) \le n$
(and, hence, $\lim_{n\to\infty} \bc(f_n) = \infty$).\smallskip

Furthermore, given any graph $G$ with a circuit, the sequence of maps
$\{f_n\}_{n\ge 7, n\text{ odd}}$ can be extended to a sequence
of continuous totally transitive self maps of $G$,
$\{g_n\}_{n\ge 7, n\text{ odd}},$
such that
$\Per(g_n) = \Per(f_n)$ and, additionally, $\lim_{n\to\infty} h(g_n) = 0.$
\end{example}

\begin{remark}\label{rem:examplefirstexamplebcnintroduction}
The above example is different from the previous one
since for every $n$ there exist $\bc(f_n)-$low periods and, moreover,
every map $f_n$ has a constant $\bc(f_n)-$low period 2.

On the other hand, $\sbc(f_n) = n \ne \bc(f_n).$
In this example this is due to the fact that the set of periods which
are smaller than $\sbc(f_n)$ is very large relative to the value of
$\sbc(f_n).$ More concretely,
\[
   \textsf{DensLowPer}_{f_n}\left(\sbc(f_n)\right) =
   \begin{cases}
       \tfrac{k+1}{4k-1} & \text{if $n = 4k+1,$}\\
       \tfrac{k}{4k-3} & \text{if $n = 4k-1.$}
   \end{cases}
\]
So, for large $n,$
\begin{multline*}
   \textsf{DensLowPer}_{f_n}\left(\sbc(f_n)\right)  \approx \tfrac{1}{4} >\\
   \frac{2 \log_2(n-2)}{n-2} = \frac{2 \log_2(\sbc(f_n)-2)}{\sbc(f_n)-2}
\end{multline*}
and, hence, the strict boundary of cofiniteness does not belong to
$\sbcset(f_n).$
Furthermore, the differences $\sbc(f_n) - \bc(f_n)$ are unbounded
because
\[
  \textsf{DensLowPer}_{f_n}\left(\bc(f_n)\right) \le
        \frac{2 \log_2(\bc(f_n)-2)}{\bc(f_n)-2}
\]
converges to zero as $n$ goes to infinity (so, if
the differences $\sbc(f_n) - \bc(f_n)$ are bounded, then
$\textsf{DensLowPer}_{f_n}\left(\sbc(f_n)\right)$ also
converges to zero as $n$ goes to infinity; which contradicts the
previous estimate).
This is a concrete new motivation of our definition of boundary of
cofiniteness.
\end{remark}

\begin{example}[with non-constant low periods]\label{examplemontevideuexampleintroduction}
For every $n \in \N,\ n\ge 3$ there exists $f_n,$  a totally transitive
continuous circle map of degree one having a lifting $F_n \in \dol$
such that
\[
    \Rot(F_n) = \left[\tfrac{2n-1}{2n^2}, \tfrac{2n+1}{2n^2}\right] =
        \left[\tfrac{1}{n}-\tfrac{1}{2n^2},
        \tfrac{1}{n}+\tfrac{1}{2n^2}\right],
\]
$\lim_{n\to\infty} h(f_n) = 0$ and
\begin{multline*}
 \Per(f_n) =
   \{n\}\ \cup\\
   \set{t n + k}{t \in \{2,3,\dots,\nu-1\} \text{ and }
                    -\tfrac{t}{2} < k \le \tfrac{t}{2},\ k \in \Z} \cup \\
   \succs{n\nu+1-\tfrac{\nu}{2}}
\end{multline*}
with
\[
\nu = \begin{cases}
        n & \text{if $n$ is even, and}\\
        n-1 & \text{if $n$ is odd.}
     \end{cases}
\]
Moreover, $\sbc(f_n) = n\nu+1-\tfrac{\nu}{2}$
and $\bc(f_n)$ exists and verifies $n\le \bc(f_n) \le n\nu - 1 -\tfrac{\nu}{2}$
(and hence, $\lim_{n\to\infty} \bc(f_n) = \infty$).
\smallskip

Furthermore, given any graph $G$ with a circuit, the sequence of maps
$\{f_n\}_{n=4}^\infty$ can be extended to a sequence of continuous
totally transitive self maps of $G,$ $\{g_n\}_{n=4}^\infty,$
such that $\Per(g_n) = \Per(f_n)$ and $\lim_{n\to\infty} h(g_n) = 0.$
\end{example}

\begin{remark}\label{rem:examplemontevideuexampleintroduction}
This example is different from the previous two examples
since for every $n$ there exist $\bc(f_n)-$low periods but there is
no constant $\bc(f_n)-$low period.

Moreover, as in the previous example, $\sbc(f_n) \ne \bc(f_n),$
\[
\textsf{DensLowPer}_{f_n}\left(\sbc(f_n)\right)  \approx \tfrac{1}{2}
\]
and the differences $\sbc(f_n) - \bc(f_n)$ are unbounded.
\end{remark}

\begin{remark}
In all three examples we still have $\lim_{n\to\infty} h(g_n) = 0$
despite of the fact that $h(g_n)$ is slightly larger than $h(f_n)$
for every $n.$
\end{remark}

Finally, we state the main theorem of the paper that shows that the
above examples are not exceptional among circle maps of degree one.
On the contrary, a sequence of totally transitive circle maps
that unfolds an entropy simplification process also must unfold a
set of periods simplification process:

\begin{MainTheorem}\label{MTS1}
Let $\{f_n\}_{n\in \N}$ be a sequence of totally transitive circle maps
of degree one with periodic points such that
$\lim_{n\to\infty} h(f_n) = 0.$
For every $n$ let $F_n \in \dol$ be a lifting of $f_n.$
Then,
\begin{widthitemize}
\item $\lim_{n\to\infty} \len\left(\Rot(F_n)\right) = 0,$
\item there exists $N \in \N$ such that $\bc(f_n)$ exists for every $n \ge N,$ and
\item $\lim_{n\to\infty} \bc(f_n) = \infty.$
\end{widthitemize}
\end{MainTheorem}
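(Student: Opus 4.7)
The plan uses the rotation interval $I_n := \Rot(F_n) = [a_n, b_n]$, with $\ell_n := b_n - a_n$, as the central object linking entropy to the set of periods, and proceeds through the three bullets in order.

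\textbf{First bullet.} I would invoke the classical lower bound from the rotation theory of $\dol$: there is a function $\psi$ strictly positive on $(0, 1]$ such that $h(f) \ge \psi(\ell)$ whenever $\len(\Rot(F)) = \ell > 0$ (a Misiurewicz-style horseshoe inside $\Rot(F)$, recorded in Subsection~\ref{RotTheor}). Contraposition, together with $h(f_n) \to 0$, yields $\ell_n \to 0$.

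\textbf{Second and third bullets.} I would use the rotation-theoretic characterization of $\Per(f_n)$: modulo a bounded ``endpoint contribution'' from rational endpoints of $I_n$, an integer $q$ belongs to $\Per(f_n)$ iff some fraction $p/q$ in lowest terms lies in $I_n$. Theorem~\ref{theoremtotallytransitivefromadrr} guarantees $\Per(f_n)$ is cofinite, and the elementary estimate $\sbc(f_n) \leq 2/\ell_n$ (because $q > 2/\ell_n$ forces the interval $(q a_n, q b_n)$ to contain two consecutive integers, and hence one coprime to $q$) shows $\sbc(f_n) \to \infty$. Fix $M$ large. Define $L_n$ as the smallest integer $L > M$ with $L \in \Per(f_n)$ and $L - 1 \notin \Per(f_n)$ (the smallest ``block-start'' of $\Per(f_n)$ above $M$); such $L_n$ exists because $\sbc(f_n) > M$ itself qualifies for $n$ large, and $L_n \leq \sbc(f_n)$. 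The conditions $L_n > 2$, $L_n \in \Per(f_n)$, and $L_n - 1 \notin \Per(f_n)$ required for $L_n \in \sbcset(f_n)$ hold by construction, so the heart of the matter is the density inequality $\Card\bigl(\Per(f_n) \cap \{1, \ldots, L_n - 2\}\bigr) \leq 2 \log_2(L_n - 2)$.

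The density bound follows from two observations. \textbf{(i)} Fractions with denominator $\leq M$ form a finite set, and as $I_n$ shrinks, only a bounded number of them can lie in $I_n$, so $\Card\bigl(\Per(f_n) \cap \{1, \ldots, M\}\bigr) \leq C_0$ for an absolute constant $C_0$ (once $M$ is large enough to absorb endpoint contributions). \textbf{(ii)} A Farey-spacing argument rules out periods in $(M, L_n)$ for $n$ large: two consecutive periods $r, r+1 \in \Per(f_n)$ require coprime fractions $p/r$ and $p'/(r+1)$ both in $I_n$ with $\ell_n \geq \abs{p/r - p'/(r+1)} \geq 1/(r(r+1))$, hence $r \geq \sqrt{1/\ell_n} - 1 > M$ for $n$ large, contradicting the existence of a ``spill-over'' block starting at some $a \leq M$ and reaching above $M$. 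Combining (i) and (ii), $\Card\bigl(\Per(f_n) \cap \{1, \ldots, L_n - 2\}\bigr) \leq C_0$, while $L_n > M$; choosing $M$ with $M - 2 \geq 2^{C_0/2}$ forces $2 \log_2(L_n - 2) \geq C_0$, so $L_n \in \sbcset(f_n)$. Therefore $\bc(f_n)$ exists for $n$ large and $\bc(f_n) \geq L_n > M$; since $M$ is arbitrary beyond some threshold, $\bc(f_n) \to \infty$.

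\textbf{The main obstacle} is the precise bookkeeping in (i) and (ii): rational endpoints of $I_n$ can create persistent low periods (as witnessed by Examples~\ref{examplefirstexamplebcnintroduction} and~\ref{examplemontevideuexampleintroduction}), and the Farey bound in (ii) degrades near $r = 1$. Both difficulties must be resolved using the precise endpoint/twist-orbit characterization from $\dol$-rotation theory, so that $C_0$ is genuinely absolute rather than depending on sequence-specific endpoint data.
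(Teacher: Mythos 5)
Your first bullet is fine and coincides with the paper's argument: a rational $k/q$ in the interior of $\Rot(F_n)$ forces $h(f_n)\ge \tfrac{\log 3}{q}$, so small entropy pushes $\min M(c_n,d_n)$ above any fixed $L$ and hence $\len(\Rot(F_n))\le 1/L$. The gap is in the density step for the second and third bullets. Your observation (i) — that $\Card\bigl(\Per(f_n)\cap\{1,\dots,M\}\bigr)\le C_0$ for an \emph{absolute} constant once $M$ is large — is false, and it cannot be repaired: by Theorem~\ref{theoremMisiurewicz} the periods below the entropy threshold come only from the endpoint sets $Q_{F_n}(c_n)\cup Q_{F_n}(d_n)$, and small entropy does \emph{not} bound these sets by a constant. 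If $c_n=r/s$ with $s$ small and the map $F_n^{s}-r$ has Sharkovski\u{\i} type $2^k$ or $2^\infty$ (which carries no entropy), then $Q_{F_n}(c_n)$ contributes the whole geometric string $\{s,2s,4s,\dots\}$ below $M$, i.e.\ about $\log_2(M/s)$ low periods. So the best possible $C_0$ grows like $2\log_2 M$, and your closing move ``choose $M$ with $M-2\ge 2^{C_0/2}$'' becomes $M-2\ge M$: the bookkeeping is exactly at the critical threshold and does not close. This is precisely why the definition of $\sbcset$ allows $2\log_2(L-2)$ low periods rather than a constant. The paper's proof handles this by showing (Claim~3 there) that $h(f_n)<\tfrac{3\log\sqrt2}{L}$ excludes from $Q_{F_n}(\alpha)$ all elements $s\cdot t\cdot 2^\ell\le L$ with $t\ge3$ odd (via $h\ge\tfrac{1}{s2^\ell}\log\lambda_t>\tfrac{\log\sqrt2}{s2^\ell}$), so each endpoint contributes at most $\log_2\bigl(\tfrac{\nu_n-2}{s}\bigr)+1$ low periods, and then closes the inequality $\le 2\log_2(\nu_n-2)$ only because the shortness of the rotation interval forces one of the two denominators to be $\ge 4$ (if $s_n,t_n\le3$ then $\len(\Rot(F_n))\ge\tfrac16>\tfrac1L$), which is exactly what absorbs the two additive $+1$'s. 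No absolute-constant shortcut exists; the $\log_2 s$ gain from one endpoint is indispensable.

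A second, smaller defect is your observation (ii): the Farey-spacing bound only rules out consecutive periods $r,r+1$ when \emph{both} arise from fractions in the interior, i.e.\ from $M(c_n,d_n)$; a spill-over pair can just as well have one member in $Q_{F_n}(c_n)$ (say $s\cdot2^\ell$) and the other equal to the denominator $t_n$ of the other endpoint, and your argument says nothing about this. The paper sidesteps the whole ``smallest block-start above $M$'' construction: it proves that among three consecutive integers $\{L-2,L-1,L\}$ at most two can lie in $Q_{F_n}(c_n)\cup Q_{F_n}(d_n)$ (at most one per endpoint, from the $\{s2^\ell\}$ structure), defines $\kappa_n$ as a missing period there and $\nu_n:=\min\bigl(\Per(f_n)\cap\succs{\kappa_n+1}\bigr)$, and verifies $\nu_n\in\sbcset(f_n)$ directly; this avoids any need to control spill-over blocks. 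To salvage your outline you would have to replace (i) by the logarithmic endpoint count together with the $s_nt_n\ge4$ argument, and replace (ii) by a gap-selection of the block start such as the paper's $\kappa_n$.
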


Despite of this very general theorem for totally transitive circle
maps of degree one, we emphasize that the examples are strongly
relevant and motivated by the following two reasons:
from one side they are valid for any general graph that is not a tree
(not just the circle) thus showing that the phenomenon described in
this paper seems to be much more general than the result obtained in
Theorem~\ref{MTS1}. From another side, the three examples show that all
situations about the density of the \emph{$\sbc(f)-$low periods of $f$}
are possible (see Remarks~\ref{rem:exampleBCNintroduction},
\ref{rem:examplefirstexamplebcnintroduction} and
\ref{rem:examplemontevideuexampleintroduction}) and, in particular,
they show that the boundary of cofiniteness cannot be replaced by the
notion of strict boundary of cofiniteness (see Remark~\ref{NotSBC}).

This paper is organized as follows.
In Section~\ref{DTR} we introduce the definitions and preliminary
results for the rest of the paper;
the construction of the
Examples~\ref{exampleBCNintroduction},
         \ref{examplefirstexamplebcnintroduction} and
         \ref{examplemontevideuexampleintroduction}
is done in the very long Section~\ref{sectionexamples}.
Finally, Theorem~\ref{MTS1} is proved in Section~\ref{sectiontheorem}.

%%%%%%%%%%%%%%%%%%%%%%%%%%%%%%%%%%%%%%%%%%%%%%%%%%%%%%%%%%%%%%%%%%%%%%
%%%%%%%%%%%%%%%%%%%%%%%%%%%%%%%%%%%%%%%%%%%%%%%%%%%%%%%%%%%%%%%%%%%%%%
%%%%%%%%%%%%%%%%%%%%%%%%%%%%%%%%%%%%%%%%%%%%%%%%%%%%%%%%%%%%%%%%%%%%%%

\section{Definitions and preliminary results}\label{DTR}

In this section we essentially follow the notation and strategy of \cite{alm}.

\subsection{Basic definitions}

In the rest of the paper we denote the unit circle $\set{z\in \C}{\abs{x} = 1}$ by $\SI.$

A \emph{topological graph\/} (or simply a \emph{graph\/})
is a connected compact Hausdorff space $X$ for which there exists a
finite (or empty) set $V(X),$ called the \emph{set of vertices of $X$ \/},
such that either
$X= \SI$ and $V(X) = \emptyset$ or
$X\setminus V(X)$ is the disjoint union of finitely many
open subsets of $X$ each of them homeomorphic to an open
interval of the real line, called \emph{edges of $X$}, with the property
that the boundary of every edge consists of at most two points which are in $V(X).$
A point $z \in V(X)$ is an \emph{endpoint of $X$} if there exists
an open (in $X$) neighbourhood $U$ of $z$ such that
$X \cap U$ is homeomorphic to the interval $[0,1)$ being $z$ the preimage of $0$.
A \emph{circuit\/} of a graph $X$ is any subset of $X$ homeomorphic to a circle.
A \emph{tree\/} is a graph without circuits.

Let $X$ be a topological space and let $\map{f}{X}$ be a continuous map.
When iterating the map $f$ we will use the following notation:
$f^0$ will denote the identity map (in $X$),
and $f^n := f\circ f^{n-1}$ for every $n\in \N,$ $n \ge 1.$
For a point $x\in X,$ we define the \emph{$f$-orbit of $x$ \/}
(or simply the \emph{orbit of $x$ \/}),
denoted by $\Orb_f(x),$ as the set $\set{f^n(x)}{n \ge 0}.$
A point $x\in X$ is called a \emph{periodic point of $f$ \/} if $f^n(x) = x.$
In such case $\Orb_f(x)$ is called a \emph{periodic orbit of $f$ \/}
and $\Card\left(\Orb_f(x)\right)$ is called the \emph{period of $x$ \/}
(or \emph{$f$-period of $x$ \/} if we need to specify the map).
Observe that if $x$ is a periodic point of $f$ of period $n,$
then $f^k(x) \ne x$ for every $1 \le k < n$ and
if $P$ is a periodic orbit of $f$, then $P = \Orb_f(x)$ for every $x\in P.$

The set of all positive integers $n$ such that $f$ has a periodic point
of period $n$ is denoted by $\Per(f).$

\subsection{Rotation theory and sets of periods for circle maps of degree one}\label{RotTheor}
%Now we introduce the necessary notation for circle maps.
We start by introducing the key notion to work with circle maps: the liftings.
Let $\map{\eexp}{\R}[\SI]$ be the natural projection which is defined
by $\emap{x} := \exp(2\pi ix).$
Given a continuous map $\map{f}{\SI},$ we say that a
continuous map $\map{F}{\R}$ is a \emph{lifting of $f$ \/}
if $\emap{F(x)} = f(\emap{x})$ for every $x \in \R.$
For such $F,$ there exists $d\in \Z$ such that
$F(x +1)= F(x) +d$ for all $x\in\R,$
and this integer is called both the \emph{degree of $f$ \/} and the \emph{degree of $F$}.
If $G$ and $F$ are two  liftings of $f$ then $G = F + k$ for some integer $k$
and so $F$ and $G$ have the same degree.
We denote by $\dol[d]$  the set of all liftings  of circle maps of degree $d.$

Next we introduce the important notion of \emph{rotation interval} for maps from $\dol$.
Let $F\in \dol$ and let $x \in \R$. The number
\[
   \rho_F (x) := \limsup\limits_{n\to \infty}\frac{F^n(x) -x}{n}
\]
will be called the \emph{rotation number of $x$}. Moreover, the set
\[
  \Rot(F) := \set{\rho_F(x)}{x\in \R} = \set{\rho_F(x)}{x\in [0,1]}
\]
will be called the \emph{rotation interval of $F$}.
It is well known that it is a closed interval of the real line \cite{Ito}.

If $F\in \dol$ is a non-decreasing map, then
\[
   \rho_F (x) = \lim\limits_{n\to \infty}\frac{F^n(x) -x}{n}
\]
for every $x \in \R$ and, moreover, it is independent on $x$
(see for instance \cite{alm}).
Then this number ($\rho_F (x)$ for any $x\in \R$), will be called the
\emph{rotation number of $F$}.
For every $F \in \dol$ we define the \emph{lower map \map{F_l}{\R}} by
(see Figure~\ref{upper-lowermaps} for a graphical example)
\begin{align*}
F_l(x) &= \inf\set{F(y)}{y \ge x}\\
\intertext{and the \emph{upper map \map{F_u}{\R}} by}
F_u(x) &= \sup\set{F(y)}{y \le x}.
\end{align*}
It is easy to see (see e.g. \cite{alm}) that
$F_l,F_u$ are non-decreasing maps from $\dol.$
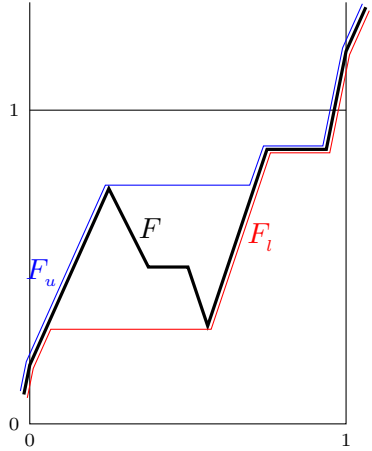
\begin{figure}
\begin{tikzpicture}[scale=0.26]
\draw (0,21.5) -- (0,0) -- (16,0) -- (16,21.5);
\draw (0,16) -- (16,16);
\node[below] at (0,0) {\tiny $0$}; \node[left] at (0,0) {\tiny $0$};
\node[below] at (16,0) {\tiny $1$}; \node[left] at (0,16) {\tiny $1$};

\begin{scope}[shift={(0,3)}]
  \draw[very thick] (-0.3,-1.5) -- (0,0) -- (4,9) -- (6,5) -- (8,5) -- (9,2) -- (12,11) -- (15,11) -- (16,16) -- (17, 18.25);
  \node[right] at (5,7) {$F$};
  \begin{scope}[shift={(0.175,-0.175)}]\draw[color=red] (-0.3,-1.5) -- (0,0) -- (0.888,2) -- (9,2) -- (12,11) -- (15,11) -- (16,16) -- (17, 18.25);\end{scope}
  \node[left, color=blue] at (2,4.7) {$F_u$};
  \begin{scope}[shift={(-0.175,0.175)}]\draw[color=blue] (-0.3,-1.5) -- (0,0) -- (4,9) -- (11.3,9) -- (12,11) -- (15,11) -- (16,16) -- (17, 18.25);\end{scope}
  \node[right, color=red] at (10.5,6.5) {$F_l$};
\end{scope}
\end{tikzpicture}
\caption{An example of a map $F \in \dol$ with its
lower map \textcolor{red}{$F_l$} in \textcolor{red}{red} and its
upper map \textcolor{blue}{$F_u$} in \textcolor{blue}{blue}.}\label{upper-lowermaps}
\end{figure}

The next theorem gives an effective way to compute the rotation
interval from the rotation numbers of the upper and lower maps.

\begin{theorem}[{\cite[Theorem~3.7.20]{alm}}]\label{theoremrotationintwathermap}
For every $F\in \dol$ it follows that $\Rot(F) =[\rho(F_l), \rho(F_u)].$
\end{theorem}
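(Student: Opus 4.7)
My plan is to prove the three conclusions in the stated order, each relying on the classical correspondence for $F\in\dol$ between reduced rationals in $\Rot(F)$ and periods of $f$: every period $q$ of $f$ gives a rational $p/q\in\Rot(F)$ in lowest terms. For the first conclusion I argue by contradiction: if $\len(\Rot(F_{n_k}))\geq 2\delta>0$ along a subsequence, then each $\Rot(F_{n_k})$ contains two distinct reduced rationals $p_1/q_1<p_2/q_2$ with $\max(q_1,q_2)\leq C(\delta)$ --- for instance its Farey parents. By the rotation theory developed in \cite{alm}, the coexistence of periodic orbits with two distinct rotation numbers and bounded denominators forces a topological horseshoe at a bounded iterate, giving a uniform lower bound $h(f_{n_k})\geq h_0(C(\delta))>0$ and contradicting $h(f_n)\to 0$.

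For the second conclusion, Theorem~\ref{theoremtotallytransitivefromadrr} ensures each $\Per(f_n)$ is cofinite, so $\sbc(f_n)$ is defined. To show $\sbc(f_n)\to\infty$ I argue by contradiction: if $\sbc(f_{n_k})\leq M$, then $[M,2M]\cap\N\subseteq\Per(f_{n_k})$, and Bertrand's postulate provides two distinct primes $q_1<q_2\in[M,2M]$, each giving a reduced rational $p_i/q_i\in\Rot(F_{n_k})$; these are distinct, so $|p_1/q_1-p_2/q_2|\geq 1/(q_1q_2)\geq 1/(4M^2)$ forces $\len(\Rot(F_{n_k}))\geq 1/(4M^2)$ and contradicts the first conclusion.

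For the third conclusion the key quantitative tools are the Farey-type bound
$$\Card\bigl(\Per(f)\cap\{1,\ldots,K\}\bigr)\leq K^2 L+1,$$
valid for every $F\in\dol$ with $\Rot(F)$ of length $L$ and every $K\geq 1$ (reduced rationals with denominator $\leq K$ are pairwise separated by at least $1/K^2$), and the consecutive-integers estimate $l(l+1)\geq 1/L$ whenever $\{l,l+1\}\subseteq\Per(f)$ (the corresponding reduced rationals differ by at least $1/(l(l+1))$). Setting $L_n:=\len(\Rot(F_n))$ and letting $L_n^\star$ be the largest integer $L\geq 4$ with $(L-2)^2L_n+1\leq \log_2(L-2)$, one has $L_n^\star\to\infty$ by the first conclusion, and the density condition in the definition of $\sbcset(f_n)$ holds automatically for every $L\in[4,L_n^\star]$ that belongs to $\Per(f_n)$ with $L-1\notin\Per(f_n)$. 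If $\sbc(f_n)\leq L_n^\star$ then $L:=\sbc(f_n)$ is such a start-of-block and $\bc(f_n)=\sbc(f_n)\to\infty$. Otherwise $\sbc(f_n)>L_n^\star$; the consecutive-integers estimate then forces, for $n$ large, every block of consecutive periods of size $\geq 2$ to start at position $\geq \sqrt{1/L_n}-1$ (itself tending to infinity), while the Farey-type bound limits the total number of periods in $[1,L_n^\star]$ to $O(\log L_n^\star)$. Taking $L$ to be either the start of the largest block of $\Per(f_n)$ intersecting $[1,L_n^\star]$ (when that start exceeds $2$) or the first period of $\Per(f_n)$ strictly above that block (possibly $L=\sbc(f_n)$ itself), one checks in every subcase that $L>2$ is a start-of-block, that the number of periods below $L$ is $O(\log L_n^\star)\leq 2\log_2(L-2)$, and that $L\to\infty$; hence $L\in\sbcset(f_n)$ and $\bc(f_n)\geq L\to\infty$.

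The main technical subtlety is the case analysis for the third conclusion, specifically ensuring that the candidate $L$ chosen when $\sbc(f_n)>L_n^\star$ is a start-of-block $>2$ satisfying the density condition and that it tends to infinity with $n$. The combination of the Farey-type count bound and the consecutive-integers estimate on block-starts is what forces this: the former limits the total number of low periods, while the latter forbids blocks of size $\geq 2$ from beginning at bounded positions, together ruling out the pathological configurations in which every admissible $L$ would remain bounded.
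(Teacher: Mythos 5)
Your proposal does not address the statement it is supposed to prove. The statement is the single identity $\Rot(F)=[\rho(F_l),\rho(F_u)]$ for an arbitrary lifting $F\in\dol$, i.e.\ that the rotation interval is exactly the interval bounded by the rotation numbers of the non-decreasing lower and upper maps $F_l$ and $F_u$; the paper quotes it from \cite[Theorem~3.7.20]{alm} without proof. What you have written is instead a proof sketch of Theorem~\ref{MTS1}: your ``three conclusions'' are $\lim_{n\to\infty}\len(\Rot(F_n))=0$, the eventual existence of $\bc(f_n)$, and $\lim_{n\to\infty}\bc(f_n)=\infty$, and all of your tools (entropy lower bounds forcing horseshoes, Bertrand's postulate, Farey-type counts of low periods, block-start analysis for $\sbcset(f_n)$) concern sequences of totally transitive maps with vanishing entropy. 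None of this bears on the statement at hand, which involves no entropy, no transitivity, no sequences, and no boundary of cofiniteness, and in which the maps $F_l$ and $F_u$ --- the whole point of the theorem --- never appear in your text.

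To prove the actual statement you would need two halves. The easy inclusion $\Rot(F)\subset[\rho(F_l),\rho(F_u)]$ follows from $F_l\le F\le F_u$ together with the monotonicity of $F_l$ and $F_u$: comparing iterates gives $F_l^n(x)\le F^n(x)\le F_u^n(x)$, and since for non-decreasing maps in $\dol$ the limit defining the rotation number exists and is independent of the point, every $\rho_F(x)$ is squeezed between $\rho(F_l)$ and $\rho(F_u)$. The substantial half is surjectivity: every $\alpha\in[\rho(F_l),\rho(F_u)]$ is realized as $\rho_F(x)$ for some $x$. The standard route (the one behind the cited result in \cite{alm}) interpolates between $F_l$ and $F_u$ by a monotone one-parameter family of non-decreasing ``water'' maps $F_\lambda$ agreeing with $F$ off the flat pieces, uses that $\lambda\mapsto\rho(F_\lambda)$ is continuous and non-decreasing (hence attains every value in $[\rho(F_l),\rho(F_u)]$), and then produces, for each $\lambda$, an orbit of $F_\lambda$ avoiding the flat pieces --- which is therefore a genuine orbit of $F$ with the prescribed rotation number, or else handles the rational case through periodic orbits. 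Your submission contains no step of this kind, so as a proof of Theorem~\ref{theoremrotationintwathermap} it is not a partial argument with a fixable gap; it is a proof of a different theorem.
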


It is well known that the rotation interval of a lifting $F \in \dol$
can be used to obtain information about the set of periods of the
corresponding circle map.
To clarify this point we will introduce the notion of \emph{lifted orbit\/}.

Let $f$ be a continuous circle map of degree $d$ and let $F\in \dol[d]$
be a lifting of $f$. A set $P\subset \R$ will be called a
\emph{lifted orbit of $F$ \/} if there exists $z \in \SI$ such that
$P = \eexp^{-1}\bigl(\Orb_f(z)\bigr)$ and
$f(\emap{x})=\emap{F(x)}$ for every $x \in P.$
Whenever $z$ is a periodic point of $f$ of period $n,$
$P$ will be called a \emph{lifted periodic orbit of $F$ of period $n$}.
We will denote by $\Per(F)$ the set of periods of all
lifted periodic orbits of $F.$ Observe that then, $\Per(F) = \Per(f).$

\begin{remark}
Let $F\in \dol$ and let $P$ be a lifted periodic orbit of $F$ of
period period $n.$ Set
\[
  P = \{\dots, x_{-2}, x_{-1},x_0, x_1, x_2,\dots\}
\]
with $x_i < x_j$ if and only if $i < j.$
The fact that $P = \eexp^{-1}\bigl(\Orb_f(z)\bigr)$, in this case, gives
\[
 \Card\left(P \cap [r,r+1)\right) = n
\]
for every $r \in \R$ and, hence,
\[
  x_{kn + i}=x_i + k
\]
for every $i, k \in \Z.$

Moreover, there exists $m \in \Z$ such that
$F^n(x_i) = x_i + m = x_{mn + i}$ for every $x_i \in P.$
Consequently,
\[
 \rho_F (x_i) = \tfrac{m}{n}
\]
for every $x_i \in P.$
\end{remark}

From the above remark it follows that if $P$ is a lifted periodic orbit
of $F \in \dol,$ then all the points of $P$ have the same rotation number.
This number will be called the \emph{rotation number of $P$ \/}
(or \emph{$F$-rotation number of $P$ \/} if we need to specify the lifting).

A lifted periodic orbit $P$ of $F \in \dol$
such that $F\evalat{P}$ is increasing
will be called \emph{twist\/}.

\begin{remark}\label{rem:liftedperiodicorbitandtwist}
Let
\[
  P = \{\dots, x_{-2}, x_{-1},x_0, x_1, x_2,\dots\}
\]
be a twist lifted periodic orbit of $F \in \dol$
of period $n$ and rotation number $m/n$ labelled so that
$x_i < x_j$ if and only if $i < j.$
By \cite[Lemma~3.7.4 and Corollary~3.7.6]{alm} we have that
$m$ and $n$ are coprime and
\[
  F(x_i) = x_{i + m},
\]
for all $i\in \Z.$
\end{remark}

The next theorem due to Misiurewicz (see \cite{m, alm})
already makes the connection between $\Rot(F)$  and  $\Per(F).$
To state it, we still need to recall the \emph{Sharkovski\u{\i} Ordering}.

The \emph{Sharkovski\u{\i} Ordering} $\geso{\Sho}$ (the symbols
$\gtso{\Sho},$ $\ltso{\Sho}$  and $\leso{\Sho}$  will also be
used in the natural way) is a linear ordering of
$\N_{\Sho} := \N \cup \{2^\infty\}$ (we have to
include the symbol $\{2^\infty\}$ in order to ensure the existence of
supremum of every subset with respect to the ordering $\geso{\Sho}$)
defined as follows:
\begin{align*}
 & 3 \gtso{\Sho} 5 \gtso{\Sho} 7 \gtso{\Sho} 9 \gtso{\Sho} \dots \gtso{\Sho} \\
 & 2 \cdot 3 \gtso{\Sho} 2 \cdot 5 \gtso{\Sho} 2 \cdot 7 \gtso{\Sho} 2 \cdot 9 \gtso{\Sho} \dots \gtso{\Sho} \\
 & 4 \cdot 3 \gtso{\Sho} 4 \cdot 5 \gtso{\Sho} 4 \cdot 7 \gtso{\Sho} 4 \cdot 9 \gtso{\Sho} \dots \gtso{\Sho}\\
 & \hspace*{7em} \vdots \\
 & 2^n \cdot 3  \gtso{\Sho}  2^n \cdot 5 \gtso{\Sho} 2^n \cdot 7 \gtso{\Sho} 2^n \cdot 9 \gtso{\Sho} \dots \gtso{\Sho} \\
 & \hspace*{7em} \vdots \\
 & 2^\infty\gtso{\Sho}\cdots \gtso{\Sho} 2^n \gtso{\Sho} \dots \gtso{\Sho}
 16 \gtso{\Sho} 8 \gtso{\Sho} 4 \gtso{\Sho} 2 \gtso{\Sho} 1.
\end{align*}

We introduce the following notation.
Given $c,d \in \R,$ $c \le d$ we set
\[
  M(c,d) := \set{n \in \N}{c < k/n < d \text{ for some integer } k}.
\]
Let $F\in\dol $  let $c$ be an endpoint of $\Rot(F).$
We define the set
\[
  Q_F(c) := \begin{cases}
             \emptyset                                       & \text{if $c \notin \Q$}\\
             \set{sk}{k\in\N \text{ and } k \leso{\Sho} s_c} & \text{if $c = r/s$ with $r, s$ coprime}
        \end{cases}
\]
and $s_c \in \N_{\Sho}$ is defined by
the Sharkovski\u{\i} Theorem on the real line.
Indeed, since $c = r/s$ and $r$ and $s$ are coprime,
the map $F^s - r$ is a continuous map on the real line with periodic
points.
Hence, by the Sharkovski\u{\i} Theorem
there exists an $s_c \in \N_{\Sho}$ such that the set of periods
(not lifted periods) of $F^s - r$ is precisely
$\set{s\in\N}{s\leso{\Sho} s_c}.$

\begin{theorem}\label{theoremMisiurewicz}
Let $f$ be a continuous circle map of degree one having a lifting $F \in \dol.$
Assume that $\Rot(F) = [c,d].$
Then \[ \Per(f) = Q_F(c) \cup M(c,d) \cup Q_F(d). \]
\end{theorem}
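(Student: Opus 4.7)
The plan proceeds in three stages tracking the three conclusions.

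\emph{Stage~1 (shrinking rotation interval).} I would invoke the standard rotation-theoretic entropy lower bound available in \cite{alm}: if $p/q$ is a reduced rational in the interior of $\Rot(F)$ then $F$ admits a twist lifted periodic orbit of period $q$ (cf.\ Remark~\ref{rem:liftedperiodicorbitandtwist}), and the associated horseshoe forces $h(f)\ge(\log 2)/q$. Combined with the elementary fact that every open interval of length $\ell>0$ contains a reduced rational of denominator at most $\lceil 1/\ell\rceil+1$, this gives the quantitative implication: $\len(\Rot(F_n))\ge\ell$ forces $h(f_n)\ge(\log 2)/(\lceil 1/\ell\rceil+1)$. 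Hence $h(f_n)\to 0$ implies $\ell_n:=d_n-c_n\to 0$, where $[c_n,d_n]:=\Rot(F_n)$.

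\emph{Stage~2 (decomposition and counting).} By Theorem~\ref{theoremtotallytransitivefromadrr}, total transitivity together with the existence of periodic points gives $\Per(f_n)$ cofinite, so $\sbc(f_n)$ is defined; Theorem~\ref{theoremMisiurewicz} then splits
\[
\Per(f_n)=Q_{F_n}(c_n)\cup M(c_n,d_n)\cup Q_{F_n}(d_n).
\]
The technical ingredients I would prove are: (a) a sparseness bound $\Card(M(c_n,d_n)\cap[1,N])=O(N^2\ell_n)$ valid for $N\ell_n\le 1$, obtained by noting that for such $k\le N$ the interval $(kc_n,kd_n)$ has length $<1$ and contains an integer only when the fractional part of $kc_n$ falls in an arc of length $k\ell_n$; (b) each $Q$-contribution is contained in the multiples of the denominator of the corresponding rational endpoint, with these denominators going to infinity (else rational endpoints of small denominator combined with $\ell_n\to 0$ produce nearby interior rationals of small denominator, contradicting Stage~1); and (c) the Sharkovski\u{\i} suprema $s_{c_n},s_{d_n}$ appearing in the definition of the $Q$-sets cannot be high in the Sharkovski\u{\i} ordering for large $n$, again by the entropy bound applied to iterates of $F_n$. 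These estimates together force $\sbc(f_n)\to\infty$.

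\emph{Stage~3 (producing elements of $\sbcset$).} For each large $n$ I would take $L_n$ to be the least element of $\Per(f_n)\cap[M_n,\infty)$ with $L_n-1\notin\Per(f_n)$, where $M_n\to\infty$ is a carefully chosen threshold growing more slowly than $\sbc(f_n)$ (e.g.\ comparable to a small fractional power of $1/\ell_n$). Such $L_n$ exists and satisfies $M_n\le L_n\le\sbc(f_n)$ by Stage~2, so $L_n\to\infty$; the first two conditions defining $\sbcset(f_n)$ are immediate. The logarithmic density condition
\[
\Card\bigl(\Per(f_n)\cap\{1,\dots,L_n-2\}\bigr)\le 2\log_2(L_n-2)
\]
follows from the estimates (a)--(c) applied at the scale $L_n-2$: the left side is bounded by $O(L_n^2\ell_n)+O(L_n/s_{c_n})+O(L_n/s_{d_n})$, which, under the chosen $M_n$, is $o(\log_2 L_n)$. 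Hence $L_n\in\sbcset(f_n)$ and $\bc(f_n)\ge L_n\to\infty$.

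\emph{Main obstacle.} The delicate part will be the tuning of $M_n$ (and therefore $L_n$) so that the three constraints of $\sbcset(f_n)$ hold simultaneously. Examples~\ref{exampleBCNintroduction}, \ref{examplefirstexamplebcnintroduction} and \ref{examplemontevideuexampleintroduction} exhibit all three possible behaviors for the low-period density (see Remarks~\ref{rem:exampleBCNintroduction}, \ref{rem:examplefirstexamplebcnintroduction}, \ref{rem:examplemontevideuexampleintroduction}), so the argument must be flexible enough to accommodate the ``easy'' case $L_n=\sbc(f_n)$ and the ``hard'' cases where $L_n$ must lie well below $\sbc(f_n)$. Controlling the $Q$-contributions when the rotation-interval endpoints are rational with intricate Sharkovski\u{\i} suprema — combining (b) and (c) — is the most technical point, as $s_{c_n}$ and $s_{d_n}$ can a priori be as large as $2^\infty$ and so produce genuinely infinite $Q$-sets unless entropy smallness is used decisively.
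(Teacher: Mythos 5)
Your proposal does not address the statement at hand. The statement to be proved is Misiurewicz's characterization of the set of periods of a degree-one circle map, namely the set equality $\Per(f) = Q_F(c) \cup M(c,d) \cup Q_F(d)$ when $\Rot(F)=[c,d]$. What you have written instead is a proof sketch of Theorem~\ref{MTS1} (the paper's main theorem about sequences of totally transitive maps with $h(f_n)\to 0$): your ``three stages tracking the three conclusions'' correspond to the three bullet points of that theorem, not to the single conclusion of Theorem~\ref{theoremMisiurewicz}. Worse, in Stage~2 you explicitly invoke Theorem~\ref{theoremMisiurewicz} to split $\Per(f_n)$ into the three pieces, so as an argument for the statement under review your text is circular: the result to be established is used as a black box and nothing in the proposal derives either inclusion of the claimed equality.

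To be clear about what is missing: a proof of this statement must show (i) that every $n\in M(c,d)$ is realized as a period, which is done by producing, for each rational $k/n\in(c,d)$, a periodic orbit of rotation number $k/n$ and period exactly $n$ (via the non-decreasing upper and lower maps $F_u,F_l$ and twist orbits, cf.\ Theorem~\ref{theoremrotationintwathermap} and Remark~\ref{rem:liftedperiodicorbitandtwist}); (ii) that for a rational endpoint $c=r/s$ in lowest terms the periods of orbits with rotation number $c$ are exactly $s\cdot\set{k}{k\leso{\Sho}s_c}$, which is a Sharkovski\u{\i}-type analysis of the real-line map $F^s-r$; and (iii) conversely, that every periodic orbit has a rational rotation number lying in $[c,d]$, so its period is accounted for by one of the three sets. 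None of these steps appears in your proposal. Note also that the paper itself does not reprove this theorem; it cites Misiurewicz (and \cite{alm}), so a faithful ``proof'' here either reproduces that classical argument or explicitly defers to the literature --- but it cannot consist of an application of the theorem to a different problem.
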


\subsection{Markov graphs, Markov maps and sets of periods}
% Next we will review the combinatorial tools that will allow us to
% effectively compute entropies and sets of periods.
Take a finite set $V=\{v_1, v_2, \dots, v_n\}$.
The pair  $\mathcal{G}=(V, U)$ where $U \subset V \times V$
is called a \emph{combinatorial oriented  graph\/}.
The elements of $V$ are called the \emph{vertices of $\mathcal{G}$ \/}
and each element $(v_i, v_j) \in U$ is called an \emph{arrow from $v_i$ to $v_j$}.
An arrow $(v_i, v_j)$ will also be denoted  by $v_i \longrightarrow v_j,$
which allows us to give  a graphical representation of an oriented graph.
% % Two combinatorial oriented  graphs $(V, U)$ and $(V', U')$
% % are said to be \emph{isomorphic\/} if there exists a bijective map
% % $\map{\phi}{V}[V']$ such that
% % $U' = \set{(\phi(v), \phi(w))}{(v,w)\in U}.$
A \emph{path of length $k$} is a sequence of $k+1$ vertices
$v_0,v_1,\dots,v_k$  with the property that there is an arrow from every vertex
to the next one. A path is denoted as
$v_0\longrightarrow v_1 \longrightarrow v_2  \longrightarrow \cdots \longrightarrow v_k.$
A \emph{loop of length $k$} is a path of length $k$ where the first
and last vertex coincide:
$v_0\longrightarrow v_1 \longrightarrow v_2  \longrightarrow \cdots \longrightarrow v_{k-1}  \longrightarrow v_0.$

Let $X$ be a topological graph.
Every subset of $X$ homeomorphic to the interval $[0,1]$
will in turn be called an \emph{interval of $X$}.
The preimages of $0$ and $1$ by the homeomorphism will
be called \emph{the endpoints of $I$} and the set of (both) endpoints
of $I$ will be denoted by $\partial{}I.$
Note that if $I \cap V(X) = \emptyset,$ then $I\setminus\partial{}I = \Int(I).$

Let $X$ be a topological graph and let $\map{f}{X}$ be a continuous map.
A set $Q\subset X$ will be called \emph{$f$-invariant\/} if $f(Q)\subset Q.$
A \emph{Markov invariant set\/} is defined to be a
finite $f$-invariant set $Q \supset V(X)$ such that
the closure of each connected component of $X \setminus Q,$
called a \emph{$Q$-basic interval\/}, is an interval of $X.$
Observe that two different $Q$-basic intervals have disjoint interiors
(here \emph{interior} means the topological interior in $X$ which is
the whole $Q$-basic interval minus those of its endpoints which
are not endpoints of the graph because $Q \supset V(X)$).

The set of all $Q$-basic intervals will be denoted by \SBI.

\begin{definition}[Of monotonicity over an interval]
Let $X$ be a topological graph, let $\map{f}{X}$ be a continuous map
and let $I$ be an interval of $X.$
The map $f$ will be said to be \emph{monotone at $I$ \/} if the set
$\bigl(f\evalat{I}\bigr)^{-1}(y) = \set{x\in I}{f(x) = y}$
is connected for every $y \in f(I).$
Clearly, since $I$ is an interval,
$\bigl(f\evalat{I}\bigr)^{-1}(y)$ is either a point or an interval
for every $y \in f(I).$
Moreover, a simple exercise shows that the subgraph $f(I),$ in turn,
must be either a point or an interval.
Finally, let $\map{g}{X}$ be another continuous map,
and let $J$ be another interval of $X$ such that
$f(I) \cap \Int(J) \ne \emptyset$ and
$g$ is monotone at $J.$
A first year calculus exercise shows that
$\bigl(f\evalat{I}\bigr)^{-1}(J) = \set{x\in I}{f(x) \in J}$ % \subset I$
is an interval and
$g\evalat{J} \circ f\evalat{I}$
% % $g\evalat{f(I) \cap J} \circ
% % f\evalat{\bigl(f\evalat{I}\bigr)^{-1}(J)}$
is monotone at $\bigl(f\evalat{I}\bigr)^{-1}(J).$
\end{definition}

\begin{remark}
The above definition of monotonicity over an interval is equivalent
to the usual one: $f(I)$ is a point or an interval and,
in the second case, the map
$\map{\zeta \circ f\evalat{I} \circ \xi}{[0,1]}$
is monotone as interval map, where
$\map{\xi}{[0,1]}[I]$ and $\map{\zeta}{f(I)}[{[0,1]}]$
are homeomorphisms (which exist because $I$ and $f(I)$ are intervals).
We prefer the more intrinsic definition given above because it is
independent on the choice of the auxiliary homeomorphisms and
on whether they are increasing or decreasing. This will be specially helpful
when studying compositions of monotone maps over intervals like in
the rest of this subsection and Subsection~\ref{TransitivityMarkovmatrices}.
\end{remark}

Let $X$ be a topological graph, let $\map{f}{X}$ be a continuous map
and let $Q \subset X$ be a Markov invariant set.
We say that $f$ is \emph{$Q$-monotone\/} if $f$ is monotone on each $Q$-basic interval.
In such a case, $Q$ is called a \emph{Markov partition of $X$ with respect to $f$ \/}
and $f$ is called a \emph{Markov map with respect to $Q$}.

Next we introduce the very important notion of $f$-covering that
allows us to get a combinatorial oriented graph from a Markov partition
of a Markov map.

Let $X$ be a topological graph, let $\map{f}{X}$ be a continuous map and
let $Q$ be a Markov partition of $X$ with respect to $f.$
Given $I, J \in \SBI,$
we say  that  $I$ \emph{$f$-covers\/} $J$ if $f(I)\supset J.$
The \emph{Markov graph of $f$ with respect to $Q$ \/} (or \emph{$f$-graph\/})
is a combinatorial oriented graph whose vertices are all the $Q$-basic
intervals and there is an arrow  $I \longrightarrow J$
from the vertex ($Q$-basic interval) $I$ to the vertex ($Q$-basic interval) $J$
if and only if $I$ $f$-covers $J.$
The \emph{Markov  matrix of $f$ with respect to $Q$ \/} is another
combinatorial object that  describe the dynamical behaviour
of a Markov map $f$ and is associated in a natural way to the
Markov graph of $f$ with respect to $Q$.
It is a  $\Card(\SBI) \times \Card(\SBI)$ matrix
$M = (m_{I,J})_{I,J \in \SBI}$ such that
\[
  m_{I,J}=\begin{cases}
      1 & \text{if $I$ $f$-covers $J$}\\
      0 & \text{otherwise}
\end{cases}.
\]

The next lemma shows the relation between loops of Markov graphs and
periodic points. Essentially it is \cite[Corollary~1.2.8]{alm}
extended to graph maps. %%%(with no major changes in the proof).

Given a tree $T$ (which is uniquely arcwise connected) and a set
$A \subset T,$ we denote by $\chull{A}[T]$ the
\emph{convex hull of $A$ in $T$},
that is, the smallest closed connected set of $T$
that contains $A.$ Also, given $q \in \N$,
the congruence classes modulo $q$ will be $\{0,1,\dots,q-1\}.$

\begin{lemma}\label{lemmaInterv}
Let $X$ be a topological graph, let $\map{f}{X}$ be a
Markov map with respect to a Markov partition $Q$ of $X$
and let
$\alpha= I_0 \longrightarrow I_1 \longrightarrow \cdots \longrightarrow I_{n-1} \longrightarrow I_0$
be a loop in the Markov graph of $f$ with respect to $Q.$
Then, there exists a fixed point $x \in I_0$ of $f^n$
such that $f^{i}(x)\in I_i$ for $i=1,2,\dots,n-1.$
\end{lemma}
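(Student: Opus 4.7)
The plan is the standard ``itinerary'' argument: pull the interval $I_0$ back along the loop to find a subinterval $J_0 \subset I_0$ with $f^i(J_0) \subset I_i$ for $i = 0, 1, \dots, n-1$ and $f^n(J_0) = I_0$, and then extract the desired fixed point of $f^n$ by an intermediate-value argument on $J_0$.

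The key preliminary is a \emph{pull-back principle}: if $I, J$ are intervals of $X$, $f$ is monotone at $I$, and $J \subset f(I)$, then there exists a closed subinterval $J' \subset I$ with $f(J') = J.$ Given the intrinsic definition of monotonicity supplied in the excerpt, after fixing homeomorphisms $\map{\xi}{[0,1]}[I]$ and $\map{\zeta}{f(I)}[{[0,1]}]$ the composition $\zeta \circ f\evalat{I} \circ \xi$ is a weakly monotone continuous self-map of $[0,1]$, since every point-preimage is an interval. The preimage of the connected set $\zeta(J)$ under such a monotone map is then itself a subinterval of $[0,1]$, and transporting it back through $\xi$ yields the desired $J'.$

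Using this pull-back principle, construct subintervals by backward induction. From the arrow $I_{n-1} \longrightarrow I_0$, i.e.\ $I_0 \subset f(I_{n-1})$, the pull-back principle produces an interval $J_{n-1} \subset I_{n-1}$ with $f(J_{n-1}) = I_0.$ Then, given $J_{i+1} \subset I_{i+1}$, the arrow $I_i \longrightarrow I_{i+1}$ gives $J_{i+1} \subset I_{i+1} \subset f(I_i)$, and the pull-back principle yields an interval $J_i \subset I_i$ with $f(J_i) = J_{i+1}.$ An easy induction then gives $f^k(J_i) \subset I_{i+k}$ for $0 \le k \le n-1-i$ and $f^{n-i}(J_i) = I_0.$ In particular, $J_0 \subset I_0$ is an interval satisfying $f^n(J_0) = I_0 \supset J_0$ and $f^i(J_0) \subset I_i$ for every $i = 0, 1, \dots, n-1.$

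To conclude, fix a homeomorphism $\map{\varphi}{[0,1]}[I_0]$ and let $[a,b] := \varphi^{-1}(J_0).$ The map $g \colon [a,b] \to [0,1]$ defined by $g(u) := \varphi^{-1}(f^n(\varphi(u)))$ is continuous, and $g([a,b]) = \varphi^{-1}(f^n(J_0)) = \varphi^{-1}(I_0) = [0,1] \supset [a,b].$ Hence there exist $s_1, s_2 \in [a,b]$ with $g(s_1) = a$ and $g(s_2) = b$, so $g(s_1) - s_1 \le 0 \le g(s_2) - s_2$, and the intermediate value theorem yields a fixed point $c \in [a,b]$ of $g.$ Then $x := \varphi(c) \in J_0 \subset I_0$ is a fixed point of $f^n$ satisfying $f^i(x) \in I_i$ for every $i = 1, \dots, n-1$, as required. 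The only genuinely delicate step is the pull-back principle itself; once it is in place, the rest of the argument is essentially a verbatim port of the classical interval-map proof \cite[Corollary~1.2.8]{alm}.
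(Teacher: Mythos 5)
Your proof is correct and follows the standard pull-back-plus-intermediate-value argument from \cite[Corollary~1.2.8]{alm}, which is exactly what the paper has in mind when it states this lemma without proof (``Essentially it is \cite[Corollary~1.2.8]{alm} extended to graph maps''). The only genuinely graph-specific ingredient is the pull-back principle, and you justify it correctly by transporting the intrinsic monotonicity condition through the chart homeomorphisms $\xi$ and $\zeta$ to reduce to monotone interval maps.
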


The next result compiles \cite[Lemma~1.2.12 and Theorem~2.6.4]{alm} extended to graph maps.
To state it we need to introduce some more definitions.

Given two paths
$\alpha = v_0 \longrightarrow v_1 \longrightarrow v_2  \longrightarrow \cdots \longrightarrow v_k$
and $\beta = w_0 \longrightarrow w_1 \longrightarrow w_2  \longrightarrow \cdots \longrightarrow w_m$
in a combinatorial graph such that the last vertex of the first path
is the first vertex of the second one (i.e., $v_k = w_0$),
the \emph{concatenation\/} of $\alpha$ and $\beta$
is denoted by $\alpha\beta$ and is the path
\[
  v_0 \longrightarrow v_1 \longrightarrow v_2  \longrightarrow \cdots \longrightarrow
  v_{k-1} \longrightarrow w_0 \longrightarrow w_1 \longrightarrow w_2  \longrightarrow
  \cdots \longrightarrow w_m.
\]
Clearly, the length of the concatenated path is the sum of the lengths of the original paths.
A loop is an \emph{$n$-repetition\/} of a (shorter) loop $\alpha$
if $n \ge 2$ and it is a concatenation of $\alpha$ with itself $n$ times.
Such a loop will be called \emph{repetitive\/}.
A loop which is not repetitive will also be called \emph{simple}.

The \emph{shift of a loop\/}
$\alpha = v_0 \longrightarrow v_1 \longrightarrow v_2 \longrightarrow \cdots \longrightarrow v_{k-1} \longrightarrow v_0$
is defined to be the loop
\[
 S(\alpha) := v_1 \longrightarrow v_2 \longrightarrow \cdots \longrightarrow v_{k-1} \longrightarrow v_0 \longrightarrow v_1.
\]
Iteratively, we set $S^0(\alpha) := \alpha$ and, for every $m \in \N,$
we define the \emph{$m$-shift of $\alpha$,} denoted by $S^m(\alpha),$
as the loop
\[
  v_{m\mkern-12mu\pmod{k}} \longrightarrow v_{m+1\mkern-12mu\pmod{k}} \longrightarrow \cdots \longrightarrow v_{m+k-1\mkern-12mu\pmod{k}} \longrightarrow v_{m\mkern-12mu\pmod{k}}.
\]

Let $X$ be a topological graph, let $\map{f}{X}$ be a
Markov map with respect to a Markov partition,
let
$\alpha = I_0 \longrightarrow I_1 \longrightarrow \cdots \longrightarrow I_{m-1} \longrightarrow I_0$
be a loop in the Markov graph of $f$
and let $P$ be a periodic orbit of period $m$ of $f$.
We say that $\alpha$ and $P$ are \emph{associated\/} to each other
if there exists an $x \in P$ such that $f^k(x)\in I_k$ for $k=0,1,\dots,m-1.$
Observe that if $\alpha$ is associated to a periodic orbit,
then so is $S^k(\alpha)$ for every $k \in \N$
(by replacing $x$ by $f^k(x)$ in the above definition).

Next we will define the notion of \emph{negative\/} and \emph{positive\/}
loop in a Markov graph of $f$ with respect to a Markov partition.
Given a loop
$\alpha = I_0 \longrightarrow I_1 \longrightarrow \cdots \longrightarrow I_{m-1} \longrightarrow I_0$
we inductively define a sequence of intervals $\{U_k\}_{k=0}^m$ as follows.
We start by setting $U_m = I_0.$
Then, for every positive integer $k = m-1, m-2, \dots,1,0,$ we take $U_k$
to be the unique interval contained in $I_k$ such that
$f(\Int(U_k)) = \Int(U_{k+1})$
(such intervals exist due to the monotonicity of $f$ at $I_k$
and because $I_k$ $f$-covers $I_{k+1\mkern-6mu\pmod{m}} \supset U_{k+1}$).
The continuity of $f$ implies that $f$ sends $\partial{}U_k$
bijectively to $\partial{}U_{k+1}$ for every $k=0,1,\dots,m-1.$
Denote the endpoints of $U_0$ as $u^0_-$ and $u^0_+$
(this is equivalent to define a linear ordering in $I_0;$
namely the orientation that gives $u^0_- < u^0_+$).
Now we label the endpoints of $I_0$ as $r^0_-$ and $r^0_+$
in such a way that
$\chull{r^0_-, u^0_-}[I_0] \cap U_0 = \{u^0_-\}$ and
$\chull{r^0_+, u^0_+}[I_0] \cap U_0 = \{u^0_+\}.$
Putting all together,
$f^m$ is monotone at $U_0,$
$f^m(\Int(U_0)) = \Int(I_0)$ and
$f^m$ sends $\{u^0_-, u^0_+\}$ bijectively to $\{r^0_-, r^0_+\}.$
There are two cases:
\begin{itemize}
 \item $f^m(u^0_-) = r^0_-$ and $f^m(u^0_+) = r^0_+,$ and we call the loop $\alpha$ \emph{positive,} and
 \item $f^m(u^0_-) = r^0_+$ and $f^m(u^0_+) = r^0_-$ in which case we call the loop $\alpha$ \emph{negative.}
\end{itemize}
Note that in the positive case $f^m$ sends $U_0$ to $I_0$ preserving
the linear ordering compatible with $u^0_- < u^0_+,$ while
in the negative case $f^m$ sends $U_0$ to $I_0$ by reversing
this linear ordering.

\begin{proposition}[{\cite[Lemma~1.2.12 and Theorem~2.6.4]{alm}}]\label{propPerLoops}
Let $X$ be a topological graph and let $\map{f}{X}$ be a
Markov map with respect to a Markov partition $Q$ of $X.$
Then, the following statements hold:
\begin{enumerate}[(a)]
\item Assume that $P$ is a periodic orbit of $f$ disjoint from $Q.$
Then,
there exists a loop $\alpha$ in the Markov graph of $f$
with respect to $Q$ which is associated to $P,$
and any other loop in the $f$-graph of $Q$ associated to $P$ is of the
form $S^k(\alpha)$ with $k \in \N.$

\item Let $\alpha$ be a loop from the Markov graph of $f$ with respect to $Q$
which has a periodic orbit associated to it.
Then $\alpha$ is either a simple loop or a $2$-repetition of a simple negative loop.
\end{enumerate}
\end{proposition}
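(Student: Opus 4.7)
For part (a), my plan is as follows. Since $P \cap Q = \emptyset$ and the boundary of each $Q$-basic interval lies in $Q$, each point $x_i$ of $P = \{x_0, x_1, \ldots, x_{m-1}\}$ (labelled so that $f(x_i) = x_{i+1 \pmod m}$) lies in the interior of a unique $Q$-basic interval $I_i$. The candidate loop is $\alpha = I_0 \longrightarrow I_1 \longrightarrow \cdots \longrightarrow I_{m-1} \longrightarrow I_0$, with witness $x_0$. The only nontrivial point is to check that $I_i$ actually $f$-covers $I_{i+1 \pmod m}$. For this I would use that $f$ is monotone at $I_i$, so $f(I_i)$ is either an arc or a single point with endpoints in $f(\partial I_i) \subset f(Q) \subset Q$; since $f(x_i) = x_{i+1} \in \Int(I_{i+1}) \subset X \setminus Q$, $f(I_i)$ is actually a nondegenerate arc with endpoints in $Q$ that meets the interior of $I_{i+1}$, and such an arc must contain $I_{i+1}$ entirely (because basic interval interiors are exactly the components of $X \setminus Q$). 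For the uniqueness part, any loop associated to $P$ has some $x_j \in P$ as its witness, and disjointness of interiors of basic intervals forces its $\ell$-th vertex to be the unique basic interval containing $f^\ell(x_j) = x_{j+\ell \pmod m}$, yielding $S^j(\alpha)$.

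For part (b), I would argue by contradiction: assume $\alpha = \beta^n$ for some simple loop $\beta = I_0 \longrightarrow \cdots \longrightarrow I_{k-1} \longrightarrow I_0$ of length $k$ with $m = kn$ and $n \geq 2$, and derive that $n = 2$ and $\beta$ is negative. Set $g := f^k$ and apply to $\beta$ the construction of intervals $V_k := I_0, V_{k-1}, \ldots, V_0 \subset I_0$ from the definition of positive/negative loop. Then $g$ restricted to $V_0$ is a monotone map from $V_0$ onto $I_0$, order-preserving if $\beta$ is positive and order-reversing if $\beta$ is negative (with respect to any fixed linear order on $I_0$). From the association of $\alpha$ to $P$ via $x$ together with the repetitive structure $\alpha = \beta^n$, each iterate $g^i(x) = f^{ki}(x)$ lies in $V_0$, and since the $f$-period of $x$ is exactly $m = kn$, the $n$ iterates $x, g(x), \ldots, g^{n-1}(x)$ are pairwise distinct. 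Hence the $g$-orbit of $x$ is a single $n$-cycle entirely contained in $V_0 \subset I_0$.

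The heart of the argument -- and, I expect, the main obstacle -- is then a purely combinatorial observation about how a monotone map can permute a finite linearly ordered set: the only order-preserving permutation is the identity (whose cycles all have length $1$), and the unique order-reversing permutation on $n$ ordered points is the full flip $y_i \mapsto y_{n+1-i}$, whose cycle decomposition consists only of transpositions and at most one fixed point. Hence the $g$-orbit of $x$, being a single $n$-cycle with $n \geq 2$, is incompatible with the positive case and forces $n = 2$ in the negative case. The delicate points to be careful about are the bookkeeping of orientation -- so that the sign of $\beta$ really does translate into an honest order-preserving or order-reversing statement for $g$ on the $n$-point orbit inside $V_0$ -- and the verification that $x$ is $g$-periodic of period exactly $n$, which uses minimality of $m$ as the $f$-period of $x$.
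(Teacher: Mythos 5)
The paper does not prove this proposition; it is stated as a compilation of \cite[Lemma~1.2.12 and Theorem~2.6.4]{alm} ``extended to graph maps'' and is used as a black box, so there is no in-paper proof to compare against. Your argument is essentially correct and is what one would expect the proof to be.

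For (a), the one place that genuinely uses the graph setting is exactly where you put the weight: $f(I_i)$ is a nondegenerate interval with $\partial f(I_i) \subset f(\partial I_i) \subset f(Q) \subset Q$, it meets the interior of $I_{i+1}$, and since $\Int(I_{i+1}) \setminus V(X)$ is a connected component $C$ of $X \setminus Q$, the set $f(I_i) \cap C$ is nonempty and clopen in $C$, hence all of $C$; taking closures gives $f(I_i) \supset I_{i+1}$. Uniqueness up to shift is, as you say, just disjointness of interiors of basic intervals.

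For (b), the step $g^i(x) \in V_0$ is indeed the crux, and the mechanism is the one you sketch: from $f(\Int V_j) = \Int V_{j+1}$ and monotonicity of $f|_{I_j}$ one gets $(f|_{I_j})^{-1}(\Int V_{j+1}) = \Int V_j$, and then, because $\alpha = \beta^n$ forces $f^{ik+j}(x) \in I_{j \pmod k}$ for all $j$, a downward induction starting from $f^{(i+1)k}(x) \in \Int I_0$ yields $f^{ik}(x) \in \Int V_0$. The combinatorial step (a monotone bijection of a finite linearly ordered set is the identity or the flip, so its cycle type is $1^n$ or $2^a 1^b$) then forces $n=2$ and $\beta$ negative. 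One small caveat worth stating explicitly in a write-up: the downward induction needs the associated periodic orbit to be disjoint from $Q$, so that $f^{(i+1)k}(x)$ lies in $\Int I_0$ rather than possibly on $\partial I_0$. In the paper's applications (b) is always invoked together with (a), where this is built into the hypotheses; but as stated, (b) does not formally carry that hypothesis, so your proof quietly assumes it.
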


\subsection{Markov graphs modulo 1 for circle maps of degree one}
As we have seen, when the topological graph is the circle $\SI$ and
we are dealing with maps of degree one it is better to work with
liftings instead of with the original maps, specially to avoid problems
with ordering (the circle does not have a linear ordering).
In what follows we adapt the definitions related to Markov graphs
to this approach.

Let $f$ be a continuous circle map and let $\widetilde{Q} \subset \SI$
be a finite $f$-invariant set with at least two elements
(in fact, since $V(\SI) = \emptyset$, $\widetilde{Q}$ is a Markov invariant set).
Let $F\in \dol$ be a lifting of $f.$
Then the set $Q = \eexp^{-1}\bigl(\widetilde{Q}\bigr)$ is $F$-invariant,
is a partition of $\R$ and each interval produced by this
partition will be called a \emph{$Q$-basic interval\/}.
Again the set of all $Q$-basic intervals will be denoted by \SBI.
If the restriction of $F$ to each $Q$-basic interval is monotone
(as a map from the real line to itself),
we say that $F$ is $Q$-monotone,
$Q$ is a \emph{Markov partition with respect to $F$},
and $F$ is a \emph{Markov map}.
Given $I,J \in \SBI,$  we say $I$
is equivalent to $J$ and denote it by $I \sim J$
if $I = J + k$ for some $k \in \Z.$
The equivalence class of $I,$ $\set{I + k}{k \in\Z},$ is denoted by $\BIclass{I}.$

Now we define the \emph{Markov graph modulo 1 of $F$ with respect to $Q$}.
It is a combinatorial oriented graph whose vertices are
all the equivalence classes of $Q$-basic intervals and
there is an arrow $\BIclass{I} \longrightarrow \BIclass{J}$ from $\BIclass{I}$ to $\BIclass{J}$
if and only if there is a representative $J+k$ of $\BIclass{J}$ such that
$F(I) \supset J+k.$ Recall that, since $F \in \dol,$
$F(I + \ell) = F(I) + \ell$ for every $\ell \in \Z.$
Therefore, the Markov graph modulo 1 of $F$ with respect to $Q$ is well defined.
Moreover, two different liftings of $f$ have the same
Markov graphs modulo 1 with respect to $Q$.

\begin{remark}[On the projection of a Markov graph modulo 1 to the circle]\label{projectiontoSI}
Since the kernel of $\eexp$ is $\Z$
(that is, $\emap{x + k} = \emap{x}$ for every $x \in \R$ and $k \in \Z$),
it follows that $\emap{I} = \emap{K}$  for every $K \in \BIclass{I}.$
So, $\bigemap{\BIclass{I}} := \emap{I}$ is well defined.
Moreover, since $\widetilde{Q}$ has at least two elements, it follows
that every $Q$-basic interval has length strictly smaller than 1.
Hence, if $I \in \SBI,$ $\bigemap{\BIclass{I}} \in \SBI[\emap{Q}]$
(that is, $\bigemap{\BIclass{I}}$ is a $\emap{Q}$-basic interval in $\SI$
--- recall that $\emap{Q} = \widetilde{Q}$ is a Markov invariant set
of $\SI$ with respect to $f$).
Additionally, since $f(\emap{I}) = \emap{F(I)},$
$\bigemap{\BIclass{I}}$ $f$-covers $\bigemap{\BIclass{J}}$
if and only if there is an arrow $\BIclass{I} \longrightarrow \BIclass{J}$
in the Markov graph modulo 1 of $F.$

However, the $Q$-monotonicity of $F$ implies the
$\emap{Q}$-monotonicity of $f$
\emph{provided that the length of the $F$-image of every
$Q$-basic interval is smaller than 1\/}
(otherwise there exists $I \in \SBI$ such that
$f(\emap{I}) = \emap{F(I)} = \SI$ is not an interval).
In other words,
$\widetilde{Q}$ is a Markov partition of $\SI$ with respect to $f$
(and $f$ is a Markov map with respect to $\widetilde{Q}$)
whenever $Q$ is a Markov partition with respect to $F$ and
the length of the $F$-image of every $Q$-basic interval is smaller than 1.
\end{remark}

This remark motivates the following definition:
Let $F\in \dol$ be a lifting of $f$ and let $Q$ be a Markov partition with respect to $F.$
A $Q$-basic interval will be called \emph{$F$-short\/} if the length of the
interval $F(I)$ is strictly smaller than 1.
Then, $Q$ will be called a \emph{short Markov partition with respect to $F$}
whenever every $Q$-basic interval is $F$-short.

With this definition Remark~\ref{projectiontoSI} immediately gives
the following result that relates Markov partitions in the circle with
short Markov partitions with respect to liftings from $\dol$.

\begin{proposition}\label{Markov partitionprojectiontoSI}
Let $f$ be a continuous circle map and let $F\in \dol$ be a lifting of $f.$
Let $Q \subset \R$ be a Markov partition with respect to $F.$
Then, the following statements hold:
\begin{enumerate}[(a)]
\item $\emap{Q}$ is a Markov partition with respect to $f$ if and only if $Q$ is short.
\item When $Q$ is short,
the Markov graph of $f$ with respect to $\emap{Q}$ and
the Markov graph modulo 1 of $F$ with respect to $Q$ coincide,
provided that we identify $\BIclass{I}$ with $\bigemap{\BIclass{I}}$
for every $I \in \SBI$.
\end{enumerate}
\end{proposition}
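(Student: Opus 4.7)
The proposition essentially packages the content of Remark~\ref{projectiontoSI} into a formal statement, so the plan is to carefully separate the two implications in (a) and then verify the vertex/edge correspondence in (b), in each case reducing everything to the elementary fact that $\eexp$ is a local homeomorphism whose restriction to any interval of length strictly less than $1$ is a homeomorphism onto its image.

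For part (a), the plan is as follows. For the ``if'' direction, assume $Q$ is short. Since $\eexp$ is continuous and surjective and $Q$ is $F$-invariant, $\emap{Q}$ is $f$-invariant; moreover $\widetilde{Q}:=\emap{Q}$ is finite (because $Q$ is a $\Z$-translate of a finite set mod $1$) and contains $V(\SI)=\emptyset.$ To see that $\widetilde{Q}$ is Markov with respect to $f,$ pick any connected component $C$ of $\SI\setminus\widetilde{Q}.$ It lifts to a single $Q$-basic interval $I$ (modulo $\Z$-translations) of length $<1,$ and on $I$ the restriction of $\eexp$ is a homeomorphism onto $\Clos(C).$ Shortness then forces $f\evalat{C} = \emap{F\evalat{I}}$ to be a monotone map onto an interval of $\SI,$ so $f$ is monotone at $\Clos(C)$ and $\widetilde{Q}$ is Markov. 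For the ``only if'' direction, suppose some $Q$-basic interval $I$ were not $F$-short, i.e.\ the length of $F(I)$ is at least $1.$ Then $\emap{F(I)}=\SI,$ so $f$ sends the subinterval $\emap{I}$ of $\SI$ onto all of $\SI$; this forces $\bigl(f\evalat{\emap{I}}\bigr)^{-1}(y)$ to consist of at least two points for every $y$ (since $\emap{I}$ is a proper sub-arc), contradicting monotonicity of $f$ at $\emap{I}.$ Hence $Q$ must be short.

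For part (b), assuming $Q$ is short, the plan is first to check that the map $\BIclass{I}\mapsto \bigemap{\BIclass{I}}$ is a well-defined bijection between vertices of the two graphs. Well-definedness was already observed in Remark~\ref{projectiontoSI}, injectivity follows from the fact that each $\emap{Q}$-basic interval has exactly one $\eexp$-preimage in each fundamental domain, and surjectivity follows from the fact that any $\emap{Q}$-basic interval lifts to a $Q$-basic interval by the local homeomorphism property of $\eexp.$ Then for the arrows, I will show the equivalence
\[
\BIclass{I}\longrightarrow\BIclass{J} \text{ in the mod $1$ graph}
\iff
\bigemap{\BIclass{I}}\ f\text{-covers}\ \bigemap{\BIclass{J}}.
\]
The forward direction is immediate: if $F(I)\supset J+k$ for some $k\in\Z,$ then applying $\eexp$ and using $\emap{J+k}=\emap{J}$ gives $f(\emap{I})\supset \emap{J},$ i.e.\ $\bigemap{\BIclass{I}}$ $f$-covers $\bigemap{\BIclass{J}}.$ The reverse direction uses shortness: if $f(\emap{I})\supset \emap{J},$ then the arc $F(I)\subset\R$ has length $<1,$ so it lifts the arc $f(\emap{I})$ homeomorphically; hence the $\eexp$-preimage of $\emap{J}$ inside $F(I)$ is a single $\Z$-translate $J+k$ of $J,$ and $F(I)\supset J+k.$

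The main obstacle, if any, is making sure the arrow characterization is clean in the reverse direction: one needs shortness to rule out the pathology where $F(I)$ wraps around and ``picks up'' the image of $J$ only after projection, without actually containing any specific translate $J+k.$ This is precisely why short partitions are the right notion, and once this is pinned down the rest of the verification is purely formal bookkeeping about $\Z$-translates and the projection $\eexp.$
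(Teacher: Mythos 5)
Your proposal is correct and follows essentially the same route as the paper, which obtains the proposition directly from Remark~\ref{projectiontoSI} together with the definition of a short Markov partition; your verification of the vertex bijection and of both directions of the arrow equivalence in (b) (using that $\eexp$ restricted to the interval $F(I)$, of length $<1$, is a homeomorphism onto $f(\emap{I})$) is exactly the bookkeeping the paper leaves implicit. One caveat on the ``only if'' half of (a): the step ``$\bigl(f\evalat{\emap{I}}\bigr)^{-1}(y)$ has at least two points for every $y$, contradicting monotonicity'' is misstated, since with the paper's notion of monotonicity the fibres are only required to be \emph{connected} (they may be nondegenerate arcs, hence contain many points), and when $F(I)$ has length exactly $1$ it is not true for every $y$ but only for the single point of $\SI$ identifying the two endpoints of $F(I)$. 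The intended contradiction is easily repaired: either note that for some $y$ the fibre splits into two pieces separated inside $\emap{I}$ by points whose $F$-values lie strictly between two consecutive elements of $\eexp^{-1}(y)$, so that fibre is disconnected; or, more simply (and this is the paper's own parenthetical argument), note that if some $Q$-basic interval is not $F$-short then $f(\emap{I})=\emap{F(I)}=\SI$, which is not a point or an interval, whereas monotonicity of $f$ at $\emap{I}$ forces $f(\emap{I})$ to be one of the two.
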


\subsection{Markov graphs and entropy}
The Markov graph of a map is very useful to obtain information
about the dynamics of graph maps. The next result,
due to Block, Guckenheimer, Misiurewicz and Young \cite{bgmy}
(see also \cite[Theorem~4.4.5]{alm}),
relates the topological entropy of a Markov map with the
spectral radius of its associated Markov matrix.
We recall that the \emph{spectral radius\/} of a matrix $M$ is
the maximum of the moduli of all the eigenvalues of $M$ and
it will be denoted here by $\sigma(M).$
% Also, notice that if $M$ and $\widetilde{M}$ are Markov matrices
% of the Markov map with respect to the same Markov partition
% but associated to two different labellings of the basic intervals,
% then $\widetilde{M} = P^T M P,$ where $P$ is a permutation matrix.
% Consequently, $M$ and $\widetilde{M}$ have the same spectral radius.

\begin{proposition}\label{propositionmonotoneTopEnt}
Let $X$ be a topological graph, let $\map{\phi}{X}$  be a
Markov map with respect to a Markov partition $Q$ of $X$ and let
$M$ be a Markov  matrix of $\phi$ with respect to $Q.$ Then,
\[
    h(\phi) = \log\max\{\sigma(M),1\}.
\]
\end{proposition}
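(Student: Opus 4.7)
The plan is to establish the two inequalities
\[
   h(\phi) \le \log\max\{\sigma(M),1\}
   \andq
   h(\phi) \ge \log\max\{\sigma(M),1\}
\]
separately, exploiting the fact that the Markov structure encodes both an upper bound for entropy via symbolic itineraries and a lower bound via loops in the Markov graph that generate periodic orbits.

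For the upper bound, I would observe that $\SBI$ is a finite family of intervals whose interiors are pairwise disjoint and whose union is $X,$ so it is essentially a finite cover adapted to $\phi.$ By $Q$-monotonicity, for every point $x\in X$ and every $n\in\N$ the sequence of $Q$-basic intervals visited by the orbit $x,\phi(x),\dots,\phi^{n-1}(x)$ is an admissible path of length $n-1$ in the Markov graph, and the number of admissible paths of length $n-1$ equals the sum of the entries of $M^{n-1}.$ By Perron--Frobenius this grows at exponential rate $\log\sigma(M)$ up to polynomial factors. Combining this with the standard fact that, for maps on a finite graph, the topological entropy coincides with the exponential growth rate of the number of distinct refined itineraries with respect to a Markov partition, one obtains $h(\phi)\le\log\sigma(M)$ whenever $\sigma(M)\ge 1;$ the trivial bound $h(\phi)\ge 0$ then yields the desired estimate in all cases.

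For the lower bound, when $\sigma(M)\le 1$ there is nothing to prove because $h(\phi)\ge 0.$ When $\sigma(M)>1,$ I would apply Lemma~\ref{lemmaInterv}: each loop of length $n$ in the Markov graph produces a fixed point of $\phi^n$ in $X.$ The number of such loops is exactly $\operatorname{tr}(M^n),$ which by Perron--Frobenius grows at exponential rate $\log\sigma(M).$ Combined with the standard inequality $h(\phi)\ge\limsup_{n\to\infty}\tfrac{1}{n}\log\Card\bigl(\{x\in X\,\colon\phi^n(x)=x\}\bigr),$ valid for piecewise monotone maps on topological graphs, this delivers $h(\phi)\ge\log\sigma(M).$

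The main obstacle is to control the multiplicities inherent in both counts. On the upper-bound side, a point at the boundary of a $Q$-basic interval (in particular at a vertex of $X$) can be assigned several different itineraries, but the inflation is bounded by a constant depending only on the finite set $Q$ and is absorbed into the polynomial correction of Perron--Frobenius. On the lower-bound side, several distinct loops may yield the same fixed point: the shifts $S^k(\alpha)$ of a loop $\alpha$ all correspond to the same periodic orbit, and a $2$-repetition of a negative simple loop may collapse two trajectories into one, cf. Proposition~\ref{propPerLoops}. One must therefore partition loops into equivalence classes under the shift and check that discarding a bounded multiplicative factor does not affect the exponential growth rate. An alternative route, which avoids periodic-point bookkeeping altogether, is to construct exponentially many $(n,\varepsilon)$-separated points directly from admissible itineraries, using the Perron--Frobenius eigenvector of $M$ to guarantee geometric separation along each admissible path; this is probably the cleanest way to handle the delicate interplay between the combinatorial Markov data and the geometry of a general topological graph $X.$
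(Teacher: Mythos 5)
The paper itself does not prove this proposition: it is quoted from Block--Guckenheimer--Misiurewicz--Young \cite{bgmy} (see also \cite[Theorem~4.4.5]{alm}), so your sketch has to be measured against that standard proof. Your upper bound is essentially the standard one: the number of admissible itineraries of length $n$ is controlled by the entries of $M^{n}$, hence by $\sigma(M)^{n}$ up to polynomial factors, and one quotes the piecewise-monotone (lap-number) bound for graph maps to convert this into $h(\phi)\le\log\max\{\sigma(M),1\}$. That half is fine modulo the citation. The genuine gap is in the lower bound.

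The inequality you call standard, $h(\phi)\ge\limsup_{n}\tfrac1n\log\Card\bigl(\{x\colon\phi^{n}(x)=x\}\bigr)$, is simply false for continuous, even Markov, graph maps: a Markov map which is the identity on one $Q$-basic interval, or a rational rotation of $\SI$ made Markov with respect to a periodic orbit (whose Markov matrix is a permutation matrix), has uncountably many periodic points of a fixed period and zero entropy. So no bookkeeping of loop multiplicities or shift-equivalence classes (your appeal to Proposition~\ref{propPerLoops}) can rescue that route; periodic-point growth does not bound entropy from below. What the cited proofs actually use is the covering structure behind Lemma~\ref{lemmaInterv}: for a fixed basic interval $I$, the $(M^{n})_{I,I}$ loops of length $n$ based at $I$ produce subintervals of $I$ with pairwise disjoint interiors, each of which $\phi^{n}$-covers $I$, and a horseshoe-type lemma (if $s$ such subintervals of $I$ each $\phi^{n}$-cover $I$, then $h(\phi)\ge\tfrac1n\log s$) then yields $h(\phi)\ge\tfrac1n\log (M^{n})_{I,I}$ and, by Perron--Frobenius, $h(\phi)\ge\log\sigma(M)$. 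Your fallback via $(n,\varepsilon)$-separated points is missing exactly this ingredient: distinct admissible itineraries need not be separated, because adjacent basic intervals share endpoints and there is no expansiveness hypothesis in the proposition, and the Perron--Frobenius eigenvector carries combinatorial rather than metric information, so it cannot by itself ``guarantee geometric separation'' --- unless you first build the constant-slope model and prove a semiconjugacy, which again reduces to the disjoint-interior covering argument. As written, the lower bound does not close.
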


To  compute the spectral radius of a Markov matrix  we will use the
\emph{rome method\/} proposed in \cite{bgmy} (see also \cite{alm}).
To this purpose we will introduce some notation.

Let $M = (m_{ij})$ be a $k \times k$ Markov matrix.
Given a sequence $p = (p_j)_{j=0}^{\ell(p)}$ of elements of
$\{1,2,\dots,k\}$ we define the \emph{width of $p,$} denoted by
$w(p),$  as the number $\prod_{j=1}^{\ell(p)} m_{p_{j-1}p_j}.$
If $w(p)\ne 0$ then $p$ is called a \emph{path of length $\ell(p)$}.
A \emph{loop\/} is a path such that $p_{\ell(p)} = p_0$ i.e.\ that
begins and ends at the same index.
The words ``path" and ``loop"  in this setting are inherited from
the analogous notions in the Markov graph.

A subset $R$ of $\{1,2,\dots,k\}$ is called a \emph{rome\/}
if there is no loop outside $R,$ i.e.\ there is no path
$(p_j)_{j=0}^{\ell}$ such that $p_{\ell} = p_0$ and
$\set{p_j}{0 \le j \le \ell}$ is disjoint from $R.$
For a rome $R,$ a path $(p_j)_{j=0}^{\ell}$ is called \emph{simple\/}
if $p_i\in R$ for $i=0,\ell$ and $p_i\notin R$ for $i=1,2,\dots,\ell-1.$
Of course, we can define a rome using the vertices in the Markov graph
associated with the matrix instead of the matrix itself.

If $R=\{r_1,r_2,\dots,r_{m}\}$ is a rome of a matrix $M$ then we
define an $m \times m$ matrix $M_R(x)$ whose entries are real functions
by setting
$M_R(x) := (a_{ij}(x)),$ where
$a_{ij}(x) := \sum_p w(p) \cdot x^{-\ell(p)},$
where the sum is taken over all simple paths
starting at $r_i$ and ending at $r_j.$

\begin{theorem}[{\cite[Theorem~1.7]{bgmy}}]\label{theoremrome}
Let $\mathbf{I}_m$ be the identity matrix of size $m\times m.$
If $R$ is a rome of cardinality $m$ of a $k\times k$ matrix $M$ then
the characteristic polynomial of $M$ is equal to
\[
    (-1)^{k-m} x^k \det(M_R(x) - \mathbf{I}_{m}).
\]
\end{theorem}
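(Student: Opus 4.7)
The plan is a block decomposition of $M$ together with the Schur-complement formula. First I would permute the indices so that $R = \{r_1,\ldots,r_m\}$ occupies the first $m$ positions, and write
\[
 M = \begin{pmatrix} A & B \\ C & D \end{pmatrix},
\]
where $A$ is $m\times m$ (transitions among rome vertices), $D$ is $(k-m)\times(k-m)$ (transitions outside $R$), and $B, C$ are the mixed blocks. The defining property of a rome is that no loop is disjoint from $R$; equivalently, any path staying in the index set $R^c$ cannot revisit a vertex and therefore has length at most $k-m-1$. Hence $D^{k-m} = 0$. This nilpotency yields $\det(D - x\mathbf{I}_{k-m}) = (-1)^{k-m} x^{k-m}$ and a terminating Neumann series
\[
 (D - x\mathbf{I}_{k-m})^{-1} = -\sum_{j=0}^{k-m-1} \frac{D^j}{x^{j+1}},
\]
valid for every $x \neq 0$.

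Next I would apply the Schur-complement factorization
\[
 \det(M - x\mathbf{I}_k) = \det(D - x\mathbf{I}_{k-m}) \cdot \det\!\Bigl((A - x\mathbf{I}_m) - B(D - x\mathbf{I}_{k-m})^{-1} C\Bigr)
\]
and substitute the series above to rewrite the Schur complement as
\[
 (A - x\mathbf{I}_m) + \sum_{j=0}^{k-m-1} \frac{B D^j C}{x^{j+1}}.
\]
The combinatorial heart of the proof is the identification of this matrix with $x(M_R(x) - \mathbf{I}_m)$: for $r,r' \in R$ and $j \ge 0$, the entry $(BD^jC)_{rr'}$ is a sum over sequences $r \longrightarrow b_0 \longrightarrow b_1 \longrightarrow \cdots \longrightarrow b_j \longrightarrow r'$ with all $b_i \in R^c$, of the width $B_{r,b_0} D_{b_0,b_1}\cdots D_{b_{j-1},b_j} C_{b_j,r'}$. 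These are precisely the widths of the simple paths of length $j+2$ from $r$ to $r'$. Together with $A_{rr'}$, which collects the widths of simple paths of length $1$, the expression $A/x + \sum_{j \ge 0} BD^jC / x^{j+2}$ reproduces the definition of $M_R(x)$ entry by entry. Multiplying by $x$ gives $xM_R(x) - x\mathbf{I}_m$, exactly the Schur complement computed above.

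Combining both pieces I would conclude
\[
 \det(M - x\mathbf{I}_k) = (-1)^{k-m} x^{k-m} \cdot x^m \det(M_R(x) - \mathbf{I}_m) = (-1)^{k-m} x^k \det(M_R(x) - \mathbf{I}_m),
\]
the identity, a priori valid for $x \neq 0$, extending to all $x$ since both sides are polynomials (the $x^k$ prefactor absorbs the negative powers of $x$ in $M_R(x)$). The main obstacle is not either step in isolation but the careful alignment of signs and of the shift between the Neumann-series index $j$ and the simple-path length $j+2$; everything rests on the single structural input supplied by the rome hypothesis — the nilpotency of $D$ — which is what forces the Schur complement to reduce to a polynomial matrix of the required form.
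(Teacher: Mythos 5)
Your proof is correct. Note that the paper itself offers no argument for this statement: it is quoted verbatim from \cite[Theorem~1.7]{bgmy} and used as a black box, so there is no internal proof to compare against. Your route is the standard linear-algebra alternative to the combinatorial/inductive vertex-elimination argument of Block--Guckenheimer--Misiurewicz--Young: the rome condition gives acyclicity of the digraph on the non-rome indices, hence $D^{k-m}=0$, hence $\det(D-x\mathbf{I}_{k-m})=(-1)^{k-m}x^{k-m}$ and a terminating Neumann series for $(D-x\mathbf{I}_{k-m})^{-1}$; the Schur complement then becomes $x\bigl(M_R(x)-\mathbf{I}_m\bigr)$ once you match $(BD^jC)_{rr'}$ with the total width of simple paths of length $j+2$ (intermediate vertices automatically distinct and outside $R$) and $A_{rr'}$ with the length-one paths. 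The bookkeeping is right, including the bound $j\le k-m-1$ matching the maximal simple-path length $k-m+1$, and the sign convention agrees with the paper's usage (characteristic polynomial as $\det(M-x\mathbf{I}_k)$, as in the computations of $T_n$). One small point of phrasing: the extension from $x\ne 0$ to an identity of polynomials is best justified by saying that a Laurent polynomial agreeing with the polynomial $\det(M-x\mathbf{I}_k)$ on all $x\ne 0$ must coincide with it as a rational function --- the naive degree count does not by itself show that $x^k\det(M_R(x)-\mathbf{I}_m)$ has no negative powers, since distinct simple paths appearing in one determinant term may share intermediate vertices; but this is exactly what the established identity delivers, so the conclusion stands. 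Compared with the original induction (eliminating one non-rome vertex at a time), your argument is shorter and makes the mechanism transparent, at the mild cost of invoking the Schur-complement factorization and a case $x\ne 0$ that must be removed at the end.
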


\subsection{Transitivity and Markov matrices}\label{TransitivityMarkovmatrices}

The aim of this subsection is to establish and prove the following result:

\begin{theorem}\label{theoremTransitivityexpansivemap}
Let $X$ be a topological graph and let $\map{f}{X}$  be a
$Q$-expansive Markov map with respect to a Markov partition $Q$ of $X.$
Then $f$ is transitive if and only if the Markov  matrix of $f$
with respect to $Q$ is irreducible but not a permutation matrix.
\end{theorem}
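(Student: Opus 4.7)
The plan is to prove the two implications separately, both hinging on monotonicity of $f$ on each $Q$-basic interval together with the combinatorics of the Markov graph encoded by $M.$

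For the direction $(\Leftarrow),$ assume that $M$ is irreducible and not a permutation matrix, and let $U, V \subset X$ be non-empty open sets. I would invoke the $Q$-expansivity hypothesis to produce an iterate $k \ge 0$ such that $f^k(U)$ contains some $I \in \SBI.$ Since $V$ is open and $Q$ is finite, $V$ meets the interior of some basic interval $J \in \SBI.$ Irreducibility of $M$ yields a path $I = I_0 \longrightarrow I_1 \longrightarrow \cdots \longrightarrow I_n = J$ in the Markov graph, which by the definition of $f$-covering gives inductively $f^n(I) \supset J.$ Consequently $f^{k+n}(U) \cap V \supset J \cap V \ne \emptyset,$ proving transitivity. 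The role of ``not a permutation matrix'' here is mostly to exclude the degenerate case in which $f$ cyclically permutes the basic intervals by monotone homeomorphisms (incompatible with $Q$-expansivity and, in any case, non-transitive).

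For the direction $(\Rightarrow),$ assume $f$ is transitive. To prove that $M$ is irreducible, fix $I, J \in \SBI.$ By transitivity there exist $n \ge 1$ and $x \in \Int(I)$ with $f^n(x) \in \Int(J);$ removing the countable set of points whose forward orbit meets the finite set $Q,$ I may further assume $f^k(x) \notin Q$ for $0 \le k \le n.$ Let $I_k$ be the unique basic interval with $f^k(x) \in \Int(I_k),$ so $I_0 = I$ and $I_n = J.$ Since $f$ is monotone on $I_k,$ the image $f(I_k)$ is an interval of $X$ with endpoints in $f(\partial I_k) \subset Q;$ this interval contains the point $f^{k+1}(x) \in \Int(I_{k+1})$ while its own endpoints lie in $Q$ and hence outside $\Int(I_{k+1}).$ By the one-dimensional topology of arcs, the only way an embedded arc in $X$ can enter $\Int(I_{k+1})$ without having an endpoint there is to traverse $I_{k+1}$ entirely, so $f(I_k) \supset I_{k+1}.$ This gives the arrow $I_k \longrightarrow I_{k+1}$ and hence a path $I \longrightarrow \cdots \longrightarrow J,$ proving irreducibility. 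To rule out that $M$ is a permutation matrix, suppose it were. Combined with irreducibility this forces the Markov graph to be a single cycle $I_0 \longrightarrow I_1 \longrightarrow \cdots \longrightarrow I_{m-1} \longrightarrow I_0$ in which each vertex has a unique outgoing edge, so monotonicity of $f\evalat{I_k}$ combined with the arc argument above forces $f(I_k) = I_{k+1\mkern-12mu\pmod{m}}.$ Hence $f^m$ preserves each $I_k$ and $f^m\evalat{I_k}$ is a monotone self-map of the arc $I_k.$ Transitivity of $f$ on $X$ propagates a dense $f$-orbit to a dense $f^m$-orbit in each $I_k,$ yielding transitivity of $f^m\evalat{I_k}$---impossible for a continuous monotone self-map of an arc, whose orbits necessarily converge to fixed or period-two points. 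This contradiction completes the proof.

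The main obstacle I anticipate is the careful handling of the arc-traversal argument on a general topological graph: away from the vertices in $Q$ this is just the intermediate value property, but near a branch point one must argue that the monotone image $f(I_k)$ is genuinely an embedded arc of $X$ and that its meeting $\Int(I_{k+1})$ forces full coverage of $I_{k+1}.$ A secondary subtlety is the precise role of the $Q$-expansivity hypothesis, which I have used only as a black box to upgrade ``non-empty open set'' to ``contains a basic interval after some iterate'' in the backward direction.
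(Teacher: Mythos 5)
The backward direction of your argument is incomplete at exactly the point where the theorem's difficulty lies. You reduce transitivity to the statement that, given any non-empty open $U$, some iterate $f^k(U)$ contains an entire $Q$-basic interval, and you explicitly flag that you are taking this as a black box consequence of $Q$-expansivity. But this is precisely what the proof has to establish, and it is not a one-liner. The paper's argument proves first that every connected non-empty open $U$ must meet $\bigcup_{k\ge 0} f^{-k}(Q)$: if not, the iterates $f^k(U)$ live inside a sequence of basic intervals $J_k$ with $J_{k+1}\subset f(J_k)$, and expansivity \eqref{lengthofiteratedimages} would make $\spnorm[J_{k+1}]{f^{k+1}(U)}$ diverge unless all but finitely many $\lambda_{J_k}$ equal $1$; in that case the tail of $\{J_k\}$ falls into a cycle of basic intervals each of which $f$-covers only one basic interval, which is incompatible with irreducibility because no path from that cycle can reach an interval covering two or more (and one such interval exists precisely because $M$ is not a permutation matrix). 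Even this is not yet the statement you invoke: one must then apply the claim twice to find two points $x\ne y$ of $U\cap\Int(J)$ whose iterates hit $Q$, and use monotonicity and injectivity of $f\evalat{J_k}$ to argue that $f^{k_y}(\chull{x,y}[J])$ contains a full basic interval. None of this is optional; it is where both hypotheses (expansivity and ``not a permutation matrix'') do their real work, and irreducibility is used \emph{inside} the claim, not only after it as in your outline.

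The forward direction is sound in spirit but takes a different, somewhat riskier route than the paper. You track the itinerary $I_0,\dots,I_n$ of a single point $x$ with $f^n(x)\in\Int(J)$ and argue $f(I_k)\supset I_{k+1}$ by an arc-traversal argument; the paper instead uses the set-level fact that for a Markov map $f^n(I)$ is a union of $Q$-basic intervals, so $f^n(\Int(I))\cap\Int(J)\ne\emptyset$ together with the disjointness of interiors of basic intervals immediately gives $f^n(I)\supset J$. Your arc-traversal argument is essentially a pointwise reproof of that same containment fact; it can be made to work (the key point is that $f(I_k)$ is an arc with endpoints in $Q\supset V(X)$, hence a union of basic intervals), but it needs the branch-point case to be handled carefully, as you yourself note. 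Your argument for ``$M$ is not a permutation matrix'' via transitivity of $f^m\evalat{I_k}$ is a legitimate variant of the paper's shorter observation that a cyclic permutation of monotone arc homeomorphisms has no dense orbit, though you should justify why a dense $f$-orbit yields a dense $f^m\evalat{I_k}$-orbit (the part of the orbit landing in $I_k$ is exactly an $f^m$-orbit and must be dense there).
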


This result is well known when $X$ is a closed interval
of the real line and the map is piecewise affine
(see \cite[Theorem~3.1]{bc}) but we aim at extending it to the general
setting of graphs.
Its proof in this more general case goes along the lines of the one
from \cite{bc} for the interval but we will sketch it here
for completeness.

In any case we need to recall the definition of irreducibility
and establish what we understand by a piecewise expansive graph map.

A $k\times k$ matrix $M$ is called \emph{reducible\/} if there exists
a permutation matrix $P$ such that
\[
  P^T M P = \left( \begin{array}{c|c}
       M_{11} & \mathbf{0} \\ \hline
       \rule{0pt}{11pt} M_{21} & M_{22}
  \end{array}\right)
\]
where $M_{11}, M_{21}$ and $M_{22}$ are block matrices, and
$\mathbf{0}$ is a block matrix whose entries are all 0.
A matrix $P=(p_{ij})_{i,j=1}^k$  is a permutation matrix whenever
$p_{ij} \in \{0,1\}$ for all $0 \le i,j \le k$ and in each row and in each
column there is exactly one non-zero element. Observe that if $P$ is a
permutation matrix, then $P^{-1} = P^T.$

The matrix $M$ is called \emph{irreducible\/} if it is not reducible
or, equivalently (see \cite{g}),
if for every $0 \le i, j \le k$ there is a
natural number $n = n(i,j)$ such that the $i,j$-entry of $M^n$ is
strictly positive.
In the case of a Markov matrix of a Markov partition of $X,$
if we set $M^n= (m^{(n)}_{ij})_{i,j=1}^k,$
then $m^{(n)}_{ij}$ is the number of paths of length $n$
in the Markov graph starting  at the vertex $v_i$ and ending
at the vertex $v_j.$ In this context, $M$ is irreducible
if and only if there exists a path
from the vertex $v_i$ to the vertex $v_j$
for every $0 \le i,j \le k.$
In particular $f(I)$ is a (non-degenerate) interval
for every basic interval $I$.

To define the notion of $Q$-expansive graph map we need to define a
distance on basic intervals of graphs.

Let $X$ be a topological graph and let $Q \subset X$ be a finite set
such that $Q \supset V(X)$ and the closure of every connected component
of $X \setminus Q,$ called a \emph{$Q$-basic interval\/},
is an interval of $X$
(formally the notion of $Q$-basic interval is only defined for
Markov partitions of graph maps but we use it here by
abusing of notation for simplicity).
Every $Q$-basic interval $I$ can be endowed in many ways with a
distance $d_I$ verifying that the \emph{length of $I$}, defined as
$\max_{x,y\in I} d_I(x,y),$ is 1.
For instance, we can fix a homeomorphism
$\map{\mu_I}{I}[{[0,1]}]$ and set
$
d_I(x,y) := \abs{\mu_I(x) -\mu_I(y)}
$
for every $x,y \in I.$
Given a connected set $W \subset I$ we  define the \emph{length of $W$\/} by
\[
   \spnorm[I]{W} := \max\set{d_I(x,y)}{x,y \in W}.
\]
From above it follows that $\spnorm[I]{I} = 1$ and $\spnorm[I]{W} \le 1.$

Now we are ready to define the notion of:

\begin{definition}[piecewise expansiveness]\label{expansiveMM}
Let $X$ be a (topological) graph and let $\map{f}{X}$ be a
Markov map with respect to a Markov invariant set $Q.$

We say that $f$ is \emph{expansive on $I$} if
$f(I)$ is not a point and
\begin{widthitemize}
\item \textbf{when $\boldsymbol{f(I) \in \SBI}$:} $f$ verifies
\[
 d_{f(I)}(f(x), f(y)) = \lambda_I d_I(x,y) = d_I(x,y)
\]
with $\lambda_I = 1$ for every $x,y \in I;$
\item \textbf{when $\boldsymbol{f(I)}$ contains more than one $\boldsymbol{Q}$-basic interval:}
there exists $\lambda_I > 1$ such that
\[
 d_{J}(f(x), f(y)) \ge \lambda_I d_I(x,y)
\]
for every $x,y \in I$ such that $\chull{f(x),f(y)}[f(I)] \subset J \in \SBI.$
\end{widthitemize}
Observe that when $f$ is expansive on $I$ then $f\evalat{I}$ is  one-to-one.

We say that $f$ is \emph{$Q$-expansive\/} if it is
expansive on every $Q$-basic interval.
\end{definition}

\begin{proof}[Proof of Theorem~\ref{theoremTransitivityexpansivemap}]
First we will perform the simple exercise of proving that
if $f$ is transitive then
the Markov matrix of $f$ with respect to $Q$ is irreducible but not a permutation matrix.

First we assume that the Markov matrix of $f$ with respect to $Q$ is a permutation matrix.
This is equivalent to say that
we can label the set of all $Q$-basic intervals as
$I_0, I_1, \dots, I_{m-1}$ so that $f(I_i) = I_{i+1 \pmod{m}},$
and $f\evalat{I_i}$ is monotone for every $i = 0,1,\dots, m-1$.
In these conditions $f$ cannot have a dense orbit and thus it cannot be transitive.

On the other hand, by transitivity,
the image of every $Q$-basic interval is different from a point
(otherwise, again, we get a contradiction with the existence of a dense $f$-orbit).
Consequently, since $f$ is Markov with respect to a
Markov partition $Q$ (in particular $Q$ is $f$-invariant),
it follows that for every $I \in \SBI$,
$f(I)$ is an interval which is a union of $Q$-basic intervals and $f\evalat{I}$ is monotone.
It follows inductively that $f^k(I)$ is a union of $Q$-basic intervals for every $k \ge 1$.

Now we choose two arbitrary intervals $I, J \in \SBI$.
Since $f$ is transitive there exists a positive integer $n$ such that
\[ f^n(I) \cap \Int(J) \supset f^n(\Int(I)) \cap \Int(J) \ne \emptyset. \]
Since $f^n(I)$ is a union of $Q$-basic intervals and two different $Q$-basic intervals
have disjoint interiors, $f^n(I) \supset J.$ This means that there exists a
$Q$-basic interval $J_{n-1} \subset f^{n-1}(I)$ such that $J_{n-1}$ $f$-covers $J$.
Analogously, there exists a $Q$-basic interval $J_{n-2} \subset f^{n-2}(I)$ such that $J_{n-2}$ $f$-covers $J_{n-1}$.
Iterating this argument we obtain a path
$I \longrightarrow J_1 \longrightarrow J_2 \longrightarrow \cdots \longrightarrow J_{n-1} \longrightarrow J$
from $I$ to $J$ in the Markov graph of $f$ with respect to $Q$.
Consequently, the Markov matrix of $f$ with respect to $Q$ is irreducible.
This ends the proof of the ``only if part'' of the theorem.

Now we prove the ``if part'' following the ideas of \cite{bc}.
So, we assume that the Markov  matrix of $f$ with respect to $Q$
is irreducible but not a permutation matrix.
The fact that the Markov  matrix of $f$ with respect to $Q$
is not a permutation matrix tells us that
\[
\set{I}{I\in \SBI\text{ and $I$ $f$-covers at least two basic intervals}} \ne \emptyset.
\]
Hence,
\[
\lambda_f := \min \set{\lambda_I}{I\in \SBI\text{ and $I$ $f$-covers at least two basic intervals}} > 1.
\]

We claim that $U \cap \left(\cup_{k \ge 0}f^{-k}(Q) \right) \ne \emptyset$
for every connected non-empty open set $U \subset X.$
To prove the claim assume by way of contradiction that
$f^k(U) \cap Q = \emptyset$ for every $k \ge 0.$
Then, there exists a sequence of $Q$-basic intervals
$\{J_k\}_{k=0}^\infty$ such that $U \subset \Int(J_0)$ and
$f^k(U) \subset \Int(J_k) \subset f(J_{k-1})$ for every $k \ge 1.$
Moreover, from Definition~\ref{expansiveMM} we have
\begin{equation}\label{lengthofiteratedimages}
\begin{split}
\spnorm[J_{k+1}]{f^{k+1}(U)}
   & \ge \lambda_{J_k} \spnorm[J_k]{f^{k}(U)}
     \ge \lambda_{J_k} \lambda_{J_{k-1}} \spnorm[J_{k-1}]{f^{k-1}(U)}
     \ge \cdots\\
   & \ge \spnorm[J_0]{U} \prod_{i=0}^k\ \lambda_{J_i}
\end{split}
\end{equation}
for every $k \ge 0.$

Assume that $\lambda_{J_i} \ge \lambda_f > 1$ for infinitely many indices $i.$
Then, since $\lambda_{I} \ge 1$ for every $I \in \SBI,$ the sequence
\[
  \left\{\prod_{i=0}^k\ \lambda_{J_i}\right\}_{k=0}^\infty
\]
is non-decreasing, and hence
\[
  \lim_{k\to\infty} \spnorm[J_{k+1}]{f^{k+1}(U)} \ge
  \spnorm[J_0]{U} \lim_{k\to\infty}\prod_{i=0}^k\ \lambda_{J_i} = \infty.
\]
This is a contradiction because, for every $k \ge 0,$
$J_{k+1}$ is a $Q$-basic interval and
$f^{k+1}(U) \subset J_{k+1} \subset f(J_k);$ which implies
$\spnorm[J_{k+1}]{f^{k+1}(U)} \le \spnorm[J_{k+1}]{J_{k+1}} = 1.$

From the part of the claim already proven, there exists $m \in \N$
such that $\lambda_{J_k} = 1$ (that is, $f(J_k) \in \SBI$)
for every $k \ge m.$
Thus, $f(J_k) = J_{k+1}$ for every $k \ge m$
because $J_{k+1} \subset f(J_{k}).$
Since the number of $Q$-basic intervals is finite,
there exist $\ell \ge m$ and $t \ge 1$ such that $J_\ell = J_{\ell+t}.$

We already know that there exists a basic interval $I\in\SBI$
that $f$-covers at least two basic intervals. So,
$I \notin \{J_\ell, J_{\ell+1},\dots,J_{\ell+t-1}\},$
and in the Markov graph of $f$ with respect to $Q$ there does not exist
any path starting in a $Q$-basic interval from
$\{J_\ell, J_{\ell+1},\dots,J_{\ell+t-1}\}$ and ending at $I.$
This contradicts the irreducibility of the
Markov matrix of $f$ with respect to $Q$ and ends the proof of the claim.

Since for every non-empty open set $V$ there exists
$I \in \SBI$ such that $V \cap \Int(I) \ne \emptyset,$
to prove that $f$ is transitive it is enough to show that
for every non-empty open set $U \subset X$ and every $I \in \SBI$
there exists a positive integer $n$ such that $f^n(U) \supset \Int(I).$

Let $J \in \SBI$ be such that $U \cap \Int(J) \ne \emptyset.$
By the above claim with $U$ replaced by a connected component of $U \cap\, \Int(J),$
it follows that there exists
$x \in \left(U \cap \Int(J) \right) \cap \left(\cup_{k \ge 0}f^{-k}(Q) \right).$ So,
again by the claim for a connected component of
$\left(U \cap \Int(J) \right) \setminus \{x\}$ instead of $U,$
we obtain that
\[ \Card\left(\left(U \cap \Int(J) \right) \cap \left(\cup_{k \ge 0}f^{-k}(Q) \right)\right) \ge 2. \]
Therefore, there exist $x,y \in U \cap \Int(J)$ with $x \ne y$
and $k_x, k_y \in \N$ such that
$\chull{x,y}[J] \subset U \cap \Int(J),$
$1 \le k_x \le k_y,$
$f^{k_x}(x),  f^{k_y}(y) \in Q,$
and, concerning the preimages of $Q,$
$\left(U \cap \Int(J) \right) \cap f^{-k}(Q) = \emptyset$ for $k = 0,1,\dots, k_x-1$
and
$\bigl(\left(U \cap \Int(J) \right) \setminus \{x\} \bigr) \cap f^{-k}(Q) = \emptyset$ for $k = k_x,k_x+1,\dots, k_y-1.$
%% $\left(U \cap \Int(J) \right) \cap f^{-k}(Q) = \{x\}$
%% for $k = k_x,k_x+1,\dots, k_y-1$

Consequently, as in the proof of the above claim and using
the fact that $f$ is $Q$-monotone,
it follows inductively that there exist
$Q$-basic intervals $J_0 = J, J_1,\dots,J_{k_y-1}$ such that
\begin{align*}
& \chull{x,y}[J_0] \subset U \cap \Int(J_0),\\
& f^k\Bigl(\chull{x,y}[J_0]\Bigr) = \chull{f^k(x),f^k(y)}[J_k] \subset \Int(J_k)\text{ for $k = 1,2,\dots,k_x-1$ and}\\
& f^k\Bigl(\chull{x,y}[J_0]\mkern-6mu\setminus\{x\}\Bigr) = \chull{f^k(x),f^k(y)}[J_k]\mkern-6mu\setminus\{f^k(x)\} \subset \Int(J_k)\\
& \hspace*{18.05em}\text{for $k = k_x,k_x+1,\dots, k_y-1$}
\end{align*}
(recall that $f(Q) \subset Q$ and, hence, $f^k(x) \in Q$ for every $k \ge k_x$).
Moreover,
\[
 f^{k_y}\Bigl(\chull{x,y}[J_0]\Bigr) = \chull{f^{k_y}(x), f^{k_y}(y)}[f\Bigl(J_{k_y-1}\Bigr)] \subset f\Bigl(J_{k_y-1}\Bigr)
\]
with $f^{k_y}(x), f^{k_y}(y) \in Q.$
On the other hand, from above it follows that $f^k(x), f^k(y) \in J_k$
for $k = 0,1,\dots, k_y-1,$ and from Definition~\ref{expansiveMM},
$f\evalat{J_k}$ is one-to-one.
Hence, $f^k(x) \ne f^k(y)$ for $k = 0,1,\dots, k_y,$
and consequently there exists $J_{k_y} \in \SBI$ such that
\[
  J_{k_y} \subset \chull{f^{k_y}(x), f^{k_y}(y)}[f\Bigl(J_{k_y-1}\Bigr)]
  \andq
  f^{k_y}\Bigl(\chull{x,y}[J_0]\Bigr) \supset J_{k_y}.
\]

Since the Markov  matrix of $f$ with respect to $Q$ is irreducible
there exists a path of length $r \ge 0$
from $J_{k_y}$ to $I$ in the Markov graph of $f$ with respect to $Q$.
Then, from the definition of path and $f$-covering it follows that
$f^r\bigl(J_{k_y}\bigr) \supset I.$
Consequently,
\[
f^{r+k_y}(U) \supset f^{r+k_y}\Bigl(\chull{x,y}[J_0]\Bigr) \supset f^r\bigl(J_{k_y}\bigr) \supset I.
\]
This ends the proof of the proposition.
\end{proof}

%%%%%%%%%%%%%%%%%%%%%%%%%%%%%%%%%%%%%%%%%%%%%%%%%%%%%%%%%%%%%%%%%%%%%%
%%%%%%%%%%%%%%%%%%%%%%%%%%%%%%%%%%%%%%%%%%%%%%%%%%%%%%%%%%%%%%%%%%%%%%
%%%%%%%%%%%%%%%%%%%%%%%%%%%%%%%%%%%%%%%%%%%%%%%%%%%%%%%%%%%%%%%%%%%%%%

\section{Examples}\label{sectionexamples}
This section is devoted to construct the
Examples~\ref{exampleBCNintroduction},
         \ref{examplefirstexamplebcnintroduction} and
         \ref{examplemontevideuexampleintroduction}.
For clarity, each example will be dealt in a subsection.
Moreover, we will start with to additional subsections: the first one
being introductory to explain the philosophy of the constructions that
we make, while the second one is devoted to introduce a couple of
specific technical auxiliary results.

\subsection{Philosophy and introduction:
on how to explicitly specify the families of circle maps in the
examples}\label{Examples:PhilandIntro-HowTo}
We have to construct examples of totally transitive graph maps
with arbitrarily small topological entropy and an arbitrarily
large boundary of cofiniteness.

To do this we start with circle maps that verify these properties
and, with the help of the result from the next subsection,
we extend these circle examples to any graph that is not a tree while
keeping the stated properties.

A natural idea to define these circle maps is:
\emph{start by fixing a family of
non-degenerate intervals, and for each of them take the circle map of
degree one with the prescribed interval as rotation interval and
having minimum entropy among all maps with these properties.}
Indeed, \emph{the appropriate choice of the rotation interval}
and the fact that we take minimum entropy maps
should guarantee that that the boundary of cofiniteness goes to infinity
(because the rotation interval for these maps completely determines the set of periods),
that the entropies converge to zero and, finally, the total
transitivity should be inherited from the non-degeneracy of the
rotation interval.

Les us see with more detail how this can be achieved.
In particular this will explain the ``mysterious'' assumptions in the
examples.

In this discussion and survey on minimum entropy maps
depending on the rotation interval we will follow the approach from
\cite{almm, alm}.

For $c,d \in \R,$ $c < d$ and $z > 1$ we define
\[
R_{c,d}(z) := \sum_{q \in M(c,d)} z^{-q}.
\]
Then, one can show that $R_{c,d}(z) = \tfrac{1}{2}$ has a unique solution
$\beta_{c,d}$ and $\beta_{c,d} > 1.$

The following result is what makes possible the strategy proposed above.

\begin{theorem}\label{LBE}
Let $f$ be a circle map of degree $1$ with rotation interval $[c,d]$
with $c<d.$ Then $h(f) \ge \log \beta_{c,d}.$
Moreover, for every $c,d \in \R,$ $c < d$
there exists a circle map of degree 1, $f_{c,d},$
having rotation interval $[c,d]$ and entropy
$h(f_{c,d}) = \log \beta_{c,d}.$
\end{theorem}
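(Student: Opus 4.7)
The plan is to split the proof into two independent parts: the sharp lower bound $h(f) \ge \log \beta_{c,d}$, and the explicit construction of an extremal map $f_{c,d}$ realising equality.

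For the lower bound, fix a lift $F \in \dol$ of $f$. The key rotation-theoretic input, whose full strength already underlies Theorem \ref{theoremMisiurewicz}, is that for every rational $k/q \in (c,d)$ with $\gcd(k,q)=1$ the map $F$ admits a twist lifted periodic orbit $P_{k,q}$ of period $q$ and rotation number $k/q$. By Remark \ref{rem:liftedperiodicorbitandtwist}, these orbits sit in $\R$ with very rigid combinatorial structure: after labelling them in increasing order, $F$ acts on them by the shift $x_i \mapsto x_{i+k}$. Fix $N \in \N$, collect all $P_{k,q}$ with $q \le N$ into a finite $F$-invariant set $Q_N$, and replace $F$ on each $Q_N$-basic interval by a Markov extension that is monotone on each piece; this produces a piecewise monotone Markov approximation whose Markov graph modulo $1$ encodes precisely the $F$-covering relations forced by the twist orbits.

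For each $q \in M(c,d) \cap \{1,\ldots,N\}$ the presence of the twist orbit of period $q$ forces loops of length $q$ in this Markov graph modulo $1$; counting them via the rome method of Theorem \ref{theoremrome}, taking as rome the equivalence classes of basic intervals adjacent to a chosen reference point, produces a characteristic equation for the spectral radius which, after cancelling common factors, reads $2\sum_{q \in M(c,d)\,\cap\,\{1,\ldots,N\}} z^{-q} = 1$. Its largest real root $\beta_N$ therefore satisfies $\log \beta_N \le h(F) = h(f)$ by Proposition \ref{propositionmonotoneTopEnt}, and letting $N \to \infty$ one has $\beta_N \nearrow \beta_{c,d}$ by monotonicity of the truncated generating functions, giving the lower bound.

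For the existence of $f_{c,d}$, I would construct an explicit piecewise affine lift $F_{c,d} \in \dol$ designed so that its Markov graph is \emph{exactly} the combinatorial object counted above, and no more. A natural recipe is to take $F_{c,d}$ with one ``valley'' and one ``peak'' per fundamental domain, arranged so that its lower map $(F_{c,d})_l$ is a rigid translation by $c$ and its upper map $(F_{c,d})_u$ a rigid translation by $d$; by Theorem \ref{theoremrotationintwathermap} then $\Rot(F_{c,d}) = [c,d]$. Choosing the slopes of the two monotone pieces of modulus $\beta_{c,d}$, the orbits of the turning points are eventually periodic and one obtains a finite Markov partition whose Markov matrix has spectral radius $\beta_{c,d}$ by the rome computation again, so $h(f_{c,d}) = \log\beta_{c,d}$ by Proposition \ref{propositionmonotoneTopEnt}.

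The main obstacle I expect is the lower bound, specifically verifying that the truncated Markov approximations assemble coherently: one must show that the $F$-covering structure they inherit is as rich as the counting $R_{c,d}(z) = 1/2$ predicts, and that no loop is overcounted nor under-represented. The combinatorial heart of this step is controlling how, for different pairs $(k,q)$ and $(k',q')$, the twist lifted orbits $P_{k,q}$ and $P_{k',q'}$ interlace on $\R$, which is precisely what the rigidity supplied by Remark \ref{rem:liftedperiodicorbitandtwist} furnishes. Once the interlacing is pinned down, passing to the limit in $N$ is routine from the monotonicity of the truncations of $R_{c,d}$ and continuity of its largest real root in the truncation.
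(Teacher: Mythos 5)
The paper does not prove Theorem~\ref{LBE}; it cites it from \cite{almm, alm} and only records, as a byproduct, the explicit bimodal lift $G_{c,d}$ of the extremal map. So there is no in-paper proof to compare against. Your outline reproduces the broad shape of the argument in those references (twist lifted periodic orbits forcing a Markov structure, a rome/generating-function computation yielding $\beta_{c,d}$, and a bimodal piecewise affine model realizing the bound), but as written it has two serious gaps, not just technical loose ends.

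First, ``replac[ing] $F$ on each $Q_N$-basic interval by a Markov extension'' computes the entropy of a \emph{different} map, and the inequality between that entropy and $h(f)$ is exactly what needs to be proved, not assumed. The correct input here is a forcing theorem: the $Q_N$-linear (connect-the-dots) model has entropy at most $h(f)$, because $f$-covering of $Q_N$-basic intervals is determined by $f|_{Q_N}$ alone and already forces the transition structure. That inequality holds but must be invoked, and it is not a consequence of modifying $f$. Second, the claim that the rome computation on the union of all twist orbits with $q\le N$ collapses, after cancellation, to $2\sum_{q\in M(c,d),\,q\le N} z^{-q}=1$ is the combinatorial heart of the whole theorem and is stated on faith: the orbits $P_{k,q}$ interlace on $\mathbb{R}$ in an intricate, $c,d$-dependent way, the resulting Markov graph is large, and nothing in your sketch identifies a rome or explains why its characteristic polynomial simplifies to the $R_{c,d}$ generating function (the proof in \cite{alm} reaches this identity by a more indirect counting argument rather than an explicit rome on the full union of twist orbits). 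Finally, in the existence half the slopes $\pm\beta_{c,d}$ alone do not fix the rotation interval --- the vertical offset $b$ must be tuned to force $\rho\bigl((F_{c,d})_l\bigr)=c$, which the paper does via $b=(\beta_{c,d}-1)^2\sum_{n\ge1}\floor{nc}\,\beta_{c,d}^{-n-1}$ --- and the assertion that $(F_{c,d})_l$ is ``a rigid translation by $c$'' is false for the extremal map: its increasing branch has slope $\beta_{c,d}>1$, so the lower envelope follows that branch and then plateaus, which is not a translation.
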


From the proof of this theorem it follows that
the circle map $f_{c,d}$
has as a lifting (see Figure~\ref{figuregraphF5firstexamplebcn}
for an example with $c = \tfrac{1}{2}$ and $d = \tfrac{7}{10}$)
\[
  G_{c,d}(x) := \begin{cases}
                  \beta_{c,d} x + b & \text{if $0 \le x \le u,$} \\
                  \beta_{c,d} (1-x) + b + 1 & \text{if $u \le x \le 1,$} \\
                  G_{c,d}(x - \floor{x}) + \floor{x} & \text{if $x \notin [0,1],$}
                \end{cases}
\]
where $\floor{\cdot}$ denotes the integer part function,
\[
   b := (\beta_{c,d}-1)^2 \sum_{n=1}^{\infty} \floor{nc} \beta_{c,d}^{-n-1},
\]
and the continuity of $G_{c,d}$ gives $u := \tfrac{\beta_{c,d} + 1}{2\beta_{c,d}}.$

\begin{remark}
For $c\in \R$, $c > 0,$ and $z > 1$ we define
\[
T_c(z) := \sum_{n=0}^\infty z^{-\floor{\tfrac{n}{c}}},
\]
and, for definiteness, we set $T_0(z) \equiv 0.$
Then, for $c,d \in \R,$ $c < d,$ $c \in [0,1),$ and $z > 1$ we define
\[
  Q_{c,d}(z) := z + 1 + 2\left(\frac{z}{z-1} - T_{1-c}(z) - T_{d}(z)\right)
\]
(observe that if $[c,d]$ is a rotation interval then, by replacing the
lifting $F$ used to compute the rotation interval by
$F - \floor{c},$ we get the new rotation interval
$\bigl[c-\floor{c},d-\floor{c}\bigr]$
with $c-\floor{c} \in [0,1);$
this is the reason that the assumption $c \in [0,1)$ above is not
restrictive).

Concerning the map $Q_{c,d}$ one can show that
\[
   Q_{c,d}(z) = (z -1) \left(1 - 2R_{c,d}(z)\right).
\]
Hence, $\beta_{c,d}$ is the largest root of the equation
$Q_{c,d}(z) = 0.$
This observation gives a much easier way of calculating the numbers
$\beta_{c,d}$.
\end{remark}

\begin{remark}[On when \boldmath$\lim\beta_{c,d} = 1$]\label{PropsGoingZero}
The numbers $\beta_{c,d}$ have the following important properties:
\begin{itemize}
 \item If $c \le a < b \le d$ and $\{a,b\} \ne \{c,d\},$ then
       $\beta_{c,d} > \beta_{a,b}.$
       This implies that if we have a decreasing sequence of intervals
       $\{[c_n,d_n]\}_{n=0}^\infty$ whose lengths do not converge to 0,
       then the sequence $\{\beta_{c_n,d_n}\mkern-2mu\}_{n=0}^\infty$
       is bounded away from 1.
 \item Assume that $c < \tfrac{p}{q} < d$ with $p$ and $q$ coprime.
       Then, $\beta_{c,d} > 3^{\tfrac{1}{q}}.$
       This implies that given a sequence of intervals
       $\{[c_n,d_n]\}_{n=0}^\infty$ for which there exist $M$ such that
       $\min M(c_n, d_n) \le M$ for every $n$
       (for instance
        $
          \left[\tfrac{n-1}{2n}, \tfrac{n+1}{2n}\right] =
          \left[\tfrac{1}{2}-\tfrac{1}{n}, \tfrac{1}{2}+\tfrac{1}{n}\right]
        $),
       then $\beta_{c_n,d_n}\mkern-4mu > 3^{\tfrac{1}{M}}$ and, hence,
       the sequence $\{\beta_{c_n,d_n}\mkern-2mu\}_{n=0}^\infty$
       is bounded away from 1.
\end{itemize}
Summarizing, if we want to achieve
$\lim_{n\to\infty} \beta_{c_n,d_n} = 1$
for a given sequence of intervals $\{[c_n,d_n]\}_{n=0}^\infty,$
then we need (as necessary but not sufficient conditions)
that the lengths $d_n-c_n$ of the intervals converge to zero,
and $\{\min M(c_n, d_n)\}_{n=0}^\infty$ is unbounded.
\end{remark}

Taking all the above comments and results into account,
Theorem~\ref{LBE} gives the following procedure to build our examples:
\begin{enumerate}[{\footnotesize$\blacktriangleright$}]
 \item Choose, a sequence of closed intervals of the real
       line $\{[c_n,d_n]\}_{n=0}^\infty$
       with rational endpoints such that:
       \begin{itemize}
         \item $\lim_{n\to\infty} \beta_{c_n,d_n} = 1$ (see Remark~\ref{PropsGoingZero}), and
         \item the boundary of cofiniteness of $M(c_n,d_n)$
               (defined as the boundary of cofiniteness of a map $f$
               but replacing $\Per(f)$ by $M(c_n,d_n)$)
               exists and goes to infinity with $n.$
       \end{itemize}
  \item Compute the numbers $\beta_{c_n,d_n},$ $b$ and $u$ defined above
        to get the map $f_{c_n,d_n}$ determined.
        Observe that we automatically have
        \[ \lim_{n\to\infty} h\bigl(f_{c_n,d_n}\bigr) = \lim_{n\to\infty} \log \beta_{c_n,d_n} = \log\lim_{n\to\infty} \beta_{c_n,d_n} = 0.\]
  \item Check that $\Per(f_{c_n,d_n}) = M(c_n,d_n)$ and compute this set to get
        \[ \lim_{n\to\infty} \bc\bigl(f_{c_n,d_n}\bigr) = \lim_{n\to\infty} \bc\bigl(M(c_n,d_n)\bigr) = \infty.\]
  \item Show that the map $f_{c_n,d_n}$ is totally transitive
        for every $n$, if that is the case.
\end{enumerate}

This method, while giving an effective procedure to construct the
sequences of maps that we are looking for, has two serious drawbacks:
First, in this setting it is very difficult to show that the map
$f_{c_n,d_n}$ is totally transitive and
to compute $\Per\bigl(f_{c_n,d_n}\bigr)$ and $\beta_{c_n,d_n}$
for every $n$ (essentially, we only can do it numerically).
Second, we cannot extend these models to graphs in
such a way that we can also extend the study of the transitivity,
$\Per\bigl(f_{c_n,d_n}\bigr)$ and $h\bigl(f_{c_n,d_n}\bigr)$
from the circle (indeed, for graphs there is no analogous to the theorem
stating that the topological entropy of a piecewise linear circle map
such that the absolute value of its slopes is constant,
is precisely the logarithm of this number).

To solve all these problems it is much better
to find a Markov partition for every map $f_{c_n,d_n}$ and
use it to prove that it is totally transitive and
to compute $\Per\bigl(f_{c_n,d_n}\bigr)$ and $\beta_{c_n,d_n}.$
Moreover, this also gives a good tool to extend the circle models
to graphs while keeping the transitivity,
the sets of periods, and the fact that
$\left\{h\bigl(f_{c_n,d_n}\bigr)\right\}_{n=0}^\infty$
converges to zero
(despite of the fact these entropies increase  a little bit).

\begin{figure}
\def\Gmap{G\mkern-2mu_{\mbox{\tiny$\tfrac{1}{2},\mkern-4mu\tfrac{7}{10}$}}}
% % n =   5 ; [1/2, 7/10] = [0.500000,0.700000]; Entropia: log 1.619606015079925525244252692880 ;    1.245730939615517;    0.373875075464408
% % Function: b = 0.381736793335882413710052829071; U = 0.808717055471868; 1/a = 0.617434110943735; (a-1)/(2*a) = 0.191282944528132
% % Orbita del cero: [  0] 0.000000000000000, [  1] 0.381736793335882 (Control: 0.000000000000000)
% % Orbita del U: [  0] 0.808717055471868, [  1] 0.691539800875845, [  2] 0.501758814501575, [  3] 0.194388387422006, [  4] 0.696569394866251, [  5] 0.509904775181846, [  6] 0.207581634338377, [  7] 0.717937256930439, [  8] 0.544512293110403, [  9] 0.263632178542455 (Control: 0.000000000000000)
% % Ordenación de la órbita: L-0 < U-3 < U-6 < U-9 < L-1 < U-2 < U-5 < U-8 < U-1 < U-4 < U-7 < U-0 ; Agrupaciones: 1*L, 3*U, 1*L, 7*U
\begin{tikzpicture}[scale=5]
\newdimen\unpunt \unpunt=1pt
%%%%% Grid lines
\foreach \x / \y / \dx / \dy / \lab in {0/0.381736793335882413710052829071/0/0/0, 0.3821736793335882413710052829071/1/0/0/1}{ %%%, 1/1.381736793335882413710052829071/0/0/0/black} {
           \ifnum\lab=0\def\labseg{1}\else\def\labseg{0}\fi
           \node[below] at ([yshift=-0.3]\x,\dy) {\supertiny$x_{\lab}$};
           \ifnum\lab=0 \node[left] at ([xshift=-0.3]\dx,\y) {\supertiny$x_{\labseg}$};
           \else        \node[left] at ([xshift=-0.3]\dx,\y) {\supertiny$1+x_{\labseg}$}; \fi
           \draw[densely dashed, ultra thin] ([xshift=-1]\dx,\y) -- (\y, \y);
           \draw[densely dashed, ultra thin] (\x, \y) -- ([yshift=-1]\x, \dy);
} \draw[densely dashed, ultra thin] (1,0.1) -- ([yshift=-1]1,0); \node[below] at ([yshift=-1]1,0) {\supertiny$1+x_0$};
\foreach \x / \y / \dx / \dy / \lab in {
                            0.194388387422006/0.696569394866251/-0.2/1pt/3, 0.207581634338377/0.717937256930439/0/-0.05/6, 0.263632178542455/0.808717055471868/0/0/9,
                            0.501758814501575/1.194388387422006/1pt/1pt/2,    0.509904775181846/1.207581634338377/0/-0.05/5, 0.544512293110403/1.263632178542455/0/0/8,
                            0.691539800875845/1.501758814501575/1pt/1pt/1,    0.696569394866251/1.509904775181846/0/0/4, 0.717937256930439/1.544512293110403/0/-0.05/7,
                            0.808717055471868/1.691539800875845/0/0/0}{
           \ifnum\lab=9\def\labseg{0}\else\pgfmathtruncatemacro{\labseg}{1 + \lab};\fi
           \ifnum\lab=0\node[below, red] at ([yshift=-0.3]\x,\dy) {\supertiny$y_{\lab}$};\else\ifnum\lab>3\node[below, red] at ([yshift=-0.3]\x,\dy) {\supertiny$y_{\lab}$};\fi\fi
           \ifdim\y\unpunt<\unpunt
                    \node[left, red] at ([xshift=-0.3]\dx,\y) {\supertiny$y_{\labseg}$};
                    \draw[densely dashed, ultra thin, red] ([xshift=-1]\dx,\y) -- (\y, \y);
                    \draw[densely dashed, ultra thin, red] (\x, {1+\x}) -- ([yshift=-1]\x, \dy);
           \else    \ifnum\labseg=2\else\ifnum\labseg=3\else\node[left, red] at ([xshift=-0.3]\dx,\y) {\supertiny$1+y_{\labseg}$};\fi\fi
                    \draw[densely dashed, ultra thin, red] ([xshift=-1]\dx,\y) -- (\x, \y);
                    \ifdim\y\unpunt<1.505pt\draw[densely dashed, ultra thin, red] (\x, {1+\x}) -- ([yshift=-1]\x, \dy);
                    \else \draw[densely dashed, ultra thin, red] (\x, \y) -- ([yshift=-1]\x, \dy);\fi
           \fi
}
\foreach \y / \labseg in {1.194388387422006/3, 1.501758814501575/2}{
   \draw[densely dashed, thin, red] (0,\y) -- ([xshift=-0.5, yshift=-1]0,\y) -- ([xshift=-1, yshift=-1]0,\y);
   \node[left, red] at ([xshift=-0.3, yshift=-1]0,\y) {\supertiny$1+y_{\labseg}$};
}
\foreach \x / \lab in {0.194388387422006/3, 0.501758814501575/2, 0.691539800875845/1}{
   \draw[densely dashed, thin, red] (\x,0) -- ([xshift=-1.2, yshift=-0.6]\x,0) -- ([xshift=-1.2, yshift=-1.2]\x,0);
   \node[below, red] at ([xshift=-1.2, yshift=-0.5]\x,0) {\supertiny$y_{\lab}$};
}
% Caixa de la funci\'o
\draw (1,2) -- (0,2) -- (0,0) -- (1,0) -- (1,2);
\draw (1,2) -- (0,1) -- (1,1) -- (0,0);
%%%%% Function
\node[color=blue, left] at (0.47,1.13) {\tiny$\Gmap$};
\draw[thick, color=blue] (-0.1912945, 0.691539800875845) -- (0,0.381736793335882413710052829071) -- (0.808717055471868,1.691539800875845) --
                         (1, 1.381736793335882413710052829071) -- (1.1, 1.543697395);
\foreach \x / \y / \c in { -0.191282945/0.691539800875845/red,         1/1.381736793335882413710052829071/black, 0/0.381736793335882413710052829071/black,
                            0.3821736793335882413710052829071/1/black, 0.194388387422006/0.696569394866251/red,        0.207581634338377/0.717937256930439/red,
                            0.263632178542455/0.808717055471868/red,   0.501758814501575/1.194388387422006/red,        0.509904775181846/1.207581634338377/red,
                            0.544512293110403/1.263632178542455/red,   0.691539800875845/1.501758814501575/red,        0.696569394866251/1.509904775181846/red,
                            0.717937256930439/1.544512293110403/red,   0.808717055471868/1.691539800875845/red}  { \filldraw[\c] (\x,\y) circle (0.009); }
\node[color=brown, below right] at (-0.01,0.7) {\supertiny$G_u$};
\draw[brown] (-0.1912945, 0.691539800875845) -- (0.194388387422006, 0.691539800875845);
\end{tikzpicture}\vspace*{-2ex}
\caption{The graph of the map $\Gmap$ used in
Example~\ref{examplefirstexamplebcnintroduction}
as a model for the map $F_5$.
To better show the dynamics of the orbits, in this picture
they are labelled so that
$\Gmap(x_0) = x_1,\ \Gmap(x_1) = x_0 + 1,$
$\Gmap(y_i) = 1+y_{i+1}$ for $i \in \{0,1,2,4,5,7,8\},$
$\Gmap(y_i) = y_{i+1}$ for $i \in \{3,6\}$ and
$\Gmap(y_9) = y_{0}$
(observe that, for the points $y_i,$ this labelling is different than
the linearly ordered one introduced below).}\label{figuregraphF5firstexamplebcn}
\vspace*{-2ex}
\end{figure}

To find these Markov partitions we note that the maps
$G_{c_n,d_n}\mkern-4mu\evalat{[0,1]}$ are bimodal,
and monotone on the intervals $[0,u]$ and $[u, 1]$.
So, every Markov partition must include $\{0,u\}.$
Moreover, the existence of such a partition depends on the finiteness
of the orbits of $\emap{0}$ and $\emap{u}$ (on the circle).
It follows that if $c_n = \tfrac{p_n}{q_n}$ with $p_n$ and $q_n$
relatively prime (respectively, $d_n = \tfrac{r_n}{s_n}$
with $r_n$ and $s_n$ relatively prime),
then $\emap{0}$ (respectively $\emap{u}$) is a periodic point of
$f_{c_n,d_n}$ of period $q_n$ (respectively $s_n$)
and $Q_n = \eexp^{-1}\Bigl(\Orb_{f_{c_n,d_n}}\mkern-8mu(\emap{0})\Bigr)$
(respectively $S_n = \eexp^{-1}\Bigl(\Orb_{f_{c_n,d_n}}\mkern-8mu(\emap{u})\Bigr)$)
is a twist lifted periodic orbit with rotation number $\tfrac{p_n}{q_n}$
(respectively $\tfrac{r_n}{s_n}$).
Hence, $Q_n \cup S_n \supset \{0,u\}$ is a Markov partition with respect
$G_{c_n,d_n}$ (see Figure~\ref{figuregraphF5firstexamplebcn}).\enlargethispage{5mm}
Notice also that
\begin{align*}
& \Card\left(Q_n \cap [0,1]\right) = \Card\left(Q_n \cap [0,u]\right) = q_n,\\
& \Card\left(S_n \cap [0,1]\right) = \Card\left(S_n \cap [0,u]\right) = s_n
\end{align*}
and $G_{c_n,d_n}\mkern-4mu\evalat{Q_n}$
(respectively $G_{c_n,d_n}\mkern-4mu\evalat{S_n}$)
is determined by Remark~\ref{rem:liftedperiodicorbitandtwist}
and the numbers $p_n$ and $q_n$ (respectively $r_n$ and $s_n$).
So, to determine the Markov partition $Q_n \cup S_n$ and
$G_{c_n,d_n}\mkern-4mu\evalat{Q_n \cup S_n}$
it is enough to determine the relative positions of the points from
$Q_n \cap [0,u]$ and $S_n \cap [0,u].$ It turns out that if the
endpoints of the rotation interval are appropriately chosen, then
there is a formula for the relative positions of the points from
$Q_n \cap [0,u]$ and $S_n \cap [0,u]$ that depends solely on $n$.
For instance (to fix ideas),
in Example~\ref{examplefirstexamplebcnintroduction}
we consider the family of liftings with rotation interval
$\left[\tfrac{1}{2}, \tfrac{n+2}{2n}\right]$ and minimum entropy
relative to the rotation interval.
Then, $\Card\left(Q_n \cap [0,u]\right) = 2$
and $Q_n$ is a twist lifted periodic orbit with rotation number
$\tfrac{1}{2},$
$\Card\left(S_n \cap [0,u]\right) = 2n$
and $S_n$ is a twist lifted periodic orbit with rotation number
$\tfrac{n+2}{2n}.$ Without loss of generality we can write
$Q_n \cap [0,u] = \{x_0, x_1\}$ with $x_0 = 0 < x_1$ and
$S_n \cap [0,u] = \{y_0,y_1,\dots,y_{2n-1}\}$ with $y_0 < y_1 < \dots < y_{2n-1} = u.$
Then, by explicitly computing some particular examples of this family
(as it is done in Figure~\ref{figuregraphF5firstexamplebcn} for $n=5$),
one can see that that the $2n+2$ points of $(Q_n \cup S_n) \cap [0,u]$
are organized in the following way:
\[
 x_0 = 0 < y_0 < y_1 < \dots < y_{n-3} < x_1 < y_{n-2} < y_{n-1} < \dots < y_{2n-1} = u
\]
(notice that the number $n-3$ in the above formula is, indeed,
the number $s_n -r_n - 1 = 2n-(n+2)-1$).

Summarizing, the numbers $p_n,\ q_n,\ r_n$ and $s_n$ and the relative
positions of the points of the  lifted periodic orbits with rotation
numbers $\tfrac{p_n}{q_n}$ and $\tfrac{r_n}{s_n}$ in $[0,u]$
specify in a totally explicit way the Markov partitions of the functions
$f_{c_n,d_n}$ (and hence, by linearity, the maps $f_{c_n,d_n}$
themselves) in a way that easily give the totally transitive of
these maps, $\Per\bigl(f_{c_n,d_n}\bigr)$ and $\beta_{c_n,d_n}.$
Moreover, we can easily extend the circle models to graphs while keeping the
transitivity, the sets of periods and the fact that
$h\bigl(f_{c_n,d_n}\bigr)$ still converges to zero.

\subsection{Two technical auxiliary results}
The following lemma is analogous to \cite[Lemma~3.6]{arr}
with the additional assumption that the number of elements
of the partition must be even.
It will be our main tool to translate the examples from $\SI$
to any graph that is not a tree (see Figure~\ref{mapsfromarr}).
Due to the additional assumption about the parity of the number
of elements of the partition we will include the proof for
completeness.
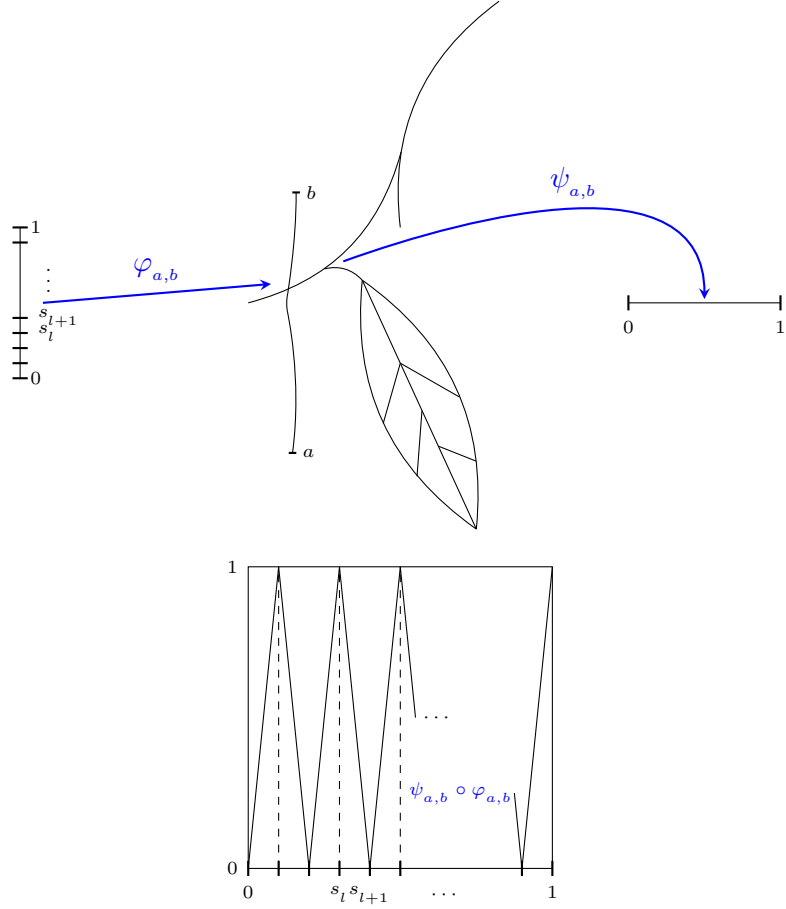
\begin{figure}
\begin{tikzpicture}
% Piece of Apple boundary
\draw[rounded corners] (6.63,1.464) .. controls (6.635,0.8) and (6.5125,0.1) .. (6.485,0)  .. controls (6.5105,-0.1) and (6.714,-1) .. (6.5825,-1.99);
\draw[thick] (6.63,1.464)+(-0.05,0) -- +(0.05,0); \node[right] at (6.63,1.464) {\tiny $b$};
\draw[thick] (6.5825,-1.99)+(-0.05,0) -- +(0.05,0); \node[right] at (6.5825,-1.99) {\tiny $a$};
% Leaves
\path (6,0)       edge [bend right] (8.01,2)
      (8,1)       edge [bend left]  (9.3,4)
      (7,0.45)    edge [bend left]  (7.5, 0.3)
      (7.5,0.3)   edge [bend left]  (9,-3)
      (7.5,0.3)   edge [bend right] (9,-3)
      (7.5,0.3)   edge (9,-3)
      (8,-0.8)    edge (8.78,-1.25)
      (8.285,-1.43) edge (8.22, -2.3)
      (8.5,-1.9)  edge (9, -2.1)
      (8,-0.8)    edge (7.775,-1.6);
% Lower unit interval
\draw  (3,-1) -- (3,1);
\foreach \s in {-1,-0.8,-0.6,-0.4,-0.2,0.8,1}{ \draw[thick] (2.9,\s)   -- (3.1,\s); }
\node[right] at (3,-1) {\tiny 0}; \node[right] at (3,1) {\tiny 1};
\node[right] at (3.1,-0.4)     {\tiny $s_{l}$};
\node[right] at (3.1,-0.2)      {\tiny $s_{l+1}$};
\node[right] at (3.2,0.4)      {\tiny $\vdots$};
\path[->, >=stealth, blue, thick] (3.3,0) edge node[above] {$\varphi_{a,b}$} (6.3,0.25);

% Upper unit interval
\draw  (11,0) -- (13,0);
\draw[thick] (11,-0.1) -- (11,0.1); \draw[thick] (13,-0.1) -- (13,0.1);
\node[below] at (11,-0.1) {\tiny 0}; \node[below] at (13,-0.1) {\tiny 1};
\path[->, >=stealth, blue, thick] (7.25,0.55) edge [out=20, in=90] node[above]  {$\psi_{a,b}$} (12,0.05);

% Composition
\begin{scope}[shift={(1.5,1)}]
\draw  (4.5, -8.5) rectangle (8.5,-4.5);
\foreach \s in {4.9,5.7,6.5}{ \draw[thick] (\s,-8.6) -- (\s,-8.4);  \draw[dashed] (\s,-8.5) -- (\s,-4.5); }
\foreach \s in {4.5,5.3,6.1,8.1,8.5}{ \draw[thick] (\s,-8.6) -- (\s,-8.4); }

\node[left]  at  (4.5,-8.5){\tiny 0}; \node[left] at (4.5,-4.5) {\tiny 1};
\node[below] at (4.5,-8.6) {\tiny 0}; \node[below] at (8.5,-8.6)     {\tiny 1};
\node[below] at (5.7,-8.6) {\tiny $s_{l}$};
\node[below] at (6.1,-8.6) {\tiny $s_{l+1}$};
\node[below] at (7.1,-8.65){\tiny $\cdots$};

\draw (4.5,-8.5) -- (4.9,-4.5) -- (5.3, -8.5) -- (5.7, -4.5) -- (6.1,-8.5) -- (6.5, -4.5) -- (6.7,-6.5);
\draw (8,-7.5) -- (8.1,-8.5) -- (8.5,-4.5);
\node at (7,-6.5) {\tiny $\cdots$};
\node[blue] at (7.3,-7.5) {\tiny $\psi_{a, b}\circ\varphi_{a, b}$};
\end{scope}
\end{tikzpicture}
\caption{A sketch of a topological graph $X$ and the maps from Lemma~\ref{lemmaPhiPsi} (top picture).
The map $\psi_{a, b}\circ\varphi_{a, b}$ is shown in the bottom picture.}\label{mapsfromarr}
\end{figure}

\begin{lemma}\label{lemmaPhiPsi}
Let $X$ be a topological graph which is not an interval
and let $a, b \in V(X)$ be two different endpoints of $X.$
Then, there exist a partition of the interval $[0,1],$
$0=s_0<s_1<\cdots <s_m=1,$ with $m=m(X, a, b) \ge 5$ odd,
and two continuous surjective maps
$\map{\varphi_{a, b}}{[0,1]}[X]$ and
$\map{\psi_{a, b}}{X}[{[0,1]}]$
such that the following statements hold:
\begin{enumerate}[(a)]
\item $\varphi_{a, b}^{-1}(W) =\set{s_i}{i\in\{0,1,\dots,m\}},$ where
      \[ W := \varphi_{a, b}\bigl(\set{s_i}{i\in\{0,1,\dots,m\}}\bigr) \supset V(X), \]
      and $\varphi_{a, b}(0)=a$ and $\varphi_{a, b}(1)=b.$
\item For every $i=0, 1,\dots, m-1,$
      $\varphi_{a, b}\evalat{[s_i, s_{i+1}]}$ is injective and
      $\varphi_{a, b}\bigl([s_i, s_{i+1}]\bigr)$
      is an interval which is the closure of a connected component of
      the punctured graph $X \setminus W.$
\item If $\varphi_{a, b}(s_i) = \varphi_{a, b}(s_j)$ then $i\equiv j \pmod{2}.$
\item $\psi_{a, b}(\varphi_{a, b}(s_i))=0$ if $i$ is even and
$\psi_{a, b}(\varphi_{a, b}(s_i))=1$ if $i$ is odd
(in particular, $\psi_{a, b}(a)=0$ and $\psi_{a, b}(b)=1$).
\item%\label{phipsimonotone}
The map
$\psi_{a, b}\rule[-9pt]{0.5pt}{18pt}\mkern3mu\raisebox{-6pt}{$\scriptstyle \varphi_{a, b}([s_i, s_{i+1}])$}$
is injective and
$\psi_{a, b}\bigl(\varphi_{a, b}([s_i, s_{i+1}])\bigr) = [0,1]$
for every $i=0, 1,\dots, m-1.$
In particular, the map
$\bigl(\psi_{a, b}\circ\varphi_{a, b}\bigr)\evalat{[s_i, s_{i+1}]}$
is strictly monotone.
\end{enumerate}
\end{lemma}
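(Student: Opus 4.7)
The plan is to realize $\varphi_{a,b}$ as the parametrization of an Eulerian trail from $a$ to $b$ in a convenient multigraph obtained from $X$, and to produce $\psi_{a,b}$ from the parity of the partition indices. The main technical hurdle will be to arrange simultaneously that the trail has odd length $m$ and that it visits any vertex only at positions of one fixed parity (which is exactly condition~(c)); both difficulties will be absorbed by first subdividing $X$ so that the resulting graph is bipartite with $a$ and $b$ in different classes.

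First, I would view $X$ as a combinatorial graph with vertex set $V(X)$ and edges corresponding to the connected components of $X\setminus V(X)$, and form $X^*$ by adding a midpoint to every edge of $X$ other than the unique edge $e_a$ incident to~$a$. Every cycle of $X^*$ must avoid $e_a$ (a bridge, since $\deg_X(a)=1$) and thus alternates between the new midpoints and the original vertices of $V(X)$; consequently every cycle of $X^*$ has even length, so $X^*$ is bipartite. A direct distance computation from $a$ shows that one bipartition class consists of $a$ together with all the newly added midpoints, while the opposite class is exactly $V(X)\setminus\{a\}$ and contains $b$ in particular.

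Next, choose any simple path $P^*$ from $a$ to $b$ in $X^*$ (necessarily of odd length, by the bipartition) and form the multigraph $X^{**}$ consisting of $P^*$ taken with multiplicity one and every other edge of $X^*$ taken with multiplicity two. Then $a$ and $b$ are the only vertices of odd degree in $X^{**}$, so an Eulerian trail $w_0=a, w_1,\dots,w_m=b$ exists. Its length is $m=2\lvert E(X^*)\rvert-\lvert P^*\rvert$, which is odd because $\lvert P^*\rvert$ is odd and $2\lvert E(X^*)\rvert$ is even. Since $X$ is not an interval it has a branching vertex and hence $\lvert E(X)\rvert\ge 3$; therefore $\lvert E(X^*)\rvert=2\lvert E(X)\rvert-1\ge 5$, and combining this with $\lvert P^*\rvert\le \lvert V(X^*)\rvert-1\le\lvert E(X^*)\rvert$ (the second inequality because $X^*$ is connected) yields $m\ge\lvert E(X^*)\rvert\ge 5$.

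Finally, set $s_i:=i/m$ and define $\varphi_{a,b}$ on each $[s_i,s_{i+1}]$ to be a homeomorphism onto the edge $w_iw_{i+1}$ of $X^*$, regarded as a subarc of $X$. Taking $W:=\{w_0,\dots,w_m\}=V(X^*)\supset V(X)$, properties (a) and (b) follow because the interior of each edge of $X^*$ contains no other vertex of $X^*$. Since $X^*$ is bipartite with $a$ in class $A$ and $b$ in class $B$ and the trail alternates classes, $w_i$ lies in $A$ iff $i$ is even; hence $\varphi_{a,b}(s_i)=\varphi_{a,b}(s_j)$ forces $i\equiv j\pmod 2$, which is (c). To conclude, define $\psi_{a,b}$ on $W$ by $\psi_{a,b}(\varphi_{a,b}(s_i)):=i\bmod 2$ --- well-defined precisely by (c) --- and extend it to a homeomorphism from each closed component of $X\setminus W$ onto $[0,1]$ sending its two endpoints to $0$ and $1$ consistently; properties (d) and (e) then hold automatically.
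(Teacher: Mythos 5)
Your proof is correct, but it reaches the lemma by a genuinely more combinatorial route than the paper. The shared kernel is the same in both arguments: subdivide the edges of $X$ by midpoints, except for one edge incident to an endpoint, so that the marked visits alternate between two disjoint sets of points and the total number of partition points comes out odd (the paper omits the midpoint on the terminal edge at $b$, you omit it on $e_a$; the paper's artificial vertices $v_C$ on circuit edges play the role your subdivision plays automatically for loops). Where you differ is in how the surjective path is produced and how the parity and the bound $m\ge 5$ are certified. The paper simply invokes path-connectedness to get \emph{some} continuous surjective path from $a$ to $b$, marks its crossings of $\widetilde{V}(X)$, inserts exactly one preimage of a chosen midpoint $\alpha_C$ into each subinterval, and gets statement (c) from the disjointness of $\widetilde{V}(X)$ and $\{\alpha_C\}$ together with the special handling of the last edge at $b$; the bound $m\ge 5$ comes from $\Card\bigl(\widetilde{V}(X)\bigr)\ge 4$. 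You instead build the path explicitly as an Eulerian trail of the multigraph obtained by doubling all edges of the subdivided graph $X^*$ off a chosen simple $a$--$b$ path, get (c) from bipartiteness of $X^*$ (which, incidentally, is most quickly seen by noting that every edge of $X^*$ joins $\{a\}\cup\{\text{midpoints}\}$ to $V(X)\setminus\{a\}$, rather than via the cycle argument), get oddness of $m$ from the parity of $|P^*|$, and get $m\ge 5$ from an edge count. Your version buys an explicit, canonical construction (with a computable length $m=2|E(X^*)|-|P^*|$) at the cost of invoking Euler's theorem; the paper's version is shorter because surjectivity of the path is free and no Eulerian machinery is needed. All the verifications you sketch (degrees of $a,b$ in $X^{**}$, $W=V(X^*)\supset V(X)$, the consistency of the endpoint values $0,1$ on each edge of $X^*$ when defining $\psi_{a,b}$) do go through, so I see no gap.
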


In what follows the closure of a set $A \subset X$ will be denoted by
$\Clos(A).$

\begin{proof}
The existence of a surjective map $\psi_{a,b}$ which
satisfies (d--e) follows easily from the existence of the partition
$0=s_0<s_1<\cdots <s_m=1$ and a map $\varphi_{a, b}$
which satisfy statements (a--c) of the lemma.
In particular,
(d) can be guaranteed by using (c),
and (e) by (b) and (d).
So, we only have to show that there exist a partition
$0=s_0<s_1<\cdots <s_m=1$ with $m \ge 5$ odd, and
a continuous surjective map $\varphi_{a, b}$ such that (a--c) hold.

Since $X$ is not an interval and $a$ and $b$ are endpoints of $X,$
there exist $v \in V(X) \setminus\{a,b\}$ and an interval
$J \subset X$ with endpoints $v$ and $b$ such that
$J \cap V(X) = \{v,b\}.$

Let $C$ be an edge of $X$
(i.e. a connected component of $X \setminus V(X)$).
Clearly, $\Clos(C)$ is either an interval or a circuit
which contains a unique vertex of $X$.
For every $C$ such that $\Clos(C)$ is a circuit we choose a point
$v_C \in C$ (that will play the role of an artificial vertex).
Then we set
\[
 \widetilde{V}(X) := V(X) \cup
 \set{v_C}{\text{$C$ is an edge of $X$ such that $\Clos(C)$ is a circuit}}.
\]
Observe that the closure of every connected component of
$X \setminus \widetilde{V}(X)$ is an interval and
$J \cap \widetilde{V}(X) = J \cap V(X) = \{v,b\}.$
Moreover, since $\{a,b,v\} \subset V(X) \subset \widetilde{V}(X)$
and $a$ and $b$ are endpoints of $X,$
$X \setminus \widetilde{V}(X)$ has at least three connected components
and 4 vertices. So, $\Card\bigl(\widetilde{V}(X)\bigr) \ge 4.$

Since a topological graph is path connected there exists
$\map{\varphi_{a, b}}{[0,1]}[X],$
a path from $a$ to $b,$
which is continuous and onto (i.e., visits each point from $X$
going several times trough the same edge, if necessary)
and a partition of the interval $[0,1],$
$0 = s^*_0 < s^*_1 < \dots < s^*_n = 1,$
such that the following statements hold:
\begin{enumerate}[(i)]
\item $\set{s^*_j}{j\in\{0,1,\dots,n\}} := \varphi_{a, b}^{-1}\bigl(\widetilde{V}(X)\bigr)$
      with
      $\varphi_{a, b}(s^*_0) = \varphi_{a, b}(0) = a,$
      $\varphi_{a, b}\bigl(s^*_{n-1}\bigr) = v$ and
      $\varphi_{a, b}(s^*_n) = \varphi_{a, b}(1) = b,$
\item for every $j=0, 1,\dots, n-1,$
      $\varphi_{a, b}\evalat{\bigl[s^*_j,s^*_{j+1}\bigr]}$
      is injective and, hence,
      $\varphi_{a, b}\bigl(\bigl[s^*_j,s^*_{j+1}\bigr]\bigr)$
      is an interval which is the closure of a connected component of
      $X \setminus \widetilde{V}(X),$
\item $\varphi_{a, b}^{-1}(b) = s^*_{n}$ and
      $\varphi_{a, b}^{-1}\bigl(J \setminus \{v\} \bigr) = \bigl(s^*_{n-1},s^*_{n}\bigr].$
\end{enumerate}

Let $\mathcal{E}$ be the set of all connected components
of $X \setminus \widetilde{V}(X)$ which are different from
$J\setminus \{b,v\}.$
Then, for every $C \in \mathcal{E}$ we choose an arbitrary but fixed point
$\alpha_C \in C.$
By (i) and (iii),
\[
 \varphi_{a, b}^{-1}\bigl(\set{\alpha_C}{C \in \mathcal{E}}\bigr) \subset
  \bigl(s^*_0, s^*_{n-1}\bigr) \setminus \set{s^*_j}{j\in\{1,\dots,n-2\}}.
\]
We claim that for every $j=0,1,\dots, n-2,$
\[ \Card\Bigl(\varphi_{a, b}^{-1}\bigl(\set{\alpha_C}{C \in \mathcal{E}}\bigr) \cap \bigl(s^*_j, s^*_{j+1}\bigr) \Bigr) = 1. \]
To prove the claim note that, by (i--iii),
\begin{equation}\label{TheSetE}
 \mathcal{E} = \set{\varphi_{a, b}\bigl(\bigl(s^*_j,s^*_{j+1}\bigr)\bigr)}{j\in\{0,1,\dots, n-2\}}.
\end{equation}
Hence, for every $j=0,1,\dots, n-2,$
$
\alpha\sb{\varphi_{a, b}\bigl(\bigl(s^*_j,s^*_{j+1}\bigr)\bigr)} \in \varphi_{a, b}\bigl(\bigl(s^*_j,s^*_{j+1}\bigr)\bigr),
$
implies
\[
\emptyset \ne \varphi_{a, b}^{-1}\left(\alpha\sb{\varphi_{a, b}\bigl(\bigl(s^*_j,s^*_{j+1}\bigr)\bigr)}\right)
   \cap (s^*_j,s^*_{j+1}) \subset \varphi_{a, b}^{-1}\bigl(\set{\alpha_C}{C \in \mathcal{E}}\bigr) \cap \bigl(s^*_j, s^*_{j+1}\bigr) .
\]
Assume that
$
 s_i^1, s_i^2 \in \varphi_{a, b}^{-1}\bigl(\set{\alpha_C}{C \in \mathcal{E}}\bigr) \cap \bigl(s^*_j, s^*_{j+1}\bigr).
$
By \eqref{TheSetE} and the definition of the points $\alpha_C,$
\[
\varphi_{a, b}\bigl(s_i^1\bigr), \varphi_{a, b}\bigl(s_i^2\bigr) \in
 \varphi_{a, b}\bigl((s^*_j,s^*_{j+1})\bigr) \cap \set{\alpha_C}{C \in \mathcal{E}} =
 \Bigl\{\alpha\sb{\varphi_{a, b}\bigl(\bigl(s^*_j,s^*_{j+1}\bigr)\bigr)}\Bigr\}.
\]
Consequently, $s_i^1 = s_i^2$ by (ii). This proves the claim.

Now we set $m = m(X, a, b) := 2n-1$ and by the above claim we define the partition
\[
  s_0 = s^*_0 = 0 < s_1 < s_ 2 \cdots < s_{m-2} < s_{m-1} = s_ {2(n-1)} = s^*_{n-1} < s_m = s^*_n = 1
\]
of the interval $[0,1]$ by:
%%%%\begin{equation}\label{construction-partition}
\[\begin{split}
& s_{2j} := s^*_j,\text{ and} \\
& s_{2j+1} \text{ is the unique point of } \varphi_{a, b}^{-1}\bigl(\set{\alpha_C}{C \in \mathcal{E}}\bigr) \cap \bigl(s^*_j, s^*_{j+1}\bigr)
\end{split}\]
for every $j=0,1,2,\dots, n-2.$
With these definitions and (i), the set
$\{s_0,s_1,\dots,s_m\}$ is the union of two disjoint sets:
\begin{equation}\label{construction-partition-setversion}
\begin{split}
\{s_0,s_2,\dots,s_{m-1},s_m\} &= \set{s^*_j}{j\in\{0,1,\dots,n\}} = \varphi_{a, b}^{-1}\bigl(\widetilde{V}(X)\bigr)\text{ and} \\
\{s_1,s_3,\dots,s_{m-2}\}     &= \varphi_{a, b}^{-1}\bigl(\set{\alpha_C}{C \in \mathcal{E}}\bigr).
\end{split}
\end{equation}

By definition $m = m(X, a, b, M)$ is odd,
$\map{\varphi_{a, b}}{[0,1]}[X]$ is continuous and surjective
and, by (i),
$\varphi_{a, b}(0) = a$ and $\varphi_{a, b}(1) = b.$
Moreover, since the map $\varphi_{a, b}$ is onto,
\begin{multline*}
 n+1 = \Card\left(\set{s^*_j}{j\in\{0,1,\dots,n\}}\right) =
   \Card\left(\varphi_{a, b}^{-1}\bigl(\widetilde{V}(X)\bigr)\right) \ge\\
   \Card\bigl(\widetilde{V}(X)\bigr) \ge 4,
\end{multline*}
and hence, $m = 2n - 1 \ge 5.$

On the other hand, by \eqref{construction-partition-setversion},
\begin{align*}
W &= \varphi_{a, b}\bigl(\set{s_i}{i\in\{0,1,\dots,m\}}\bigr) \\
%%%%&= \varphi_{a, b}\left(\set{s^*_j}{j\in\{0,1,\dots,n\}}\right) \cup \varphi_{a, b}\left(\varphi_{a, b}^{-1}\bigl(\set{\alpha_C}{C \in \mathcal{E}}\bigr)\right)\\
  &= \varphi_{a, b}\left(\varphi_{a, b}^{-1}\bigl(\widetilde{V}(X)\bigr)\right) \cup \varphi_{a, b}\left(\varphi_{a, b}^{-1}\bigl(\set{\alpha_C}{C \in \mathcal{E}}\bigr)\right)\\
  &= \widetilde{V}(X) \cup \set{\alpha_C}{C \in \mathcal{E}} \supset V(X),
\end{align*}
and
\[
 \varphi_{a, b}^{-1}(W) =
 \varphi_{a, b}^{-1}\bigl(\widetilde{V}(X)\bigr) \cup \varphi_{a, b}^{-1}\bigl(\set{\alpha_C}{C \in \mathcal{E}}\bigr)
%%%%= \set{s^*_j}{j\in\{0,1,\dots,n\}} \cup \varphi_{a, b}^{-1}\bigl(\set{\alpha_C}{C \in \mathcal{E}}\bigr)
 = \set{s_i}{i\in\{0,1,\dots,m\}}.
\]
Thus, (a) holds.

Statement~(b) follows from (ii), Statement~(a) and the fact
that every interval $[s_i,s_{i+1}]$ is contained in an interval
$\bigl[s^*_j,s^*_{j+1}\bigr].$

To end the proof of the lemma it remains to prove (c).
Assume that $\varphi_{a, b}(s_i) = \varphi_{a, b}(s_j)$
(or, equivalently, that there exists $\alpha \in W$ such that
$s_i, s_j \in \varphi_{a, b}^{-1}(\alpha) \subset \varphi_{a, b}^{-1}(W)$).
Since
\[
\varphi_{a, b}^{-1}(W) =
  \varphi_{a, b}^{-1}\bigl(\widetilde{V}(X)\bigr) \cup
  \varphi_{a, b}^{-1}\bigl(\set{\alpha_C}{C \in \mathcal{E}}\bigr)
\text{ and }
\widetilde{V}(X) \cap \set{\alpha_C}{C \in \mathcal{E}} = \emptyset,
\]
by \eqref{construction-partition-setversion}, it follows that either
\begin{align*}
s_i, s_j &\in \varphi_{a, b}^{-1}\bigl(\widetilde{V}(X)\bigr) = \{s_0,s_2,\dots,s_{m-1},s_m\}\text{ or}\\
s_i, s_j &\in \varphi_{a, b}^{-1}\bigl(\set{\alpha_C}{C \in \mathcal{E}}\bigr) = \{s_1,s_3,\dots,s_{m-2}\}.
\end{align*}
On the other hand, by (iii),
$s_m = s^*_n = \varphi_{a, b}^{-1}(b) \notin \{s_i,s_j\}.$
Consequently, either
$i,j \in \{0,2,4,\dots,{m-1}\}$ or
$i,j \in \{1,3,5,\dots,{m-2}\},$
and (c) holds.
\end{proof}

The next lemma will be useful in dealing with piecewise expansiveness
and in making possible to use Theorem~\ref{theoremTransitivityexpansivemap}
to obtain transitive graph maps.

\begin{lemma}\label{convertingtoexpansiveMM}
Let $X$ be a topological graph and let $\map{f}{X}$ be a
Markov map with respect to a Markov invariant set $Q$
such that $f(I)$ is a (non-degenerate) interval for every
$I \in \SBI.$
Then there exists a $Q$-expansive (Markov) map $\map{g}{X}$
such that $g\evalat{Q} = f\evalat{Q}$ and $g(I) = f(I)$ for every
$I \in \SBI.$ In particular, the Markov graphs of $f$ and $g$
with respect to $Q$ coincide.
\end{lemma}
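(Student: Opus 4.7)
The plan is to build $g$ by rigidly modifying $f$ on the interior of each basic interval while keeping $f|_Q$ and the image $f(I)$ of every $I \in \SBI$ unchanged. Given any $I \in \SBI$, I would first use that $f|_I$ is monotone, that $Q$ is $f$-invariant, and that $f(I)$ is a non-degenerate interval to write $f(I) = \bigcup_{j=1}^{k} J_j$ as a concatenation of consecutive basic intervals $J_1, J_2, \ldots, J_k \in \SBI$ with $k \ge 1$; simultaneously, $f$ sends the two endpoints of $I$ bijectively onto the two endpoints of $f(I)$. The construction would then split according to whether $k = 1$ or $k \ge 2$.

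If $k = 1$, I would take $g|_I$ to be the unique isometry from $(I, d_I)$ onto $(J_1, d_{J_1})$ that agrees with $f$ on $\partial I$; since both distances have total length $1$, the first clause of Definition~\ref{expansiveMM} is then satisfied with $\lambda_I = 1$. If $k \ge 2$, using the homeomorphism $\mu_I : I \to [0,1]$ that defines $d_I$, I would partition $I$ into $k$ consecutive sub-intervals $I_1, \ldots, I_k$ of $d_I$-length $1/k$, ordered along $I$ so as to match the order of $J_1, \ldots, J_k$ along $f(I)$ inherited from $f|_I$. On each $I_j$ I would declare $g|_{I_j}$ to be the isometry $(I_j, d_I) \to (J_j, d_{J_j})$ whose boundary behaviour matches $f$ at the extreme endpoints of $I$ (for $j = 1$ and $j = k$) and matches the neighbouring sub-interval at every internal subdivision point. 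This forces a scaling factor of exactly $k$, because a set of $d_I$-length $1/k$ is stretched to one of $d_{J_j}$-length $1$. The pieces agree at each internal subdivision (both sides map to the single point $\partial J_{j-1} \cap \partial J_j$), so $g|_I$ is continuous, monotone, with $g(I) = f(I)$ and $\lambda_I = k \ge 2$.

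Assembling these local definitions, $g$ would be continuous on all of $X$ because $g|_Q = f|_Q$ by design: every $\partial I \subset Q$ sees the same values under $g$ and $f$. Consequently $g$ is a Markov map with respect to $Q$, it is $Q$-expansive by the preceding estimates, and since $g(I) = f(I)$ for every $I \in \SBI$, the $g$-covering relation coincides with the $f$-covering relation, producing identical Markov graphs. The main subtlety will be to verify, in the case $k \ge 2$, that whenever $x, y \in I$ satisfy $\chull{g(x), g(y)}[g(I)] \subset J$ for some $J \in \SBI$, the two points must lie in a common $I_j$. This would follow because distinct adjacent $J_j$'s share at most one point of $Q$, so any such $J$ must coincide with a single $J_j$, and $x, y$ cannot straddle two consecutive sub-intervals without the convex hull of their images leaving $J$. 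The required inequality $d_J(g(x), g(y)) \ge \lambda_I d_I(x,y)$ then reduces to the uniform isometric scaling by $k$ on each $I_j$.
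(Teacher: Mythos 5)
Your construction is correct and follows essentially the same route as the paper: on each basic interval $I$ you replace $f$ by a piecewise affine (with respect to the metrics $d_I$, $d_{J_j}$) map onto the same chain of basic intervals $J_1,\dots,J_k$ composing $f(I)$, glue along $Q$ to get continuity, and read off the expansion constant from the subdivision, with the same check that two points whose image-hull lies in a single basic interval must lie in (the closure of) one piece. The only difference is cosmetic: the paper subdivides $I$ at the $f$-preimages $t_0,\dots,t_k$ of the endpoints of the $J_j$'s, giving $\lambda_I=\min_i 1/d_I(t_i,t_{i+1})$, whereas you use pieces of equal $d_I$-length $1/k$, giving $\lambda_I=k$; both yield $\lambda_I>1$ when $k\ge 2$ and $\lambda_I=1$ when $k=1$.
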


\begin{proof}
The requirement that $g\evalat{Q} = f\evalat{Q}$ implies that
it is enough to define $g\evalat{I}$ for every $I \in \SBI$ so that
$g\evalat{I}$ is expansive,
$g\evalat{\partial I} = f\evalat{\partial I}$ and
$g\evalat{I}(I) = f(I).$

Let $I \in \SBI.$ The monotonicity of $f$ on $I$ implies that
$f(I)$ is an interval that it is the union of $n \ge 1$
$Q$-basic intervals.
Thus there exists a partition $t_0, t_1,\dots, t_n$ of $I$ with
\[
\chull{t_i,t_{i+1}}[I] \cap \{t_0,t_1,\dots,t_n\} = \{t_i,t_{i+1}\}
\andq[for] i=0,1,\dots,n-1
\]
(in particular, $\{t_0, t_n\} = \partial I$)
and such that
\begin{multline*}
f\bigl(\chull{t_0,t_1}[I]\bigr) = \chull{f(t_0), f(t_1)}[f(I)],
  f\bigl(\chull{t_1,t_2}[I]\bigr)= \chull{f(t_1), f(t_2)}[f(I)], \dots,\\
  f\bigl(\chull{t_{n-1},t_n}[I]\bigr)= \chull{f(t_{n-1}), f(t_n)}[f(I)]
\end{multline*}
are pairwise different basic intervals.
Clearly, for every $i\in\{0,1,\dots,n-1\},$
\begin{align*}
& \set{\frac{d_I(x,t_i)}{d_I(t_i,t_{i+1})}}{x \in \chull{t_i,t_{i+1}}[I]} = \left[0, \frac{d_I(t_{i+1},t_i)}{d_I(t_i,t_{i+1})}\right] = [0, 1], \text{ and}\\
& \set{d_{f\bigl(\chull{t_i,t_{i+1}}[I]\bigr)}(z, f(t_i))}{z \in f\bigl(\chull{t_i,t_{i+1}}[I]\bigr)} = [0, 1]
\end{align*}
because $f\bigl(\chull{t_i,t_{i+1}}[I]\bigr) \in \SBI,$
and hence
\[
  d_{f\bigl(\chull{t_i,t_{i+1}}[I]\bigr)}(f(t_{i+1}), f(t_i)) =
  \spnorm[{f\bigl(\chull{t_i,t_{i+1}}[I]\bigr)}]{f\bigl(\chull{t_i,t_{i+1}}[I]\bigr)} =
  1.
\]

Then, for $i\in\{0,1,\dots,n-1\}$ and $x \in \chull{t_i,t_{i+1}}[I]$
we define $g\evalat{\chull{t_i,t_{i+1}}[I]}(x)$ to be the unique point from
$\chull{f(t_i), f(t_{i+1})}[f(I)]$ that verifies
\[
 d_{f\bigl(\chull{t_i,t_{i+1}}[I]\bigr)}(g\evalat{\chull{t_i,t_{i+1}}[I]}(x), f(t_i)) = \frac{1}{d_I(t_i,t_{i+1})} d_I(x,t_i).
\]
Observe that this formula defines
\[
   g\evalat{\chull{t_i,t_{i+1}}[I]}(t_i) = f(t_i) \andq
   g\evalat{\chull{t_i,t_{i+1}}[I]}(t_{i+1}) = f(t_{i+1}).
\]
Hence $g\evalat{I}$ is well defined and continuous and
$g\evalat{\{t_0, t_1,\dots, t_n\}} = f\evalat{\{t_0, t_1,\dots, t_n\}}.$
In particular, $g\evalat{\partial I} = f\evalat{\partial I}.$
Moreover, for every $i\in\{0,1,\dots,n-1\},$
$g\evalat{\chull{t_i,t_{i+1}}[I]}$ is one-to-one (and hence monotone),
and
\[
  g\evalat{I}\bigl(\chull{t_i,t_{i+1}}[I]\bigr) =
  \chull{f(t_i), f(t_{i+1})}[f(I)] =
  f\bigl(\chull{t_i,t_{i+1}}[I]\bigr).
\]
Consequently $g\evalat{I}(I) = f(I).$

To end the proof of the lemma only it remains to show that
$g\evalat{I}$ is expansive.
By using appropriately the triangle inequality and the monotonicity of
$g\evalat{\chull{t_i,t_{i+1}}[I]}$
is not difficult to see that
\[
 d_{f\bigl(\chull{t_i,t_{i+1}}[I]\bigr)}\bigl(g\evalat{I}(x), g\evalat{I}(y)\bigr) = \frac{1}{d_I(t_i,t_{i+1})} d_I(x,y)
\]
for every $x,y \in \chull{t_i,t_{i+1}}[I]$ with $i\in\{0,1,\dots,n-1\}.$
Thus, in the special case when $n=1$
(that is, when $I = \chull{t_0,t_1}[I] \in \SBI$ and $f(I) \in \SBI$),
we have
\[
 d_{f(I)}\bigl(g\evalat{I}(x), g\evalat{I}(y)\bigr) = d_I(x,y)
\]
for every $x,y \in I,$ because $d_I(t_i,t_{i+1}) = \spnorm[I]{I} = 1.$
Hence $g\evalat{I}$ is expansive on $I.$

When $n > 1$ then $g\evalat{I}$ also is expansive on $I$ by setting
\[
 \lambda_I = \min \set{\tfrac{1}{d_I(t_i,t_{i+1})}}{i\in\{0,1,\dots,n-1\}} > 1.
\]
\end{proof}
%%%%%%%%%%%%%%%%%%%%%%%%%%%%%%%%%%%%%%%%%%%%%%%%%%%%%%%%%%%%%%%%%%%%%%
\subsection{Example with persistent fixed low periods}

This subsection is devoted to construct and prove

\begin{CustomNumberedExample}[\ref{examplefirstexamplebcnintroduction}]
For every positive integer $n \in \set{4k+1, 4k-1}{k\in \N}$
there exists $f_n,$ a totally transitive continuous circle map
of degree one having a lifting $F_n \in \dol$ such that
$\Rot(F_n) = \left[\tfrac{1}{2}, \tfrac{n+2}{2n}\right],$
$\lim_{n\to\infty} h(f_n) = 0,$
\[
  \Per (f_n) = \{ 2\} \cup \set{p\ \text{odd}}{2k+1 \le p\le n-2} \cup  \succs{n}
\]
and $\bc(f_n)$ exists and verifies $2k+1 \le \bc(f_n) \le n$
(and, hence, $\lim_{n\to\infty} \bc(f_n) = \infty$).\smallskip

Furthermore, given any graph $G$ with a circuit, the sequence of maps
$\{f_n\}_{n\ge 7, n\text{ odd}}$ can be extended to a sequence
of continuous totally transitive self maps of $G$,
$\{g_n\}_{n\ge 7, n\text{ odd}},$
such that
$\Per(g_n) = \Per(f_n)$ and $\lim_{n\to\infty} h(g_n) = 0.$
\end{CustomNumberedExample}

Example~\ref{examplefirstexamplebcnintroduction} will be split
into Theorem~\ref{theoremfirstexamplebcncircle}
which shows the existence of the circle maps $f_n$ by constructing them
along the lines of Subsection~\ref{Examples:PhilandIntro-HowTo},
and Theorem~\ref{theoremfirstexamplebcngraph} that
extends these maps to a generic graph that is not a tree.
The proof of Theorem~\ref{theoremfirstexamplebcncircle}, in turn, will
use a proposition that computes the Markov graph modulo 1 of the
liftings $F_n.$

The auxiliary Figure~\ref{figuretheoremfirstexamplebcncircle}
illustrates the construction of the orbits $P_n$, $Q_n$ and
the map $F_n$ from the next theorem in a particular case.

\begin{theorem}\label{theoremfirstexamplebcncircle}
Let $n \in \set{4k+1, 4k-1}{k\in \N}$ and let
\begin{align*}
Q_n &= \{\dots, x_{-1}, x_0, x_1, x_2,\dots\} \subset \R,\andq \\
P_n &= \{\dots y_{-1}, y_0, y_1, y_2, \dots, y_{2n-1}, y_{2n}, y_{2n+1}, \dots\} \subset \R
\end{align*}
be infinite sets such that the points of $P_n$ and $Q_n$ are intertwined so that
\[
   0 = x_0 < y_0 < y_1 < \cdots y_{n-3} < x_1 < y_{n-2} < \cdots < y_{2n-1} < x_2 = 1,
\]
and
$x_{i + 2\ell} = x_i + \ell$ and $y_{i + 2n\ell} = y_i + \ell$ for every $i,\ell \in \Z.$

We define a lifting $F_n \in \dol$ such that, for every $i \in \Z,$
$F_n(x_i) = x_{i+1}$ and $F_n(y_i) = y_{i+n+2},$
and $F_n$ is affine between consecutive points of $P_n \cup Q_n.$
Then, $Q_n$ (respectively $P_n$)
is a twist lifted periodic orbit of $F_n$ of period 2 (respectively $2n$)
with rotation number $\tfrac{1}{2}$ (respectively $\tfrac{n+2}{2n}$).
Moreover, the map $F_n$ has
$\Rot(F_n) = \left[\tfrac{1}{2}, \tfrac{n+2}{2n}\right]$
as rotation interval.

Let $\map{f_n}{\SI}$ be the continuous map which has $F_n$ as a lifting.
Then, $f_n$ is totally transitive, $\lim_{n\to\infty} h(f_n) = 0,$
\[
  \Per (f_n) = \Per (F_n) = \{ 2\} \cup \set{q\ \text{odd}}{2k+1 \le q \le n-2} \cup  \succs{n}
\]
and $\bc(f_n)$ exists and verifies $2k+1 \le \bc(f_n) \le n.$
\end{theorem}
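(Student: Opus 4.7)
The strategy is to turn the interleaved orbits $R_n := Q_n \cup P_n$ into an explicit Markov partition, read off its Markov graph modulo~$1$, and derive every conclusion from that graph via the tools collected in Subsections~\ref{RotTheor}--\ref{TransitivityMarkovmatrices}. First, I verify via Remark~\ref{rem:liftedperiodicorbitandtwist} that $Q_n$ is a twist orbit of period~$2$ and rotation number $\tfrac{1}{2}$, and that the parity restriction $n \in \{4k-1, 4k+1\}$ (forcing $n$ odd) gives $\gcd(n+2, 2n)=1$, so that $P_n$ is a twist orbit of period~$2n$ and rotation number $\tfrac{n+2}{2n}$. Since $F_n$ is affine between consecutive points of $R_n$ and each image is strictly shorter than~$1$, $R_n$ is a short Markov partition and Proposition~\ref{Markov partitionprojectiontoSI} identifies the Markov graph modulo~$1$ of $F_n$ with respect to $R_n$ with the Markov graph of $f_n$ with respect to $\emap{R_n}$.

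With the interleaving $0 = x_0 < y_0 < \cdots < y_{n-3} < x_1 < y_{n-2} < \cdots < y_{2n-1} < x_2 = 1$ fixed, the $2n$ basic-interval equivalence classes and their covering relations are pinned down directly by the formulas $x_i \mapsto x_{i+1}$ and $y_i \mapsto y_{i+n+2}$ together with the monotonicity of $F_n$ on each basic interval. Once this combinatorial graph is written down, the rotation interval follows from Theorem~\ref{theoremrotationintwathermap}: the inclusion $\bigl[\tfrac{1}{2}, \tfrac{n+2}{2n}\bigr] \subset \Rot(F_n)$ is witnessed by the twist orbits, and the reverse inclusion is obtained by reading off $\rho(F_{n,l}) = \tfrac{1}{2}$ and $\rho(F_{n,u}) = \tfrac{n+2}{2n}$ from the envelopes of the graph.

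The set of periods then comes from Theorem~\ref{theoremMisiurewicz}. A direct divisibility argument gives $M\bigl(\tfrac{1}{2}, \tfrac{n+2}{2n}\bigr) = \set{q \text{ odd}}{2k+1 \le q \le n-2} \cup \succs{n}$ and shows that $Q_{F_n}(\tfrac{n+2}{2n})$, being a set of multiples of $2n$, contributes nothing new. A Sharkovski\u{\i} analysis of $F_n^2-1$ near the orbit $Q_n$ identifies $s_{1/2} = 1$, so $Q_{F_n}(\tfrac{1}{2}) = \{2\}$, assembling the announced $\Per(f_n)$. From this explicit form, $\sbc(f_n) = n$ and the bounds $2k+1 \le \bc(f_n) \le n$ follow by checking the density inequality in Definition~\ref{SCB}. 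For transitivity, Lemma~\ref{convertingtoexpansiveMM} replaces $F_n$ by a $Q$-expansive Markov map with the same Markov graph; Theorem~\ref{theoremTransitivityexpansivemap} then applies once one checks that the matrix is irreducible (every vertex is reachable from every other via concatenations of loops around the two twist orbits) and is not a permutation (since $P_n$ is not a cyclic permutation of $Q_n$). Cofiniteness of $\Per(f_n)$ and Theorem~\ref{theoremtotallytransitivefromadrr} upgrade transitivity to total transitivity. For entropy, the rome method (Theorem~\ref{theoremrome}) applied via Proposition~\ref{propositionmonotoneTopEnt} yields $h(f_n) \le \log \beta_{1/2,(n+2)/(2n)}$; since $\min M\bigl(\tfrac{1}{2}, \tfrac{n+2}{2n}\bigr) = 2k+1 \to \infty$ and the interval length $\tfrac{1}{n}$ vanishes, Remark~\ref{PropsGoingZero} gives $\beta_{1/2,(n+2)/(2n)} \to 1$, hence $h(f_n) \to 0$.

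The main technical obstacle is the Sharkovski\u{\i} analysis that forces $s_{1/2} = 1$: it is precisely here that the parity condition $n \in \{4k-1, 4k+1\}$ is used essentially, because only under this restriction does the explicit interleaving of $Q_n$ and $P_n$ prevent $F_n^2 - 1$ from having periodic points of period greater than~$1$. Turning the corresponding combinatorial condition on loops of the Markov graph into a clean non-existence statement for short loops of $F_n^2$ is the delicate part of the argument and the source of the ``persistent fixed low periods'' behavior named in the example.
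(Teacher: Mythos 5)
Your overall architecture matches the paper's: build the Markov partition $P_n\cup Q_n$, read off the Markov graph modulo~$1$, get the rotation interval from the upper/lower envelopes, apply Theorem~\ref{theoremMisiurewicz} for the periods, and invoke Theorem~\ref{theoremTransitivityexpansivemap} plus Theorem~\ref{theoremtotallytransitivefromadrr} for total transitivity. However, two steps as you describe them would not go through.

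First, invoking Lemma~\ref{convertingtoexpansiveMM} here is a genuine mistake. That lemma does not assert that $F_n$ \emph{is} expansive; it constructs a \emph{different} Markov map $g$ that agrees with $F_n$ only on $P_n\cup Q_n$ and has the same Markov graph. Applying it would let you conclude transitivity of the modified map, not of $f_n$, and the theorem claims $f_n$ itself is totally transitive. What makes the argument work is that $F_n$, being affine on each $P_n\cup Q_n$-basic interval, is \emph{already} $\bigemap{P_n\cup Q_n}$-expansive in the sense of Definition~\ref{expansiveMM}: with the normalized intrinsic distances $d_I(\emap{x},\emap{y})=\abs{x-y}/\abs{x_I-y_I}$, the affine relation $\abs{F_n(x)-F_n(y)}/\abs{F_n(x_I)-F_n(y_I)} = \abs{x-y}/(y_I-x_I)$ directly gives $\lambda_I=1$ when $f_n(I)\in\SBI$ and $\lambda_I>1$ otherwise. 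You must verify this expansivity for $f_n$ directly; Lemma~\ref{convertingtoexpansiveMM} is reserved for the graph-map extension $g_n$ in Theorem~\ref{theoremfirstexamplebcngraph}, where one has the freedom to modify the map.

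Second, your entropy argument leans on Remark~\ref{PropsGoingZero} to conclude $\beta_{1/2,(n+2)/(2n)}\to 1$, but the paper explicitly states that shrinking rotation intervals and unbounded $\min M(c_n,d_n)$ are \emph{necessary but not sufficient} for $\beta_{c_n,d_n}\to 1$. So this step, as written, does not follow. (The identification $h(f_n)=\log\beta_{c_n,d_n}$ is correct, since $G_{c_n,d_n}$ realizes the same Markov graph and entropy depends only on the transition matrix by Proposition~\ref{propositionmonotoneTopEnt}, but you would still need a quantitative argument that $\beta_{c_n,d_n}\to 1$.) You need either to prove that $\min M(c_n,d_n)\to\infty$ together with a suitable density estimate forces $\beta_{c_n,d_n}\to1$, or to do what the paper does: compute the characteristic polynomial $T_n$ via the rome $\{\BIclass{J_0},\BIclass{J_3}\}$ and bound its largest root by a sequence $\gamma_n\searrow 1$ using the explicit inequality $\xi_n(x)\ge x^{(n-1)/2}(x^2-1)/8$.

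Two smaller remarks. For the ``not a permutation'' step, the observation that $P_n$ is not a cyclic permutation of $Q_n$ is not the operative reason; rather, some vertex (the paper picks $\BIclass{J_0}$) has more than one outgoing arrow, which rules out permutation matrices directly. And your closing paragraph mislocates where the constraint $n\in\{4k-1,4k+1\}$ does its real work: in the paper it is used to prove that the intervals $I_i$ are genuinely $P_n\cup Q_n$-basic (Proposition~\ref{propositionmarkovgraphfirstexamplebcn}(a)) and to compute $\min M\bigl(\tfrac12,\tfrac{n+2}{2n}\bigr)=2k+1$ in the set of periods, not specifically to deduce $s_{1/2}=1$; the latter follows from the loop $\BIclass{J_0}\longrightarrow\BIclass{J_2}\longrightarrow\BIclass{J_0}$ being positive (by monotonicity of $F_n$ on $J_0$ and $J_2$), which only needs $n$ odd.
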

\begin{figure}[ht]
\def\equisp{0.416666667}
\begin{tikzpicture}
%% Caixa de la funci\'o
\draw (0,9) -- (0,0) -- (5,0) -- (5,9);
\draw (4,9) -- (0,5) -- (5,5) -- (0,0);
\def\xtest{x}
\foreach \x / \l / \p / \f / \c [ remember=\p as \lp (initially -1),
                                  remember=\f as \lf (initially 9) ] in {
                                  x/0/0/4/black, y/0/1/8/darkgray, y/1/2/8/darkgray, y/2/3/8/darkgray,
                                  x/1/4/8/black, y/3/5/8/darkgray, y/4/6/8/darkgray, y/5/7/8/darkgray,
                                  y/6/8/9/darkgray, y/7/9/9/darkgray, y/8/10/9/darkgray, y/9/11/9/darkgray,
                                  x/2/12/4/black} { \pgfmathsetmacro{\px}{\p*\equisp}; \pgfmathsetmacro{\pxf}{(\p+\f)*\equisp};
                                                    \pgfmathsetmacro{\lx}{\lp*\equisp}; \pgfmathsetmacro{\lxf}{(\lp+\lf)*\equisp};
                                    \node[below, \c] at (\px,0) {\tiny $\x_{\l}$};
                                    \draw[dashed, \c] (\px, -0.1) -- (\px, \pxf);
                                    \ifnum\p>3
                                         \ifnum\p<12 \node[left, \c] at (-0.6,\px) {\tiny $\x_{\l}$}; \draw[dashed, \c] (-0.7,\px) -- (\px, \px); \fi
                                         \draw[dashed, \c] (\px, \pxf) -- (-0.1,\pxf);
                                         \ifx\x\xtest \pgfmathtruncatemacro{\lsub}{\l - 1};\pgfmathtruncatemacro{\lsubtop}{\lsub + 2};
                                         \else        \pgfmathtruncatemacro{\lsub}{\l - 3}; \pgfmathtruncatemacro{\lsubtop}{\lsub + 10}; \fi
                                         \node[left, \c] at (0,\pxf) {\tiny $\x_{\lsubtop} = \x_{\lsub} + 1$};
                                    \fi
                                    \draw[thick] (\lx, \lxf) -- (\px, \pxf); \filldraw (\px, \pxf) circle (0.04);
                                    \ifnum\p>1\ifnum\p<12\begin{scope}[shift={(-0.04,0.04)}] \draw[color=blue] (\lx, \lxf) -- (\px, \pxf); \end{scope}\fi\fi
                                    \ifnum\p>0\ifnum\p<8\begin{scope}[shift={(0.04,-0.04)}] \draw[color=red] (\lx, \lxf) -- (\px, \pxf); \end{scope}\fi\fi
}
\begin{scope}[shift={(-0.04,0.04)}]
   \draw[color=blue] (-\equisp,{8*\equisp}) -- ({4*\equisp/5}, {8*\equisp}) -- (\equisp, {9*\equisp});
   \draw[color=blue, shift={(5,5)}] (-\equisp,{8*\equisp}) -- ({4*\equisp/5}, {8*\equisp}) -- (\equisp, {9*\equisp});
\end{scope} \filldraw[red] (-\equisp,{8*\equisp}) circle (0.04);
\draw[thick] (5, {5 + 4*\equisp}) -- +(\equisp,{5*\equisp}); \filldraw[red] ({5+\equisp},{5 + 9*\equisp}) circle (0.04);
\begin{scope}[shift={(0.04,-0.04)}]
    \draw[color=red] (-\equisp,{4*\equisp}) -- +(\equisp, 0);
    \draw[color=red] ({7*\equisp}, {5 + 3*\equisp}) -- ({7.5*\equisp}, {5 + 4*\equisp}) -- (5, {5 + 4*\equisp}) -- +(\equisp,{5*\equisp});
\end{scope}
\node[above left] at (2.55, 5.725) {$F_{5}$};
\end{tikzpicture}
\caption{A possible choice of the points of
$(P_5 \cup Q_5) \cap [0,1]$ from
Theorem~\ref{theoremfirstexamplebcncircle} and
Proposition~\ref{propositionmarkovgraphfirstexamplebcn},
and the graph of the corresponding map $F_5$.
The lower map \textcolor{red}{$(F_5)_l$} is drawn in \textcolor{red}{red} and
the upper map \textcolor{blue}{$(F_5)_u$} in \textcolor{blue}{blue}.}\label{figuretheoremfirstexamplebcncircle}
\end{figure}
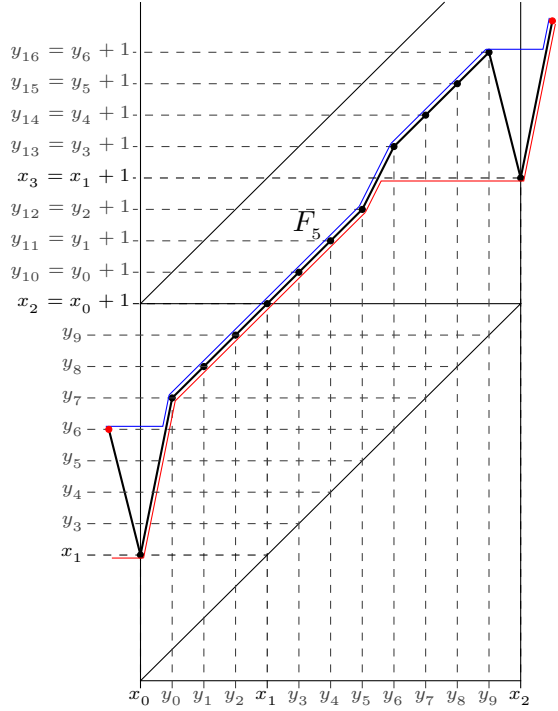

\begin{theorem}\label{theoremfirstexamplebcngraph}
Let $G$ be a graph with a circuit. Then, the sequence of maps
$\{f_n\}_{n\ge 7, \ n\text{ odd}}$ from
Theorem~\ref{theoremfirstexamplebcncircle}
can be extended to a sequence
of continuous totally transitive self maps of $G$,
$\{g_n\}_{n\ge 7, \ n\text{ odd}},$
such that
$\Per(g_n) = \Per(f_n)$ and $\lim_{n\to\infty} h(g_n) = 0.$
\end{theorem}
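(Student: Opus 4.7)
The plan is to graft the circle map $f_n$ from Theorem~\ref{theoremfirstexamplebcncircle} onto a circuit of $G$ and extend it to the rest of $G$ by a controlled Markov construction, using Lemma~\ref{lemmaPhiPsi} and Lemma~\ref{convertingtoexpansiveMM} together with Theorem~\ref{theoremTransitivityexpansivemap} for the verification.

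First I would fix a topological circle $C \subset G$ (which exists because $G$ has a circuit) and a homeomorphism $h\colon \SI \to C$. If $G$ is itself a circle, I set $g_n := h \circ f_n \circ h^{-1}$ and the theorem is trivial. Otherwise let $T_1,\dots,T_r$ be the closures of the connected components of $G \setminus C$; each $T_i$ is a non-trivial subgraph attached to $C$ at finitely many points. I transport the Markov partition of $F_n$ modulo $1$ (the orbits $P_n \cup Q_n$) to $C$ via $h$, obtaining a finite set $\widetilde{Q}\subset C$. On $C$ I define $g_n := h \circ f_n \circ h^{-1}$, while on each $T_i$ I use Lemma~\ref{lemmaPhiPsi} (with $a$ an endpoint of $T_i$ if any and $b$ an attachment point of $T_i$ to $C$) to produce a partition of $T_i$ into finitely many intervals, and define $g_n$ on each of these so that it is monotone and maps into a fixed basic interval of $C$, making $g_n$ continuous at the attachment points. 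I enlarge $\widetilde Q$ by the new partition points and, if necessary, by further points of $C$ chosen so that some basic interval of $C$ strictly $f_n$-covers one of the new basic intervals of $T_i$. This produces a Markov partition $Q$ of $G$ with respect to $g_n$; finally I replace $g_n$ by the $Q$-expansive map with the same Markov graph given by Lemma~\ref{convertingtoexpansiveMM}.

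The verifications are as follows. \textbf{Transitivity.} By Theorem~\ref{theoremfirstexamplebcncircle} the Markov matrix of $f_n$ on $C$ is irreducible and not a permutation; the construction grants that every basic interval of each $T_i$ has an outgoing arrow into $C$ (because $g_n(T_i) \subset C$) and an incoming arrow from $C$ (by the enlargement of $\widetilde Q$), so the full Markov matrix inherits both properties. Theorem~\ref{theoremTransitivityexpansivemap} then yields transitivity of $g_n$, and Theorem~\ref{theoremtotallytransitivefromadrr} upgrades this to total transitivity once we have verified that $\Per(g_n)$ is cofinite. \textbf{Period set.} Because $g_n|_C$ is conjugate to $f_n$ we have $\Per(f_n) \subseteq \Per(g_n)$; conversely, since $g_n$ sends each $T_i$ into $C$, any periodic orbit of $g_n$ is contained in $C$, hence is an $f_n$-periodic orbit and contributes a period in $\Per(f_n)$. \textbf{Entropy.} Applying the rome method of Theorem~\ref{theoremrome} with rome the set of basic intervals of $C$, each basic interval of a $T_i$ shows up as a vertex whose only outgoing arrow enters $C$ in one step, so the reduced matrix $M_R(x)$ coincides with the one coming from $f_n$ up to a perturbation supported on terms with $x^{-\ell}$ for small $\ell$ and with bounded width. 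Hence the largest root of $\det(M_R(x)-\mathbf{I})$ is at most slightly larger than $\beta_{c_n,d_n}$, and since the latter tends to $1$ we obtain $\lim_{n\to\infty} h(g_n)=0$.

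The main obstacle I expect is the simultaneous verification of transitivity and preservation of the period set: making the Markov graph irreducible forces us to introduce arrows from $C$ into each $T_i$, and such arrows can in principle create new loops whose lengths do not already lie in $\Per(f_n)$. The delicate point is to place the attachment points and the image intervals inside $C$ so that every loop of the extended Markov graph that traverses some $T_i$ can be replaced (via Proposition~\ref{propPerLoops} and an appropriate shift argument) by a loop of the same length lying entirely within the $C$-subgraph, ensuring no spurious periods appear. Once this combinatorial matching is handled, the entropy estimate via the rome method is routine.
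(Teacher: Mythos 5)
Your construction has a fatal structural flaw at the transitivity step. You keep $g_n\evalat{C}$ conjugate to $f_n$ (so $g_n(C)=C$) and you send every attached piece $T_i$ into $C$; hence $g_n(G)\subset C$, and no open subset of $C$ can ever reach the interior of any $T_i$ under forward iteration, so $g_n$ cannot be transitive. Your proposed remedy --- enlarging $\widetilde{Q}$ ``by further points of $C$ chosen so that some basic interval of $C$ strictly $f_n$-covers one of the new basic intervals of $T_i$'' --- cannot work: adding partition points does not change the map, and since every interval of $C$ has $g_n$-image inside $C$, there can be no arrow from a $C$-basic interval to a basic interval of $T_i\setminus C$ whatever partition you choose. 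So the Markov matrix you build is necessarily reducible, and the obstacle you yourself flag at the end (irreducibility forces arrows from $C$ into the $T_i$, which may create loops of new lengths) is not an expected difficulty to be smoothed over later: it is the heart of the problem, and your construction neither produces those arrows nor gives any mechanism to control the resulting loops.

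The proof in the paper resolves exactly this by \emph{modifying the circle dynamics on part of the circuit} rather than leaving it conjugate to $f_n$: one basic interval $\widetilde{I}_1$ of the transported partition is re-mapped, via Lemma~\ref{lemmaPhiPsi} applied to the subgraph $X=G\setminus\Int(I)$ (which consists of the basic interval $\widetilde{I}_2$ together with \emph{all} attached branches), so that $\widetilde{I}_1$ covers the whole of $X$, and then all of $X$ is mapped, piecewise monotonically, onto $\widetilde{I}_3$. The original arrows $\widetilde{I}_1\longrightarrow\widetilde{I}_2\longrightarrow\widetilde{I}_3$ are thereby replaced by arrows $L_i\longrightarrow U_j\longrightarrow\widetilde{I}_3$ of the same lengths, so loop lengths are preserved; irreducibility now holds because the branches lie on the unique route from $\widetilde{I}_1$ to $\widetilde{I}_3$. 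After this, $\Per(g_n)=\Per(f_n)$ is \emph{not} automatic (periodic orbits need no longer lie in $C$): it is proved by projecting each loop of the Markov graph of $g_n$ onto a loop of the same length of the Markov graph of $f_n$, which gives $p=\ell q$ with $q\in\Per(f_n)$, and then excluding the one dangerous case $q=2,\ \ell\ge 2$ by showing the length-two loop $\widetilde{J}_0\longrightarrow\widetilde{J}_2\longrightarrow\widetilde{J}_0$ is positive and invoking Proposition~\ref{propPerLoops}(b). Finally the entropy estimate requires a fresh rome computation: the characteristic polynomial acquires coefficients depending on the (fixed, $n$-independent) parameter $m$ of Lemma~\ref{lemmaPhiPsi}, and one redoes the root-bounding argument for this perturbed polynomial; the parity statement $m$ odd in Lemma~\ref{lemmaPhiPsi} and Lemma~\ref{convertingtoexpansiveMM} are then used as you anticipated. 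These are the missing ideas; as written, your map is not transitive and your sketch does not contain a repair.
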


Before proving Theorem~\ref{theoremfirstexamplebcncircle}, in the next
proposition, we study the Markov graph  modulo 1 of the liftings $F_n$
(see the auxiliary Figure~\ref{figuretheoremfirstexamplebcncircle}).
Given $m \in \Z$ and $q \in \N,$ to simplify the notation,
we will denote ${m \pmod{q}}$ by $\modulo{m}{q}.$

%%%%%%%%%%%%%%%%%%%%%%%%%%%%%%%%%%%%%%%%%%%%%%%%%%%%%%%%%%%%%%%%
%%%% Subfigure of Figure~\ref{graphlemmafngraphmontevideu} %%%%
%%%%%%%%%%%%%%%%%%%%%%%%%%%%%%%%%%%%%%%%%%%%%%%%%%%%%%%%%%%%%%%%
\long\def\subfigurefngraphModOne{%
\node[place] (j2)        at (9.5,0)    {$\bigBIclass{J_2}$};
\node[place, double, pattern color=black!30, pattern=north east lines] (j0) at (5, 0) {$\bigBIclass{J_0}$};
\node[place] (i4d1)      at (11.5,1.5) {$\bigBIclass{I_{4d-1}}$};
\node[rotate=90] (dots)  at (11.5,3)   {$\cdots$};
\node[place] (i3d1)      at (11.5,4.5) {$\bigBIclass{I_{3d+1}}$};
\node[place] (i3d)       at (11.5,6)   {$\bigBIclass{I_{3d}}$};
\node (dots2)            at (9.6,6)    {$\cdots $};
\node[place] (i2d1)      at (7.7,6)    {$\bigBIclass{I_{2d+1}}$};
\node[place] (i2d2)      at (5.8,6)    {$\bigBIclass{I_{2d}}$};
\node (dots3)            at (3.9,6)    {$\cdots $};
\node[place] (id1)       at (1.9,6)    {$\bigBIclass{I_{d+1}}$};
\node[place] (id)        at (0,6)      {$\bigBIclass{I_{d}}$};
\node[place] (j1)        at (9.5,1.5)  {$\bigBIclass{J_1}$};
\node[place, double, pattern color=black!30, pattern=north east lines] (j3) at (7.5,1.5) {$\bigBIclass{J_3}$};

\path[post]
     (i4d1.west)+(-1pt,0)  edge (j1.east)
     (j1.west)+(-1pt,0)    edge (j3)
     (j2.west)+(-1pt,-4pt) edge ([yshift=-4pt]j0.east)
     (j0.east)+(2pt,4pt)   edge (j2.158)
     (id.east)+(1pt,0)     edge (id1)
     (id1.east)+(1pt,0)    edge (dots3)
     (dots3.east)+(1pt,0)  edge (i2d2)
     (i2d2.east)+(1pt,0)   edge (i2d1)
     (i2d1.east)+(1pt,0)   edge (dots2)
     (dots2.east)+(1pt,0)  edge (i3d)
     (i3d.south)+(0,-1pt)  edge (i3d1)
     (i3d1.south)+(0,-1pt) edge (dots)
     (i4d1.west)+(-1pt,0)  edge (j2.north)
     (dots.west)+(0,-1pt)  edge (i4d1)
     (j3.south)+(0,-2pt)   edge (j2.north)
     (j3.north)+(0,1pt)    edge (id.300)
     (j3.north)+(0,1pt)    edge (i2d2)
     (j3.north)+(0,1pt)    edge (i3d.south)
     (j0.north)+(1pt,1pt)  edge (i2d2)
     (j0.north)+(1pt,1pt)  edge (i3d);
}

\begin{proposition}[$\calB(P_n \cup Q_n)$ and the $F_n$-Markov graph modulo 1]\label{propositionmarkovgraphfirstexamplebcn}
In the assumptions of Theorem~\ref{theoremfirstexamplebcncircle} we set
\[
  J_0 := [ x_0, y_0],\ J_1 := [ y_{n-3}, x_1],\ J_2 := [ x_1, y_{n-2}], \andq J_3 := [ y_{2n-1}, x_2],
\]
and
\[
  I_i := \Bigl[y_{\modulo{n+1+i(n+2)}{2n}}, y_{\modulo{n+2+i(n+2)}{2n}}\Bigr] \andq[for] i \in \{0,1,\dots,2n-3\}.
\]
Then the following statements hold:
\begin{enumerate}[(a)]
\item We have,
\begin{multline*}
 \bigSBI{P_n \cup Q_n} = \set{I_i + \ell}{i \in \{0,1,\dots,2n-3\},\ \ell \in \Z} \cup \\
                      \set{J_i + \ell}{i \in \{0,1,2,3\},\ \ell \in \Z}.
\end{multline*}
and, in particular,
\[
   \left(\bigcup\nolimits_{i=0}^{2n-3} I_i  \right)\cup \left(\bigcup\nolimits_{j=0}^{3} J_j \right) = [0,1].
\]
\item When $n = 4k-1$ for some $k \in \N,$
the Markov graph modulo 1 of $F_n$ is:
\begin{center}\tiny
        \tikzstyle{place}=[rectangle,draw=black!50,fill=black!0,thick]
        \tikzstyle{post}=[->,shorten >=1pt,>=stealth,semithick]
\hspace*{-1.8em}\begin{tikzpicture}
\subfigurefngraphModOne
\node[rotate=90] (dots4) at (0,4.5)    {$\cdots$};
\node[place] (i1)        at (0,3)      {$\bigBIclass{I_1}$};
\node[place] (i0)        at (0,1.5)    {$\bigBIclass{I_0}$};
\path[post]
     (j0.north)+(1pt,1pt)  edge (i0.south)
     (j0.north)+(1pt,1pt)  edge (id.south)
     (i0.north)+(0,1pt)    edge (i1)
     (i1.north)+(0,1pt)    edge (dots4)
     (dots4.east)+(0,1pt)  edge (id);
\end{tikzpicture}\medskip
\end{center}
where $d := 2k-1.$
Otherwise, when $n = 4k+1$ for some $k \in \N,$
the Markov graph modulo 1 of $F_n$ is:\vspace*{-1ex}
\begin{center}\tiny
        \tikzstyle{place}=[rectangle,draw=black!50,fill=black!0,thick]
        \tikzstyle{post}=[->,shorten >=1pt,>=stealth,semithick]
\smallskip
\hspace*{-1.6em}\begin{tikzpicture}
\node[place](j2)        at (9.5,0)    {$\bigBIclass{J_2}$};
\node[place, double, pattern color=black!30, pattern=north east lines] (j0) at (5, 0) {$\bigBIclass{J_0}$};
\node[place](i4d5)      at (11.3,1.5) {$\bigBIclass{I_{4d-5}}$};
\node[rotate=90](dots)  at (11.3,3)   {$\cdots$};
\node[place](i3d1)      at (11.3,4.5) {$\bigBIclass{I_{3d-1}}$};
\node[place](i3d2)      at (11.3,6)   {$\bigBIclass{I_{3d-2}}$};
\node(dots2)            at (9.6,6)    {$\cdots $};
\node[place](i2d1)      at (7.7,6)    {$\bigBIclass{I_{2d-1}}$};
\node[place](i2d2)      at (5.8,6)    {$\bigBIclass{I_{2d-2}}$};
\node(dots3)            at (3.9,6)    {$\cdots $};
\node[place](id1)       at (1.9,6)    {$\bigBIclass{I_{d-1}}$};
\node[place](id2)       at (0,6)      {$\bigBIclass{I_{d-2}}$};
\node[rotate=90](dots4) at (0,4.5)    {$\cdots$};
\node[place](i1)        at (0,3)      {$\bigBIclass{I_1}$};
\node[place](i0)        at (0,1.5)    {$\bigBIclass{I_0}$};
\node[place](j1)        at (9.5,1.5)  {$\bigBIclass{J_1}$};
\node[place, double, pattern color=black!30, pattern=north east lines] (j3) at (7.5,1.5) {$\bigBIclass{J_3}$};

\path[post]
     (i4d5.west)+(-1pt,0)  edge (j1.east)
     (j1.west)+(-1pt,0)    edge (j3)
     (j2.west)+(-1pt,-4pt) edge ([yshift=-4pt]j0.east)
     (j0.east)+(2pt,4pt)   edge (j2.158)
     (j0.north)+(0,1pt)    edge (i0.south)
     (i0.north)+(0,1pt)    edge (i1)
     (i1.north)+(0,1pt)    edge (dots4)
     (dots4.east)+(0,1pt)  edge (id2)
     (id2.east)+(1pt,0)    edge (id1)
     (id1.east)+(1pt,0)    edge (dots3)
     (dots3.east)+(1pt,0)  edge (i2d2)
     (i2d2.east)+(1pt,0)   edge (i2d1)
     (i2d1.east)+(1pt,0)   edge (dots2)
     (dots2.east)+(1pt,0)  edge (i3d2)
     (i3d2.south)+(0,-1pt) edge (i3d1)
     (i3d1.south)+(0,-1pt) edge (dots)
     (i4d5.west)+(-1pt,0)  edge (j2.north)
     (dots.west)+(0,-1pt)  edge (i4d5)
     (j3.south)+(0,-2pt)   edge (j2.north)
     (j3.north)+(0,1pt)    edge (id2.300)
     (j3.north)+(0,1pt)    edge (i2d2)
     (j3.north)+(0,1pt)    edge (i3d2.south)
     (j0.north)+(0,1pt)    edge (id2.south)
     (j0.north)+(0,1pt)    edge (i2d2)
     (j0.north)+(0,1pt)    edge (i3d2);
\end{tikzpicture}\medskip
\end{center}
where  $d := 2k+1.$

\item $h(f_n) = \log \rho_n,$ where $\rho_n > 1$ is the largest root
of the polynomial
\[
    T_n(x)  = x^{2n}(x^2 - 1) -2x^{\tfrac{3n+1}{2}} -2x^{n+1}-2x^{\tfrac{n+3}{2}} -x^2 -1.
\]
\end{enumerate}
\end{proposition}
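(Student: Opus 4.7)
The plan is to prove the three parts in sequence.

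For part (a), I will enumerate the $2n + 2$ points of $(P_n \cup Q_n) \cap [0,1)$ in the order given, thereby producing $2n + 2$ basic interval equivalence classes. Four of them involve the $x_i$ and are the $J_j$ as defined; the remaining $2n - 2$ intervals are of the form $[y_j, y_{j+1}]$ with $j \in \{0,1,\ldots,2n-1\} \setminus \{n-3, 2n-1\}$, the two excluded values being exactly those where $y_j, y_{j+1}$ straddle $x_1$ or $x_2$. These are in bijection with $I_0, \ldots, I_{2n-3}$ via the map $i \mapsto n+1+i(n+2) \pmod{2n}$, which is a bijection on $\{0,1,\ldots,2n-1\}$ since $\gcd(n+2, 2n) = 1$ for odd $n$; the two values of $i$ that would produce the excluded $j$'s fall outside the range $\{0,\ldots,2n-3\}$, so the $2n - 2$ index choices exactly exhaust the non-$J$ basic intervals.

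For part (b), I will compute $F_n(I)$ for each basic interval $I$ and read off the arrows. For each ``middle'' $I_i$ with $0 \le i < 2n - 3$ the image is the consecutive interval $I_{i+1}$, giving the linear chain; $I_{2n-3}$ is the exception, since its image $[y_{n-3}, y_{n-2}]$ straddles $x_1$ and hence splits into $J_1 \cup J_2$. The single arrows $J_1 \to J_3$ and $J_2 \to J_0$ follow from direct orbit computation. For $J_0 = [x_0, y_0]$, the image $[x_1, y_{n+2}]$ contains $J_2$ together with the four intervals $[y_j, y_{j+1}]$ with $j \in \{n-2, n-1, n, n+1\}$, and solving the congruence $n + 1 + i(n+2) \equiv j \pmod{2n}$ case-by-case for $n = 4k \pm 1$ produces the two different index sets. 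For $J_3 = [y_{2n-1}, x_2]$, the map $F_n$ is decreasing and $F_n(J_3) \sim [x_1, y_{n+1}]$, which coincides with $F_n(J_0)$ except that it lacks the final interval $I_0 = [y_{n+1}, y_{n+2}]$, giving its cover list.

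For part (c), I first apply Proposition~\ref{Markov partitionprojectiontoSI}: the Markov partition $P_n \cup Q_n$ is short because every basic interval has image length bounded by $y_{n+2} - x_1 < 1$, so the Markov graph of $f_n$ coincides with the Markov graph modulo $1$ of $F_n$. Proposition~\ref{propositionmonotoneTopEnt} then gives $h(f_n) = \log\max\{\sigma(M), 1\}$. To compute $\sigma(M)$ I will use the rome method (Theorem~\ref{theoremrome}) with $R = \{\BIclass{J_0}, \BIclass{J_3}\}$, which is a minimal rome: $J_0 \in R$ is forced by the short loop $J_0 \to J_2 \to J_0$, and $J_3 \in R$ by loops of the form $J_3 \to I_m \to \cdots \to J_1 \to J_3$. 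Enumerating the simple paths between the two rome vertices, the matrix $M_R(x)$ has the uniform structure $a_{11} = S_1 + A + S_5$, $a_{22} = A$, $a_{12} = A + S_5$, $a_{21} = S_1 + A$, where $A$ collects the three ``middle'' path weights and $S_1, S_5$ are the shortest and longest loop weights. Because the quadratic terms in $A$ cancel in $(a_{11}-1)(a_{22}-1) - a_{12}a_{21}$, the determinant collapses to $1 - 2A - S_1 - S_5 - S_1 S_5$; multiplying by $x^{2n+2}$ and substituting the explicit exponents reproduces $T_n(x)$. Since the graph is strongly connected, $\sigma(M) > 1$ and equals the unique real root $\rho_n$ of $T_n$ exceeding $1$.

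The hard part will be part (b): the two parity regimes $n = 4k \pm 1$ produce genuinely different Markov graphs because the modular inversion of $n+2$ mod $2n$ behaves differently in each case, so the four indices at which $J_0$ injects into the chain sit at qualitatively different positions. Once the two diagrams are in place, part (c) is a uniform algebraic manipulation whose structural parallel between the two cases leads to the single polynomial $T_n(x)$ for both parities.
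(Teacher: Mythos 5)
Your proposal is correct and follows essentially the same route as the paper's proof: (a) by counting the $2n+2$ classes and using that $n+2$ is a unit modulo $2n$ to see that the two straddling indices $n-3$ and $2n-1$ are not attained for $i\le 2n-3$, (b) by computing the $F_n$-image of every basic interval and locating $[y_{n-2},y_{n-1}],\dots,[y_{n+1},y_{n+2}]$ in the chain by inverting $n+2$ modulo $2n$, and (c) via shortness of the partition, the projection and entropy propositions and the rome $\{\BIclass{J_0},\BIclass{J_3}\}$, with the same collapse of $\det\bigl(M_R(x)-\mathbf{I}_2\bigr)$ to $1-2A-S_1-S_5-S_1S_5$. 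The only caveat, which you inherit from the statement rather than introduce: with the covered intervals $I_{d-2},I_{2d-2},I_{3d-2}$ in the case $n=4k+1$ (i.e.\ exactly the paper's own $\alpha(x)=x^{-3d}+x^{-2d}+x^{-d}$ with $d=\tfrac{n+1}{2}$), the rome computation yields the exponents $\tfrac{3n+3}{2}$ and $\tfrac{n+1}{2}$ rather than $\tfrac{3n+1}{2}$ and $\tfrac{n+3}{2}$ (for $n=5$ the characteristic polynomial is $x^{12}-x^{10}-2x^{9}-2x^{6}-2x^{3}-x^{2}-1$), so the single displayed $T_n$ is reproduced verbatim only when $n=4k-1$; this discrepancy lies in the paper's own statement as well and is harmless for the subsequent entropy estimates.
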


\begin{proof}
It is obvious from the assumptions that
$J_0,J_1,J_2,J_3 \in \bigSBI{P_n \cup Q_n}$
and $\Card\bigl((P_n \cup Q_n) \cap [0,1]\bigr) = 2n + 3.$
Hence, there are $2n-2$ $P_n \cup Q_n$-basic intervals contained
in the interval $[0,1]$ different from $J_0,J_1,J_2$ and $J_3.$
On the other hand, $n+2$ and $2n$ are coprime and, hence,
there exist $2n-2$ pairwise different intervals $I_i.$
Thus, to prove (a) it is enough to show
that all the intervals $I_i$ are $P_n \cup Q_n$-basic.
This amounts showing that
\begin{multline*}
 \set{I_i = [y_{\modulo{n+1+i(n+2)}{2n}}, y_{\modulo{n+2+i(n+2)}{2n}}]}{i \in \{0,1,\dots,2n-3\}} \subset \\
    \bigl\{ [y_0,y_1], [y_1,y_2], \dots,
            [y_{n-4},y_{n-3}], [y_{n-2}, y_{n-1}], [y_{n-1},y_{n}],\dots,
            [y_{2n-2},y_{2n-1}]\bigr\}.
\end{multline*}
More concretely, we have to see that
\[
\bigl\{\modulo{n+1+i(n+2)}{2n} \,\colon i \in \{0,1,\dots,2n-3\}\bigr\} \cap \{n-3, 2n-1\} = \emptyset
\]
because $\modulo{n+2+i(n+2)}{2n} = \modulo{n+1+i(n+2)}{2n} + 1$
provided that $\modulo{n+1+i(n+2)}{2n} \ne 2n-1.$

Assume by way of contradiction that there exist
$i \in \{0,1,\dots,2n-3\},$ $\ell \in \Z$
and  $a \in \{n-3, 2n-1\}$ such that
\[
  n+1+i(n+2)  = a + \ell 2n
  \quad\Longleftrightarrow\quad
   i(n+2)  = (2\ell - 1)n + (a-1).
\]
Then, since $n \in \set{4k+1, 4k-1}{k\in \N}$ is odd,
it follows that $i$ has the same parity as $a.$
So, there exists $t \in \{0,1,\dots, n-2\}$ such that
$i = 2t$ when $a = n-3,$ and $i = 2t+1$ when $a = 2n-1.$
In any case,
\[
   i(n+2)  = (2\ell - 1)n + (a-1)
   \quad\Longleftrightarrow\quad
   4t  = 2n(\ell - t - 1) + (2n-4).
\]
Since $0 \le 4t \le 4n-8$ it follows that
$4t  = 2n-4$ (that is, $\ell - t - 1$ must be 0)
which implies $\tfrac{n}{2}-1 = t \in \Z;$
a contradiction. So, the intervals $I_i$ are
$P_n \cup Q_n$-basic and, hence, (a) holds.

Now we will compute the Markov graph modulo 1 of $F_n$ to prove (b)
Recall that, by definition,
\[
  y_{i + 2n\ell} = y_i + \ell = y_{\modulo{i + 2n\ell}{2n}} + \ell
  \andq
  F_n(y_i) = y_{i+n+2}
\]
for every $i,\ell \in \Z.$
For convenience we set
$\widetilde{I}_i = \bigl[y_{n+1+i(n+2)}, y_{n+2+i(n+2)}\bigr]$
for every $i \in \{0,1,\dots,2n-2\}.$
Hence, by the part of the lemma already proven,
for $i \in \{0,1,\dots,2n-3\}$ we have
$\BIclass{I_i} =  \left\llbracket\widetilde{I}_i\right\rrbracket,$
$\widetilde{I}_i \in \bigSBI{P_n \cup Q_n},$ and
\begin{equation}\label{images}
 F_n\left(\widetilde{I}_i\right) =
 \bigl[F_n(y_{n+1+i(n+2)}), F_n(y_{n+2+i(n+2)})\bigr] =
 \widetilde{I}_{i+1}
\end{equation}
because $F_n\evalat{P_n}$ is increasing and
$F_n$ is affine on $P_n \cup Q_n$-basic intervals.
Moreover,
\begin{multline*}
 F_n\left(\widetilde{I}_{2n-3}\right) = \widetilde{I}_{2n-2} =
   \bigl[y_{n-3 + 2n(n+1)}, y_{n-2 + 2n(n+1)}\bigr] = \\
   (J_1 + (n+1)) \cup (J_2 + (n+1)).
\end{multline*}
Consequently, the Markov graph modulo 1 of $F_n$ has the following
subgraph:
\begin{equation}\label{fundamentaloop} \arraycolsep=2pt
 \BIclass{I_0} \longrightarrow \BIclass{I_1} \longrightarrow \cdots
 \longrightarrow \BIclass{I_{2n-3}}
 \begin{array}{ll} \nearrow & \raisebox{6pt}{\BIclass{J_1}}\\ \searrow & \raisebox{-6pt}{\BIclass{J_2}} \end{array}.
\end{equation}

To completely determine the Markov graph modulo 1 of $F_n$
we still need to compute the images of the intervals
$J_0,\ J_1,\ J_2$ and $J_3.$
We have $x_{i + 2\ell} = x_i + \ell$ and $F_n(x_i) = x_{i+1}$
for every $i,\ell \in \Z$ and, hence,
\begin{enumerate}[(i)]
\item $F_n(J_0) = [x_1, y_{n+2}] = J_2 \cup [y_{n-2}, y_{n+2}]$;
\item $F_n(J_1) = [y_{2n-1}, x_2] = J_3$;
\item $F_n(J_2) = [x_2, y_{2n}] = J_0 + 1;$ and
\item $F_n(J_3) = [x_3,y_{3n+1}] = [x_1,y_{n+1}] + 1 = (J_2 + 1) \cup ([y_{n-2}, y_{n+1}] + 1).$
\end{enumerate}

We will end the proof in the case $n = 4k-1$ and
$d = 2k-1 = \tfrac{n-1}{2}.$
The proof in the other case follows analogously.

In this case we have $2n-3 = 4d-1.$
Moreover, for $\ell \ge 1$ we have
\[
  \ell d(n+2) = dn\ell + 2d\ell = 2kn\ell - n\ell + (4k-1)\ell - \ell = k\ell2n - \ell.
\]
Hence,
\begin{align*}
  & \modulo{n+1+\ell d(n+2)}{2n} = \modulo{n+1-\ell + k\ell2n}{2n} = n+1-\ell,\text{ and} \\
  & \modulo{n+2+\ell d(n+2)}{2n} = \modulo{n+2-\ell + k\ell2n}{2n} = n+2-\ell.
\end{align*}
So, $I_{\ell d} = [y_{n+1-\ell}, y_{n+2-\ell}]$ for $\ell = 0,1,2,3$
(observe that the interval $I_{4d}$ is not defined because $4d > 2n-3$
and, on the other hand, $[y_{n+1-\ell}, y_{n+2-\ell}]$ with $\ell = 4$
is not $P_n \cup Q_n$-basic).
Consequently, from \eqref{fundamentaloop} and (i--iv) above we get
that the Markov graph modulo 1 of $F_n$ is the union of the following
two subgraphs:
\begin{center}
\begin{tikzpicture}
  \node(I0) at (0,0)    {$\bigBIclass{I_0}$};
  \node(I1) at (2,0)    {$\bigBIclass{I_1}$};
  \node(dots) at (4,0)    {$\cdots$};
  \node(I4dm1) at (6,0)    {$\bigBIclass{I_{4d-1}}$};
  \node(J1) at (8,1)    {$\bigBIclass{J_1}$};
  \node(J3) at (10,1)    {$\bigBIclass{J_3}$};
  \node(J2) at (8,-1)    {$\bigBIclass{J_2}$};
  \node(J0) at (10,-1)    {$\bigBIclass{J_0}$};

  \path[->,shorten >=1pt,>=stealth,semithick]
     (I0.east) edge (I1.west)
     (I1.east) edge (dots.west)
     (dots.east) edge (I4dm1.west)
     (I4dm1.east) edge (J1.west)
     (I4dm1.east) edge (J2.west)
     (J1.east) edge (J3.west)
     (J2.east)+(0,2pt) edge ([yshift=2pt]J0.west)
     (J0.west)+(0,-2pt) edge ([yshift=-2pt]J2.east)
     (J3.south) edge (J2.north);
\end{tikzpicture}
\par\par\hspace*{-11em}
\begin{tikzpicture}
  \node(J0) at (0,1)    {$\bigBIclass{J_0}$};
  \node(J3) at (0,0)    {$\bigBIclass{J_3}$};
  \node(I0) at (4,2)    {$\bigBIclass{I_0}$};
  \node(Id) at (4,1)    {$\bigBIclass{I_{d}}$};
  \node(I2d) at (4,0)    {$\bigBIclass{I_{2d}}$};
  \node(I3d) at (4,-1)    {$\bigBIclass{I_{3d}}$};

  \path[->,shorten >=1pt,>=stealth,semithick]
      (J0.east) edge (I0.west)
      (J0.east) edge ([yshift=2pt]Id.west)
      (J0.east) edge ([yshift=2pt]I2d.west)
      (J0.east) edge ([yshift=2pt]I3d.west)
      (J3.east) edge ([yshift=-2pt]Id.west)
      (J3.east) edge ([yshift=-2pt]I2d.west)
      (J3.east) edge ([yshift=-2pt]I3d.west);
\end{tikzpicture}
\end{center}
This ends the proof of (b).

To prove (c) we will use
Propositions~\ref{Markov partitionprojectiontoSI}
and~\ref{propositionmonotoneTopEnt},
and Theorem~\ref{theoremrome}.

First notice that \eqref{images} and (i--iv) above imply that
$P_n \cup Q_n$ is a short Markov partition with respect to $F_n.$
Then, Propositions~\ref{Markov partitionprojectiontoSI}
and~\ref{propositionmonotoneTopEnt},
imply that $f_n$ is a Markov map and
\[
    h(f_n) = \log\max\{\sigma(M_n),1\}
\]
where $M_n$ is the Markov matrix of $f_n$ with respect to
$\bigemap{P_n \cup Q_n}.$
Moreover, we can identify the set $\bigSBI{\bigemap{P_n \cup Q_n}}$
with the set of all equivalence classes of $P_n \cup Q_n$-basic intervals
(i.e. the set of all vertices of the Markov graph modulo 1 of $F_n$).
Then, the matrix $M_n$ coincides with the
transition matrix of the Markov graph modulo 1 of $F_n$ which,
by definition and Proposition~\ref{Markov partitionprojectiontoSI},
is a
$\Card\bigl(\bigSBI{\bigemap{P_n \cup Q_n}}\bigr) \times \Card\bigl(\bigSBI{\bigemap{P_n \cup Q_n}}\bigr)$
matrix
$M_n = (m_{\BIclass{I},\BIclass{J}})_{\BIclass{I},\BIclass{J} \in \SBI[\emap{P_n \cup Q_n}]}$
such that
\[
  m_{\BIclass{I},\BIclass{J}}=\begin{cases}
      1 & \text{if $\BIclass{I}$ $f$-covers $\BIclass{J}$}\\
      0 & \text{otherwise}
\end{cases}.
\]

To compute $\sigma(M_n)$ we will use Theorem~\ref{theoremrome} with
\[
\textsf{Rom}_n = \{\textsf{r}_1 = \BIclass{J_0}, \textsf{r}_2 = \BIclass{J_3}\}
\]
as a rome
(being their elements marked in the statement with a box with double
border and sloping lines background pattern).
Moreover, recall that $M_{\textsf{Rom}_n}(x) = (a_{ij}(x))$
is the matrix such that $a_{ij}(x) = \sum_p x^{-\ell(p)},$
where the sum is taken over all simple paths
starting at $r_i$ and ending at $r_j$
(since $M_n$ is  a matrix of zeroes and ones the width of every path is 1).
Then, the matrix $M_{\textsf{Rom}_n}(x)$ is:
\[
 \begin{cases}
   \begin{pmatrix*}[r]
        x^{-2} + x^{-4d-2} +\alpha(x) & x^{-4d-2} +\alpha(x)\\
        x^{-2}  +\alpha(x)            & \alpha(x)
   \end{pmatrix*} & \text{\parbox{9em}{when $n=4k-1$ and $d=\tfrac{n-1}{2}$ for some $k \in \N,$ and}}\\[5ex]
   \begin{pmatrix*}[r]
         x^{-2} + x^{-4d+2} +  \alpha(x) & x^{-4d+2} +\alpha(x)\\
         x^{-2}   +\alpha(x)             & \alpha(x)
  \end{pmatrix*} & \text{\parbox{9em}{when $n=4k+1$ and $d=\tfrac{n+1}{2}$ for some $k \in \N,$}}
\end{cases}
\]
where
\[
 \alpha(x) = \begin{cases}
   x^{-3d -2} +x^{-2d-2}+x^{-d-2}& \text{\parbox{14em}{when $n=4k-1$ and $d=\tfrac{n-1}{2}$ for some $k \in \N,$ and}}\\[2ex]
   x^{-3d} +x^{-2d} +x^{-d} & \text{\parbox{14em}{when $n=4k+1$ and $d=\tfrac{n+1}{2}$ for some $k \in \N.$}}
\end{cases}
\]

By Theorem~\ref{theoremrome}, the characteristic polynomial $T_n(x)$
of $M_n$ is
\begin{multline*}
    T_n(x) = (-1)^{2n} x^{2n+2} \det(M_{\textsf{Rom}_n}(x) - \mathbf{I}_2) = \\
             x^{2n}(x^2 - 1) -2x^{\tfrac{3n+1}{2}} -2x^{n+1}-2x^{\tfrac{n+3}{2}} -x^2 -1,
\end{multline*}
where $\mathbf{I}_2$ is the unit matrix $2\times 2.$

By direct inspection of the Markov graph modulo 1 of $F_n$ (see (b)),
given any two vertices $\BIclass{K}$ and $\BIclass{L}$
in the graph, there exists a path from $\BIclass{K}$ to $\BIclass{L}.$
This means that the transition matrix $M_n$ of the
Markov graph modulo 1 of $F_n$ is non-negative and irreducible.
Then, by the Perron-Frobenius Theorem, $\sigma(M_n) > 1$
is the largest eigenvalue of $M_n$.
Hence, $\sigma(M_n)$ is the largest root (larger than one) of $T_n.$
\end{proof}

\begin{proof}[Proof of Theorem~\ref{theoremfirstexamplebcncircle}]
Since $y_{i + 2n\ell} = y_i + \ell$ and $F_n(y_i) = y_{i+n+2}$
for every $i,\ell \in \Z,$ it follows that
\[
 F^{2n}_n(y_i) = y_{i + 2n(n+2)} = y_i + n+2
\]
for every $i \in \Z.$
Moreover, let $j \in \{1,2,\dots, 2n-1\}$ and assume that
\[ F^j_n(y_i) - y_i = y_{i + j(n+2)} - y_i = \ell \in \Z. \]
Then, $y_{i + j(n+2)} = y_i + \ell = y_{i + 2n\ell}$ and, thus,
$j(n+2) = 2n\ell;$
a contradiction because $n+2$ and $2n$ are relatively prime.
Hence, $P_n$ is a lifted periodic orbit of $F_n$ of period $2n$
and rotation number $\tfrac{n+2}{2n}.$
Moreover, $F_n\evalat{P_n}$ is increasing and, thus,
$P_n$ is a twist lifted periodic orbit of $F_n$ of period $2n$
and rotation number $\tfrac{n+2}{2n}.$
In a similar manner, $Q_n$ is a twist lifted periodic orbit of $F_n$
of period $2$ and rotation number $\tfrac{1}{2}.$

Now we will show tat
$\Rot(F_n) = \left[\tfrac{1}{2}, \tfrac{n+2}{2n}\right]$
by using Theorem~\ref{theoremrotationintwathermap}.
To this end we need to compute the rotation number of the
lower map $(F_n)_l$ and of the upper map $(F_n)_u$
(see Figures~\ref{upper-lowermaps} and~\ref{figuretheoremfirstexamplebcncircle}).
Observe that
\[
F_n(y_{2n-5}) = y_{3n-3} = y_{n-3} + 1 < x_1 + 1 < y_{n-2} + 1 = y_{3n-2} = F_n(y_{2n-4}).
\]
Hence, there exists a unique $u^n_l \in (y_{2n-5}, y_{2n-4})$
such that $F_n\bigl(u^n_l\bigr) = x_1 + 1.$ So,\pagebreak[1]
\begin{multline*}
 (F_n)_l(x) = \inf\set{F_n(y)}{y \ge x} = \\
 \begin{cases}
    \inf\bigl[F_n(x), +\infty\bigr) = F_n(x)& \text{for $x \in \bigl[0, u^n_l\bigr]$,}\\
    \inf\bigl[x_1+1, +\infty\bigr) = x_1+1 & \text{for $x \in \bigl[u^n_l,1\bigr]$,}\\
    (F_n)_l(x - \floor{x}) + \floor{x} & \text{if $x \notin [0,1].$}
 \end{cases}
\end{multline*}
On the other hand,
$
F_n(x_0) = x_1 < y_{n+1} < y_{n+2} = F_n(y_0),
$
which implies that there exists a unique $u^n_u \in (x_0,y_0)$
such that $F_n\bigl(u^n_u\bigr) = y_{n+1} = F_n(y_{2n-1})-1.$ So,
\begin{multline*}
 (F_n)_u(x) = \sup\set{F_n(y)}{y \le x} = \\
 \begin{cases}
    \sup\bigl[-\infty, y_{n+1}\bigr) = y_{n+1} & \text{for $x \in \bigl[0, u^n_u\bigr]$,}\\
    \sup\bigl[-\infty, F_n(x)\bigr) = F_n(x)   & \text{for $x \in \bigl[u^n_u,y_{2n-1}\bigr]$,}\\
    \sup\bigl[-\infty, F_n(y_{2n-1})\bigr) = y_{n+1}+1 & \text{for $x \in \bigl[y_{2n-1}, 1\bigr]$,}\\
    (F_n)_u(x - \floor{x}) + \floor{x} & \text{if $x \notin [0,1].$}
 \end{cases}
\end{multline*}

To compute $\rho\bigl((F_n)_l\bigr)$ observe that
\[ P_n \cap [0,1] \subset \bigl[u^n_u,y_{2n-1}\bigr] \andq[and, hence,]
   (F_n)_u\evalat{P_n} = F_n\evalat{P_n}.
\]
So,
$\rho\bigl((F_n)_u\bigr) = \rho_{F_n}(P_n) = \tfrac{n+2}{2n}.$
In a similar way,
$Q_n \cap [0,1] \subset \bigl[0, u^n_l\bigr],$
$(F_n)_l\evalat{Q_n} = F_n\evalat{Q_n}$ and
$\rho\bigl((F_n)_l\bigr) = \rho_{F_n}(Q_n) = \tfrac{1}{2}.$
Hence, $\Rot(F_n) = \left[\tfrac{1}{2}, \tfrac{n+2}{2n}\right]$
by Theorem~\ref{theoremrotationintwathermap}.

Next we will compute $\Per(f_n).$
By Theorem~\ref{theoremMisiurewicz} we have
\[
\Per(F_n) = Q_{F_n}\mspace{-8.0mu}\left(\tfrac{1}{2}\right) \cup
            M\left(\tfrac{1}{2}, \tfrac{n+2}{2n}\right) \cup
            Q_{F_n}\mspace{-8.0mu}\left(\tfrac{n+2}{2n}\right).
\]
We will compute separately the sets
$Q_{F_n}\mspace{-8.0mu}\left(\tfrac{1}{2}\right)$ and
$
  M\left(\tfrac{1}{2}, \tfrac{n+2}{2n}\right) \cup
  Q_{F_n}\mspace{-8.0mu}\left(\tfrac{n+2}{2n}\right),
$
starting with $Q_{F_n}\mspace{-8.0mu}\left(\tfrac{1}{2}\right).$

To compute $Q_{F_n}\mspace{-8.0mu}\left(\tfrac{1}{2}\right)$
we will use Proposition~\ref{propPerLoops} with $X$ and $f$
replaced by $\SI$ and $f_n,$ respectively.
Hence, we will use the Markov graph of $f_n.$
However, from the proof of
Proposition~\ref{propositionmarkovgraphfirstexamplebcn}
we already know that $P_n \cup Q_n$ is a short Markov partition with
respect to $F_n$ and,
by Proposition~\ref{Markov partitionprojectiontoSI},
$f_n$ is a Markov map with respect to the Markov
partition $\bigemap{P_n \cup Q_n}.$
Moreover,
the Markov graph of $f_n$ with respect to
$\bigemap{P_n \cup Q_n}$
and the Markov graph modulo 1 of $F_n$ with respect to
$P_n \cup Q_n$ coincide, provided that we identify
$\BIclass{I}$ with $\bigemap{\BIclass{I}}$ for every
$I \in \bigSBI{P_n \cup\; Q_n}$.
Thus, to perform our arguments we will use
Proposition~\ref{propositionmarkovgraphfirstexamplebcn} and
the Markov graph modulo 1 of $F_n$ with respect to
$P_n \cup Q_n$
instead of the Markov graph of $f_n$ with respect to
$\bigemap{P_n \cup Q_n}.$

By Proposition~\ref{propositionmarkovgraphfirstexamplebcn},
in the Markov graph modulo 1 of $F_n$
there is no simple loop of length $4$
and the only repetitive loop of length 4 is the $2$-repetition
of
$
\BIclass{J_0} \longrightarrow \BIclass{J_2} \longrightarrow \BIclass{J_0}.
$
Since
\begin{align*}
& F_n(x_1) = x_2 = x_0 + 1 < y_{0} + 1 = y_{2n} = F_n(y_{n-2})\text{ and}\\
& F_n(x_0) = x_1 < y_{n+2} = F_n(y_0),
\end{align*}
$
\BIclass{J_0} \longrightarrow \BIclass{J_2} \longrightarrow \BIclass{J_0}
$
is a positive loop. Then,
$4 \notin \Per(F_n) \supset Q_{F_n}\mspace{-8.0mu}\left(\tfrac{1}{2}\right)$
by Proposition~\ref{propPerLoops}(b).
Therefore, with the notation from the definition of $Q_F(c)$
in Subsection~\ref{RotTheor}, we have $s=2$ and $s_{1/2} = 1.$
Consequently,
$Q_{F_n}\mspace{-8.0mu}\left(\tfrac{1}{2}\right) = \{2\}.$

Now we compute
$
  M\left(\tfrac{1}{2}, \tfrac{n+2}{2n}\right) \cup
  Q_{F_n}\mspace{-8.0mu}\left(\tfrac{n+2}{2n}\right).
$
Since $\tfrac{n+2}{2n} - \tfrac{1}{2} = \tfrac{1}{n},$
it follows that for every all $q \in \N,$ $q > n,$
there exists $p \in \Z$ such that
$\tfrac{1}{2} < \tfrac{p}{q} < \tfrac{n+2}{2n}.$
On the other hand, since $n$ is odd,
$(n+1)/2 \in \Z$ and
$\tfrac{1}{2} < \tfrac{(n+1)/2}{n} < \tfrac{n+2}{2n}.$
Summarizing,
\[
   M\left(\tfrac{1}{2}, \tfrac{n+2}{2n}\right) \supset
   \succs{n} \supset
   \set{2n\ell}{\ell \in \N} \supset
   Q_{F_n}\mspace{-8.0mu}\left(\tfrac{n+2}{2n}\right).
\]
Thus,
\begin{multline*}
   M\left(\tfrac{1}{2}, \tfrac{n+2}{2n}\right) \cup Q_{F_n}\mspace{-8.0mu}\left(\tfrac{n+2}{2n}\right) =
   M\left(\tfrac{1}{2}, \tfrac{n+2}{2n}\right) =\\
   \succs{n} \cup \set{q \in M\left(\tfrac{1}{2}, \tfrac{n+2}{2n}\right)}{q < n}.
\end{multline*}

Now we need to compute
$\set{q \in M\left(\tfrac{1}{2}, \tfrac{n+2}{2n}\right)}{q < n}.$
To this end, assume that
$\tfrac{1}{2} < \tfrac{p}{q} < \tfrac{n+2}{2n}$ with
$p \in \Z$ and $q \in \N,$ $q \le n-1.$
We claim that $q$ is odd.
Otherwise, $q = 2\ell \le n-1$ with $\ell \in \N.$
Then, the expression
$\tfrac{1}{2} < \tfrac{p}{2\ell} < \tfrac{n+2}{2n}$
is equivalent to
\[
  \ell < p < \ell + \frac{2\ell}{n} \le
  \ell + \frac{n-1}{n} <
  \ell + 1;
\]
a contradiction.
This proves the claim.

Now assume that $q = 2\ell + 1 \le n-2$ with $\ell \in \N$
(recall that $n$ is odd).
We have,
\[
  \frac{1}{2} < \frac{p}{2\ell + 1} < \frac{n+2}{2n}
\]
which is equivalent to
\begin{multline*}
  \ell + \frac{1}{2}  = \frac{2\ell + 1}{2} < p <
%  \frac{(n+2)(2\ell + 1)}{2n} =
%  (\ell + 2) \frac{(n+2)(2\ell + 1)}{2n(\ell + 2)} =
  (\ell + 2) \frac{n(2\ell + 1) + 2(2\ell + 1)}{2n(\ell + 2)} \le \\
  (\ell + 2) \frac{n(2\ell + 1) + 2(n-2)}{2n\ell + 4n} =
  (\ell + 2) \frac{2n\ell + 3n - 4}{2n\ell + 4n} < \ell + 2.
\end{multline*}
Consequently, $p = \ell + 1$ and, hence,
\[
  \frac{1}{2} < \frac{\ell + 1}{2\ell + 1} < \frac{n+2}{2n}.
\]
The second inequality is equivalent to
\[
 2n\ell + 2n = 2n(\ell + 1) < (n+2)(2\ell + 1) = 2n\ell + n + 2(2\ell + 1).
% \Longleftrightarrow
% \frac{n}{2} < 2\ell + 1.
\]
Thus, since $n = 4k \pm 1$ with $k \in \N,$ this is equivalent to
\[
 2\ell + 1 \ge \frac{n+1}{2} = \frac{4k + r}{2} = 2k + \frac{r}{2} \andq[with] r \in \{0,2\}.
\]
Hence, since $q = 2\ell + 1$ is odd,
\[
 2\ell + 1 \ge 2k + 1.
\]

Summarizing, we have seen that
\[
  \set{q \in M\left(\tfrac{1}{2}, \tfrac{n+2}{2n}\right)}{q < n} =
  \set{2\ell+1}{\ell\in\N \text{ and } 2k+1 \le 2\ell+1 \le n-2}
\]
and, consequently,
\begin{align*}
\Per(f_n) & = \Per(F_n) = Q_{F_n}\mspace{-8.0mu}\left(\tfrac{1}{2}\right) \cup
              M\left(\tfrac{1}{2}, \tfrac{n+2}{2n}\right) \cup
              Q_{F_n}\mspace{-8.0mu}\left(\tfrac{n+2}{2n}\right)\\
          & = \{2\} \cup \succs{n} \cup \set{q \in M\left(\tfrac{1}{2}, \tfrac{n+2}{2n}\right)}{q < n}\\
          & = \{2\} \cup \succs{n} \cup \set{q\ \text{odd}}{2k+1 \le q \le n-2}.
\end{align*}
Moreover, $\sbc(f_n) = n$ and $2k+1 \in \sbcset(f_n)$.
So, $\bc(f_n)$ exists and verifies $2k+1 \le \bc(f_n) \le n.$

Next we will show that $f_n$ is totally transitive by using
Theorem~\ref{theoremTransitivityexpansivemap}.
We already know that
$f_n$ is a Markov map with respect to the Markov
partition $\bigemap{P_n \cup Q_n},$
and the transition matrix of the Markov graph of $f_n$
with respect to $\bigemap{P_n \cup Q_n}$
coincides with the transition matrix of the Markov graph modulo 1
of $F_n$ with respect to $P_n \cup Q_n.$
From the proof of
Proposition~\ref{propositionmarkovgraphfirstexamplebcn}(c),
it follows that this transition matrix is non-negative and irreducible.
By direct inspection
(see Proposition~\ref{propositionmarkovgraphfirstexamplebcn}(b))
it follows that the vertex $\BIclass{J_0}$ of
the Markov graph modulo 1 of $F_n$ is the beginning of 4 arrows.
That is, the transition matrix of the Markov graph of $f_n$
with respect to $\bigemap{P_n \cup Q_n}$ has a row with
4 non-zero elements and, thus, it cannot be a permutation matrix.

To use Theorem~\ref{theoremTransitivityexpansivemap} we also need to know
that $f_n$ is a $\bigemap{P_n \cup Q_n}$-expansive Markov map in the sense of
Definition~\ref{expansiveMM}.
Since we already know that $f_n$ is a Markov map we have to show that
$f_n$ is expansive on every $\bigemap{P_n \cup Q_n}$-basic interval.
To do this we need two ingredients, a distance $d_I$ on every
$\bigemap{P_n \cup Q_n}$-basic interval $I$ and an appropriate way of
writing the fact that the maps $F_n$ are affine on every basic interval.

Let $[a,b] \in \bigSBI{P_n \cup Q_n}.$
The fact that $F_n\evalat{[a,b]}$ is affine can be written as
\begin{equation}\label{affineontheline}
 \frac{\abs{F_n(x)-F_n(y)}}{\abs{F_n(a)-F_n(b)}} = \frac{\abs{y-x}}{b-a}
\end{equation}
for every $x,y \in [a,b].$

A distance $d_I$ on every basic interval
$I \in \bigSBI{\bigemap{P_n \cup Q_n}}$
can be defined as follows.
Write $I = \bigemap{[x_I, y_I]}$ where $[x_I, y_I]$ is a $P_n \cup Q_n$-basic interval.
Then, for every $x,y \in [x_I, y_I]$, we define
\[
 d_I(\emap{x},\emap{y}) := \frac{\abs{x-y}}{\abs{x_I-y_I}}.
\]

Observe that $I = \bigemap{[x_I, y_I]}$ implies
\[
f_n(I) = \bigemap{F_n\bigl([x_I, y_I]\bigr)}
       = \bigemap{\chull{F_n(x_I), F_n(y_I)}[\R]}.
\]

Consider first the case $f_n(I) \in \bigSBI{\bigemap{P_n \cup Q_n}}$
(which is equivalent to
$\chull{F_n(x_I), F_n(y_I)}[\R]\in \SBI[P_n \cup Q_n]$).
Hence, since $F_n$ is affine on $[x_I, y_I],$
\begin{multline*}
 d_{f_n(I)}\bigl(f_n(\emap{x}),f_n(\emap{y})\bigr) =
 d_{f_n(I)}\bigl(\emap{F_n(x)},\emap{F_n(y)}\bigr) =\\
 \frac{\abs{F_n(x)-F_n(y)}}{\abs{F_n(x_I)-F_n(y_I)}} = \frac{\abs{x-y}}{y_I-x_I} = d_I(\emap{x},\emap{y}).
\end{multline*}

Now assume that $f_n(I) = \bigemap{\chull{F_n(x_I), F_n(y_I)}[\R]}$
contains more than one $\bigemap{P_n \cup Q_n}$-basic interval and
let $x,y \in [x_I, y_I]$ be such that
\[
  \chull{f_n(\emap{x}), f_n(\emap{y})}[f_n(I)] \subset J = \bigemap{[x_J, y_J]}
  \andq[with]
  [x_J, y_J] \in \SBI[P_n \cup Q_n].
\]
In this case we set
\begin{multline*}
 \lambda_I := \min\left\{ \tfrac{\abs{F_n(x_I)-F_n(y_I)}}{y_J - x_J}\,\colon [x_J, y_J] \in \SBI[P_n \cup Q_n] \text{ and}\right.\\
                  \left.[x_J, y_J] \subset \chull{F_n(x_I), F_n(y_I)}[\R]\vphantom{\tfrac{\abs{F_n(x_I)-F_n(y_I)}}{y_J - x_J}}\right\},
\end{multline*}
and we have
\begin{multline*}
 d_{J}\bigl(f_n(\emap{x}),f_n(\emap{y})\bigr) =
 d_{J}\bigl(\emap{F_n(x)},\emap{F_n(y)}\bigr) =\\
 \frac{\abs{F_n(x)-F_n(y)}}{y_J - x_J} =
 \frac{\abs{F_n(x_I)-F_n(y_I)}}{y_J - x_J} \frac{\abs{F_n(x)-F_n(y)}}{\abs{F_n(x_I)-F_n(y_I)}} \ge\\
 \lambda_I \frac{\abs{y-x}}{y_I -x_I} = \lambda_I  d_I(\emap{x},\emap{y}).
\end{multline*}
So, we have proved that $f_n$ is $\bigemap{P_n \cup Q_n}$-expansive,
and thus, $f_n$ is transitive by Theorem~\ref{theoremTransitivityexpansivemap}.
Moreover, since $\Per(f_n) \supset \succs{n},$
$\Per(f_n)$ is cofinite and $f_n$ is totally transitive
by Theorem~\ref{theoremtotallytransitivefromadrr}.

% To end the proof of the theorem only it remains to show that
% the topological entropy goes to zero:
% $\lim_{n\to\infty} h(f_n) = 0.$
% To do this we will use
To prove that $\lim_{n\to\infty} h(f_n) = 0$ we will use
Proposition~\ref{propositionmarkovgraphfirstexamplebcn}(c)
which states that
$h(f_n) = \log \rho_n,$ where $\rho_n > 1$ is the largest root
of the polynomial
\[
    T_n(x)  = x^{2n}(x^2 - 1) -2x^{\tfrac{3n+1}{2}} -2x^{n+1}-2x^{\tfrac{n+3}{2}} -x^2 -1.
\]
Set
\begin{align*}
q_n(x) & := x^{2n}(x^2 - 1),\\
t_n(x) & := 2x^{\tfrac{3n+1}{2}} +2x^{n+1}+2x^{\tfrac{n+3}{2}} +x^2 +1,\text{ and}\\
\xi_n(x) & := \frac{q(x)}{t(x)} =
  \frac{x^{\tfrac{n-1}{2}}(x^2 -1)}{2 + 2x^{\tfrac{-n+1}{2}} + 2 x^{-n +1} + x^{\tfrac{-3n+3}{2}} + x^{-\tfrac{3n+1}{2}}}
  \text{ for $x > 0$.}
\end{align*}

With this notation, the expression
$0 = T_n(\rho_n) = q_n(\rho_n) - t_n(\rho_n)$
is equivalent to
$\xi_n(\rho_n) = 1.$
% So, to prove that $\lim_{n\to\infty} h(f_n) = 0,$ we need to study
% the equation
% \[
% \xi_n(x) = 1.
% \]
Observe that, for $x \ge 1,$
\begin{equation}\label{ineqxi}
\xi_n(x) = \frac{x^{\tfrac{n-1}{2}}(x^2 -1)}{2 + 2x^{\tfrac{-n+1}{2}} + 2 x^{-n +1} + x^{\tfrac{-3n+3}{2}} + x^{-\tfrac{3n+1}{2}}}
\ge \frac{x^{\tfrac{n-1}{2}}(x^2 -1)}{8},
\end{equation}
\[
\text{and}\hspace*{4em}\frac{x^{\tfrac{n-1}{2}}(x^2 -1)}{8} = 1 \Longleftrightarrow x^{\tfrac{n-1}{2}} = \frac{8}{x^2 -1}.
\]
%\enlargethispage{7mm}
Now we remark that\par\par\bigskip\par\par\noindent
\begin{minipage}{0.63\textwidth}
\begin{enumerate}[(i)]
\item The map $x \mapsto \frac{8}{x^2 -1}$ is strictly decreasing on $(1,+\infty)$,
      $\lim_{x\to1^+} \frac{8}{x^2 -1} = +\infty$ and
      $\lim_{x\to\infty} \frac{8}{x^2 -1} = 0.$
\item For every $n$ odd and every $x \ge 1$,
      the map $x \mapsto x^{\tfrac{n-1}{2}}$ is strictly increasing and
      $x^{\tfrac{n-1}{2}}\evalat{x=1} = 1.$
\item For every $n,m \in \N,$  $n,m$ odd, $n < m$ and $x > 1,$
      $x^{\tfrac{n-1}{2}} < x^{\tfrac{m-1}{2}}.$
\end{enumerate}
Then, for each $n$ odd, there exists a unique real
\end{minipage}\hfill\begin{minipage}{0.35\textwidth}\flushright\begin{tikzpicture}
\draw[->, >=stealth] (1,-0.1) -- (1,4); \draw[->, >=stealth] (1,0) -- (5,0);
\draw (1,1) -- (0.9,1); \node[left] at (1,1) {\tiny$1$};
\draw (1,0) -- (1,-0.1); \node[below] at (1,0) {\tiny$1$};
\node[below] at (4.5,0) {$x$};

\draw[densely dashed] (1.262,-0.3) -- +(0,1.675); \node[below] at (1.55,-0.2) {\tiny$\gamma_{m}$};
\draw[densely dashed] (1.304,-0.1) -- +(0,1.27); \node[below right] at (1.15,0) {\tiny$\gamma_{n}$};

\draw[domain=2:5, smooth, variable=\x, blue] plot ({\x},{0.84/(\x*\x-1)});
\draw[domain=1.1:2, smooth, variable=\x, blue] plot ({\x},{0.84/(\x*\x-1)});

\draw[domain=1:1.89, smooth, variable=\x, red] plot ({\x},{\x^4/4 + 0.75});
\draw[domain=1:3.57, smooth, variable=\x, red] plot ({\x},{\x^2/4 + 0.75});

\node[red, right] at (1.1,3.5) {$\cdots$}; \node[red, right] at (3.5,3.5) {$\cdots$};

\node[blue, above] at (3.5,0) {\tiny$\tfrac{8}{x^2 -1}$};
\node[red, right] at (2.1,2) {\tiny$x^{\tfrac{n-1}{2}}$};
\node[red, right] at (1.7,3.5) {\tiny$x^{\tfrac{m-1}{2}}$};
\end{tikzpicture}\label{fig:arrels}
\end{minipage}
number $\gamma_n > 1$ such that
$\gamma_n^{\tfrac{n-1}{2}} = \tfrac{8}{\gamma_n^2 -1},$
$x^{\tfrac{n-1}{2}} > \tfrac{8}{x^2 -1}$ for every $x > \gamma_n,$
the sequence $\{\gamma_n\}_n$ is strictly decreasing and
$\lim_{n\to\infty} \gamma_n = 1.$
Hence, by \eqref{ineqxi}
\[
  \xi(x) \ge \frac{x^{\tfrac{n-1}{2}}(x^2 -1)}{8} > 1
\]
for every $x > \gamma_n.$
Consequently, $\rho_n \le \gamma_n$ for every $n$ odd and, thus,
\[
 \lim_{n\to\infty} \log\rho_n  \le \lim_{n\to\infty} \log\gamma_n = 0.
\]
\end{proof}

Next we prove Theorem~\ref{theoremfirstexamplebcngraph} by
``exporting'' the maps $F_n$ from
Theorem~\ref{theoremfirstexamplebcncircle} to any arbitrary graph.

\begin{proof}[Proof of Theorem~\ref{theoremfirstexamplebcngraph}]
If $G = \SI$ then there is nothing to prove since
Theorem~\ref{theoremfirstexamplebcncircle} already gives the desired
sequence of maps.
So, we assume that $G \ne \SI.$

Let $n \ge 7$ odd, and let $F_n,$ $P_n,$ $Q_n$ and $f_n$ be as in
Theorem~\ref{theoremfirstexamplebcncircle} and
Proposition~\ref{propositionmarkovgraphfirstexamplebcn}.
Recall also that
$I_i,$ $0 \le i \le 2n-3$ and $J_j,$ $0 \le j \le 3$
are $P_n \cup Q_n$-basic intervals which generate all the
equivalence classes of $P_n \cup Q_n$-basic intervals.

Next we fix the general notation to be used in this proof:
Let $C$ be a circuit of $G,$
let $I \subset C$ be an interval such that $I \cap V(G) = \emptyset$
and let $\map{\eta}{\SI}[C]$ be a homeomorphism such that
\[
  I \supset \left( \bigcup_{\substack{i=0\\ i \ne 2}}^{2n-3} \widetilde{I}_i \right)
      \cup \etaemap{P_n \cup Q_n}
      \andq
      C\mkern-2mu\setminus\mkern-3mu\Int(I) = \widetilde{I}_2,
\]
where
$\widetilde{I}_i := \etaemap{I_i}$ for $i \in \{0,1,\dots, 2n-3\}.$
Clearly,
$X := G \setminus \Int(I) \supset \widetilde{I}_2$
is a subgraph of $G$
(see Figure~\ref{figuregraphGandmapgnfirstexampleBCN}).
Observe that $\tilde{z}^2_0, \tilde{z}^2_1 \in I$ are endpoints
(and thus vertices) of $X$ but they cannot be vertices of $G$ because
$I \cap V(G) = \emptyset.$ Moreover,
$V(X) = V(G) \cup \bigl\{\tilde{z}^2_0,\tilde{z}^2_1\bigr\}.$

Recall that, for every $i \in \{0,1,\dots, 2n-3\},$
the endpoints of $I_i$ are
$y_{\modulo{n+1+i(n+2)}{2n}} < y_{\modulo{n+2+i(n+2)}{2n}}.$
Then, for $j=0,1$ we set
\[
  \tilde{z}^i_j := \bigeta{\bigemap{y_{\modulo{n+j+1+i(n+2)}{2n}}}}
\]
so that,
$\partial\widetilde{I}_i = \bigl\{\tilde{z}^i_0, \tilde{z}^i_1\bigr\}.$

Under the assumptions of Theorem~\ref{theoremfirstexamplebcncircle}
with $n \ge 7$ odd we have
\[ 0=x_0 < y_3 < y_4 < y_7 < y_8 < y_{n+5} < y_{n+6} \le y_{2n-1} < 1. \]
Hence,
\[
 I_1 = [y_3, y_4],\quad
 I_2 = [y_{n+5}, y_{n+6}] \andq
 I_3 = [y_7, y_8]
\]
are pairwise disjoint and, consequently, so are
$\widetilde{I}_1,$ $\widetilde{I}_2$ and $\widetilde{I}_3.$

To define the maps $\map{g_n}{G}$ for every $n$ odd, $n \ge 7,$
we will use Lemma~\ref{lemmaPhiPsi} for the subgraph $X$
with $a$ replaced by $\tilde{z}^2_0$ and
$b$ replaced by $\tilde{z}^2_1$
(see Figure~\ref{mapsfromarr}).
\begin{figure}[t]
\begin{tikzpicture}
\begin{scope} \clip (-2.3,-6.3) rectangle (9.3,5.7);
% Apple contour with decorations
\draw[blue, very thick, rounded corners] (6.63,1.464) .. controls (6.635,0.8) and (6.5125,0.1) .. (6.485,0)  .. controls (6.5105,-0.1) and (6.714,-1) .. (6.5825,-1.99);
\draw[blue, rounded corners, postaction={decorate}, decoration={markings,
       mark=at position 2cm with {\draw[black, thick] (0,-3pt) -- (0,3pt);}, mark=at position 2cm with {\node[black, right] {$\tilde{z}^2_0$};},
       mark=at position 5cm with {\draw[black, thick] (0,-3pt) -- (0,3pt);}, mark=at position 5cm with {\node[black, below, xshift=2pt] {\vbox{$\tilde{z}^1_1$\\$\shortparallel$\\$s^1_m$}};},
       mark=at position 9cm with {\draw[black, thick] (0,-3pt) -- (0,3pt);}, mark=at position 9cm with {\node[black, below] {\vbox{$\tilde{z}^1_0$\\$\shortparallel$\\$s^1_0$}};},
       mark=between positions 7.8cm and 8.3cm step 0.5cm with {\draw[black] (0,-3pt) -- (0,3pt);},
       mark=at position 7.8cm with {\node[black, below, xshift=4pt] {$s^1_{l+1}$};},
       mark=at position 8.3cm with {\node[black, below, xshift=-1pt] {$s^1_l$};},
       mark=at position 7.97cm with {\node[black, above, yshift=-2pt] {$L_l$};},
       mark=at position 12cm with {\draw[black, thick] (0,-3pt) -- (0,3pt);}, mark=at position 12cm with {\node[black, left] {$\tilde{z}^0_1$};},%{$s^0_m = \tilde{z}^0_1$};},
       mark=at position 14cm with {\draw[black, thick] (0,-3pt) -- (0,3pt);}, mark=at position 14cm with {\node[black, left] {$\tilde{z}^0_0$};},%{$s^0_0 = \tilde{z}^0_0$};},
%%%%   mark=between positions 13.8cm and 14.3cm step 0.5cm with {\draw[black] (0,-3pt) -- (0,3pt);},
%%%%   mark=at position 13.8cm with {\node[black, left] {$s^0_{l+1}$};},
%%%%   mark=at position 14.3cm with {\node[black, left] {$s^0_l$};},
       mark=at position 17cm with {\draw[black, thick] (0,-3pt) -- (0,3pt);}, mark=at position 17cm with {\node[black, below, xshift=4pt] {$\tilde{z}^3_1$};},
       mark=at position 19cm with {\draw[black, thick] (0,-3pt) -- (0,3pt);}, mark=at position 19cm with {\node[black, below, xshift=2pt] {$\tilde{z}^3_0$};},
       mark=at position 18cm with {\node[black, below, xshift=3pt]{$\widetilde{I}_3$};},
       mark=at position 24cm with {\draw[black, thick] (0,-3pt) -- (0,3pt);}, mark=at position 24cm with {\node[black, right] {$\tilde{z}^2_1$};}
    }] (6.5,0)  .. controls (7,-3) and (6,-3.5) ..
       (4,-3.8) .. controls (2.4,-4) and (1,-3.5) ..
       (0.4,-3) .. controls (-0.8,-2) and (-1.2, -1.5) ..
       (-1,0)   .. controls (-1,1.5) and (-0.8, 2) ..
       (0.4,3)  .. controls (1.2,3.5) and (2.4,4) ..
       (4,3.8)  .. controls (6,3.5) and (7,3) ..
       (6.5,0);
\draw[decorate, decoration={brace, amplitude=5pt, raise=0.5pt}] (0.64,-3.05) -- (4.42,-3.62) node[midway, above, xshift=2pt, yshift=2pt]  {$\widetilde{I}_1$};
\node[above, rotate=-12] at (2, -3.8) {$\cdots$};
\node[above, rotate=-22] at (0.9, -3.45) {$\cdots$};
%%%% \draw[decorate, decoration={brace, mirror, amplitude=5pt}] (-0.71,-1.665) -- (-0.4,2.05) node[midway, right, xshift=3pt]  {$\widetilde{I}_0$};
\node[left] at (-0.3,0.18) {$\widetilde{I}_0$};
\path[->, >=stealth, thick, blue] (-0.5,0.18) edge [out=0, in=90] node[above, xshift=2pt] {$\ g_n$} (2.6,-2.6);
%%%% \node[above, rotate=62] at (-0.37,1.68) {$\cdots$};
%%%% \node[above, rotate=86] at (-0.63,0.71) {$\cdots$};
% \begin{scope} \clip (5.89,-3.2) rectangle (7,3.2);
%     \draw [rounded corners, thick, red] (4,-3.8) .. controls (6,-3.5)  and (7,-3) .. (6.5,0)  .. controls (7,3) and (6,3.5) .. (4,3.8);
% \end{scope}
% Leaves
\path (6,0)         edge [bend right] (8.01,2)
      (8,1)         edge [bend left]  (9.3,4)
      (7,0.45)      edge [bend left]  (7.5, 0.3)
      (7.5,0.3)     edge [bend left]  (9,-3)
      (7.5,0.3)     edge [bend right] (9,-3)
      (7.5,0.3)     edge (9,-3)
      (8,-0.8)      edge (8.78,-1.25)
      (8.285,-1.43) edge (8.22, -2.3)
      (8.5,-1.9)    edge (9, -2.1)
      (8,-0.8)      edge (7.775,-1.6);
\node[right] at (8.25, -0.4) {$U_j$};
\draw [thick] ([xshift=-1.5pt,yshift=-1.2pt]8.1,-0.2)  -- ([xshift=1.5pt,yshift=1.2pt]8.1,-0.2);
\draw [thick] ([xshift=-1.5pt,yshift=-1pt]8.62,-0.9)  -- ([xshift=1.5pt,yshift=1pt]8.62,-0.9);

% Lower unit interval with maps \xi and return map \varphi_{a,b}
\begin{scope}[color=red]
\draw (2,-5.5) -- (4,-5.5);
\foreach \s in {-1,-0.8,-0.6,-0.4,-0.2,0.8,1}{ \draw[thick] ({3+\s},-5.6)   -- +(0,0.2); }
\node[above] at (2,-5.5) {\tiny 0}; \node[above] at (4,-5.5) {\tiny 1};
\node[above] at (2.67,-5.55) {\tiny $s_{l}$};
\node[below] at (2.97,-5.5) {\tiny $s_{l+1}$};
\node[above] at (3.4,-5.55) {\tiny $\cdots$};
\path[->, >=stealth, thick] (2.5,-3.9)  edge node[right] {$\xi$} (3.1,-5.3);
\path[->, >=stealth, thick] (3.5,-5.7) edge [out=290, in=320] node[below right]  {$\varphi_{\tilde{z}^2_0, \tilde{z}^2_1}$} (6.8,0.1);
\end{scope}

% Upper unit interval with maps \zeta and return map \psi_{a,b}
\begin{scope}[color=red]
\draw (1,5) -- (3,5);
\draw[thick] (1,4.9) -- +(0,0.2); \draw[thick] (3,4.9) -- +(0,0.2);
\node[below] at (1,4.9) {\tiny 1}; \node[below] at (3,4.9) {\tiny 0};
\path[->, >=stealth, thick] (2,4.9)  edge node[right] {$\zeta$} (2,3.8);
\path[->, >=stealth, thick] (7.3,0.9) edge [out=90, in=45] node[above] {$\ \psi_{\tilde{z}^2_0, \tilde{z}^2_1}$} (2,5.1);
\end{scope}
\end{scope}
\end{tikzpicture}
\caption{A topological graph $G$ and the definition of $g_n$.
         The circuit \textcolor{blue}{$C$} (with apple shape) is drawn in \textcolor{blue}{blue}.
         Then, the interval $I$ is the (thin) path in $C$ from $\tilde{z}^2_1$ to $\tilde{z}^2_0$ counter-clockwise
         ($\partial{}I = \{\tilde{z}^2_0,\tilde{z}^2_1\}$)
         and the interval $C\setminus\Int(I) = \widetilde{I}_2$ is
         the \textbf{thick} path
         in $C$ from $\tilde{z}^2_1$ to $\tilde{z}^2_0$ clockwise.}\label{figuregraphGandmapgnfirstexampleBCN}
\end{figure}

We define the map $\map{g_n}{G}$
(see Figure~\ref{figuregraphGandmapgnfirstexampleBCN}) by:
\[
  g_n(x) := \begin{cases}
      \varphi_{\tilde{z}^2_0, \tilde{z}^2_1} (\xi(x)) & \text{if $x \in \widetilde{I}_1$;}\\
      \zeta \bigl(\psi_{\tilde{z}^2_0, \tilde{z}^2_1}(x)\bigr)  & \text{if $x\in X$;}\\
      (\eta \circ f_n \circ \eta^{-1})(x) & \text{if $x \in I\setminus\Int\bigl(\widetilde{I}_1\bigr).$}
\end{cases}
\]
Recall that
$\widetilde{I}_1,$ $\widetilde{I}_2$ and $\widetilde{I}_3$
are pairwise disjoint
and, for every $\ell \in \Z$ and $j \in \{0,1\},$
$
  \bigemap{y_{\modulo{n+j+1+\ell(n+2)}{2n}}} =
  \bigemap{y_{n+j+1+\ell(n+2)}}
$
because $y_{i + 2n\ell} = y_i + \ell$ for every $i,\ell \in \Z.$
Hence, by Lemma~\ref{lemmaPhiPsi},
\begin{align*}
\varphi_{\tilde{z}^2_0, \tilde{z}^2_1} \bigr(\xi\bigl(\tilde{z}^1_j\bigr)\bigr)
  & = \varphi_{\tilde{z}^2_0, \tilde{z}^2_1} (j) = \tilde{z}^2_j
    = \bigeta{\bigemap{y_{n+j+1+2(n+2)}}}\\
  & = \bigeta{\bigemap{F_n\bigl(y_{n+j+1+(n+2)}\bigr)}}
    = \bigeta{f_n\bigl(\bigemap{y_{n+j+1+(n+2)}}\bigr)}\\
  & = \bigeta{f_n\bigl(\eta^{-1}\bigl(\bigeta{\bigemap{y_{n+j+1+(n+2)}}}\bigr)\bigr)}
    = \eta \circ f_n \circ \eta^{-1} \bigl(\tilde{z}^1_j\bigr),\text{ and}\\
\zeta\bigl(\psi_{\tilde{z}^2_0, \tilde{z}^2_1}\bigr(\tilde{z}^2_j\bigr)\bigr)
  & = \zeta(j) = \tilde{z}^3_j = \bigeta{\bigemap{y_{n+j+1+3(n+2)}}}\\
  & = \bigeta{\bigemap{F_n\bigl(y_{n+j+1+2(n+2)}\bigr)}}
    = \bigeta{f_n\bigl(\bigemap{y_{n+j+1+2(n+2)}}\bigr)}\\
  & = \bigeta{f_n\bigl(\eta^{-1}\bigl(\bigeta{\bigemap{y_{n+j+1+2(n+2)}}}\bigr)\bigr)}
    = \eta \circ f_n \circ \eta^{-1}\bigl(\tilde{z}^2_j\bigr)
\end{align*}
for $j \in \{0,1\}.$ So, $g_n$ is continuous because the maps
$\varphi_{\tilde{z}^2_0, \tilde{z}^2_1} \circ \xi\evalat{\widetilde{I}_1},$
$\zeta \circ \psi_{\tilde{z}^2_0, \tilde{z}^2_1}\evalat{X},$ and
$\eta \circ f_n \circ \eta^{-1}\evalat{I\setminus\Int\bigl(\widetilde{I}_1\bigr)}$
are continuous.

To be able to compute and use a Markov partition for the map $g_n$
we introduce the following notation.
Set
\begin{equation}\label{RnDefinition}
\begin{split}
s^1_i &= \xi^{-1}(s_i)\text{ for $i = 0,1,\dots,m,$ and}\\
R_n &=\ \bigeta{\bigemap{Q_n \cup P_n}} \cup \set{s^1_i}{i \in \{0,1,\dots,m\}} \cup \\
    &   \hspace*{10em}\set{\varphi_{\tilde{z}^2_0, \tilde{z}^2_1}(s_i)}{i \in \{0,1,\dots,m\}}
\end{split}
\end{equation}
and observe that
\begin{multline*}
\bigeta{\bigemap{Q_n \cup P_n}} =\\
\etaemap{Q_n} \cup\ \set{\tilde{z}^i_j}{i \in \{0,1,\dots,2n-3\}
   \text{ and } j \in \{0,1\}} \subset \\
\hspace*{24em}I\setminus\Int\bigl(\widetilde{I}_1\bigr),\\
 \hspace*{-10em}\set{s^1_i}{i \in \{0,1,\dots,m\}} \subset
 \widetilde{I}_1 \text{ and}\\
 \set{\varphi_{\tilde{z}^2_0, \tilde{z}^2_1}(s_i)}{i \in \{0,1,\dots,m\}}
 \subset X.
\end{multline*}
Moreover, by Lemma~\ref{lemmaPhiPsi}(a),
\[
R_n \supset
  \set{\varphi_{\tilde{z}^2_0, \tilde{z}^2_1}(s_i)}{i \in \{0,1,\dots,m\}}
  \supset V(X) \supset V(G).
\]
Hence, $R_n$ will be a Markov invariant set
provided that it is $g_n$-invariant and the closure of each
connected component of $G\setminus R_n$ is an interval in $G.$
Fix a point $x \in R_n.$ Then, $g_n(x)$ is
\begin{equation}\label{RnInvariant}
  \left\{\setlength{\tabcolsep}{0pt}\begin{tabular}{p{13mm}p{75mm}} & \\[-1.5ex]
      \multicolumn{2}{p{100mm}}{$\bigeta{f_n\bigl(\eta^{-1}(\etaemap{t})\bigr)} = \bigeta{f_n(\emap{t})} =$\newline\hspace*{12em}
                                $\bigeta{\bigemap{F_n(t)}} \in \bigeta{\bigemap{Q_n \cup P_n}}$}\\
      & if $x = \etaemap{t} \in I\setminus\Int\bigl(\widetilde{I}_1\bigr)$ with $t \in Q_n \cup P_n$\newline (here we use Theorem~\ref{theoremfirstexamplebcncircle}),\\
      \multicolumn{2}{l}{$\varphi_{\tilde{z}^2_0, \tilde{z}^2_1} \bigl(\xi\bigl(s^1_i\bigr)\bigr) = \varphi_{\tilde{z}^2_0, \tilde{z}^2_1} (s_i) \in \set{\varphi_{\tilde{z}^2_0, \tilde{z}^2_1}(s_i)}{i \in \{0,1,\dots,m\}}$}\\
      & if $x = s^1_i \in \widetilde{I}_1$ with $i \in \{0,1,\dots,m\},$\\
      \multicolumn{2}{l}{$\zeta\Bigl(\psi_{\tilde{z}^2_0, \tilde{z}^2_1}\Bigl(\varphi_{\tilde{z}^2_0, \tilde{z}^2_1}(s_i)\Bigr)\Bigr) \in \zeta\bigl(\{0,1\}\bigr) = \bigl\{\tilde{z}^3_0, \tilde{z}^3_1\bigr\} \in \bigeta{\bigemap{Q_n \cup P_n}}$}\\
      & if $x = \varphi_{\tilde{z}^2_0, \tilde{z}^2_1}(s_i) \in X$ with $i \in \{0,1,\dots,m\}$\newline (here we use Lemma~\ref{lemmaPhiPsi}(d)).\\[-1.5ex] &
  \end{tabular}\right.
\end{equation}
In either case, $g_n(x) \in R_n$ and, consequently,
$R_n$ is $g_n$-invariant.

Let $K$ be a connected component of $G\setminus R_n.$
Since $I$ is an interval with endpoints
$\bigl\{\tilde{z}^2_0, \tilde{z}^2_1\bigr\} \subset R_n,$
either $\Clos(K) \subset I$ or $\Clos(K) \subset X.$
In the first case, $\Clos(K)$ is clearly an interval.
Now assume that $K \subset \Clos(K) \subset X.$
Clearly, $K$ is a connected component of
$
  X\setminus R_n =
  X\setminus
     \set{\varphi_{\tilde{z}^2_0, \tilde{z}^2_1}(s_i)}{i \in \{0,1,\dots,m\}}.
$
Since the map
$\map{\varphi_{\tilde{z}^2_0, \tilde{z}^2_1}}{[0,1]}[X]$
is surjective and
\[
  K \cap \set{\varphi_{\tilde{z}^2_0, \tilde{z}^2_1}(s_i)}{i \in \{0,1,\dots,m\}}
  = \emptyset,
\]
by Lemma~\ref{lemmaPhiPsi}(a),
$\varphi_{\tilde{z}^2_0, \tilde{z}^2_1}\bigl((s_i,s_{i+1})\bigr) = K$
for some $i \in \{0,1,\dots,m-1\}.$
Hence, by Lemma~\ref{lemmaPhiPsi}(b),
$
\Clos(K) = \varphi_{\tilde{z}^2_0, \tilde{z}^2_1}\bigl([s_i,s_{i+1}]\bigr)
$
is an interval.
This shows that $R_n$ is a Markov invariant set for $g_n.$

The $R_n$-basic intervals are:
\begin{align*}
&\widetilde{I}_0, \widetilde{I}_3, \widetilde{I}_4, \dots, \widetilde{I}_{2n-3} \subset I\setminus\Int\bigl(\widetilde{I}_1\bigr)\\
&\widetilde{J}_j := \bigeta{\bigemap{J_j}} \subset I\setminus\Int\bigl(\widetilde{I}_1\bigr) \andq[for] j=0,1,2,3,\\
&L_i := \chull{s^1_i, s^1_{i+1}}[\widetilde{I}_1] = \xi^{-1}([s_i,s_{i+1}]) \subset \widetilde{I}_1 \andq[for] i=0,1,\dots,m-1, \text{ and}\\
&\varphi_{\tilde{z}^2_0, \tilde{z}^2_1}\bigl([s_i,s_{i+1}]\bigr) \subset X \andq[for] i=0,1,\dots,m-1.
\end{align*}
Moreover, unlike other $R_n$-basic intervals, the elements of
\[
 \set{\varphi_{\tilde{z}^2_0, \tilde{z}^2_1}\bigl([s_i,s_{i+1}]\bigr)}{i\in\{0,1,\dots,m-1\}}
\]
may coincide.
The (pairwise different) elements of this set will be denoted by
$U_0, U_1,\dots,U_r$ with $r \le m-1,$ so that
\[
 \{U_0, U_1,\dots,U_r\} = \set{\varphi_{\tilde{z}^2_0, \tilde{z}^2_1}\bigl([s_i,s_{i+1}]\bigr)}{i\in\{0,1,\dots,m-1\}} \subset X.
\]

Next we will show that $g_n$ is a Markov map with respect to $R_n$
and we will compute the Markov graph of $g_n$ with respect to $R_n.$
More precisely, we will show that $g_n$ is monotone at every basic
interval and derive the Markov graph of $g_n$ with respect to $R_n$
from the Markov graph of $f_n$ with respect to
$\bigemap{P_n \cup Q_n}$ which,
by the proof of Theorem~\ref{theoremfirstexamplebcncircle},
coincides with the Markov graph modulo 1 of $F_n$ with respect to
$P_n \cup Q_n$
(see Proposition~\ref{propositionmarkovgraphfirstexamplebcn}(b))
provided that we identify
$\BIclass{I}$ with $\bigemap{\BIclass{I}} = \emap{I}$ for every
$I \in \bigSBI{P_n \cup Q_n}.$

We start by observing that if
$
  K \in \bigl\{\widetilde{I}_0, \widetilde{I}_3, \widetilde{I}_4, \dots,
          \widetilde{I}_{2n-3}, \widetilde{J}_0, \widetilde{J}_1, \widetilde{J}_2,
          \widetilde{J}_3\bigr\},
$
(that is, $K \in \bigSBI{R_n}$ with
 $K \subset I\setminus\Int\bigl(\widetilde{I}_1\bigr)$),
then $\eta^{-1}(K) \in \bigSBI{\bigemap{Q_n \cup P_n}}$
and $g_n(K) = (\eta \circ f_n) \bigl(\eta^{-1}(K)\bigr),$
which is equivalent to $\eta^{-1}(g_n(K)) = f_n\bigl(\eta^{-1}(K)\bigr).$
Consequently, $g_n$ is monotone on $K$ because
$f_n$ is a Markov map with respect to $\bigemap{P_n \cup Q_n}$
(see the proof of Theorem~\ref{theoremfirstexamplebcncircle})
and, for every interval
$
  L \in \bigl\{\widetilde{I}_0, \widetilde{I}_3, \widetilde{I}_4, \dots,
          \widetilde{I}_{2n-3}, \widetilde{J}_0, \widetilde{J}_1, \widetilde{J}_2,
          \widetilde{J}_3\bigr\} \cup \bigl\{\widetilde{I}_1\bigr\},
$
it follows that
$K$ $g_n$-covers $L$ if and only if $\eta^{-1}(K)$ $f_n$-covers $\eta^{-1}(L).$
With the help of
Proposition~\ref{propositionmarkovgraphfirstexamplebcn}(b)
this gives the Markov graph of $g_n$ on the intervals
$J \in \bigSBI{R_n}$ with $J \subset I\setminus\Int\bigl(\widetilde{I}_1\bigr)$
and shows that $\widetilde{I}_0$ $g_n$-covers $L_i$
for $i=0,1,\dots,m-1$
(to illustrate this fact see
Figure~\ref{fig:MarkovGraphofgnfirstexamplebcn} which shows the
Markov graph of $g_n$ in the case $n = 4k-1$ and $d = 2k-1$
with $k \in \N$).\vspace{-1ex}
\begin{figure}[ht]
\begin{center}\tiny
  \tikzstyle{place}=[rectangle,draw=black!50,fill=black!0,thick]
  \tikzstyle{post}=[->,shorten >=1pt,>=stealth,semithick]
\begin{tikzpicture}
\filldraw[thick, draw=black!40, fill=black!10, decorate, decoration={zigzag, amplitude=1pt, segment length=2pt}] (-0.45,0.3) rectangle  (3.75,2.9);
\def\bigBIclass#1{\widetilde #1}
\subfigurefngraphModOne
\node[rotate=90] (dots4) at (0,5)    {$\cdots$};
\node[place] (i3)        at (0,4)    {$\widetilde{I}_3$};
\node[place] (i0)        at (0.8,-1) {$\widetilde{I}_0$};
\node[place](l0)       at (0,0.7)    {$L_0$};
\node[place](l1)       at (0.65,0.7) {$L_1$};
\node(dotsl)           at (1.25,0.7) {$\cdots$};
\node[place](lm)       at (2,0.7)    {$L_{m-2}$};
\node[place](lm1)      at (3.1,0.7)  {$L_{m-1}$};
\node[place](u0)       at (0,2.5)    {$U_0$};
\node[place](u1)       at (0.8,2.5)  {$U_1$};
\node[place](ur)       at (2.4,2.5)  {$U_r$};
\node(dotsu)           at (1.6,2.5)  {$\cdots$};

\path[post]
     (i3.north)+(0,1pt)    edge (dots4)
     (dots4.east)+(0,1pt)  edge (id)
     (j0.west)+(-1pt,0)    edge (i0.east)
     (i0.north)+(0,1pt)    edge (l0)
     (i0.north)+(0,1pt)    edge (l1.south)
     (i0.north)+(0,1pt)    edge (lm.south)
     (i0.north)+(0,1pt)    edge (lm1.south)
     (l0.north)+(0,1pt)    edge (u0)
     (l1.north)+(0,1pt)    edge (u1)
     (lm.north)+(0,1pt)    edge (u1)
     (lm1.north)+(0,1pt)   edge (ur)
     (u0.north)+(0,1pt)    edge (i3)
     (u1.north)+(0,1pt)    edge (i3.290)
     (ur.north)+(0,1pt)    edge (i3.310);
\draw[post] (j0.north)+(1pt,1pt) .. controls (4,3.5) ..  (id.south);
\end{tikzpicture}
\end{center}
\caption{The Markov graph of $g_n$ in the case when
$n = 4k-1$ and $d = 2k-1$ with $k \in \N$
(being the other case when $n = 4k+1$ and $d = 2k+1$).
The part of the Markov graph of $g_n$ with respect to $R_n$
which differs from the Markov graph of $f_n$ with respect to
$\bigemap{P_n \cup Q_n}$ is shown inside a grey box with a
zigzag border
(see Proposition~\ref{propositionmarkovgraphfirstexamplebcn}(b)).
The arrows between the intervals $L_i$ and $U_j$
are just illustrative.}\label{fig:MarkovGraphofgnfirstexamplebcn}
\end{figure}

\medskip

Now we consider the intervals $L_i.$ Clearly, by Lemma~\ref{lemmaPhiPsi}(b),
\[
g_n(L_i) = \varphi_{\tilde{z}^2_0, \tilde{z}^2_1} (\xi(L_i))
         = \varphi_{\tilde{z}^2_0, \tilde{z}^2_1} ([s_i,s_{i+1}])
         \in \{U_0, U_1,\dots,U_r\}
\]
and $g_n$ is monotone on $L_i.$
This shows that in the Markov graph of $g_n$ every interval $L_i$
$g_n$-covers a unique interval $U_j$ but different intervals $L_i$
can $g_n$-cover the same interval $U_j$
(see again Figure~\ref{fig:MarkovGraphofgnfirstexamplebcn}).

Finally, we consider the intervals $U_j.$
Clearly, by Lemma~\ref{lemmaPhiPsi}(e),
\[
g_n(U_j) = \zeta \bigl(\psi_{\tilde{z}^2_0, \tilde{z}^2_1}(U_j)\bigr)
         = \zeta \bigl(\psi_{\tilde{z}^2_0, \tilde{z}^2_1}\bigl(\varphi_{\tilde{z}^2_0, \tilde{z}^2_1} ([s_{i_j},s_{i_j+1}])\bigr)\bigr)
         = \zeta([0,1]) = \widetilde{I}_3
\]
and $g_n$ is monotone on $U_j.$
This shows that in the Markov graph of $g_n$ every interval $U_j$
$g_n$-covers a unique interval $\widetilde{I}_3$
(see once more Figure~\ref{fig:MarkovGraphofgnfirstexamplebcn}).

We just have seen that $g_n$ is a Markov map with respect to $R_n$
such that $f(K)$ is a (non-degenerate) interval for every
$K \in \SBI[R_n].$
Then, by Lemma~\ref{convertingtoexpansiveMM},
the map $g_n$ can be modified
without altering $g_n\evalat{R_n}$ and $g_n(K)$
for every $K \in \SBI[R_n]$ in such a way that
$g_n$ becomes $R_n$-expansive. So, we can use
Theorem~\ref{theoremTransitivityexpansivemap}
to prove that $g_n$ is transitive.
The Markov graph of $g_n$
(see Figure~\ref{fig:MarkovGraphofgnfirstexamplebcn} and
Proposition~\ref{propositionmarkovgraphfirstexamplebcn}(b))
tells us that the Markov matrix of $g_n$
with respect to $R_n$ is not a permutation matrix because there is
at least one basic interval which $g_n$-covers more than one basic interval
(for instance the interval $\widetilde{J}_0$ that $g_n$-covers 4
different intervals).
Moreover, by direct inspection of the Markov graph of $g_n,$
given any two vertices $\widetilde{I}$ and $\widetilde{J}$
in the graph, there exists a path from
$\widetilde{I}$ to $\widetilde{J}.$
This means that the transition matrix of the
Markov graph of $g_n$ is non-negative and irreducible.
Thus, $g_n$ is transitive by
Theorem~\ref{theoremTransitivityexpansivemap}.

Next we will show that $\Per(g_n) = \Per(f_n)$
(which will also be helpful in showing that $g_n$
is totally transitive).

In what follows, given $q\in \N$ and $A\subset \N$ we will
denote the set $\set{q\ell}{\ell \in A}$ by $q\cdot A.$

Observe that, by Theorem~\ref{theoremfirstexamplebcncircle},
$q\in\Per(f_n),\ q \ne 2$ implies $q \ge 2k+1 > \tfrac{n}{2}.$
Thus, for every $\ell \in \N\setminus\{1\},$
$\ell q > n$ and, hence, $\ell q \in \succs{n} \subset \Per(f_n).$
Consequently, again by Theorem~\ref{theoremfirstexamplebcncircle},

\begin{multline*}
\Per(f_n) = \{2\} \cup \bigcup_{\substack{q\in\Per(f_n)\\ q \ne 2}} q\cdot\{1\}
          \subset \{2\} \cup \bigcup_{\substack{q\in\Per(f_n)\\ q \ne 2}} q\cdot\N = \\
% \left(\{2\} \cup \bigcup_{\substack{q\in\Per(f_n)\\ q \ne 2}}
%                  q\cdot\{1\}\right)
% \cup \bigcup_{\substack{q\in\Per(f_n)\\ q \ne 2}}
%                  q\cdot\bigl(\N\setminus\{1\}\bigr) =\\
          \Per(f_n) \cup \bigcup_{\substack{q\in\Per(f_n)\\ q \ne 2}} q\cdot\bigl(\N\setminus\{1\}\bigr) \subset \Per(f_n) \cup \succs{n} = \Per(f_n).
\end{multline*}
So, to prove that $\Per(g_n) = \Per(f_n)$ it is enough to show that
\[
 \Per(g_n) = \{2\} \cup \bigcup_{\substack{q\in\Per(f_n)\\ q \ne 2}} q\cdot\N.
\]

First we will show that $2 \in \Per(g_n).$
Recall that, by Theorem~\ref{theoremfirstexamplebcncircle}, $Q_n$
is a lifted periodic orbit of $F_n$ of period 2, which implies that
$\bigemap{Q_n}$ is a periodic orbit of $f_n$ of period 2.
Moreover, $\bigeta{\bigemap{Q_n}} \subset I\setminus\Int\bigl(\widetilde{I}_1\bigr)$
and, hence,
$g_n(y) = \bigl(\eta \circ f_n \circ \eta^{-1}\bigr)(y)$
for every $y \in \bigeta{\bigemap{Q_n}}$
(here and in the rest of the proof that $\Per(g_n) = \Per(f_n)$
we use the fact that $g_n\evalat{R_n}$ has not been modified).
Thus, $\bigeta{\bigemap{Q_n}}$ is a periodic orbit of $g_n$ of period 2.

Let $Y$ be a periodic orbit of $g_n$ of period $p \ne 2$.
We have to see that
$p = \ell q$ with $\ell \in \N$ and
$q\in\Per(f_n),$ $q \ne 2.$
In the same way as before,
$\bigeta{\bigemap{P_n}}$ is a periodic orbit of $g_n$ of period $2n,$
and $\bigemap{P_n}$ is a periodic orbit of $f_n$ of period $2n.$
So, if $Y = \bigeta{\bigemap{P_n}},$
$p\in\Per(f_n),$ $p \ne 2$ and we are done in this case.

In the rest of the proof we assume that
$Y \ne \bigeta{\bigemap{P_n}}.$
Then, $Y \cap R_n = \emptyset.$
Indeed, otherwise,
\begin{multline*}
 \emptyset \ne g_n^2\bigl(Y \cap R_n\bigr) \subset
 g_n^2(Y) \cap g_n^2\bigl(R_n\bigr) \subset \\
 g_n^2(Y) \cap \bigeta{\bigemap{Q_n \cup P_n}} =
 Y \cap \Bigl(\bigeta{\bigemap{Q_n}} \cup \bigeta{\bigemap{P_n}} \Bigr)
\end{multline*}
by \eqref{RnDefinition} and \eqref{RnInvariant}.
Thus, $Y \cap \bigeta{\bigemap{Q_n}}  \ne \emptyset$ because
$Y \ne \bigeta{\bigemap{P_n}}$ implies $Y \cap\; \bigeta{\bigemap{P_n}} = \emptyset.$
Then,
since both $Y$ and $\bigeta{\bigemap{Q_n}}$ are periodic orbits of $g_n,$
it follows that $Y = \bigeta{\bigemap{Q_n}};$ a contradiction because
we are assuming that $p \ne 2.$
Therefore we have shown that $Y$ is disjoint from $R_n.$
Consequently, by Proposition~\ref{propPerLoops}, there is a loop
$
\lambda = K_0\longrightarrow K_1\longrightarrow \cdots
             \longrightarrow K_{p-1}\longrightarrow K_0
$
in the Markov graph of $g_n$ of length $p$ associated to $Y.$

Now we define a projection $\map{\pi}{\bigSBI{R_n}}[{\bigSBI{\bigemap{P_n \cup Q_n}}}]$
from the set of basic intervals of $g_n$ to
the set of basic intervals of $f_n$ in the following way.
For every $K \in \bigSBI{R_n}$ we set:
\[
  \pi(K) := \begin{cases}
     \eta^{-1}(K) & \text{if $K \subset I\setminus \widetilde{I}_1$;}\\
     \emap{I_1}   & \text{if $K\subset\widetilde{I}_1$;}\\
     \emap{I_2}   & \text{if $K\subset X.$}
\end{cases}
\]
It is clear by construction
(see Figure~\ref{fig:MarkovGraphofgnfirstexamplebcn} and
Proposition~\ref{propositionmarkovgraphfirstexamplebcn}(b))
that if there is an arrow $J \longrightarrow L$
in the Markov graph of $g_n,$ then there is an arrow
$\pi(J) \longrightarrow \pi (L)$ in the Markov graph of $f_n.$
Moreover,
since $\lambda$ is a loop of length $p$ in the Markov graph of $g_n,$
the projection of $\lambda$
\[
\pi(\lambda) := \pi(K_0)\longrightarrow \pi(K_1)\longrightarrow \cdots
                       \longrightarrow \pi(K_{p-1})\longrightarrow
               \pi(K_0)
\]
is a loop in the Markov graph of $f_n$ of the same length.
By Lemma~\ref{lemmaInterv},
there exists $x\in \pi(K_0)$ such that
$f_n^i(x)\in \pi(K_i),$ $0 \le i \le p-1$ and $f^{p}(x) = x.$
Then, the $f_n$-period of $x$ is $q,$ a divisor of $p,$ so that
$p = \ell q$ with $\ell \in \N$ and $q\in \Per(f_n).$
To end the proof that $\Per(g_n) = \Per(f_n)$
only it remains to show that $q \ne 2.$

By way of contradiction we assume that $q = 2$
(so that the $f_n$-orbit of $x$ is $\{x, f_n(x)\}$).
%(i.e. $x$ is a periodic point of period 2 of $f_n$).
Clearly, in this case, $\ell \ge 2$ (and, hence, $p > q$)
because $p \ne 2.$
On the other hand,
as it it has been already justified after enumerating the
$R_n$-basic intervals,
the Markov graph of $f_n$ with respect to
$\bigemap{P_n \cup Q_n}$
coincides with the Markov graph modulo 1 of $F_n$ with respect to
$P_n \cup Q_n$ provided that we identify
$\BIclass{I}$ with $\bigemap{\BIclass{I}} = \emap{I}$ for every
$I \in \bigSBI{P_n \cup Q_n}.$ Consequently,
Proposition~\ref{propositionmarkovgraphfirstexamplebcn}(b)
gives the Markov graph of $f_n$ with respect to
$\bigemap{P_n \cup Q_n}.$

First we consider the case when
$\{x, f_n(x)\} \cap \bigemap{P_n \cup Q_n} = \emptyset.$
By Proposition~\ref{propPerLoops}(a), there exists a loop associated
to $\{x, f_n(x)\}$ in the Markov graph of $f_n.$
By Proposition~\ref{propositionmarkovgraphfirstexamplebcn}(b),
$\{x, f_n(x)\}$ is associated to the loop
$
\emap{J_0}\longrightarrow \emap{J_2} \longrightarrow \emap{J_0},
$
which is the unique loop of length 2 in the Markov graph of $f_n.$
Moreover, since $\{x, f_n(x)\} \cap \bigemap{P_n \cup Q_n} = \emptyset,$
and $f_n^i(x)\in \pi(K_i)$ for $0 \le i \le p-1$ and $f^{p}(x) = x,$
it follows that $\pi(\lambda)$ is an $\ell$-repetition of
$
\emap{J_0}\longrightarrow \emap{J_2} \longrightarrow \emap{J_0}.
$
Hence, in view of the definition of $\pi,$ we have that
$\pi^{-1}(\pi(\lambda)) = \lambda$ and, thus,
$\lambda$ is an $\ell$-repetition of the loop
\[
  \widetilde{J}_0 = \pi^{-1}(\emap{J_0}) \longrightarrow
  \widetilde{J}_2 = \pi^{-1}(\emap{J_2}) \longrightarrow
  \widetilde{J}_0 = \pi^{-1}(\emap{J_0}).
\]
By Proposition~\ref{propPerLoops}(b) applied to $\lambda$
it follows that $\ell = 2$ and
$
\widetilde{J}_0 \longrightarrow \widetilde{J}_2 \longrightarrow \widetilde{J}_0
$
must be negative.
However, with the notation of
Proposition~\ref{propositionmarkovgraphfirstexamplebcn}
it follows that $F_n\evalat{J_0}$ and $F_n\evalat{J_2}$
are strictly increasing and
\[
   F_n(J_0) = [x_1, y_{n+2}] \varsupsetneq [x_1,y_{n-2}] = J_2 \andq
   F_n(J_2) = [x_2, y_{2n}] = J_0 + 1.
\]
Thus, the loop
$
\widetilde{J}_0 \longrightarrow \widetilde{J}_2 \longrightarrow \widetilde{J}_0
$
is positive; a contradiction.

Now  we consider the case when
$\{x, f_n(x)\} \cap \bigemap{P_n \cup Q_n} \ne \emptyset.$
Clearly, since
$\{x, f_n(x)\},\ \bigemap{Q_n}$ and $\bigemap{P_n}$
are periodic orbits of $f_n,$ we have either
$\{x, f_n(x)\} = \bigemap{Q_n}$ or
$\{x, f_n(x)\} = \bigemap{P_n}.$
Furthermore, since the $f_n$ period of $\bigemap{P_n}$ is $2n,$
it follows that $\{x, f_n(x)\} = \bigemap{Q_n}.$
On the other hand, observe that the only intervals of
$\bigSBI{\bigemap{P_n \cup Q_n}}$ containing a point from $\bigemap{Q_n}$
are $\emap{J_0},\ \emap{J_1},\ \emap{J_2}$ and $\emap{J_3}$
(see Proposition~\ref{propositionmarkovgraphfirstexamplebcn}).
Thus, since $f_n^i(x) \in \bigemap{Q_n} \cap \pi(K_i)$ for $0 \le i \le p-1,$
it follows that
\[
  \pi(K_i)  \in \{\emap{J_0},\emap{J_1},\emap{J_2},\emap{J_3}\}
  \andq[for]
  0 \le i \le p-1.
\]
Moreover, as it can be checked in
Proposition~\ref{propositionmarkovgraphfirstexamplebcn}(b),
$\emap{J_1}$ is not $f_n$-covered by any of these four intervals.
So, $\emap{J_1}$ cannot appear in $\pi(\lambda).$
In a similar way, since $\emap{J_3}$ is not $f_n$ covered by
$\emap{J_0}$ and $\emap{J_2}$ and is $f_n$-covered only by
$\emap{J_1}$ which, as we already know, does not take part in
$\pi(\lambda),$ $\emap{J_1}$ does not appear in $\pi(\lambda).$
Consequently,
$
\pi(K_i)  \in \{\emap{J_0},\emap{J_2}\}
$
for $0 \le i \le p-1$ and, as in the previous case,
$\pi^{-1}(\pi(\lambda)) = \lambda.$  So,
\[
  K_i  \in \bigl\{\widetilde{J}_0 = \pi^{-1}(\emap{J_0}),\widetilde{J}_2 = \pi^{-1}(\emap{J_2})\bigr\}
  \andq[for]
  0 \le i \le p-1.
\]
We recall that the only loop in the Markov graph of $g_n$
consisting only on intervals $\widetilde{J}_0$ and $\widetilde{J}_2$
is
$
\widetilde{J}_0 \longrightarrow \widetilde{J}_2 \longrightarrow \widetilde{J}_0
$
(see Figure~\ref{fig:MarkovGraphofgnfirstexamplebcn})
and that this loop is positive.
Thus, either
$
\lambda = \widetilde{J}_0 \longrightarrow \widetilde{J}_2 \longrightarrow \widetilde{J}_0
$
or $\lambda$ is an $\ell$-repetition of
$
\widetilde{J}_0 \longrightarrow \widetilde{J}_2 \longrightarrow \widetilde{J}_0.
$
In view of Proposition~\ref{propPerLoops}(b),
this last option is not possible because, in that case,
$\lambda$ would be a repetition of a positive loop and hence
$
\lambda = \widetilde{J}_0 \longrightarrow \widetilde{J}_2 \longrightarrow \widetilde{J}_0
$
and $p = 2$; a contradiction.
This ends the proof that $\Per(g_n) = \Per(f_n).$

On the other hand, since $\Per(g_n) = \Per(f_n) \supset \succs{n}$
and $g_n$ is transitive, it follows that $\Per(g_n)$ is cofinite in
$\N$ and $g_n$ is totally transitive by
Theorem~\ref{theoremtotallytransitivefromadrr}.

To end the proof of the theorem we will estimate $h(g_n)$
in a similar way as in
Proposition~\ref{propositionmarkovgraphfirstexamplebcn}.
So, we  choose
$\textsf{Rom}_n = \{\widetilde{J}_0, \widetilde{J}_3\}$
as a rome in both cases: $n = 4k + 1$ and $n = 4k - 1.$
Then, the matrix $M_{\textsf{Rom}_n}(x)$ is:
\[
 \begin{cases}
   \begin{pmatrix*}[r]
        x^{-2} + m x^{-4d-2} +\alpha(x) & m x^{-4d-2} +\alpha(x)\\
        x^{-2}  +\alpha(x)            & \alpha(x)
   \end{pmatrix*} & \text{\parbox{9em}{when $n=4k-1$ and $d=\tfrac{n-1}{2}$ for some $k \in \N,$ and}}\\[5ex]
   \begin{pmatrix*}[r]
         x^{-2} + m x^{-4d+2} +  \alpha(x) & m x^{-4d+2} +\alpha(x)\\
         x^{-2}   +\alpha(x)             & \alpha(x)
  \end{pmatrix*} & \text{\parbox{9em}{when $n=4k+1$ and $d=\tfrac{n+1}{2}$ for some $k \in \N,$}}
\end{cases}
\]
where
\[
 \alpha(x) = \begin{cases}
   x^{-3d -2} +x^{-2d-2}+x^{-d-2}& \text{\parbox{14em}{when $n=4k-1$ and $d=\tfrac{n-1}{2}$ for some $k \in \N,$ and}}\\[2ex]
   x^{-3d} +x^{-2d} +x^{-d} & \text{\parbox{14em}{when $n=4k+1$ and $d=\tfrac{n+1}{2}$ for some $k \in \N.$}}
\end{cases}
\]
Finally we are ready to compute the characteristic polynomial
$P_n(x)$ of the Markov matrix of $g_n$ with respect to $R_n$
by using Theorem~\ref{theoremrome}. As in
Proposition~\ref{propositionmarkovgraphfirstexamplebcn}
it turns out that it is the same in both cases:
$n=4k-1$ and $d=\tfrac{n-1}{2}$ or
$n=4k+1$ and $d=\tfrac{n+1}{2}.$
We get
\[
  P_n(x)  = x^{2n}(x^2 - 1) -2x^{\frac{3n+1}{2}} -2x^{n+1}-2x^{\frac{n+3}{2}} -mx^2 -m
\]
and $h(g_n) = \log \rho_n$ where $\rho_n$
is the largest root (larger than one) of $P_n.$

The polynomial $P_n$ is very similar to the polynomial $T_n$ in
Proposition~\ref{propositionmarkovgraphfirstexamplebcn}.
Thus, reasoning as at the end of the proof of
Theorem~\ref{theoremfirstexamplebcncircle},
we conclude that, for each $n$ odd,
there exists a real number $\gamma_n \ge \rho_n$ such that
the sequence $\{\gamma_n\}_n$ is strictly decreasing and
$\lim_{n\to\infty} \gamma_n = 1.$
Consequently, $\lim_{n\to\infty} h(g_n) = 0$
by Proposition~\ref{propositionmonotoneTopEnt}.
\end{proof}

%%%%%%%%%%%%%%%%%%%%%%%%%%%%%%%%%%%%%%%%%%%%%%%%%%%%%%%%%%%%%%%%%%%%%%
\subsection{Example with low non-constant periods}

This subsection is devoted to construct and prove

\begin{CustomNumberedExample}[\ref{examplemontevideuexampleintroduction}]
For every $n \in \N,\ n\ge 3$ there exists $f_n,$  a totally transitive
continuous circle map of degree one having a lifting $F_n \in \dol$
such that
\[
    \Rot(F_n) = \left[\tfrac{2n-1}{2n^2}, \tfrac{2n+1}{2n^2}\right] =
        \left[\tfrac{1}{n}-\tfrac{1}{2n^2},
        \tfrac{1}{n}+\tfrac{1}{2n^2}\right],
\]
$\lim_{n\to\infty} h(f_n) = 0$ and
\begin{multline*}
 \Per(f_n) =
   \{n\}\ \cup\\
   \set{t n + k}{t \in \{2,3,\dots,\nu-1\} \text{ and }
                    -\tfrac{t}{2} < k \le \tfrac{t}{2},\ k \in \Z} \cup \\
   \succs{n\nu+1-\tfrac{\nu}{2}}
\end{multline*}
with
\[
\nu = \begin{cases}
        n & \text{if $n$ is even, and}\\
        n-1 & \text{if $n$ is odd.}
     \end{cases}
\]
Moreover, $\sbc(f_n) = n\nu+1-\tfrac{\nu}{2}$
and $\bc(f_n)$ exists and verifies $n\le \bc(f_n) \le n\nu - 1 -\tfrac{\nu}{2}$
(and hence, $\lim_{n\to\infty} \bc(f_n) = \infty$).\smallskip

Furthermore, given any graph $G$ with a circuit, the sequence of maps
$\{f_n\}_{n=4}^\infty$ can be extended to a sequence of continuous totally
transitive maps $\map{g_n}{G}$ such that $\Per(g_n) = \Per(f_n)$
and $\lim_{n\to\infty} h(g_n) = 0.$
\end{CustomNumberedExample}

As in the previous subsection,
Example~\ref{examplemontevideuexampleintroduction} will be split
into Theorem~\ref{theoremexamplemontevideucircle}
which shows the existence of the circle maps $f_n$ by constructing them
along the lines of Subsection~\ref{Examples:PhilandIntro-HowTo},
and Theorem~\ref{theoremexamplemontevideugraph} that
extends these maps to a generic graph that is not a tree.
The proof of Theorem~\ref{theoremexamplemontevideucircle}, in turn, will
use a proposition that computes the Markov graph  modulo 1 of the
liftings $F_n.$

\begin{theorem}\label{theoremexamplemontevideucircle}
Let $n \in \N,\ n\ge 3,$ $p=2n-1,$ $r=2n+1$ and $q=2n^2,$ and let
\begin{align*}
Q_n &= \{\dots x_{-1}, x_0, x_1, x_2, \dots, x_{q-1}, x_{q}, x_{q+1}, \dots\} \subset \R,\andq \\
P_n &= \{\dots y_{-1}, y_0, y_1, y_2, \dots, y_{q-1}, y_{q}, y_{q+1}, \dots\} \subset \R
\end{align*}
be infinite sets such that the points of $P_n$ and $Q_n$
are intertwined  so that
\begin{tiny}
\[\def\sseepp{< \cdots <} \arraycolsep=1pt
 \begin{array}{cccccccccccl}
 \multicolumn{3}{l}{x_0 = 0< x_1 \sseepp x_{n-1} <}  & & &         &               &   &            &         &              &   \\[0.5ex]
    x_{n}    & \sseepp & x_{p+n-1}  &   &                &    <    &               &   & y_0        & \sseepp & y_{n}        & < \\
    x_{p+n}  & \sseepp & x_{2p+n-1} & < & y_{n+1}        & \sseepp & y_{2n}        & < & y_{r}      & \sseepp & y_{r+n}      & < \\
    x_{2p+n} & \sseepp & x_{3p+n-1} & < & y_{r+n+1}      & \sseepp & y_{r+2n}      & < & y_{2r}     & \sseepp & y_{2r+n}     & < \\
     \vdots  &         & \vdots     &   & \vdots         &         & \vdots        &   & \vdots     &         & \vdots       &   \\[-2ex]
             & \sseepp &            & < &                & \sseepp &               & < &            & \sseepp &              & < \\[-2.6ex]
     \vdots  &         & \vdots     &   & \vdots         &         & \vdots        &   & \vdots     &         & \vdots       &   \\[-0.5ex]
\hspace*{1.1em}x_{(n-1)p+n} & \sseepp & x_{np+n-1} & < & y_{(n-2)r+n+1} & \sseepp & y_{(n-2)r+2n} & < & y_{(n-1)r} & \sseepp & y_{(n-1)r+n} & < \hspace*{1.1em}\\[1ex]
             &         &            &   &                &         &               &   &            &         & \multicolumn{2}{r}{x_q = 1,}
\end{array}
\]
\end{tiny}
and $x_{i + q\ell} = x_i + \ell$ and $y_{i + q\ell} = y_i + \ell$
for every $i,\ell \in \Z.$

We define a lifting $F_n \in \dol$ such that, for every $i \in \Z,$
$F_n(x_i) = x_{i+p}$ and $F_n(y_i) = y_{i+r},$
and $F_n$ is affine between consecutive points of $P_n \cup Q_n.$
Then, $Q_n$ and $P_n$ are twist lifted periodic orbits of $F_n$
both of period $q$ such that
$Q_n$ has rotation number $\tfrac{p}{q}$ and
$P_n$ has rotation number $\tfrac{r}{q}.$
Moreover, the map $F_n$ has
$\Rot(F_n) = \left[\tfrac{p}{q}, \tfrac{r}{q}\right]$
as rotation interval.

Let $\map{f_n}{\SI}$ be the continuous map which has $F_n$ as a lifting.
Then, $f_n$ is totally transitive, $\lim_{n\to\infty} h(f_n) = 0,$
\begin{multline*}
 \Per(f_n) =
   \{n\}\ \cup\\
   \set{t n + k}{t \in \{2,3,\dots,\nu-1\} \text{ and }
                    -\tfrac{t}{2} < k \le \tfrac{t}{2},\ k \in \Z} \cup \\
   \succs{n\nu+1-\tfrac{\nu}{2}}.
\end{multline*}
Moreover, $\sbc(f_n) = n\nu+1-\tfrac{\nu}{2}$
and $\bc(f_n)$ exists and verifies $n\le \bc(f_n) \le n\nu - 1 -\tfrac{\nu}{2}$
(and hence, $\lim_{n\to\infty} \bc(f_n) = \infty$).
\end{theorem}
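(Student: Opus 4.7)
The plan is to follow the template of Theorem~\ref{theoremfirstexamplebcncircle} (with its supporting proposition on the Markov graph modulo 1 of $F_n$), adapted to the new combinatorial setup. First I would verify the two twist orbits: since $p=2n-1$ is odd and $\gcd(p,n)=\gcd(2n-1,n)=\gcd(-1,n)=1$, we have $\gcd(p,q)=\gcd(2n-1,2n^2)=1$, and similarly $\gcd(r,q)=1$. Together with $x_{i+q\ell}=x_i+\ell$, $F_n(x_i)=x_{i+p}$, $y_{i+q\ell}=y_i+\ell$ and $F_n(y_i)=y_{i+r}$, this forces $F_n^q(x_0)=x_0+p$ and $F_n^q(y_0)=y_0+r$; minimality of the period $q$ follows from coprimality exactly as in Theorem~\ref{theoremfirstexamplebcncircle}, and $F_n|_{Q_n}$, $F_n|_{P_n}$ are increasing by construction, so $Q_n$ and $P_n$ are twist lifted periodic orbits with rotation numbers $p/q$ and $r/q$. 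Then I would compute the upper and lower maps $(F_n)_u,(F_n)_l$ as in Theorem~\ref{theoremfirstexamplebcncircle}, locate the two ``plateaus'' of constant height by finding the unique points where $F_n$ surpasses a point of the other orbit, check that $P_n\cap[0,1]$ lies in the graph of $(F_n)_u$ and $Q_n\cap[0,1]$ in the graph of $(F_n)_l$, and conclude via Theorem~\ref{theoremrotationintwathermap} that $\Rot(F_n)=[p/q,r/q]$.

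Next I would build a proposition analogous to Proposition~\ref{propositionmarkovgraphfirstexamplebcn} describing $\calB(P_n\cup Q_n)$ and the Markov graph modulo $1$ of $F_n$. The block structure of the ordering given in the statement (rows indexed by the $n$ blocks $x_{jp+n},\dots,x_{(j+1)p+n-1}$ together with the $y$-blocks) makes each basic interval fall into one of a small number of ``types''; the images under $F_n$ of each type can be read off from the shifts $i\mapsto i+p$ and $i\mapsto i+r$ and the fact that $F_n$ is affine between consecutive points of $P_n\cup Q_n$. The partition $P_n\cup Q_n$ will again be a short Markov partition with respect to $F_n$, so by Proposition~\ref{Markov partitionprojectiontoSI} the combinatorics transfers to a genuine Markov graph for $f_n$. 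Expansiveness on each basic interval is then a copy of the calculation at the end of the proof of Theorem~\ref{theoremfirstexamplebcncircle} using the normalized distance $d_I(\emap{x},\emap{y})=|x-y|/|x_I-y_I|$; together with irreducibility of the transition matrix (every basic interval is reachable from every other) and the fact that it is not a permutation matrix (some intervals $f_n$-cover more than one), Theorem~\ref{theoremTransitivityexpansivemap} then gives transitivity, and cofiniteness of $\Per(f_n)$ (from the next step) upgrades this to total transitivity via Theorem~\ref{theoremtotallytransitivefromadrr}.

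For the set of periods I would apply Theorem~\ref{theoremMisiurewicz}: $\Per(F_n)=Q_{F_n}(p/q)\cup M(p/q,r/q)\cup Q_{F_n}(r/q)$. Because $p,r$ are coprime to $q$, both $Q_{F_n}(\cdot)$ sets are contained in $q\cdot\N$, and $q=2n^2\ge n\nu+1-\nu/2$, so these sets will be absorbed by the successor tail and the nontrivial content is the computation of $M(p/q,r/q)$. Writing $m=tn+k$ with $t\ge 1$ and $-t/2<k\le t/2$, the inequality $p/q<j/m<r/q$ becomes
\[
 \frac{(2n-1)(tn+k)}{2n^2}<j<\frac{(2n+1)(tn+k)}{2n^2},
\]
an interval of length $m/n^2$; a direct arithmetic check (treating the cases $n$ even, $n$ odd separately, which is the source of the definition of $\nu$) shows that an integer $j$ lies in this interval iff $t\in\{1,2,\dots,\nu-1\}$ (contributing $\{n\}$ when $t=1,k=0$, and $\{tn+k:-t/2<k\le t/2\}$ for $t\ge 2$) or $m\ge n\nu+1-\nu/2$. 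This yields the stated formula for $\Per(f_n)$, immediately gives $\sbc(f_n)=n\nu+1-\nu/2$, and a check that $n$ belongs to $\sbcset(f_n)$ (using that the number of periods below $n$ in the set is $0\le 2\log_2(n-2)$) shows that $\bc(f_n)$ exists with $n\le\bc(f_n)\le n\nu-1-\nu/2$.

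The remaining step is $\lim h(f_n)=0$. Using the rome method (Theorem~\ref{theoremrome}) on a small distinguished rome inside the Markov graph modulo $1$, I would extract the characteristic polynomial of the Markov matrix; by Proposition~\ref{propositionmonotoneTopEnt} it suffices to bound its largest root $\rho_n$ above by a sequence tending to $1$. I expect the main technical obstacle to be precisely this last estimate together with the careful arithmetic of $M(p/q,r/q)$: the interval $[p/q,r/q]$ has length $1/n^2$, so $\min M(p/q,r/q)=n$ grows with $n$, and by Remark~\ref{PropsGoingZero} the \emph{minimum-entropy} value $\beta_{p/q,r/q}$ does tend to $1$; the nontrivial point is that the entropy of the piecewise-linear model $f_n$ we have chosen actually realizes the same asymptotic, which I would show by producing an explicit $\gamma_n\downarrow 1$ with $\rho_n\le\gamma_n$ via the same type of inequality chain used at the end of Theorem~\ref{theoremfirstexamplebcncircle}, separating the dominating monomial of the characteristic polynomial from the remaining polynomially many low-degree terms.
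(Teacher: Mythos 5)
Your proposal is correct and follows essentially the same route as the paper's proof: twist orbits and the rotation interval via the upper/lower maps and Theorem~\ref{theoremrotationintwathermap}, the set of periods via Theorem~\ref{theoremMisiurewicz} with the $Q_{F_n}$ sets absorbed into $q\cdot\N\subset\succs{n\nu+1-\tfrac{\nu}{2}}$ and an arithmetic analysis of $M\bigl(\tfrac{p}{q},\tfrac{r}{q}\bigr)$ split by parity (the paper's claims (i)--(iii)), total transitivity via the short Markov partition, expansiveness, irreducibility and Theorems~\ref{theoremTransitivityexpansivemap} and~\ref{theoremtotallytransitivefromadrr}, and the entropy bound via the rome method and an explicit upper bound for the largest root of the characteristic polynomial (the paper takes $\gamma_n=\sqrt[n]{10}$). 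The only difference is one of detail, not of method: the ``direct arithmetic check'' you defer is in fact the bulk of the paper's proof (Proposition~\ref{propositionmarkovgraphexamplemontevideu} and the case analysis (iii.1)--(iii.3)), but your plan for it is the same.
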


\begin{theorem}\label{theoremexamplemontevideugraph}
Let $G$ be a graph with a circuit. Then, the sequence of maps
$\{f_n\}_{n=4}^\infty$ from
Theorem~\ref{theoremexamplemontevideucircle}
can be extended to a sequence
of continuous totally transitive self maps of $G$,
$\{g_n\}_{n=4}^\infty,$
such that
$\Per(g_n) = \Per(f_n)$ and $\lim_{n\to\infty} h(g_n) = 0.$
\end{theorem}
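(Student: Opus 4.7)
The strategy mirrors that of Theorem~\ref{theoremfirstexamplebcngraph}: when $G = \SI$ there is nothing to prove, so assume $G \neq \SI$. The plan is to pick a circuit $C$ of $G$, isolate inside $C$ an ``arc'' $I$ containing all but one of the $(P_n \cup Q_n)$-basic intervals (transported to $C$ via a homeomorphism $\map{\eta}{\SI}[C]$), and let the remaining basic interval be the complementary arc $C \setminus \Int(I)$. The auxiliary subgraph $X := G \setminus \Int(I)$ is then traversed by a ``zig-zag'' built from Lemma~\ref{lemmaPhiPsi} applied at its two endpoints (the endpoints of $C \setminus \Int(I)$). Concretely, for each $n \ge 4$, I would choose some fixed $(P_n \cup Q_n)$-basic interval $I^\star_n$ (any one that is $f$-covered only in a benign way, for instance one of the intervals between two consecutive $y_i$'s lying deep inside the ``$y$-block'', which plays the role analogous to $I_2$ in the previous example), let $C \setminus \Int(I) = \widetilde I^\star_n$, pick another basic interval $\widetilde K_n$ disjoint from $\widetilde I^\star_n$ and use it as the ``launcher'' into $X$ as $\widetilde I_1$ was used before. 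Then define
\[
  g_n(x) := \begin{cases}
      \varphi_{a,b}(\xi(x)) & x \in \widetilde K_n,\\
      \zeta(\psi_{a,b}(x)) & x \in X,\\
      (\eta \circ f_n \circ \eta^{-1})(x) & x \in I \setminus \Int(\widetilde K_n),
  \end{cases}
\]
where $a,b$ are the endpoints of $C \setminus \Int(I)$ and $\xi,\zeta$ are affine identifications. Continuity at the boundary points of $\widetilde K_n$ and $\widetilde I^\star_n$ is verified exactly as in the previous proof, using that $f_n$ sends the corresponding boundary points in $\bigemap{P_n \cup Q_n}$ to each other in a way compatible with $\varphi_{a,b}$ and $\zeta$.

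Next I would introduce the Markov invariant set
\[
  R_n := \bigeta{\bigemap{P_n \cup Q_n}} \cup \xi^{-1}(\{s_0,\dots,s_m\}) \cup \varphi_{a,b}(\{s_0,\dots,s_m\}),
\]
verify $g_n$-invariance of $R_n$ by the same three-case computation as in~\eqref{RnInvariant}, and list the $R_n$-basic intervals. After applying Lemma~\ref{convertingtoexpansiveMM} to make $g_n$ (piecewise) expansive without changing either $g_n\evalat{R_n}$ or the basic-interval images, total transitivity follows from Theorem~\ref{theoremTransitivityexpansivemap} provided the Markov matrix is irreducible and not a permutation: irreducibility propagates from the $F_n$-Markov graph modulo $1$ (which, by the circle analogue, is irreducible because $\Rot(F_n)$ is non-degenerate and $f_n$ is transitive), plus the fact that every new vertex in the $\widetilde K_n$-$X$ detour is reachable from and reaches the old graph through the entrance/exit arrows; non-permutation holds because $\widetilde K_n$ $g_n$-covers $m \ge 5$ distinct intervals $U_0,\dots,U_r$.

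The main obstacle, and the step I would treat most carefully, is proving $\Per(g_n) = \Per(f_n)$. As in the previous example, define the projection $\map{\pi}{\bigSBI{R_n}}[{\bigSBI{\bigemap{P_n \cup Q_n}}}]$ sending basic intervals in $I \setminus \Int(\widetilde K_n)$ to themselves (via $\eta^{-1}$), and collapsing all $L_i \subset \widetilde K_n$ to $\pi^{-1}(\bigemap{K})$ and all $U_j \subset X$ to $\pi^{-1}(\bigemap{I^\star_n})$. This projection sends loops to loops of equal length, so via Proposition~\ref{propPerLoops} and Lemma~\ref{lemmaInterv}, every period of $g_n$ comes from a period of $f_n$ multiplied by some positive integer. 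Since the minimum period of $f_n$ is $n$ and every multiple $\ell q$ with $q \in \Per(f_n)$, $\ell \ge 2$ lies in $\succs{n\nu+1-\tfrac{\nu}{2}} \subset \Per(f_n)$ for $n \ge 4$ (this is the arithmetic check: $2n \ge n\nu+1-\tfrac{\nu}{2}$ fails for large $\nu$, so I will need a more delicate argument covering each small $q \in \Per(f_n)$ individually, exploiting that when $\ell q$ is \emph{not} a priori in $\Per(f_n)$ the loop must pass through $\widetilde K_n$ and $X$ and hence is forced to have length at least $\sbc(f_n)+1$). Conversely, $\Per(g_n) \supset \Per(f_n)$ is immediate because $\bigeta{\bigemap{P_n}}$ and $\bigeta{\bigemap{Q_n}}$ remain periodic of period $q$ and the old loops in the Markov graph of $f_n$ survive verbatim in that of $g_n$. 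Combined with transitivity, Theorem~\ref{theoremtotallytransitivefromadrr} upgrades $g_n$ to totally transitive once $\Per(g_n)$ is shown to be cofinite.

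Finally, for the entropy, I would compute the characteristic polynomial $P_n$ of the Markov matrix of $g_n$ using the rome method (Theorem~\ref{theoremrome}) with a rome analogous to $\textsf{Rom}_n$ of the previous proof, obtained from the circle model by replacing the coefficient of one monomial by $m = m(X,a,b)$ (the number of simple paths through the detour); the resulting polynomial differs from the circle polynomial only by the constant multiplier $m$ in two terms. Comparing roots as in the closing argument of the proof of Theorem~\ref{theoremfirstexamplebcngraph}, one obtains a majorant sequence $\gamma_n \ge \rho_n$ with $\gamma_n \to 1$, hence $h(g_n) = \log \rho_n \to 0$ by Proposition~\ref{propositionmonotoneTopEnt}. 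The hardest part will be the arithmetic bookkeeping in the period argument above, because the set of periods in this example has a much more intricate structure (low periods of the form $tn+k$) than in Example~\ref{examplefirstexamplebcnintroduction}, so each potential ``short'' loop in the augmented Markov graph must be traced through the detour and shown either to correspond to an existing period of $f_n$ or to be too long to cause trouble.
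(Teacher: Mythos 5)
Your plan is essentially the paper's proof: the same surgery on a circuit $C$ (remove one $\bigeta{\bigemap{P_n\cup Q_n}}$-basic interval, traverse $X=G\setminus\Int(I)$ with the maps of Lemma~\ref{lemmaPhiPsi}, launch into $X$ from one basic interval), the same Markov set $R_n$, Lemma~\ref{convertingtoexpansiveMM} plus Theorem~\ref{theoremTransitivityexpansivemap} for transitivity, the projection $\pi$ of loops for the periods, and a rome computation with one column multiplied by $m$ for the entropy. One warning on the construction itself: the removed interval and the ``launcher'' cannot be chosen independently. The launcher must be the unique basic interval whose $F_n$-image is exactly the removed interval, and the removed interval must cover exactly one class (its image becomes the $\zeta$-target); this is what makes $g_n$ continuous at $\partial\widetilde K_n$ and at $a,b$, and what keeps the modification of the Markov graph local and length-preserving. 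Concretely the paper takes three consecutive classes of the $y$--$x$ road, launcher $\BIgraph{y_{(n-3)r+n},x_{(n-2)p+n}}$, removed $\BIgraph{y_{(n-2)r+n},x_{(n-1)p+n}}$, target $\BIgraph{y_{q-1},x_q}$ (pairwise disjoint precisely because $n\ge 4$). Your ``deep $y$--$y$ interval'' would also do, but only with its road predecessor as launcher, not with ``another basic interval disjoint from $I^\star_n$''.

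The genuine gap is in the step you yourself flag as hardest, $\Per(g_n)=\Per(f_n)$. The arithmetic fact you need is not that $\ell q\in\succs{n\nu+1-\tfrac{\nu}{2}}$ for all $\ell\ge 2$ (indeed false), but the much simpler and true statement that $\Per(f_n)$ is closed under taking multiples: $\ell n$ equals $tn+0$ with $t=\ell\le\nu-1$ or exceeds $n\nu-\tfrac{\nu}{2}$ when $\ell\ge\nu$; and $\ell(tn+k)$ with $-\tfrac{t}{2}<k\le\tfrac{t}{2}$ is again of the form $t'n+k'$ with $t'=\ell t$, $k'=\ell k$ when $\ell t\le\nu-1$, while $\ell(tn+k)>\ell t\bigl(n-\tfrac12\bigr)\ge\nu n-\tfrac{\nu}{2}$ when $\ell t\ge\nu$. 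Hence $\Per(f_n)=\bigcup_{w\in\Per(f_n)}w\cdot\N$, and your projection argument immediately gives $\Per(g_n)\subset\Per(f_n)$; this is exactly what the paper does, and it is why this example is \emph{simpler} than Example~\ref{examplefirstexamplebcnintroduction} (there $2\in\Per(f_n)$ but $4\notin\Per(f_n)$, forcing the positive/negative loop analysis). Your proposed fallback --- that a loop realizing a ``bad'' multiple must pass through $\widetilde K_n$ and $X$ and is therefore forced to have length at least $\sbc(f_n)+1$ --- is both unnecessary and unjustified: with the paper's detour there is a loop of length $n$ through $L_i$ and $U_j$ (the $y$--$x$ road closed up by the arrow from $\BIgraph{y_{q-1},x_q}$ back to $\BIgraph{y_n,x_{p+n}}$), far below $\sbc(f_n)+1\approx n^2$, and for a deep $y$--$y$ choice the lengths of detour loops depend on scattered re-entry points and admit no such uniform bound. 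Also note that the old loops do not survive ``verbatim'': those through the launcher and the removed interval are rerouted through the $L_i$ and $U_j$, only their lengths are preserved --- which is all you need.
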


Before proving Theorem~\ref{theoremexamplemontevideucircle} we will
study the Markov graph modulo 1 of the liftings $F_n.$

%%%%%%%%%%%%%%%%%%%%%%%%%%%%%%%%%%%%%%%%%%%%%%%%%%%%%%%%%%%%%%%%
%%%% Subfigure of Figure~\ref{graphlemmafngraphmontevideu} %%%%
%%%%%%%%%%%%%%%%%%%%%%%%%%%%%%%%%%%%%%%%%%%%%%%%%%%%%%%%%%%%%%%%
\long\def\subfigurefngraphModOne{%
\node[place] (xp1xp)   at (0,14) {$\bigBIclass{x_{p-1},x_{p}}$};
\node[place] (x2p1x2p) at (3,14) {$\bigBIclass{x_{2p-1},x_{2p}}$};
\node(dotsx)          at (7,14) {$\cdots $};
\node[place, double, pattern color=black!40, pattern=north east lines] (xn1xn) at (11,14) {$\arraycolsep=0pt\begin{array}{c}
                               \bigBIclass{x_{n-1},x_n} \\
                               \text{\rotatebox{90}{$=$}}\\[-7pt]
                               \bigBIclass{x_{(q-n)p-1},x_{(q-n)p}}
                         \end{array}$};
\node[place, double, pattern color=black!40, pattern=north east lines] (xpn1y0) at (5.5,8.3) {$\bigBIclass{x_{p+n-1},y_0}$};
\node[place] (x2pn1yn)    at (5.5,6.8) {$\bigBIclass{x_{2p+n-1},y_{n+1}}$};
\node[place] (x3pn1yrn)   at (5.5,5.3) {$\bigBIclass{x_{3p+n-1},y_{r+n+1}}$};
\node[rotate=90] (dotsxy) at (5.5,3.8) {$\cdots$};
\node[place, double, pattern color=black!40, pattern=north east lines] (xnpn1yn2rn) at (5.5,2.3) {$\bigBIclass{x_{np+n-1},y_{(n-2)r+n+1}}$};

\node[place] (yr1yr)   at (0,10)   {$\bigBIclass{y_{r-1},y_{r}}$};
\node[place] (y2r1y2r) at (2.5,10) {$\bigBIclass{y_{2r-1},y_{2r}}$};
\node(dotsy) at (4.7,10) {$\cdots $};
\node[place, double, pattern color=black!40, pattern=north east lines] (yn1ryn1r1) at (7.5,10) {$\arraycolsep=0pt\begin{array}{c}
                                    \bigBIclass{y_{(n-1)r},y_{(n-1)r +1}} \\
                                    \text{\rotatebox{90}{$=$}} \\[-7pt]
                                    \bigBIclass{y_{(q-n)r-1},y_{(q-n)r}}
                              \end{array}$};
\node[place] (yx1p)       at (11,10) {$\bigBIclass{y_{n},x_{p+n}}$};
\node[place] (yrx2p)      at (11,8)  {$\bigBIclass{y_{r+n},x_{2p+n}}$};
\node[place, double, pattern color=black!40, pattern=north east lines] (yxq) at (11,0) {$\bigBIclass{y_{q-1}, x_q}$};

\path[post]
    (xp1xp.east)+(1pt,0)      edge (x2p1x2p.west)
    (x2p1x2p.east)+(1pt,0)    edge (dotsx.west)
    (dotsx.east)+(1pt,0)      edge (xn1xn.west)
    (xn1xn.south)+(0,-1pt)    edge (yx1p)
    (xpn1y0.south)+(0,-1pt)   edge (x2pn1yn)
    (x2pn1yn.south)+(0,-1pt)  edge (x3pn1yrn)
    (x3pn1yrn.south)+(0,-1pt) edge (dotsxy)
    (dotsxy.west)+(0,-1pt)    edge (xnpn1yn2rn)
    (yr1yr.east)+(1pt,0)      edge (y2r1y2r.west)
    (y2r1y2r.east)+(1pt,0)    edge (dotsy.west)
    (dotsy.east)+(1pt,0)      edge (yn1ryn1r1.west)
    (yn1ryn1r1.east)+(1pt,0)  edge (yx1p)
    (yx1p.south)+(0,-1pt)     edge (yrx2p);
\draw[post] ([xshift=-1pt,yshift=4pt]xnpn1yn2rn.west) .. controls (3,2.43) .. (3, 3) .. controls (3,6.2) .. ([xshift=10pt]xpn1y0.south west);
\draw[post] ([xshift=5pt,yshift=-1pt]yn1ryn1r1.south) .. controls (7.7,7)  .. ([yshift=4pt]x2pn1yn.east);
\draw[post] ([yshift=-1pt]xn1xn.south) .. controls (9.6,12.5) .. (9.6,10) .. controls (9.6, 8.5) and ([xshift=8em]xpn1y0.east) .. ([xshift=2em]xpn1y0.east) -- (xpn1y0.east);
\draw[post] ([xshift=-1pt,yshift=6pt]yxq.west)  .. controls ([xshift=-56pt,yshift=6pt]yxq.west)  .. (8,1.25) -- (8,1.25)   .. controls (8,7)                 .. (yx1p);
\draw[post] ([xshift=-1pt,yshift=2pt]yxq.west)  .. controls ([xshift=-60pt,yshift=2pt]yxq.west)  .. ([xshift=-4pt]8,1.25) .. controls ([xshift=-4pt]8, 8.2) .. ([yshift=-4pt]xpn1y0.east);
\draw[post] ([xshift=-1pt,yshift=-2pt]yxq.west) .. controls ([xshift=-64pt,yshift=-2pt]yxq.west) .. ([xshift=-8pt]8,1.25) .. controls ([xshift=-8pt]8, 6.8) .. (x2pn1yn.east);

\node[rectangle, ultra thick, draw=black!60, fill=black!15] at (7,12.5) (caixa) {$\bigcup\limits_{i=0}^{n-1} \bigBIclass{y_i,y_{i+1}}$};
\draw[double, ->,shorten >=1pt,>=stealth,semithick] (xn1xn.south)+(0,-1pt) -- (caixa);
\node[rectangle, ultra thick, draw=black!60, fill=black!15] at (0.8,8.3) (caixa) {$ \bigcup\limits_{i=1}^{n} \bigBIclass{y_{n+i},y_{n +i+1}}$};
\draw[double, ->,shorten >=1pt,>=stealth,semithick] (xpn1y0.west)+(-1pt,0) -- (caixa.east);
\node[rectangle, ultra thick, draw=black!60, fill=black!15] at (0.8,2.3) (caixa) {$ \bigcup\limits_{i=-1}^{n-2} \bigBIclass{x_{p+i},x_{p+i+1}}$};
\draw[double, ->,shorten >=1pt,>=stealth,semithick] (xnpn1yn2rn.west)+(-1pt,0) -- (caixa.east);
\node[rectangle, ultra thick, draw=black!60, fill=black!15] at (3.5,12.5) (caixa) {$ \bigcup\limits_{i=0}^{p-2} \bigBIclass{x_{p +n+i},x_{p+n+i+1}}$};
\draw[double, ->,shorten >=1pt,>=stealth,semithick] (yn1ryn1r1.west)+(-1pt,8pt) -- (caixa.240);
\node[rectangle, ultra thick, draw=black!60, fill=black!15] at (3,-0.1) (caixa) {$\begin{aligned}
        \left(\bigcup\limits_{i=p}^{p+n-2} \bigBIclass{x_i,x_{i+1}}\right) &\cup
        \left(\bigcup\limits_{i=p+n}^{2p+n-2} \bigBIclass{x_i,x_{i+1}}\right)\cup \\
        \left(\bigcup\limits_{i=0}^{n-1} \bigBIclass{y_i,y_{i+1}}\right) &\cup
        \left(\bigcup\limits_{i=n+1}^{2n-1=r-2} \bigBIclass{y_i,y_{i+1}}\right)
    \end{aligned}$};
\draw[double, ->,shorten >=1pt,>=stealth,semithick] (yxq.west)+(-1pt,-7pt) -- ([yshift=-4pt]caixa.east);
} %%% End of \def\subfigurefngraphModOne

\begin{proposition}[$\calB(P_n \cup Q_n)$ and the $F_n$-Markov graph modulo 1]\label{propositionmarkovgraphexamplemontevideu}
In the assumptions of Theorem~\ref{theoremexamplemontevideucircle} we have
\begin{enumerate}[(a)]
\item The Markov graph modulo 1 of $F_n$ is the one shown in
Figure~\ref{graphlemmafngraphmontevideu},
where the double arrows arriving to the the boxes in grey mean that
there is an arrow arriving to each class of basic intervals modulo 1
in the box.
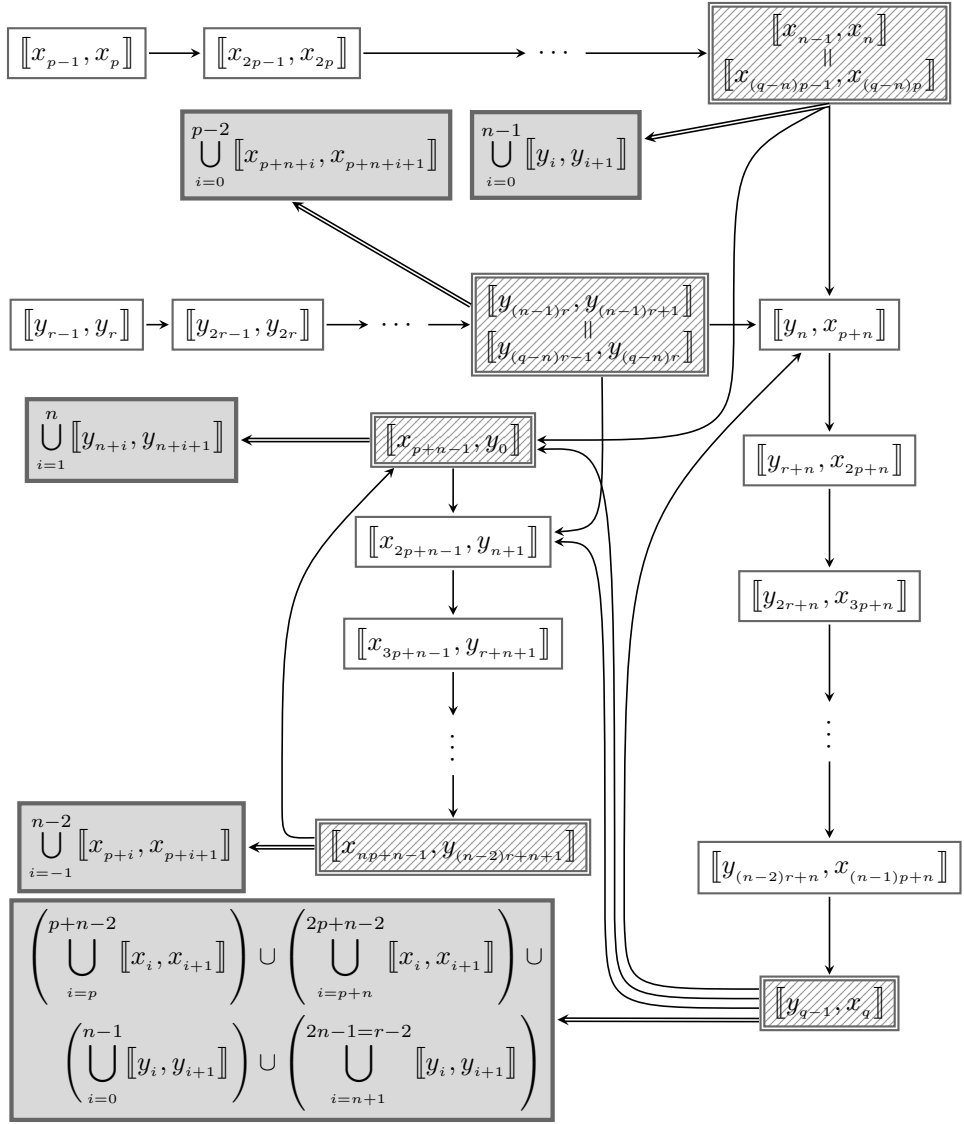
\begin{figure}[t]
%\vspace*{5ex}
\begin{center}\small
  \tikzstyle{place}=[rectangle,draw=black!60, thick]%
  \tikzstyle{post}=[->,shorten >=1pt,>=stealth,semithick]%
\begin{tikzpicture}[scale=0.9]
\subfigurefngraphModOne
\node[place] (y2rx3p)     at (11,6)  {$\bigBIclass{y_{2r+n},x_{3p+n}}$};
\node[rotate=90] (dotsyx) at (11,4)  {$\cdots$};
\node[place] (yn2rxn1p)   at (11, 2) {$\bigBIclass{y_{(n-2)r+n},x_{(n-1)p+n}}$};

\path[->,shorten >=1pt,>=stealth,semithick]
    (yrx2p.south)+(0,-1pt)    edge (y2rx3p)
    (y2rx3p.south)+(0,-1pt)   edge (dotsyx)
    (dotsyx.west)+(0,-1pt)    edge (yn2rxn1p)
    (yn2rxn1p.south)+(0,-1pt) edge (yxq.north);
\end{tikzpicture}
\end{center}
\caption{The Markov graph modulo 1 of the map $F_n$ from
Theorem~\ref{theoremexamplemontevideucircle}.
The double arrows arriving to the the boxes in grey mean that
there is an arrow arriving to each interval in the box.}\label{graphlemmafngraphmontevideu}
\end{figure}

\item $h(f_n) = \log \rho_n,$ where $\rho_n > 1$ is the largest root
of the polynomial
\[
    T_n(x)  = \kappa_2(x)\bigl(x^{2q} + 1\bigr) + \kappa_1(x)x^{q+n}
              - 2\bigl(x^{4n} - 2x^{2n-1} + 1\bigr) ,
\]
where
\begin{align*}
   \kappa_2(x) &=  x^{4n} - 2x^{3n} - x^{2n+1} - 2x^{2n} - 3x^{2n-1} - 2x^{n} + 1,\\
\intertext{and}
   \kappa_1(x) &= 4x^{2n} + 2x^{n+1} + 4x^{n} + 2x^{n-1} + 4.
\end{align*}
\end{enumerate}
\end{proposition}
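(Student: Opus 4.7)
The plan is to proceed in two stages: first work out the Markov graph modulo $1$ from the combinatorics of the twist orbits $Q_n$ and $P_n,$ and then feed the resulting transition matrix into the rome method of Theorem~\ref{theoremrome}.

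For part~(a), the starting point is that $Q_n$ and $P_n$ are both twist lifted periodic orbits of period $q,$ so $F_n$ acts on them by the shifts $x_i\mapsto x_{i+p}$ and $y_i\mapsto y_{i+r},$ and its restriction to each $(P_n\cup Q_n)$-basic interval is affine and monotone by construction. Consequently, whenever a basic interval $\chull{a,b}[\R]$ has both endpoints in the same orbit and the image $\chull{F_n(a), F_n(b)}[\R]$ contains no interior point of the other orbit, the corresponding vertex of the Markov graph modulo $1$ $F_n$-covers exactly one other vertex. Scanning the intertwining pattern in the statement of Theorem~\ref{theoremexamplemontevideucircle}, this accounts for all but finitely many classes of basic intervals and produces the long horizontal and vertical chains in Figure~\ref{graphlemmafngraphmontevideu}. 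What remains is to compute the $F_n$-images of the five ``hub'' classes drawn with a double border, which sit at the transitions between rows of the pattern. For each of these hubs one writes $F_n\bigl(\chull{a,b}[\R]\bigr)$ explicitly via the shift formulas and then uses the row-by-row layout of the intertwining to decompose this image into a union of basic intervals; these decompositions are exactly the multiple outgoing arrows entering the grey boxes.

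For part~(b), once the graph is determined, I would first observe that the rotation numbers $p/q$ and $r/q$ of the two twist orbits are both smaller than $1,$ which together with the affine behaviour of $F_n$ implies that $P_n\cup Q_n$ is a short Markov partition. Then Propositions~\ref{Markov partitionprojectiontoSI} and~\ref{propositionmonotoneTopEnt} give $h(f_n) = \log\max\{\sigma(M_n),1\},$ where $M_n$ is the transition matrix of the Markov graph modulo~$1.$ To compute $\sigma(M_n)$ I would apply Theorem~\ref{theoremrome} with the rome consisting of the five double-bordered hub vertices, since each pure horizontal $x$- or $y$-chain and each pure vertical mixed-interval chain lies entirely outside any loop. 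Enumerating simple paths between hubs is then straightforward: each pure chain of length $\ell$ contributes a single term $x^{-\ell}$ to the appropriate entry of $M_R(x),$ while each multiple arrow into a grey box contributes a sum $\sum x^{-\ell}$ indexed by the basic intervals inside the box. Substituting into Theorem~\ref{theoremrome} and collecting terms in $x^{2q}\det(M_R(x) - \mathbf{I}_5)$ yields the polynomial $T_n(x).$ By direct inspection of Figure~\ref{graphlemmafngraphmontevideu} the graph is strongly connected, so $M_n$ is non-negative and irreducible, and by Perron--Frobenius its spectral radius $\rho_n > 1$ is the largest real root of $T_n.$

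The main obstacle will be the bookkeeping in part~(a): determining the exact length of each pure chain and the exact decomposition of each hub image. The chain lengths follow from congruences such as $kp \equiv n\pmod{q},$ which forces $\bigBIclass{x_{kp-1}, x_{kp}}$ first to coincide with the hub $\bigBIclass{x_{n-1}, x_n}$ after $k = q - n$ steps, and analogous congruences govern the $y$-chain and the vertical mixed chains. The hub decompositions, in turn, require a careful case-by-case reading of the intertwining list of points. Once these multiplicities are pinned down, the appearance of the specific exponents $4n, 3n, 2n+1, 2n, 2n-1, n+1, n, n-1$ in $\kappa_1$ and $\kappa_2$ is forced, and the polynomial identity claimed in (b) follows by mechanical simplification of $x^{2q}\det(M_R(x) - \mathbf{I}_5).$
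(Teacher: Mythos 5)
Your plan is correct and follows essentially the same route as the paper: decompose the Markov graph modulo~$1$ into maximal chains of single-outgoing-arrow vertices (the paper calls them \emph{roads}) terminating at the five ``hub'' classes, compute each hub image explicitly via the shift formulas $x_i\mapsto x_{i+p},\ y_i\mapsto y_{i+r}$ and the intertwining pattern, pin down chain lengths via congruences modulo $q$, take the five hubs as a rome, and simplify $x^{2q}\det(M_R(x)-\mathbf{I}_5)$. One minor looseness: short-ness of $P_n\cup Q_n$ does not follow merely from the rotation numbers $p/q,\, r/q$ being less than $1$ (indeed $F_n$ is not even monotone---it decreases on $[y_{q-1},x_q]$); rather, it follows from the same explicit image computations you carry out for the hub decompositions in part (a), which is exactly what the paper invokes.
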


\begin{proof}
We start by proving (a) but before it is helpful to introduce
a new auxiliary definition.
Let
$
\alpha = \bigBIclass{I_0}\longrightarrow \bigBIclass{I_1}
                        \longrightarrow \bigBIclass{I_2}
                        \longrightarrow \cdots
                        \longrightarrow \bigBIclass{I_k}
$
be a path in the Markov graph modulo 1 of $F_n$ with respect to
$P_n \cup Q_n.$
We say that $\alpha$ is a \emph{road\/}
if $\bigBIclass{I_i}$ only $F_n$-covers $\bigBIclass{I_{i+1}}$
for $i=0,1,2,\dots,k-1$
(that is $\bigBIclass{I_i}\longrightarrow\bigBIclass{I_{i+1}}$ is the
unique arrow beginning at $\bigBIclass{I_i}$ in the whole Markov graph
modulo 1 of $F_n$ with respect to $P_n \cup Q_n$) and
$\bigBIclass{I_k}$ $F_n$-covers more than one equivalence class
(modulo 1) of $P_n \cup Q_n$-basic intervals.
In the trivial case when $k=0$ a road consists on a single
class (modulo 1) which $F_n$-covers more than one equivalence class.

The Markov graph modulo 1 of $F_n$ with respect to $P_n \cup Q_n$
can be decomposed in the following roads:
\stepcounter{equation}
\begin{enumerate}[({3.\arabic{equation}}.a)]
\item $
  \bigBIclass{x_{p-1},x_{p}} \longrightarrow
  \bigBIclass{x_{2p-1},x_{2p}} \longrightarrow\cdots \longrightarrow$\\
  \hspace*{\fill}$\bigBIclass{x_{(q-n)p-1},x_{(q-n)p}} = \bigBIclass{x_{n-1},x_{n}}
$\\[\smallskipamount]
is a road of length $q-n-1$ which is formed by all the
$q-n$ equivalence classes (modulo 1) of
$P_n \cup Q_n$-basic intervals having both endpoints in $Q_n.$
More concretely,\addtocounter{equation}{-1}
\begin{equation}\label{road1-Classes}
\begin{split}
 & \set{\bigBIclass{x_{\ell p-1},x_{\ell p}}}{\ell \in \{1,2,\dots,q-n\}} = \\
 & \hspace*{6em}\left\{\bigBIclass{x_\ell,x_{\ell+1}}\,\colon \ell \in \{0,1,\dots,q-1\}\setminus \right. \\
 & \hspace*{14em}\left.\set{i p + n-1}{i\in\{1,2,\dots,n\}}\right\}.
\end{split}
\end{equation}

\item The class $\bigBIclass{x_{n-1},x_{n}}$ $F_n$-covers
  $\bigBIclass{x_{n-1+p},y_0},$ $\bigBIclass{y_n,x_{n+p}}$
  and all the classes
  $\bigBIclass{y_i,y_{i+1}}$ for $i=0,1,\dots, n-1.$
\end{enumerate}
\stepcounter{equation}
\begin{enumerate}[({3.\arabic{equation}}.a)]
\item $
  \bigBIclass{y_{r-1},y_{r}} \longrightarrow
  \bigBIclass{y_{2r-1},y_{2r}} \longrightarrow\cdots \longrightarrow$\\
  \hspace*{\fill}$\bigBIclass{y_{(q-n)r-1},y_{(q-n)r}} = \bigBIclass{y_{(n-1)r},y_{(n-1)r + 1}}
$\\[\smallskipamount]
is a road of length $q-n-1$ which is formed by all the
$q-n$ equivalence classes (modulo 1) of
$P_n \cup Q_n$-basic intervals having both endpoints in $P_n.$
More concretely,\addtocounter{equation}{-1}
\begin{equation}\label{road2-Classes}
\begin{split}
 & \set{\bigBIclass{y_{\ell r-1},y_{\ell r}}}{\ell \in \{1,2,\dots,q-n\}} = \\
 & \hspace*{6em}\left\{\bigBIclass{y_\ell,y_{\ell+1}}\,\colon \ell \in \{0,1,\dots,q-1\}\setminus \right. \\
 & \hspace*{14em}\left.\set{i r + n}{i\in\{0,1,\dots,n-1\}}\right\}.
\end{split}
\end{equation}

\item The class $\bigBIclass{y_{(n-1)r},y_{(n-1)r + 1}}$ $F_n$-covers
  $\bigBIclass{y_n,x_{p+n}},$ $\bigBIclass{x_{2p+n-1},y_{n+1}}$
  and all the classes
  $\bigBIclass{x_{p+n+i},x_{p+n+i+1}}$ for $i=0,1,\dots, p-2.$
\end{enumerate}
\stepcounter{equation}
\begin{enumerate}[({3.\arabic{equation}}.a)]
\item $
  \bigBIclass{y_n,x_{p+n}} \longrightarrow
  \bigBIclass{y_{r+n},x_{2p+n}} \longrightarrow\cdots \longrightarrow$\\
  \hspace*{\fill}$\bigBIclass{y_{(n-1)r+n},x_{np+n}} = \bigBIclass{y_{q-1},x_q}
$\\[\smallskipamount]
is a road of length $n-1$ which is formed by all the $n$ equivalence
classes (modulo 1) of $P_n \cup Q_n$-basic intervals
verifying that  each class has a representative basic interval
whose first endpoint belongs to $P_n$ and the second one to $Q_n.$

\item The class $\bigBIclass{y_{q-1},x_q}$ (negatively) $F_n$-covers
  $\bigBIclass{x_{p+n-1},y_0},$ $\bigBIclass{y_n,x_{p+n}},$
  $\bigBIclass{x_{2p+n-1},y_{n+1}},$
  and all the classes
  \begin{align*}
   & \bigBIclass{x_{i},x_{i+1}} \andq[for] i=p,p+1,\dots, p+n-2,\\
   & \phantom{\bigBIclass{x_{i},x_{i+1}} \andq[for] i=}p+n,p+n+1,\dots,2p+n-2,\text{ and}\\
   & \bigBIclass{y_{i},y_{i+1}} \andq[for] i=0,1,\dots, n-1,\\
   & \phantom{\bigBIclass{y_{i},y_{i+1}} \andq[for] i=}n+1,n+1,\dots,2n-1=r-2.
  \end{align*}
\end{enumerate}
\stepcounter{equation}
\begin{enumerate}[({3.\arabic{equation}}.a)]
\item $\bigBIclass{x_{p+n-1},y_0}$ is a trivial road.

\item The class $\bigBIclass{x_{p+n-1},y_0}$ $F_n$-covers
  $\bigBIclass{x_{2p+n-1},y_{n+1}}$
  and all the classes
  $\bigBIclass{y_{n+i},y_{n+i+1}}$ for $i=1,2,\dots, n.$
\end{enumerate}
\stepcounter{equation}
\begin{enumerate}[({3.\arabic{equation}}.a)]
\item $
  \bigBIclass{x_{2p+n-1},y_{n+1}} \longrightarrow
  \bigBIclass{x_{3p+n-1},y_{r+n+1}} \longrightarrow\cdots \longrightarrow$\\
  \hspace*{\fill}$\bigBIclass{x_{np+n-1},y_{(n-2)r+n+1}}
$\\[\smallskipamount]
is a road of length $n-2$ which is formed by all the $n-1$ equivalence
classes (modulo 1) of $P_n \cup Q_n$-basic intervals
verifying that  each class has a representative basic interval
whose first endpoint belongs to $Q_n$ and the second one to $P_n$
except for $\bigBIclass{x_{p+n-1},y_0},$ which gives the previous trivial
road.

\item The class $\bigBIclass{x_{np+n-1},y_{(n-2)r+n+1}}$ $F_n$-covers
  $\bigBIclass{x_{p+n-1},y_0}$
  and all the classes
  $\bigBIclass{x_{p+i},x_{p+i+1}}$ for $i=-1,0,1,\dots, n-2.$
\end{enumerate}

We will prove Statements~(3.12.a,b).
Statements~(3.13.a) through (3.16.b) follow analogously.
We will start by proving that
\[ \bigBIclass{x_{(q-n)p-1},x_{(q-n)p}} = \bigBIclass{x_{n-1},x_{n}} \]
and \eqref{road1-Classes},
which shows that the path from (3.12.a) is formed
by all the $q-n$ equivalence classes (modulo 1) of
$P_n \cup Q_n$-basic intervals having both endpoints in $Q_n.$

To do this we will use the following facts (which are easy to check)
about the numbers $n, \ p,\ r$ and $q:$
$np = q-n,\ nr = q + n,$
$rp \equiv -1 \pmod{q}$ and
$(p,q) = (r,q) = 1.$

Observe that $(p,q) = 1$ implies
$\modulo{\ell p}{q} \in \{1,2,\dots,q-1\}$
for every $\ell \in \{0,1,\dots,q-1\}$
and, hence,
$\modulo{\ell p - 1}{q} \in \{0,1,\dots,q-2\}.$
Summarizing,
\begin{equation}\label{road1-consecutive}
\modulo{\ell p}{q} =
\modulo{\ell p-1}{q} + 1 \andq[whenever] \ell \in \{1,2,\dots,q-1\}.
\end{equation}

Since $np = q-n,$ we have
$(q-n)p - 1 = q(p-1) + (n-1).$ %%%% = qp - np -1
Hence, $\modulo{(q-n)p - 1}{q} = n-1,$ and
$\bigBIclass{x_{(q-n)p-1},x_{(q-n)p}} = \bigBIclass{x_{n-1},x_{n}}.$
\smallskip

%\enlargethispage{3pt}
Now we will prove \eqref{road1-Classes}.
Fix $\ell \in \{1,2,\dots,q-n\}.$
From \eqref{road1-consecutive} it follows that
$\left[x_{\ell p-1}, x_{\ell p}\right]$ and
$
  \left[x_{\modulo{\ell p-1}{q}}, x_{\modulo{\ell p-1}{q}+1}\right] =
  \left[x_{\ell p-1}, x_{\ell p}\right] -
        \floor{\tfrac{\ell p-1}{q}}
$\footnote{This equality follows from
$\ell p-1 = q\cdot \floor{\tfrac{\ell p-1}{q}} + \modulo{\ell p-1}{q}$
and $x_{i + q\ell} = x_i + \ell$ for every $i,\ell \in \Z$.}
are basic intervals provided that their endpoints
($x_{\ell p-1}$ and $x_{\ell p}$ in the first case and
 $x_{\modulo{\ell p-1}{q}}$ and $x_{\modulo{\ell p-1}{q}+1}$ in the second one)
are consecutive in $P_n \cup Q_n.$
In view of the crucial assumption on the relative positions
of the points of $P_n \cup Q_n$ from
Theorem~\ref{theoremexamplemontevideucircle},
this happens whenever
$\modulo{\ell p-1}{q} \neq i p + n-1$ with $i\in\{1,2,\dots,n\}.$
By way of contradiction assume that
$\modulo{\ell p-1}{q} = i p + n-1$ for some $i\in\{1,2,\dots,n\}.$
By using again \eqref{road1-consecutive} this is equivalent to
\[
 \modulo{\ell p}{q} = i p + n
    \quad\Longleftrightarrow\quad
 (\ell - i) p + kq = n
\]
for some $k\in \Z.$
The last equality holds if and only if
\[
    (\ell -i, k) \in \set{(q-n+tq, 1-p-tp)}{t \in \Z}
\]
(recall that $np = q-n$).
Since $1 \le \ell \le q-n$ and $1 \le i \le n,$
it follows that
\[
  1-n \le \ell - i = q-n+tq \le q-n-1;
\]
a contradiction because
$q-n+tq < 1-n$ for $t < 0$ and
$q-n+tq > q-n-1$ for $t \ge 0.$
So, we already know that the path from (3.12.a) is formed
by all the $q-n$ equivalence classes (modulo 1) of
$P_n \cup Q_n$-basic intervals having both endpoints in $Q_n.$
To see that this path is a road and to prove (3.12.b)
we need to compute the images of the corresponding
$P_n \cup Q_n$-basic intervals.

Since $F_n(x_i) = x_{i+p}$ and $x_{i + q\ell} = x_i + \ell$
for every $i,\ell \in \Z;$
and $F_n$ is monotone on every interval from $\bigSBI{P_n \cup Q_n}$
(bearing in mind the assumption on the relative ordering of the
points of $P_n \cup Q_n$ in
Theorem~\ref{theoremexamplemontevideucircle}) we see that
$
  F_n\left(\left[x_{\ell p-1},x_{\ell p}\right]\right) =
  \left[x_{(\ell+1) p-1},x_{(\ell+1) p}\right]
$ for $\ell = 1,2,\dots,q-n-1.$
On the other hand,
\begin{multline*}
  F_n\left(\left[x_{n-1},x_{n}\right]\right) =
  \left[x_{n-1+p},x_{n+p}\right] = \\
  \left[x_{n-1+p},y_0\right] \cup
     \left(\bigcup_{i=0}^{n-1} \left[y_i,y_{i+1}\right]\right) \cup
  \left[y_n,x_{n+p}\right].
\end{multline*}
Thus, Statements~(3.12.a,b) hold.

To end the proof of Statement~(a)
(Figure~\ref{graphlemmafngraphmontevideu})
observe that there are exactly
$2q$ $P_n \cup Q_n$-basic intervals in the interval $[0,1]$ and,
hence, there exist $2q$ equivalence classes (modulo 1) of
$P_n \cup Q_n$-basic intervals.
So, the above list of roads given in Statements~(3.12--16.a)
displays all vertices in the Markov graph modulo 1 of $F_n$
with respect to $P_n \cup Q_n$ (classified according to roads).
The arrows between vertices in this Markov graph are those given
by the previous roads and the arrows beginning at the last class
of every road given in Statements~(3.12--16.b)
All these vertices and arrows between them are, precisely,
the ones packaged in Figure~\ref{graphlemmafngraphmontevideu}.

To prove (b), as in the previous subsection, we will use
Propositions~\ref{Markov partitionprojectiontoSI}
and~\ref{propositionmonotoneTopEnt},
and Theorem~\ref{theoremrome}.
Notice that Statements~(3.12--16.a,b) above imply that
$P_n \cup Q_n$ is a short Markov partition with respect to $F_n.$
Then, as before, $f_n$ is a Markov map,
the Markov matrix $M_n$ of $f_n$ with respect to
$\bigemap{P_n \cup Q_n}$ is non-negative and irreducible
and, by Propositions~\ref{Markov partitionprojectiontoSI}
and~\ref{propositionmonotoneTopEnt},
$
    h(f_n) = \log \sigma(M_n)
$
where, by the Perron-Frobenius Theorem,
$\sigma(M_n)$ is the largest eigenvalue of $M_n$ and, hence,
the largest root (larger than one) of
the characteristic polynomial of $M_n$.
So, to end the proof of the proposition we need to compute
the characteristic polynomial of $M_n$.

As before, we identify the set $\bigSBI{\bigemap{P_n \cup Q_n}}$
with the set of all equivalence classes of $P_n \cup Q_n$-basic intervals
(i.e. the set of all vertices of the Markov graph modulo 1 of $F_n$).
Then, the matrix $M_n$ coincides with the
transition matrix of the Markov graph modulo 1 of $F_n$
given in Figure~\ref{graphlemmafngraphmontevideu}.

To compute $T_n$ we will use Theorem~\ref{theoremrome} with
\begin{multline*}
\textsf{Rom}_n = \Bigl\{\textsf{r}_1 = \bigBIclass{x_{n-1},x_n},
                   \textsf{r}_2 = \bigBIclass{x_{p+n-1},y_0},\\
                   \textsf{r}_3 = \bigBIclass{x_{np+n-1},y_{(n-2)r +n+1}}, \\
                   \textsf{r}_4 = \bigBIclass{y_{(n-1)r},y_{(n-1)r +1}},
                   \textsf{r}_5 = \bigBIclass{y_{q-1},x_q}\Bigr\}
\end{multline*}
as a rome (being their elements marked in
Figure~\ref{graphlemmafngraphmontevideu} with a box
with double border and sloping lines background pattern).
Then, we recall that $M_{\textsf{Rom}_n}(x) = (a_{ij}(x))$
where $a_{ij}(x) = \sum_p x^{-\ell(p)},$
and the sum is taken over all simple paths
starting at $r_i$ and ending at $r_j$
(since $M_n$ is  a matrix of zeroes and ones the width of every path is 1).
From (a) and Figure~\ref{graphlemmafngraphmontevideu} we have
\[
M_{\textsf{Rom}_n}(x) = \begin{pmatrix}
           0    & x^{-1} &      0     & a_{14}(x) & x^{-n} \\
           0    &     0  & x^{-(n-1)} & a_{24}(x) &   0 \\
      a_{31}(x) & x^{-1} &      0     &      0    &   0 \\
      a_{41}(x) &     0  & x^{-(n-1)} &      0    & x^{-n} \\
      a_{51}(x) & x^{-1} & x^{-(n-1)} & a_{54}(x) & x^{-n} \\
\end{pmatrix}
\]
where
(recall that $p = 2n -1,$ $n + np = q,$ $r = 2n +1,$ $nr = q + n$ and $pr = 2q-1$):
\begin{align*}
   a_{14}(x) &= \sum_{i=0}^{n-1} x^{-\modulo{n+ip}{q}} = \sum_{i=0}^{n-1} x^{-(n+ip)} = %%%% x^{-n}\sum_{i=0}^{n-1} x^{-ip} =
                x^{-n}(1+\alpha(x));\\
   a_{24}(x) &= \sum_{i=1}^{n} x^{-\modulo{n+(n+i)p}{q}} = \sum_{i=1}^{n} x^{-ip} = x^{-np} + \alpha(x);\\
   a_{31}(x) &= \sum_{i=-1}^{n-2} x^{-\modulo{n+2+(p+i)r}{q}} = x^{-(n+1+q-r)} + \sum_{i=0}^{n-2} x^{-(n+1+ir)}\\ %%%% x^{-(n+1)}\left[ x^{-(q-r)} + \sum_{i=0}^{n-2} x^{-ir} \right] =
             &\hspace*{3em}=\ x^{-(n+1+q-r)} + x^{-(n+1)}\beta(x) = x^{-(n+1)}\bigl(x^{-(q-r)} + \beta(x)\bigr);\\
   a_{41}(x) &= \sum_{i=0}^{p-2} x^{-\modulo{n+2+(p+n+i)r}{q}} = \sum_{i=0}^{p-2} x^{-\modulo{(i+1)r}{q}}\\
             &\hspace*{3em}=\ \sum_{i=0}^{n-2} x^{-(i+1)r} + \sum_{i=n-1}^{p-2} x^{-((i+1)r-q)}\\
             &\hspace*{3em}=\ x^{-r}\beta(x) + \sum_{i=0}^{n-2} x^{-((n+i)r-q)} = x^{-r}\beta(x) + \sum_{i=0}^{n-2} x^{-(n + ir)}\\
             &\hspace*{3em}=\ \bigl(x^{-r}+x^{-n}\bigr)\beta(x);\\
   a_{51}(x) &= \bigl(a_{31}(x) - x^{-(n+1+q-r)}\bigr) + a_{41}(x) = \bigl(x^{-(n+1)} + x^{-r}+x^{-n}\bigr)\beta(x);\\
   a_{54}(x) &= a_{14}(x) + \bigl(a_{24}(x) - x^{-np}\bigr) = x^{-n}(1+\alpha(x)) + \alpha(x)\\
             &\hspace*{3em}=\ x^{-n} + \bigl(1 + x^{-n}\bigr)\alpha(x);\\
\intertext{with}
   \alpha(x) &= \sum_{i=1}^{n-1} x^{-ip} = \frac{x^{-np}-x^{-p}}{x^{-p}-1} = \frac{x^{-(n-1)p}-1}{1-x^{p}},\text{ and}\\
   \beta(x)  &= \sum_{i=0}^{n-2} x^{-ir} = \frac{x^{-(n-1)r}-1}{x^{-r}-1} = \frac{x^{-(n-2)r}-x^r}{1-x^{r}}.
\end{align*}
Next we explain the above computations for the matrix $M_{\textsf{Rom}_n}(x).$
All entries in this matrix can be easily deduced from
Figure~\ref{graphlemmafngraphmontevideu} except for the entries
$a_{14}(x), \ a_{24}(x),\ a_{31}(x),\ a_{41}(x),\ a_{51}(x)$ and $a_{54}(x)$
(these ``complicate'' terms of $M_{\textsf{Rom}_n}(x)$
are determined by the partition of the Markov graph modulo 1 of $F_n$
in roads and the fact that we have chosen the last vertex of each road
to be a member of the rome).
They correspond to simple paths of the form
(see \eqref{road1-Classes} and \eqref{road2-Classes})
either
\begin{align*}
 & \textsf{r}_j \longrightarrow
   \bigBIclass{x_i, x_{i+1}} =
   \bigBIclass{x_{\ell p-1},x_{\ell p}} \longrightarrow
   \bigBIclass{x_{(\ell+1)p - 1},x_{(\ell+1)p}} \longrightarrow\cdots\\
 & \hspace*{18em}\longrightarrow \bigBIclass{x_{(q-n)p-1},x_{(q-n)p}} = \textsf{r}_1,\text{ or}\\
 & \textsf{r}_j \longrightarrow
   \bigBIclass{y_i, y_{i+1}} =
   \bigBIclass{y_{\ell r-1},y_{\ell r}} \longrightarrow
   \bigBIclass{y_{(\ell+1)r - 1},y_{(\ell+1)r}} \longrightarrow\cdots\\
 & \hspace*{18em}\longrightarrow\bigBIclass{y_{(q-n)r-1},y_{(q-n)r}} = \textsf{r}_4,
\end{align*}
for some $j\in\{1,2,3,4,5\}.$
We claim that the length of the first one of the above paths is
$\modulo{n+2+ir}{q}$ and the length of the second one is
$\modulo{n+ip}{q}.$

Then, entry $a_{14}(x)$ can be obtained as follows:
Statement~(3.12.b) tells us that
$\textsf{r}_1 = \bigBIclass{x_{n-1},x_{n}}$ $F_n$-covers all the classes
$\bigBIclass{y_i,y_{i+1}}$ for $i=0,1,\dots, n-1,$
and \eqref{road2-Classes} sows that
$\bigBIclass{y_i, y_{i+1}} = \bigBIclass{y_{\ell r-1},y_{\ell r}}$
for some $\ell \in \{1,2,\dots,q-n\}.$
Hence, every of such paths is a simple path from
$\textsf{r}_1$ to $\textsf{r}_4$ and, by the claim,
it contributes $x^{-\modulo{n+ip}{q}}$ to the entry $a_{14}(x).$
Thus,
\[
a_{14}(x) = \sum_{i=0}^{n-1} x^{-\modulo{n+ip}{q}}
          = \sum_{i=0}^{n-1} x^{-(n+ip)}
\]
because $n + ip \le n + (n-1)p \le q-p$ and, hence,
$\modulo{n+ip}{q} = n+ip$.
The other ``complicate'' entries:
$a_{24}(x),\ a_{31}(x),\ a_{41}(x),\ a_{51}(x)$ and $a_{54}(x)$
can be justified analogously.

Now we prove the first statement of the claim;
the second one follows analogously.
The path
\begin{multline*}
 \bigBIclass{x_i, x_{i+1}} =
   \bigBIclass{x_{\ell p-1},x_{\ell p}} \longrightarrow
   \bigBIclass{x_{(\ell+1)p - 1},x_{(\ell+1)p}} \longrightarrow\cdots\\
 \longrightarrow \bigBIclass{x_{(q-n)p-1},x_{(q-n)p}} = \bigBIclass{x_{n-1},x_{n}}
\end{multline*}
can also be written as
\begin{multline*}
 \bigBIclass{x_i, x_{i+1}} \longrightarrow
   \bigBIclass{x_{i+p},x_{i+1+p}} \longrightarrow\cdots\\
 \longrightarrow \bigBIclass{x_{i+d p},x_{i+1+dp}} = \bigBIclass{x_{n-1},x_{n}},
\end{multline*}
which clearly has length $d$ for some $d\in \{0,1,\dots,q-1\}.$
We have to show that $d = \modulo{n+1+ir}{q}.$
Since $np = q-n$ and $rp \equiv -1 \pmod{q},$
\[
 i + (n+1+ir)p = i + q + (p-n) + irp = q + i (1 + rp) + (n-1) \equiv n-1\mkern-12mu\pmod{q}.
\]
So, the path
\begin{multline*}
 \textsf{r}_j \longrightarrow \bigBIclass{x_i, x_{i+1}} =
   \bigBIclass{x_{\ell p-1},x_{\ell p}} \longrightarrow
   \bigBIclass{x_{(\ell+1)p - 1},x_{(\ell+1)p}} \longrightarrow\cdots\\
 \longrightarrow \bigBIclass{x_{(q-n)p-1},x_{(q-n)p}} = \bigBIclass{x_{n-1},x_{n}}
\end{multline*}
has length $\modulo{n+1+ir}{q} + 1 = \modulo{n+2+ir}{q}$ because,
according to (3.12.a), the length of the path is
$\modulo{n+1+ir}{q} \le q-n-1 < q-1.$
This ends the proof of the claim.

By Theorem~\ref{theoremrome}, the characteristic polynomial
(ignoring the sign) of $M_n$ is
\begin{multline*}
  \pm x^{2q}\det(M_{\textsf{Rom}_n} - \mathbf{I}_{5})  =\\
  \frac{
    \kappa_2(x) x^{2q} +  \kappa_1(x) x^{q+n} + \kappa_2(x) - 2\bigl(x^{4n} - 2x^{2n-1} + 1\bigr)
  }{\bigl(x^{2n-1}-1\bigr)\bigl(x^{2n+1}-1\bigl)} = \\
  \frac{T_n(x)}{\bigl(x^{2n-1}-1\bigr)\bigl(x^{2n+1}-1\bigl)} .
\end{multline*}
Clearly, the largest root (larger than one) of
the characteristic polynomial of $M_n$ coincides with
the largest root (larger than one) of the numerator of
$\pm x^{2q}\det(M_{\textsf{Rom}_n} - \mathbf{I}_{5})$ which is $T_n(x).$
\end{proof}

\begin{proof}[Proof of Theorem~\ref{theoremexamplemontevideucircle}]
In a similar way to the proof of Theorem~\ref{theoremfirstexamplebcncircle}
we see that $Q_n$ and $P_n$ are twist lifted periodic orbits of $F_n$
both of period $q$ such that
$Q_n$ has rotation number $\tfrac{p}{q}$ and
$P_n$ has rotation number $\tfrac{r}{q}.$

The proof that $\Rot(F_n) = \left[\tfrac{p}{q}, \tfrac{r}{q}\right]$
also follows as in Theorem~\ref{theoremfirstexamplebcncircle}
with the following differences. There exists a unique
$u^n_l \in (x_{np+n-1}, y_{(n-2)r+n+1})$
with $F_n\bigl(u^n_l\bigr) = x_p + 1 = F_n(x_0) + 1$
($F_n\Bigl(\bigl[x_{np+n-1}, y_{(n-2)r+n+1}\bigr]\Bigr) = 1 + \bigl[x_{p-1}, y_0\bigr]$)
such that
\begin{multline*}
 (F_n)_l(x) = \inf\set{F_n(y)}{y \ge x} = \\
 \begin{cases}
    F_n(x)& \text{for $x \in \bigl[0, u^n_l\bigr]$,}\\
    x_p+1 & \text{for $x \in \bigl[u^n_l,1\bigr]$,}\\
    (F_n)_l(x - \floor{x}) + \floor{x} & \text{if $x \notin [0,1];$}
 \end{cases}
\end{multline*}
and a unique $u^n_u \in (x_{p+n-1},y_0)$
with $F_n\bigl(u^n_u\bigr) = y_{2n} = y_{r-1} = F_n(y_{q-1}) - 1$
($F_n\bigl([x_{p+n-1},y_0]\bigr) = [x_{2p+n-1}, y_r]$)
such that\pagebreak[2]
\begin{multline*}
 (F_n)_u(x) = \sup\set{F_n(y)}{y \le x} = \\
 \begin{cases}
    y_{2n}   & \text{for $x \in \bigl[0, u^n_u\bigr]$,}\\
    F_n(x)   & \text{for $x \in \bigl[u^n_u,y_{q-1}\bigr]$,}\\
    y_{2n}+1 & \text{for $x \in \bigl[y_{q-1}, 1\bigr]$,}\\
    (F_n)_u(x - \floor{x}) + \floor{x} & \text{if $x \notin [0,1].$}
 \end{cases}
\end{multline*}
In this situation we have
$P_n \cap [0,1] \subset [u^n_u,y_{q-1}],$
$(F_n)_u\evalat{P_n} = F_n\evalat{P_n}$
and, hence,
$\rho\bigl((F_n)_u\bigr) = \rho_{F_n}(P_n) = \tfrac{r}{q}.$
In a similar way,
$Q_n \cap [0,1] \subset [0, u^n_l],$
$(F_n)_l\evalat{Q_n} = F_n\evalat{Q_n}$
and, hence,
$\rho\bigl((F_n)_l\bigr) = \rho_{F_n}(Q_n) = \tfrac{p}{q}.$
Consequently, $\Rot(F_n) = \left[\tfrac{p}{q}, \tfrac{r}{q}\right]$
by Theorem~\ref{theoremrotationintwathermap}.

To compute the set $\Per(f_{n})$ we will start by computing
$M\Bigl(\tfrac{p}{q}, \tfrac{r}{q}\Bigr).$
We claim that
\begin{multline*}
 M\Bigl(\tfrac{p}{q}, \tfrac{r}{q}\Bigr) =
   \{n\}\ \cup\\
   \set{t n + k}{t \in \{2,3,\dots,\nu-1\} \text{ and }
                    -\tfrac{t}{2} < k \le \tfrac{t}{2},\ k \in \Z} \cup \\
   \succs{n\nu+1-\tfrac{\nu}{2}}
\end{multline*}
with
\[
\nu = \begin{cases}
        n & \text{if $n$ is even, and}\\
        n-1 & \text{if $n$ is odd.}
     \end{cases}
\]
In what follows, to simplify the notation, we will denote
\[
 \mathcal{K}_n := \{1-\tfrac{\nu}{2}, 2-\tfrac{\nu}{2},\dots,0,1,\dots,n-\tfrac{\nu}{2}\}.
\]
Taking into account that $n\nu - \tfrac{\nu}{2} = n(\nu-1) + \Bigl(n-\tfrac{\nu}{2}\Bigr)$ and
\begin{multline*}
 \N = \{1,2,\dots,2n-\tfrac{\nu}{2}\}\ \cup \succs{n\nu+1-\tfrac{\nu}{2}} \cup \\
      \set{tn+k}{t \in \{2,3,\dots,\nu-1\},\ k \in \mathcal{K}_n},
\end{multline*}
the claim follows directly from
\begin{enumerate}[(i)]
 \item $M\Bigl(\tfrac{p}{q}, \tfrac{r}{q}\Bigr) \cap \{1,2,\dots,2n-\tfrac{\nu}{2}\} = \{n\},$
 \item $M\Bigl(\tfrac{p}{q}, \tfrac{r}{q}\Bigr) \supset \succs{n\nu+1-\tfrac{\nu}{2}},$ and
 \item $\begin{multlined}[t][0.85\textwidth]
             M\Bigl(\tfrac{p}{q}, \tfrac{r}{q}\Bigr) \cap \set{tn+k}{t \in \{2,3,\dots,\nu-1\}\text{ and } k \in \mathcal{K}_n} = \\[1ex]
             \set{t n + k}{t \in \{2,3,\dots,\nu-1\} \text{ and } -\tfrac{t}{2} < k \le \tfrac{t}{2},\ k \in \Z}. \end{multlined}$
\end{enumerate}
Moreover, to prove these three statements note that the elements of
$M\Bigl(\tfrac{p}{q}, \tfrac{r}{q}\Bigr)$ are those $m \in \N$
for which there exists $\ell \in \N$ such that
\begin{equation}\label{Mcdconditionexamplemontevideucircle}
  \frac{2n-1}{2n^{2}} < \frac{\ell}{m} < \frac{2n+1}{2n^{2}}.
\end{equation}
Simple computations show that
\[
 0 < \tfrac{1}{n+k} \le \frac{2n-1}{2n^{2}} < \frac{1}{n} <
     \frac{2n+1}{2n^{2}} \le \tfrac{2}{n+k} < \tfrac{1}{k}
\]
for every $k \in \{1,2,\dots,n-1\}.$
Thus (i) holds.

To prove (ii) we write
\[
\succs{n\nu+1-\tfrac{\nu}{2}} = \set{tn+k}{t \in \N,\ t \ge \nu\text{ and } k \in \mathcal{K}_n}
\]
(recall that $n\nu - \tfrac{\nu}{2} = n(\nu-1) + (n-\nu/2)$).
Moreover, \eqref{Mcdconditionexamplemontevideucircle} with $m = tn+k$
is equivalent to
\begin{equation}\label{Mcdconditionexamplemontevideucircle-dos}
  (2n-1) k - t n < 2(\ell - t)n^2 < (2n+1) k + t n.
\end{equation}

Assume first that either $n$ is even or $k \le n - \tfrac{\nu}{2} -1.$
In this case \eqref{Mcdconditionexamplemontevideucircle-dos}
with $\ell = t$ holds because
$t \ge \nu,$ $1-\tfrac{\nu}{2} \le k \le n-\tfrac{\nu}{2}$ and
\begin{multline*}
 (2n-1)k - t n \le\\
 (2n-1) \left\{\begin{array}{ll}
  \Bigl(n-\tfrac{\nu}{2}\Bigr)   & \text{when $n$ is even}\\
  \Bigl(n-\tfrac{\nu}{2}-1\Bigr) & \text{when $n$ is odd and $k \le n - \tfrac{\nu}{2} -1$}
 \end{array}\right\} - \nu n =\\
 (2n-1)\tfrac{\nu}{2} - \nu n = -\tfrac{\nu}{2} < 0 < 2n+1 - \tfrac{\nu}{2} =\\
 (2n+1)\Bigl(1-\tfrac{\nu}{2}\Bigr) + \nu n \le (2n+1)k + t n .
\end{multline*}
Thus, $tn+k \in M\Bigl(\tfrac{p}{q}, \tfrac{r}{q}\Bigr)$ in this case.
Now we assume that $n$ is odd and $k = n - \tfrac{\nu}{2}.$
Then, \eqref{Mcdconditionexamplemontevideucircle-dos}
with $\ell = t+1$ holds:
\begin{multline*}
 (2n-1)k - t n \le (2n-1)\Bigl(n - \tfrac{\nu}{2}\Bigr) -  \nu n = \tfrac{3n-1}{2} < 2n^2 < \\
 2n^2 + \Bigl(n - \tfrac{\nu}{2}\Bigr) = (2n+1)\Bigl(n - \tfrac{\nu}{2}\Bigr) + \nu n \le (2n+1)k + t n,
\end{multline*}
and (ii) follows.

Next we prove (iii).
First notice that when $n = 3,$ then $\nu = 2$ and, hence,
\begin{multline*}
 \set{tn+k}{t \in \{2,3,\dots,\nu-1\}\text{ and } k \in \mathcal{K}_n} =  \\
 \set{t n + k}{t \in \{2,3,\dots,\nu-1\} \text{ and } -\tfrac{t}{2} < k \le \tfrac{t}{2},\ k \in \Z} = \emptyset.
\end{multline*}
So, (iii) holds trivially in this case.

In the rest of the proof of (iii) we assume that $n \ge 4$ and
we will again use \eqref{Mcdconditionexamplemontevideucircle-dos}.
Observe that
\begin{multline*}
 - 2n^2 < - 2n^2 + 2n\Bigl(n+\tfrac{3}{2}-\nu\bigr) + \Bigl(\tfrac{\nu}{2} - 1\Bigr) =  (2n-1)\Bigl(1-\tfrac{\nu}{2}\Bigr) - (\nu -1) n \le \\
   (2n-1) k - t n < 2(\ell - t)n^2 < (2n+1) k + t n \le \\
   (2n+1)\Bigl(n-\tfrac{\nu}{2}\Bigr) + (\nu -1) n = 2n^2-\tfrac{\nu}{2} < 2n^2
\end{multline*}
because $t \in \{2,3,\dots,\nu-1\}$ and $1-\tfrac{\nu}{2} \le k \le n-\tfrac{\nu}{2}.$
This implies $\ell-t = 0$ and, then,
\eqref{Mcdconditionexamplemontevideucircle-dos} becomes
\[
  (2n-1)k - t n  < 0 < (2n+1)k + t n,
\]
which is equivalent to
\[
  -\frac{tn}{2n+1} < k < \frac{tn}{2n-1} \andq k \in \mathcal{K}_n.
\]

Observe that, for every $t\in\N,$
\[
 -\frac{t}{2} < -\frac{tn}{2n+1} < 0 < \frac{t}{2} < \frac{tn}{2n-1}.
\]

To prove (iii) we will show that the following three statements hold:
\begin{enumerate}[{(iii.}1)]
 \item $
 \max \left( \Z \cap \Bigl(-\tfrac{tn}{2n+1},\tfrac{t}{2}\Bigr]\right) =
 \left\{\begin{array}{ll}
  \tfrac{t}{2} & \text{if $t$ is even}\\
  \tfrac{t-1}{2} & \text{if $t$ is odd}
 \end{array}\right\} \in \mathcal{K}_n.
$
\item $\tfrac{tn}{2n-1} < 1 + \max \left( \Z \cap \Bigl(-\tfrac{tn}{2n+1},\tfrac{t}{2}\Bigr]\right).$
\item $
 \min \left( \Z \cap \Bigl(-\tfrac{tn}{2n+1},\tfrac{t}{2}\Bigr]\right) =
 \left\{\begin{array}{ll}
  1-\tfrac{t}{2} & \text{if $t$ is even}\\
  \tfrac{1-t}{2} & \text{if $t$ is odd}
 \end{array}\right\} \in \mathcal{K}_n.
$
\end{enumerate}

First we will show that (iii) follows from the above statements and
then we will prove them. From (iii.3) we immediately get that
\[
  \min \left( \Z \cap \Bigl(-\tfrac{tn}{2n+1},\tfrac{t}{2}\Bigr]\right) - 1 =
 \left\{\begin{array}{ll}
  -\tfrac{t}{2} & \text{if $t$ is even}\\
  -\tfrac{t+1}{2} & \text{if $t$ is odd}
 \end{array}\right\} \le -\tfrac{t}{2}.
\]
Consequently, by (iii.1--3),
\begin{multline*}
 \Z \cap \left(-\frac{tn}{2n+1},\frac{t}{2}\right] = \mathcal{K}_n \cap \left(-\frac{tn}{2n+1},\frac{t}{2}\right]
 \andq \\
 \Z \cap \left(-\frac{t}{2}, -\frac{tn}{2n+1}\right] = \Z \cap \left(\frac{t}{2},\frac{tn}{2n-1}\right) = \emptyset.
\end{multline*}
So, since $\mathcal{K}_n \subset \Z,$
\begin{multline*}
 \Z \cap \left(-\frac{t}{2},\frac{t}{2}\right] =
 \Z \cap \left(-\frac{tn}{2n+1},\frac{tn}{2n-1}\right) \supset
 \mathcal{K}_n \cap \left(-\frac{tn}{2n+1},\frac{tn}{2n-1}\right) \supset \\
    \mathcal{K}_n \cap \left(-\frac{tn}{2n+1},\frac{t}{2}\right] =
    \Z \cap \left(-\frac{tn}{2n+1},\frac{t}{2}\right] =
    \Z \cap \left(-\frac{t}{2},\frac{t}{2}\right],
\end{multline*}
which gives
$\Z \cap \Bigl(-\tfrac{t}{2},\tfrac{t}{2}\Bigr] = \mathcal{K}_n \cap \Bigl(-\tfrac{tn}{2n+1},\tfrac{tn}{2n-1}\Bigr).$
Thus, (iii) holds and hence the claim, provided that (iii.1--3) are verified.

We start by checking that (iii.1) holds.
The fact that
\[
 \max \left( \Z \cap \Bigl(-\tfrac{tn}{2n+1},\tfrac{t}{2}\Bigr]\right) =
 \left\{\begin{array}{ll}
  \tfrac{t}{2} & \text{if $t$ is even,}\\
  \tfrac{t-1}{2} & \text{if $t$ is odd,}
 \end{array}\right.
\]
is obvious.
So, we have to see that
$\max \left( \Z \cap \Bigl(-\tfrac{tn}{2n+1},\tfrac{t}{2}\Bigr]\right) \in \mathcal{K}_n.$
Note that $2n > 2n - 1 \ge 2\nu - 1$ and thus,
\[
 \tfrac{t}{2} \le \tfrac{\nu - 1}{2} < n-\tfrac{\nu}{2} = \max \mathcal{K}_n
\]
because $t \le \nu -1.$
So, since $0 \in \mathcal{K}_n$ and
$0 <  \max \left( \Z \cap \Bigl(-\tfrac{tn}{2n+1},\tfrac{t}{2}\Bigr]\right),$
the statement (iii.1) holds.

To show (iii.2), again since $t \le \nu -1$ we have
\[
 2tn < 2tn + (2n - \nu) \le 2n(t+1) - (t+1) = (2n-1)(t+1)
\]
which is equivalent to
\[
 \frac{tn}{2n-1} < \frac{t+1}{2}.
\]
So, by (iii.1),
\[
 \frac{tn}{2n-1} < \frac{t+1}{2} \le 1 + \max \left( \Z \cap \Bigl(-\tfrac{tn}{2n+1},\tfrac{t}{2}\Bigr]\right).
\]

Now we prove (iii.3).
By assumption we have $t \le \nu - 1 \le  n-1.$
Hence,
\begin{multline*}
 (2n+1)(t+1) > (2n+1)t > 2nt > 2nt - 2n + n >\\
        2n(t-1) + (t-1) = (2n+1)(t-1) > (2n+1)(t-2).
\end{multline*}
This gives
\begin{align*}
 & -\frac{t}{2} < -\frac{tn}{2n+1} < 1- \frac{t}{2} = \frac{2-t}{2} \text{ when $t$ is even, and}\\
 & -\frac{t+1}{2} = \frac{1-t}{2} - 1 < -\frac{tn}{2n+1} < \frac{1-t}{2} \text{ when $t$ is odd,}
\end{align*}
which proves that
\[
 \min \left( \Z \cap \Bigl(-\tfrac{tn}{2n+1},\tfrac{t}{2}\Bigr]\right) =
 \left\{\begin{array}{ll}
  1-\tfrac{t}{2} & \text{if $t$ is even,}\\
  \tfrac{1-t}{2} & \text{if $t$ is odd.}
 \end{array}\right.
\]
Furthermore, we need to show that
$\min \left( \Z \cap \Bigl(-\tfrac{tn}{2n+1},\tfrac{t}{2}\Bigr]\right) \in \mathcal{K}_n.$
If $t = \nu - 1$, since $\nu$ is always even we have
\[
 \min \left( \Z \cap \left(-\frac{tn}{2n+1},\frac{t}{2}\right]\right) =
 \frac{1-t}{2} = \frac{1-(\nu-1)}{2} = 1- \frac{\nu}{2} %%= \min \mathcal{K}_n
 \in \mathcal{K}_n.
\]
%% So, in this case we are done. On the other hand,
When $t \le \nu -2,$
\[
 \min \mathcal{K}_n = \frac{2-\nu}{2} \le -\frac{t}{2} < -\frac{tn}{2n+1}.
\]
Consequently, (iii.3) holds as before because
\[
   \min \left( \Z \cap \Bigl(-\tfrac{tn}{2n+1},\tfrac{t}{2}\Bigr]\right) \le 0 \in \mathcal{K}_n.
\]
This ends the proof of (iii) and the claim with it.

Finally are ready to compute the set $\Per(f_{n})$ by using the above claim.
By Theorem~\ref{theoremMisiurewicz},
\[
 \Per(f_{n}) = Q_{F_n}\Bigl(\tfrac{p}{q}\Bigr) \cup M\Bigl(\tfrac{p}{q}, \tfrac{r}{q}\Bigr) \cup Q_{F_n}\Bigl(\tfrac{r}{q}\Bigr)
\]
and, from the above claim,
\[
  Q_{F_n}\Bigl(\tfrac{p}{q}\Bigr) \cup Q_{F_n}\Bigl(\tfrac{r}{q}\Bigr)
     \subset q\N \subset \succs{2n^2} \subset \succs{n\nu+1-\tfrac{\nu}{2}}
     \subset M\Bigl(\tfrac{p}{q}, \tfrac{r}{q}\Bigr)
\]
because, independently of the parity of $n,$
$n\nu+1-\tfrac{\nu}{2} < n^2 + 1 < 2n^2.$
Consequently, $\Per(f_{n}) = M\Bigl(\tfrac{p}{q}, \tfrac{r}{q}\Bigr),$
which, together with the claim, proves the statement about the set
$\Per(f_{n}).$

Notice that $2 \tfrac{\nu}{2} \le n$ implies
$\tfrac{\nu}{2} - n \le -\tfrac{\nu}{2}.$
Hence, since $\nu$ is always even,
\begin{multline*}
   \max \set{t n + k}{t \in \{2,3,\dots,\nu-1\} \text{ and } -\tfrac{t}{2} < k \le \tfrac{t}{2},\ k \in \Z} = \\
   (\nu-1)n + \tfrac{\nu-2}{2} = \nu n + \Bigl( \tfrac{\nu}{2} - n\Bigr) - 1 \le \nu n - \tfrac{\nu}{2} -1.
\end{multline*}
Then,
$\nu n - \tfrac{\nu}{2} \notin \Per(f_{n})$ and thus,
$\sbc(f_n) = n\nu+1-\tfrac{\nu}{2}.$
On the other hand, $n \in \sbcset(f_n)$ and therefore,
$\bc(f_n)$ exists and verifies
\[ n\le \bc(f_n) \le n\nu - 1 -\tfrac{\nu}{2}. \]

Next we show that $f_n$ is totally transitive.
As in the previous example we have that
$P_n \cup Q_n$ is a short Markov partition with respect to $F_n.$
Then, $f_n$ is an expansive Markov map with respect to the
partition $\bigemap{P_n \cup Q_n},$
and the transition matrix of the Markov graph of $f_n$
with respect to the partition $\bigemap{P_n \cup Q_n}$
coincides with the transition matrix of the Markov graph modulo 1
of $F_n$ with respect to $P_n \cup Q_n.$ Moreover,
this transition matrix is non-negative and irreducible,
and from the proof of
Proposition~\ref{propositionmarkovgraphexamplemontevideu}(a)
(see also Figure~\ref{graphlemmafngraphmontevideu})
it follows that there exist five vertices in the
Markov graph modulo 1 of $F_n$ (indeed all ends of roads)
which are the beginning of more than one arrow.
That is, the transition matrix of the Markov graph of $f_n$
with respect to $\bigemap{P_n \cup Q_n}$ is not
a permutation matrix.
Then, $f_n$ is transitive by Theorem~\ref{theoremTransitivityexpansivemap}
and, since $\Per(f_n) \supset \succs{n\nu+1-\tfrac{\nu}{2}},$
$\Per(f_n)$ is cofinite and $f_n$ is totally transitive
by Theorem~\ref{theoremtotallytransitivefromadrr}.

Next we need to show that $\lim_{n\to\infty} h(f_n) = 0.$
We will use the notation of
Proposition~\ref{propositionmarkovgraphexamplemontevideu}(b)
and we write
\[
    T_n(x)  = \kappa_2(x)\bigl(x^{2q} + 1\bigr) + \kappa_1(x)x^{q+n} - 2\kappa_0(x)
\]
with $\kappa_0(x) := x^{4n} - 2x^{2n-1} + 1.$
%%%% We have $\kappa_2(1) = -8,$ $\kappa_1(1) = 16$ and $\kappa_0(1) = 0.$
Then, for $x \ge 1$ we have the following easy bounds:
\begin{align*}
 \kappa_2(x) &=  x^{4n} - 2x^{3n} - x^{2n+1} - 2x^{2n} - 3x^{2n-1} - 2x^{n} + 1 >\\
             &\hspace*{4em}x^{4n} - 10x^{3n} = x^{3n} (x^n - 10),\\
 \kappa_1(x) &= 4x^{2n} + 2x^{n+1} + 4x^{n} + 2x^{n-1} + 4 > 12x^{n-1},\text{ and}\\
 \kappa_0(x) &= x^{4n} - 2x^{2n-1} + 1 \le x^{4n} - 1 < x^{4n}.
\end{align*}
Hence, now for $x > \sqrt[n]{10} > 1,$
\begin{multline*}
 T_n(x) > x^{3n} (x^n - 10)\bigl(x^{2q} + 1\bigr) + 12x^{n-1}x^{q+n} - 2x^{4n} = \\
          x^{3n} (x^n - 10)\bigl(x^{2q} + 1\bigr) + 2x^{4n} \bigl(6x^{2n(n-1)-1} - 1\bigr) > \\
          x^{3n} (x^n - 10)\bigl(x^{2q} + 1\bigr) > 0.
\end{multline*}
Therefore, $\rho_n \le \sqrt[n]{10}$ and
\[ 0 \le \lim_{n\to\infty} h(f_n) = \lim_{n\to\infty} \log \rho_n \le \lim_{n\to\infty} \log \sqrt[n]{10} = 0. \]
\end{proof}

\begin{proof}[Proof of Theorem~\ref{theoremexamplemontevideugraph}]
As in the proof of proof of Theorem~\ref{theoremfirstexamplebcngraph}
we may assume that $G \ne \SI$ since otherwise
Theorem~\ref{theoremexamplemontevideucircle} already gives the desired
sequence of maps.

The proof in the case $G \ne \SI$ goes along the lines of the
proof of Theorem~\ref{theoremfirstexamplebcngraph} and most of the
details will be omitted.
We will only summarize the parts of the proof which are different from
the proof of Theorem~\ref{theoremfirstexamplebcngraph}, and the ones
needed to fix the notation.

We fix a circuit $C$ of $G$ and an interval $I \subset C$ such that
$I \cap V(G) = \emptyset.$ Also, we choose a homeomorphism
$\map{\eta}{\SI}[C]$ such that
\begin{align*}
 & C\mkern-2mu\setminus\mkern-3mu\Int(I) = \chull{\etaemap{y_{(n-2)r+n}}, \etaemap{x_{(n-1)p+n}}}[C],\text{ and}\\
 & I \supset \bigeta{\bigemap{P_n \cup Q_n}} \cup \mspace{-155mu}
     \bigcup_{\mspace{222mu}\begin{subarray}{l}
                 [x,y] \in \SBI[P_n \cup Q_n]\\
                 [x,y] \notin \BIclass{y_{(n-2)r+n}, x_{(n-1)p+n}}
              \end{subarray}}                  \chull{\etaemap{x}, \etaemap{y}}[C].
\end{align*}

For simplicity, in the rest of the proof we will use the following
notation: Given $x,y\in \R$ we denote by $\BIgraph{x,y}$
the convex hull (in $C$) of $\{\etaemap{x}, \etaemap{y}\}$
(which, of course, coincides with $\bigeta{\bigemap{\chull{x,y}[\R]}}$).
With this notation, the
$\bigeta{\bigemap{P_n \cup Q_n}}$-basic intervals in $C$ are
\[
  \set{\BIgraph{x,y}}{\chull{x,y}[\R] \in \bigSBI{P_n \cup Q_n}}
\]
(see Figure~\ref{graph:graphmontevideuOnTheGraph}).
Clearly, if $\BIclass{x,y} = \BIclass{\tilde x, \tilde y},$ then
$\BIgraph{x,y} = \BIgraph{\tilde x, \tilde y}.$

Observe that
$\BIgraph{y_{(n-2)r+n}, x_{(n-1)p+n}}$ plays the role of $\widetilde{I}_2$
in the proof of Theorem~\ref{theoremfirstexamplebcngraph}
(see Figure~\ref{figuregraphGandmapgnfirstexampleBCN}) and consequently,
$\BIgraph{y_{q-1}, x_{q}}$ plays the role of the interval $\widetilde{I}_3$ while
$\BIgraph{y_{(n-4+j)r+n}, x_{(n-3+j)p+n}}$ play the role of $\widetilde{I}_j$ for $j = 0,1.$
Note that all the intervals are well defined since $n \ge 4$
and they are pairwise disjoint because of the ordering of points defined
in Theorem~\ref{theoremexamplemontevideucircle}.

We set
$X := G \setminus \Int(I) \supset \BIgraph{y_{(n-2)r+n}, x_{(n-1)p+n}},$
and
$V(X) = V(G) \cup \{a,b\}$
with
$a := \bigeta{\bigemap{y_{(n-2)r+n}}}$ and
$b := \bigeta{\bigemap{x_{(n-1)p+n}}}.$
Then, as before, we use Lemma~\ref{lemmaPhiPsi} for the subgraph $X$
(see Figure~\ref{mapsfromarr}).
Let $m=m(X, a, b) \ge 5$ be odd,
consider the partition $0=s_0<s_1<\cdots <s_m=1,$ and
let the maps
$\map{\varphi_{a, b}}{[0,1]}[X]$ and
$\map{\psi_{a, b}}{X}[{[0,1]}]$
be as in Lemma~\ref{lemmaPhiPsi}.
Moreover, as before, we define two arbitrary but fixed homeomorphisms
$\map{\zeta}{[0,1]}[{\BIgraph{y_{q-1}, x_{q}}}]$  and
$\map{\xi}{\BIgraph{y_{(n-3)r+n}, x_{(n-2)p+n}}}[{[0,1]}]$
such that
\begin{multline*}
  \zeta(0) = \etaemap{y_{q-1}},\ \zeta(1) = \etaemap{x_{q}},
  \ \xi\bigl(\etaemap{y_{(n-3)r+n}}\bigr) = 0 \text{ and}\\
  \xi\bigl(\etaemap{x_{(n-2)p+n}}\bigr) = 1
\end{multline*}
(see Figure~\ref{figuregraphGandmapgnfirstexampleBCN} for an analogous situation).

Equipped with all these definitions, for $n \ge 4$ we set
\[
  g_n(x) := \begin{cases}
      \varphi_{a,b} (\xi(x)) & \text{if $x \in \BIgraph{y_{(n-3)r+n}, x_{(n-2)p+n}}$;}\\
      \zeta \bigl(\psi_{a,b}(x)\bigr)  & \text{if $x\in X$;}\\
      (\eta \circ f_n \circ \eta^{-1})(x) & \text{if $x \in I\setminus\Int\BIgraph{y_{(n-3)r+n}, x_{(n-2)p+n}}.$}
\end{cases}
\]
Then, as in the proof of Theorem~\ref{theoremfirstexamplebcngraph}
we can easily but tediously show that $g_n$ is a Markov map
with respect to the partition
\begin{multline*}
R_n = \bigeta{\bigemap{Q_n \cup P_n}} \cup
      \set{\xi^{-1}(s_i)}{i \in \{0,1,\dots,m\}} \cup \\
      \set{\varphi_{a,b}(s_i)}{i \in \{0,1,\dots,m\}},
\end{multline*}
whose $R_n$-basic intervals are:
\begin{align*}
&\Bigl\{\BIgraph{x,y} \,\colon \\
&\hspace*{1.5em}[x,y] \in \bigSBI{P_n \cup Q_n} \setminus \bigl( \BIclass{y_{(n-3)r+n}, x_{(n-2)p+n}} \cup \BIclass{y_{(n-2)r+n}, x_{(n-1)p+n}} \bigr)\\
&\hspace*{17em}\Bigr\} \subset I\setminus\Int\BIgraph{y_{(n-3)r+n}, x_{(n-2)p+n}},\\
& \set{L_i := \xi^{-1}([s_i,s_{i+1}])}{i\in\{0,1,\dots,m-1\}} \subset \BIgraph{y_{(n-3)r+n}, x_{(n-2)p+n}}, \text{ and}\\
&\ \{U_0, U_1,\dots,U_t\} = \set{\varphi_{a,b}\bigl([s_i,s_{i+1}]\bigr)}{i\in\{0,1,\dots,m-1\}} \subset X.
\end{align*}

Next we will derive the Markov graph of $g_n$ with respect to $R_n$
from the Markov graph modulo 1 of $F_n$ with respect to $P_n \cup Q_n,$
which coincides with the Markov graph of $f_n$ with respect to
$\bigemap{P_n \cup Q_n}$ provided that we identify
$\BIgraph{x,y}$ with $\bigemap{\BIclass{x,y}} = \bigemap{[x,y]}$ and this
with $\BIclass{x,y}$ for every $[x,y] \in \bigSBI{P_n \cup Q_n}$
(see the proof of Proposition~\ref{propositionmarkovgraphexamplemontevideu}).
\begin{figure}[ht]
\begin{center}\small
  \tikzstyle{place}=[rectangle,draw=black!60, thick]%
  \tikzstyle{post}=[->,shorten >=1pt,>=stealth,semithick]%
\begin{tikzpicture}[scale=0.9]
\let\bigBIclass\BIgraph
\subfigurefngraphModOne
\node[rotate=90](dotsyx) at (11,6.6) {$\cdots$};
\node[place](yn4rxn3p)   at (11, 5)  {$\bigBIclass{y_{(n-4)r+n},x_{(n-3)p+n}}$};
\filldraw[thick, draw=black!40, fill=black!10, decorate, decoration={zigzag, amplitude=1pt, segment length=2pt}] (8.2,1) rectangle (12.7,3.8);
\node[place](l0)  at (8.65,3.2)  {$L_0$};
\node[place](l1)  at (9.5,3.2)   {$L_1$};
\node[place](l2)  at (10.35,3.2) {$L_2$};
\node(dotsl)      at (11.1,3.2)  {$\cdots$};
\node[place](lm1) at (12,3.2)  {$L_{m-1}$};
\node[place](u0)  at (9.3,1.6)   {$U_0$};
\node[place](u1)  at (10.2,1.6)  {$U_1$};
\node(dotsu)      at (11,1.6)  {$\cdots$};
\node[place](ut)  at (11.7,1.6)  {$U_t$};
\path[->,shorten >=1pt,>=stealth,semithick]
    (yrx2p.south)+(0,-1pt)    edge (dotsyx)
    (dotsyx.west)+(0,-1pt)    edge (yn4rxn3p)
    (yn4rxn3p.south)+(0,-1pt) edge (l0.north)
    (yn4rxn3p.south)+(0,-1pt) edge (l1.north)
    (yn4rxn3p.south)+(0,-1pt) edge (l2.north)
    (yn4rxn3p.south)+(0,-1pt) edge (lm1.north)
    (l0.south)+(0,-1pt)       edge (u0.north)
    (l1.south)+(0,-1pt)       edge (u1.north)
    (l2.south)+(0,-1pt)       edge (u1)
    (lm1.south)+(0,-1pt)      edge (ut.north)
    (u0.south)+(0,-1pt)       edge (yxq)
    (u1.south)+(0,-1pt)       edge (yxq)
    (ut.south)+(0,-1pt)       edge (yxq);
\end{tikzpicture}
\end{center}
\caption{The Markov graph of $g_n.$
The vertices which are intervals used in the Markov graph modulo 1
of $F_n$ must be identified with their images by $\eta \circ \eexp.$
Also, the arrows from the vertices $L_l$ to the vertices $U_j$
inside the grey box circled by a zigzag shape are symbolic
because they cannot be determined precisely.}\label{graph:graphmontevideuOnTheGraph}
\end{figure}
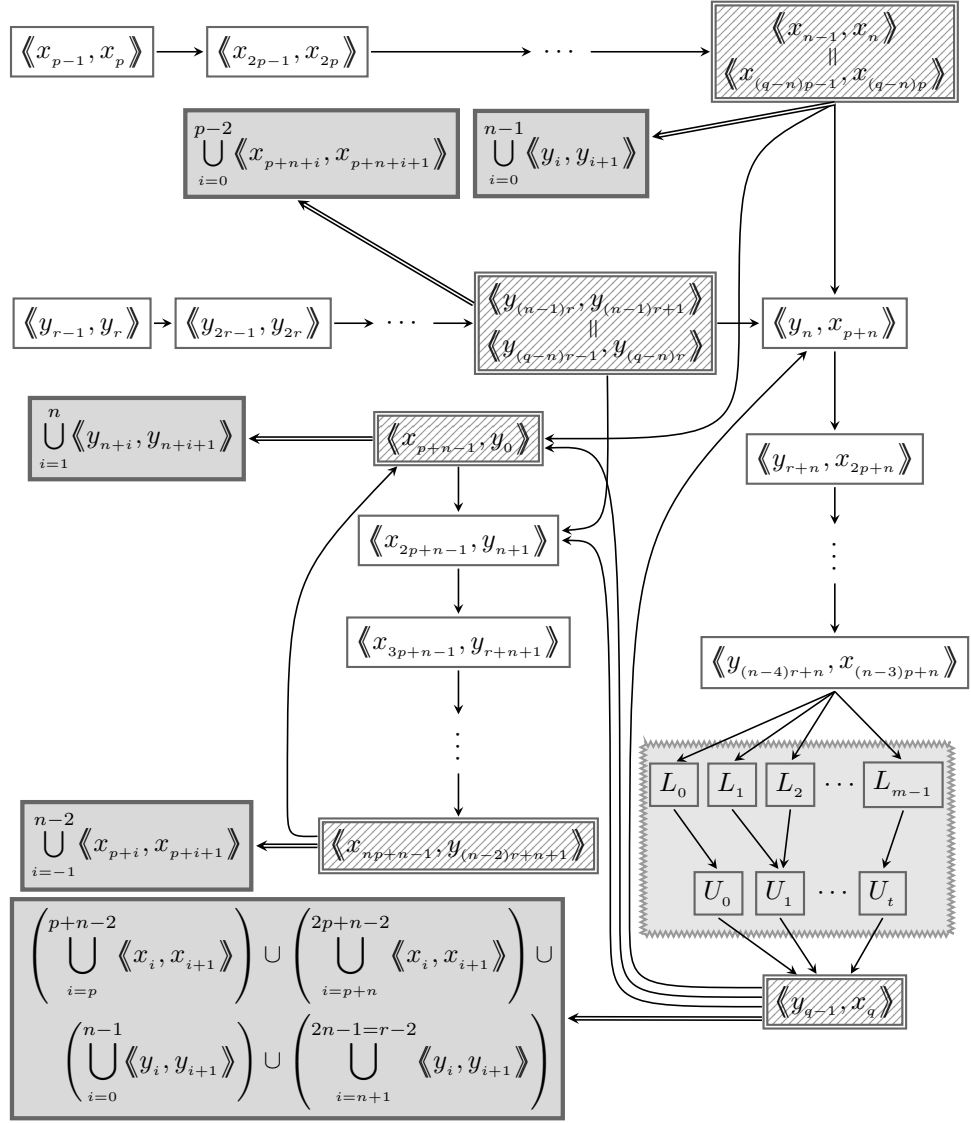
Clearly, the Markov graph of $g_n$ on the intervals
$\BIgraph{x,y}$ such that $[x,y] \in \bigSBI{P_n \cup Q_n}$ and
\[
[x,y] \notin \BIclass{y_{(n-4)r+n}, x_{(n-3)p+n}} \cup \BIclass{y_{(n-3)r+n}, x_{(n-2)p+n}} \cup \BIclass{y_{(n-2)r+n}, x_{(n-1)p+n}}
\]
is isomorphic to the Markov graph modulo 1 of $F_n$ restricted to the
corresponding intervals $\BIclass{x,y}$ (see
Figure~\ref{graph:graphmontevideuOnTheGraph},
Proposition~\ref{propositionmarkovgraphexamplemontevideu}(a) and
Figure~\ref{graphlemmafngraphmontevideu}).
Also, by construction, the interval
$\BIgraph{y_{(n-4)r+n}, x_{(n-3)p+n}}$ $g_n$-covers all the intervals
$L_0,L_1,\dots,L_{m-1} \subset \BIgraph{y_{(n-3)r+n}, x_{(n-2)p+n}}.$
Thus, in the Markov graph of $g_n$ there is an arrow from
$\BIgraph{y_{(n-4)r+n}, x_{(n-3)p+n}}$ to each one of the intervals
$L_0,L_1,\dots,L_{m-1}$ (see Figure~\ref{graph:graphmontevideuOnTheGraph}).
Moreover, every interval $L_i$ $g_n$-covers a unique interval $U_j$
but different intervals $L_i$ can $g_n$-cover the same interval $U_j,$
and every interval $U_j$ $g_n$-covers the same interval
$\BIgraph{y_{q-1}, x_{q}}.$
Hence, the Markov graph of $g_n$ with respect to $R_n$ is the one
shown in Figure~\ref{graph:graphmontevideuOnTheGraph}
where, again, the double arrows arriving to the boxes in grey mean
that there is an arrow arriving to each basic interval in the box
and the arrows between the intervals $L_i$ and $U_j$
are just illustrative.
The part of the Markov graph of $g_n$ with respect to $R_n$
which differs from the Markov graph modulo 1 of $F_n$ with respect to
$P_n \cup Q_n$ is shown inside a grey box with a zigzag border.

As before, by Lemma~\ref{convertingtoexpansiveMM},
the map $g_n$ can be modified
without altering $g_n\evalat{R_n}$ and $g_n(K)$
for every $K \in \SBI[R_n]$ in such a way that
$g_n$ becomes $R_n$-expansive. So, we can use
again Theorem~\ref{theoremTransitivityexpansivemap}
to prove that $g_n$ is transitive.
The Markov graph of $g_n$ tells us that the Markov matrix of $g_n$
with respect to $R_n$ is not a permutation matrix because there are
six basic intervals which $g_n$-cover more than one basic interval.
Moreover, by direct inspection of the Markov graph of $g_n,$
given any two vertices in the graph, there exists a path from the
first to the second one. This means that the transition matrix of the
Markov graph of $g_n$ is non-negative and irreducible.
Thus, $g_n$ is transitive by
Theorem~\ref{theoremTransitivityexpansivemap}.

Concerning the set of periods, it is easy to see that in this example,
\[ \Per(f_n) = \bigcup_{w\in\Per(f_n)} w\cdot\N. \]
So,
\[
\Per(g_n) = \bigcup_{w\in\Per(f_n)} w\cdot\N
\]
exactly as in the proof of Theorem~\ref{theoremfirstexamplebcngraph}
with the difference that, here, the situation is much simpler because
$2 \notin \Per(f_n).$
This proves that the set of periods does not change:
$\Per(g_n) = \Per(f_n).$
Therefore,
$\Per(g_n) \supset \succs{n\nu+1+\tfrac{\nu}{2}}$ which implies that
$\Per(g_n)$ is cofinite and,
by Theorem~\ref{theoremtotallytransitivefromadrr}, $g_n$ is totally transitive.

Now we will estimate $h(g_n)$ by using
Proposition~\ref{propositionmonotoneTopEnt} and
Theorem~\ref{theoremrome} to show that
$\lim_{n\to\infty} h(g_n) = 0.$
We use the rome which corresponds to the one used in the proof of
Proposition~\ref{propositionmarkovgraphexamplemontevideu}:
\begin{multline*}
\widetilde{\textsf{Rom}}_n = \Bigl\{\widetilde{\textsf{r}}_1 = \BIgraph{x_{n-1},x_n},
                   \widetilde{\textsf{r}}_2 = \BIgraph{x_{p+n-1},y_0},\\
                   \widetilde{\textsf{r}}_3 = \BIgraph{x_{np+n-1},y_{(n-2)r +n+1}}, \\
                   \widetilde{\textsf{r}}_4 = \BIgraph{y_{(n-1)r},y_{(n-1)r +1}},
                   \widetilde{\textsf{r}}_5 = \BIgraph{y_{q-1},x_q}\Bigr\}
\end{multline*}
being their elements marked in
Figure~\ref{graph:graphmontevideuOnTheGraph} with a box
with double border and sloping lines background pattern.
Observe that the simple paths from
$\textsf{r}_i$ to $\textsf{r}_j$ in the Markov graph of $f_n,$
computed in the proof of
Proposition~\ref{propositionmarkovgraphexamplemontevideu},
are in one-to-one correspondence with the simple simple paths from
$\widetilde{\textsf{r}}_i$ to $\widetilde{\textsf{r}}_j$
in the Markov graph of $g_n,$ except for the simple paths ending at
$\textsf{r}_5$ and $\widetilde{\textsf{r}}_5.$
Indeed, every simple path in the Markov graph of $f_n$ ending at
$\textsf{r}_5$ is of the form
\begin{multline*}
\textsf{r}_{\ell} \longrightarrow \BIclass{y_n,x_{p+n}} \longrightarrow
\BIclass{y_{r+n},x_{2p+n}} \longrightarrow \dots \longrightarrow
\BIclass{y_{(n-4)r+n}, x_{(n-3)p+n}} \longrightarrow \\
\BIclass{y_{(n-3)r+n}, x_{(n-2)p+n}} \longrightarrow
\BIclass{y_{(n-2)r+n}, x_{(n-1)p+n}} \longrightarrow \textsf{r}_5
\end{multline*}
with $\ell \in \{1,4,5\}.$
However, this path corresponds to the following $m$
paths of the same length in the Markov graph of $g_n$:
\begin{multline*}
\widetilde{\textsf{r}}_{\ell} \longrightarrow \BIclass{y_n,x_{p+n}} \longrightarrow
\BIclass{y_{r+n},x_{2p+n}} \longrightarrow \dots \longrightarrow \\
\BIclass{y_{(n-4)r+n}, x_{(n-3)p+n}} \longrightarrow
L_i \longrightarrow U_j \longrightarrow \widetilde{\textsf{r}}_5
\end{multline*}
with $i = 0,1,\dots, m-1$ and $j \in \{0,1,\dots,t\}$
because $L_0,L_1,\dots,L_{m-1}$
are pairwise different intervals.
So, every non-zero term in the fifth column of the matrix
$M_{\textsf{Rom}_n}(x)$ associated to the Markov graph of $f_n$
(see the proof of Proposition~\ref{propositionmarkovgraphexamplemontevideu}),
which is $x^{-n},$ must be replaced by $mx^{-n}$
in the matrix $M_{\widetilde{\textsf{Rom}}_n}(x)$ associated to the
Markov  graph of $g_n,$ and these are the only changes
when comparing $M_{\textsf{Rom}_n}(x)$ with
$M_{\widetilde{\textsf{Rom}}_n}(x)$.
Therefore,
\[
M_{\widetilde{\textsf{Rom}}_n}(x) = \begin{pmatrix}
           0    & x^{-1} &      0     & a_{14}(x) & mx^{-n} \\
           0    &     0  & x^{-(n-1)} & a_{24}(x) &   0 \\
      a_{31}(x) & x^{-1} &      0     &      0    &   0 \\
      a_{41}(x) &     0  & x^{-(n-1)} &      0    & mx^{-n} \\
      a_{51}(x) & x^{-1} & x^{-(n-1)} & a_{54}(x) & mx^{-n} \\
\end{pmatrix}
\]
where $a_{14}(x), a_{24}(x), a_{31}(x), a_{41}(x), a_{51}(x)$ and
$a_{54}(x)$ are the same as in the proof of
Proposition~\ref{propositionmarkovgraphexamplemontevideu}.

By Theorem~\ref{theoremrome}, the characteristic polynomial
(ignoring the sign) of the transition matrix of the Markov  graph of
$g_n$ is
\[
  \pm x^{2(q-1)+m+t}\det(M_{\widetilde{\textsf{Rom}}_n}(x) - \mathbf{I}_{5}) =
   x^{t+m-2}\ \frac{\widetilde{T}_n(x)}{\bigl(x^{2n-1}-1\bigr)\bigl(x^{2n+1}-1\bigl)}
\]
where
\begin{align*}
   \widetilde{T}_n(x) &= \widetilde{\kappa}_2(x) (x^{2q} + 1) +  \widetilde{\kappa}_1(x) x^{q}  - (m+1)\widetilde{\kappa}_0(x),\\
   \widetilde{\kappa}_2(x) &= x^{4n} - (m+1)\left[ x^{3n} + x^{2n} + x^{n}\right] - x^{2n+1} - (m+2)x^{2n-1} + 1,\\
   \widetilde{\kappa}_1(x) &= (m-1)\left[x^{4n} + 1\right] + 2(m+1)\left[x^{3n} + x^{2n} + x^{n}\right] +\\
                           &\hspace*{18em}2\left[x^{2n+1} + x^{2n-1}\right],\\
\intertext{and}
   \widetilde{\kappa}_0(x) &= x^{4n} -2x^{2n-1} + 1.
\end{align*}
Then, as in the proof of Theorem~\ref{theoremexamplemontevideucircle},
we use the following easy bounds for
$\widetilde{\kappa}_2(x),\ \widetilde{\kappa}_1(x)$ and
$\widetilde{\kappa}_0(x),$ which are valid for $x \ge 1$:
\begin{align*}
 \widetilde{\kappa}_2(x) &> x^{4n} - (4m+6)x^{3n} = x^{3n} (x^n - (4m+6)),\\
 \widetilde{\kappa}_1(x) &> (7m+9)x^{n},\text{ and}\\
 \widetilde{\kappa}_0(x) &= x^{4n} - 2x^{2n-1} + 1 \le x^{4n} - 1 < x^{4n}.
\end{align*}
Hence, now for $x > \sqrt[n]{4m+6} > 1,$
\begin{multline*}
 \widetilde{T}_n(x) > x^{3n} (x^n - (4m+6))\bigl(x^{2q} + 1\bigr) + (7m+9)x^{n}x^{q} - (m+1)x^{4n} = \\
          x^{3n} (x^n - (4m+6))\bigl(x^{2q} + 1\bigr) + (m+1)x^{4n} \Bigl(\tfrac{7m+9}{m+1}x^{n(2n-3)} - 1\Bigr) > \\
          x^{3n} (x^n - (4m+6))\bigl(x^{2q} + 1\bigr) > 0.
\end{multline*}
Therefore, $\tilde{\rho}_n,$ the largest root of $\widetilde{T}_n(x)$
verifies $\rho_n \le \sqrt[n]{4m+6}$ and hence,
\[ 0 \le \lim_{n\to\infty} h(g_n) = \lim_{n\to\infty} \log \tilde{\rho}_n \le \lim_{n\to\infty} \log \sqrt[n]{4m+6} = 0 \]
because $m$ is a fixed number that depends on the topology of the graph and is independent on $n.$
\end{proof}
%%%%%%%%%%%%%%%%%%%%%%%%%%%%%%%%%%%%%%%%%%%%%%%%%%%%%%%%%%%%%%%%%%%%%%
\subsection{The dream example} The last example that we construct
consists of maps without low periods:

\begin{CustomNumberedExample}[\ref{exampleBCNintroduction}]
For every positive integer $n \ge 3$ there exists $f_n,$
a totally transitive continuous circle map of degree one having a
lifting $F_n \in \dol$ such that
$\Rot(F_n) = \left[\tfrac{1}{2n-1}, \tfrac{2}{2n-1}\right],$
$\Per(f_n) = \succs{n}$ and
$\lim_{n\to\infty} h(f_n) = 0.$
Hence, $\bc(f_n) = \sbc(f_n) = n$ and $\lim_{n\to\infty} \bc(f_n) = \infty$.
\smallskip

Furthermore, given any graph $G$ with a circuit, the sequence of maps
$\{f_n\}_{n\ge 3}$ can be extended to a sequence of
continuous totally transitive maps $\map{g_n}{G}$
such that $\Per(g_n) = \Per(f_n)$ and
$\lim_{n\to\infty} h(g_n) = 0.$
\end{CustomNumberedExample}

As in the previous subsections,
Example~\ref{exampleBCNintroduction} will be split
into a theorem that shows the existence of the circle maps $f_n$
by constructing them along the lines of
Subsection~\ref{Examples:PhilandIntro-HowTo},
and a theorem that extends these maps to a generic graph that is not
a tree. The proof of these results will, in turn, use a proposition
that computes the Markov graph  modulo 1 of the liftings $F_n.$

\begin{theorem}\label{theoremexampleBCNintroduction}
Let $n \in \N,\ n\ge 3,$ and let
\begin{align*}
Q_n &= \{\dots x_{-1}, x_0, x_1, x_2, \dots, x_{2n-2}, x_{2n-1}, x_{2n}, \dots\} \subset \R,\andq \\
P_n &= \{\dots y_{-1}, y_0, y_1, y_2, \dots, y_{2n-2}, y_{2n-1}, y_{2n}, \dots\} \subset \R
\end{align*}
be infinite sets such that the points of $P_n$ and $Q_n$
are intertwined  so that
\begin{equation}\label{intertiwining}\arraycolsep=2pt
 \begin{array}{lccccccl}
 0=x_0 < x_1 < \cdots < x_{n-1} < y_0 < & x_{n}    & < & y_1      & < & y_2      & < &\\
                                        & x_{n+1}  & < & y_{3}    & < & y_{4}    & < &\\
                                        & \vdots   &   & \vdots   &   & \vdots   &   &\\
                                        & x_{2n-2} & < & y_{2n-3} & < & y_{2n-2} & < & x_{2n-1} = 1
\end{array}
\end{equation}
and $x_{i + (2n-1)\ell} = x_i + \ell$ and $y_{i + (2n-1)\ell} = y_i + \ell$
for every $i,\ell \in \Z.$

We define a lifting $F_n \in \dol$ such that, for every $i \in \Z,$
$F_n(x_i) = x_{i+1}$ and $F_n(y_i) = y_{i+2},$
and $F_n$ is expansive between consecutive points of $P_n \cup\; Q_n.$
Then, $Q_n$ and $P_n$ are twist lifted periodic orbits of $F_n$
both of period $2n-1$ such that
$Q_n$ has rotation number $\tfrac{1}{2n-1}$ and
$P_n$ has rotation number $\tfrac{2}{2n-1}.$
Moreover, $F_n$ has
$\Rot(F_n) = \left[\tfrac{1}{2n-1}, \tfrac{2}{2n-1}\right]$
as rotation interval.

Let $\map{f_n}{\SI}$ be the continuous map which has $F_n$ as a lifting.
Then,
$f_n$ is totally transitive,
$\Per(f_n) = \succs{n}$ and
$\lim_{n\to\infty} h(f_n) = 0.$
Hence, $\bc(f_n) = \sbc(f_n) = n$ and $\lim_{n\to\infty} \bc(f_n) = \infty$.
\end{theorem}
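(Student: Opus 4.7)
The plan is to mirror very closely the strategy used for Theorems~\ref{theoremfirstexamplebcncircle} and~\ref{theoremexamplemontevideucircle}, since $F_n$ is built along exactly the lines of Subsection~\ref{Examples:PhilandIntro-HowTo}: two twist lifted periodic orbits supply the Markov partition, $F_n$ is prescribed affinely (here with expanding slopes) between consecutive points of $P_n\cup Q_n$, and all the information is then read off the resulting Markov graph modulo~$1$. First I would observe that, since $\gcd(1,2n-1)=\gcd(2,2n-1)=1$ and $F_n$ permutes $Q_n$ and $P_n$ via $x_i\mapsto x_{i+1}$ and $y_i\mapsto y_{i+2}$ respectively, both $Q_n$ and $P_n$ are twist lifted periodic orbits of period $2n-1$ with rotation numbers $\tfrac{1}{2n-1}$ and $\tfrac{2}{2n-1}$. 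To identify $\Rot(F_n)$ I would locate the turning points of $F_n$: inspection of \eqref{intertiwining} shows that $F_n$ is increasing everywhere except on a short interval ending at $x_{2n-1}=1$ (because $F_n(y_{2n-2})=y_1+1>x_1+1=F_n(x_{2n-1})$) and, symmetrically, on a short interval starting at $x_0=0$. This gives explicit formulas for $(F_n)_u$ and $(F_n)_l$ satisfying $(F_n)_u|_{P_n}=F_n|_{P_n}$ and $(F_n)_l|_{Q_n}=F_n|_{Q_n}$, so Theorem~\ref{theoremrotationintwathermap} yields $\Rot(F_n)=[\tfrac{1}{2n-1},\tfrac{2}{2n-1}]$.

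Next, applying Theorem~\ref{theoremMisiurewicz} I would compute
\[
\Per(f_n)= Q_{F_n}\!\left(\tfrac{1}{2n-1}\right)\cup M\!\left(\tfrac{1}{2n-1},\tfrac{2}{2n-1}\right)\cup Q_{F_n}\!\left(\tfrac{2}{2n-1}\right).
\]
For $M(\tfrac{1}{2n-1},\tfrac{2}{2n-1})$ an elementary arithmetic check shows that $m\in\N$ belongs to this set iff there exists an integer $k$ with $\tfrac{m}{2n-1}<k<\tfrac{2m}{2n-1}$: for $m<n$ one has $\tfrac{2m}{2n-1}<1$ so no such $k\ge 1$ exists, while for every $m\ge n$ the choice $k=\lceil\tfrac{m}{2n-1}\rceil$ works (with $k=1$ in the range $n\le m<2n-1$ since $\tfrac{m}{2n-1}<1<\tfrac{2m}{2n-1}$). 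Both $Q_{F_n}(\tfrac{1}{2n-1})$ and $Q_{F_n}(\tfrac{2}{2n-1})$ are contained in $(2n-1)\N\subset\succs{n}$, so $\Per(f_n)=\succs{n}$. Since $n-1\notin\Per(f_n)$ and $\{1,\dots,n-2\}\cap\Per(f_n)=\emptyset\le 2\log_2(n-2)$, the strict and non-strict boundaries of cofiniteness both equal $n$, and $\lim_{n\to\infty}\bc(f_n)=\infty$.

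For total transitivity, I would compute the Markov graph modulo 1 of $F_n$ with respect to the short Markov partition $P_n\cup Q_n$, following the same roads-based decomposition used in Proposition~\ref{propositionmarkovgraphexamplemontevideu}. The graph consists of a long path running through the classes $\bigBIclass{x_i,x_{i+1}}$, a long path running through the classes $\bigBIclass{y_i,y_{i+1}}$, together with a handful of "branching" classes near the turning points (essentially $\bigBIclass{x_{n-1},y_0}$, $\bigBIclass{y_{2n-3},y_{2n-2}}$ and $\bigBIclass{y_{2n-2},x_{2n-1}}$) which $F_n$-cover several classes simultaneously. From this the transition matrix is plainly irreducible and is not a permutation matrix. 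By Proposition~\ref{Markov partitionprojectiontoSI} these observations transfer to the Markov graph of $f_n$ with respect to $\emap{P_n\cup Q_n}$; applying Lemma~\ref{convertingtoexpansiveMM} we may assume $f_n$ is $\emap{P_n\cup Q_n}$-expansive without altering its Markov graph, set of periods, or entropy, so Theorem~\ref{theoremTransitivityexpansivemap} yields transitivity. Since $\Per(f_n)$ is cofinite, Theorem~\ref{theoremtotallytransitivefromadrr} upgrades this to total transitivity.

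Finally, to prove $\lim_{n\to\infty}h(f_n)=0$, I would choose a small rome (two or three vertices at the branching classes identified above) and apply Theorem~\ref{theoremrome} to obtain a characteristic polynomial $T_n(x)$ whose largest real root $\rho_n$ satisfies $h(f_n)=\log\rho_n$ by Proposition~\ref{propositionmonotoneTopEnt}. As in the previous two examples, a crude term-by-term bound should show that for $x>\sqrt[n]{C}$ (with an absolute constant $C$) one has $T_n(x)>0$, giving $\rho_n\le\sqrt[n]{C}$ and hence $\log\rho_n\to 0$; this is consistent with Remark~\ref{PropsGoingZero} because both necessary conditions there ($\mathrm{length}[c_n,d_n]=\tfrac{1}{2n-1}\to 0$ and $\min M(c_n,d_n)=n\to\infty$) are satisfied. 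The main obstacle is expected to be the combinatorial bookkeeping in the first three steps: determining precisely which basic intervals the "branching" classes near the turning points $F_n$-cover, and organizing this information into a clean enough description of the Markov graph to simultaneously verify irreducibility, extract the sets $Q_{F_n}(\cdot)$, and set up the rome calculation; once that is done, all the remaining estimates are routine.
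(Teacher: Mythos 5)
Your overall plan follows the paper's strategy very closely: twist lifted periodic orbits for the Markov partition, upper and lower maps for the rotation interval, Theorem~\ref{theoremMisiurewicz} for the periods, the Markov graph modulo~$1$ together with Lemma~\ref{convertingtoexpansiveMM} and Theorem~\ref{theoremTransitivityexpansivemap} for transitivity, and the rome method for the entropy. Two issues, one minor and one genuine.

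The minor one is in the period computation. Your choice $k=\lceil m/(2n-1)\rceil$ fails when $m$ is a multiple of $2n-1$, since then $\lceil m/(2n-1)\rceil = m/(2n-1)$ is not strictly larger than $m/(2n-1)$; in particular $2n-1\notin M\bigl(\tfrac{1}{2n-1},\tfrac{2}{2n-1}\bigr)$. This does not wreck the conclusion, because $Q_n$ is a lifted periodic orbit of period $2n-1$, so $2n-1\in Q_{F_n}\bigl(\tfrac{1}{2n-1}\bigr)$ and the union is still $\succs{n}$ — but you only state the $Q_{F_n}$ sets as subsets of $(2n-1)\N$, not that they actually contain $2n-1$, so as written there is a small hole at $m=2n-1$ (and at $m=\ell(2n-1)$ with $\ell\ge 2$ your displayed $k$ is also wrong, though those $m$ do lie in $M$ for a different $k$). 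The paper instead reads the small periods off the Farey sequence and then observes $M\supset\succs{2n}$ together with $2n-1\in Q_{F_n}(\cdot)$.

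The genuine gap is the entropy estimate. You cannot get $\rho_n\le\sqrt[n]{C}$ for a fixed $C$. With
\[
T_n(x)=\bigl(x^{4n-2}-1\bigr)(x-1)-2x^{n}\bigl(x^{2n-1}-1\bigr),
\]
substituting $x=C^{1/n}$ gives, as $n\to\infty$,
\[
T_n\bigl(C^{1/n}\bigr)=\bigl(C^{4-2/n}-1\bigr)\bigl(C^{1/n}-1\bigr)-2C\bigl(C^{2-1/n}-1\bigr)\longrightarrow 2C\bigl(1-C^{2}\bigr)<0
\]
for every fixed $C>1$. So $\rho_n>C^{1/n}$ for all large $n$, and the crude term-by-term bound that works for the low-degree/high-degree separation in Theorem~\ref{theoremexamplemontevideucircle} (where $T_n$ has degree $\Theta(n^2)$) does not work here, where $T_n$ has degree only $4n-1$ and the leading negative term has degree $3n-1$. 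The paper's actual argument is the one used in Theorem~\ref{theoremfirstexamplebcncircle}: rewrite $T_n(x)=0$ as
\[
x^{n-1}=\frac{2\bigl(x^{2n-1}-1\bigr)}{x^{2n-1}(x-1)}+\frac{1}{x^{3n-1}}<\frac{3}{x-1},
\]
define $\gamma_n>1$ implicitly by $\gamma_n^{n-1}=\tfrac{3}{\gamma_n-1}$, deduce $\rho_n\le\gamma_n$, and conclude $\gamma_n\to 1$ from monotonicity of the two sides in $n$ and $x$ — crucially, without claiming the explicit rate $\gamma_n\le C^{1/n}$. You need this slower, implicitly-defined bound; the explicit $\sqrt[n]{C}$ is false.
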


\begin{remark}\label{Farey-setofperiods-exampleBCNintroduction}
Our choice of the rotation interval in this example was influenced by
the Farey sequence of order $2n-1$
(which is the ordered sequence of rationals $\tfrac{p}{q}$ such that
$0 \le p \le q \le 2n-1,\  (p, q) = 1$).
It follows that two elements $\tfrac{p}{q} < \tfrac{r}{s}$
in a Farey sequence are consecutive (called \emph{Farey neighbours})
if and only if $qr-ps = 1.$
Hence, the endpoints of the rotation interval of
Example~\ref{exampleBCNintroduction} and
Theorem~\ref{theoremexampleBCNintroduction} belong to the
Farey sequence of order $2n-1$ and the elements of this sequence
between them are
\[
 \tfrac{1}{2n-1} < \tfrac{1}{2n-2} < \tfrac{1}{2n-3} < \tfrac{1}{2n-4} < \dots < \tfrac{1}{n} < \tfrac{2}{2n-1}.
\]
This tells us that
$\Per(f_n) \cap \{1,2,\dots, 2n-2\} = \{n, n+1,\dots , 2n-2\}$
which was the kind of set of periods we were looking for.
\end{remark}

\begin{theorem}\label{theoremexampleBCNintroductiongraph}
Let $G$ be a graph with a circuit. Then, the sequence of maps
$\{f_n\}_{n=5}^\infty$ from
Theorem~\ref{theoremexampleBCNintroduction}
can be extended to a sequence
of continuous totally transitive self maps of $G$,
$\{g_n\}_{n=5}^\infty,$
such that
$\Per(g_n) = \Per(f_n)$ and $\lim_{n\to\infty} h(g_n) = 0.$
\end{theorem}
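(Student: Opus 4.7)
The proof follows the strategy of Theorems~\ref{theoremfirstexamplebcngraph} and~\ref{theoremexamplemontevideugraph}, so we only sketch the approach and point out the specific ingredients.

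If $G = \SI$ there is nothing to do, as Theorem~\ref{theoremexampleBCNintroduction} already provides the required maps. So we assume $G\neq\SI$ and fix $n\ge 5$. We select a circuit $C$ of $G$, an open arc in $C$ disjoint from $V(G)$, and a homeomorphism $\map{\eta}{\SI}[C]$ such that, identifying the $P_n\cup Q_n$-basic intervals with their images under $\eta\circ\eexp$, some distinguished basic interval $\widetilde{I}^\star$ of $\bigemap{P_n\cup Q_n}$ coincides with $C\setminus\Int(I)$, where $I$ contains all other basic intervals. Here $\widetilde{I}^\star$ will play the role of $\widetilde{I}_2$ in the proof of Theorem~\ref{theoremfirstexamplebcngraph}. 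We set $X:=G\setminus\Int(I)\supset \widetilde{I}^\star$, so that the two endpoints of $\widetilde{I}^\star$ become endpoints $a,b\in V(X)$, and apply Lemma~\ref{lemmaPhiPsi} to $(X,a,b)$ to obtain a partition $0=s_0<s_1<\dots<s_m=1$ (with $m\ge 5$ odd) and the maps $\varphi_{a,b}$ and $\psi_{a,b}$. The assumption $n\ge 5$ guarantees that we can pick two further basic intervals $\widetilde{I}^1,\widetilde{I}^3\subset I$, pairwise disjoint from $\widetilde{I}^\star$, with $F_n(\widetilde{I}^1)=\widetilde{I}^\star$ and $F_n(\widetilde{I}^\star)\supset\widetilde{I}^3$, by reasoning on the intertwining~\eqref{intertiwining} and the prescription $F_n(x_i)=x_{i+1}$, $F_n(y_i)=y_{i+2}$.

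Next, fixing homeomorphisms $\map{\xi}{\widetilde{I}^1}[{[0,1]}]$ and $\map{\zeta}{[0,1]}[\widetilde{I}^3]$ with the appropriate orientation, we define
\[
 g_n(x):=\begin{cases} \varphi_{a,b}(\xi(x)) & \text{if } x\in\widetilde{I}^1,\\ \zeta(\psi_{a,b}(x)) & \text{if } x\in X,\\ \eta\circ f_n\circ\eta^{-1}(x) & \text{if } x\in I\setminus\Int(\widetilde{I}^1).\end{cases}
\]
Checking continuity at the boundary points of $\widetilde{I}^1$ and $\widetilde{I}^\star$ reduces, as in the proof of Theorem~\ref{theoremfirstexamplebcngraph}, to verifying the values of $\varphi_{a,b}$ at the endpoints of $[0,1]$ (given by Lemma~\ref{lemmaPhiPsi}(a)) and of $\psi_{a,b}$ at $a,b$ (Lemma~\ref{lemmaPhiPsi}(d)), together with the orbit relations $F_n(\widetilde{I}^1)=\widetilde{I}^\star$ and $F_n(\widetilde{I}^\star)\supset\widetilde{I}^3$. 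We then take the Markov invariant set
\[
 R_n:=\bigeta{\bigemap{P_n\cup Q_n}}\cup\set{\xi^{-1}(s_i)}{0\le i\le m}\cup\set{\varphi_{a,b}(s_i)}{0\le i\le m},
\]
whose basic intervals are those of $\bigemap{P_n\cup Q_n}$ outside $\widetilde{I}^1$, the subintervals $L_i=\xi^{-1}([s_i,s_{i+1}])$ of $\widetilde{I}^1$, and the distinct images $U_0,\dots,U_t$ of $\varphi_{a,b}([s_i,s_{i+1}])$ in $X$. The Markov graph of $g_n$ is then obtained from the Markov graph modulo $1$ of $F_n$ by removing $\bigemap{\widetilde{I}^1}$ and $\bigemap{\widetilde{I}^\star}$ and splicing in the path structure $\widetilde{I}^0 \longrightarrow L_i\longrightarrow U_j\longrightarrow \widetilde{I}^3$ (with $\widetilde{I}^0\longrightarrow L_i$ for all $i$, each $L_i$ $g_n$-covering a unique $U_j$, and each $U_j$ $g_n$-covering $\widetilde{I}^3$).

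After applying Lemma~\ref{convertingtoexpansiveMM} to make $g_n$ $R_n$-expansive without altering the Markov graph, we obtain transitivity from Theorem~\ref{theoremTransitivityexpansivemap}: irreducibility is clear by direct inspection (any vertex reaches any other one through the backbone inherited from $F_n$), and the matrix is not a permutation matrix since $\widetilde{I}^\star$ already $g_n$-covers several basic intervals. For the set of periods one shows $\Per(g_n)=\Per(f_n)=\succs{n}$ by the same projection argument used in Theorem~\ref{theoremfirstexamplebcngraph}: define $\map{\pi}{\bigSBI{R_n}}[{\bigSBI{\bigemap{P_n\cup Q_n}}}]$ sending basic intervals outside $\widetilde{I}^1\cup X$ via $\eta^{-1}$, each $L_i$ to $\bigemap{\widetilde{I}^1}$ and each $U_j$ to $\bigemap{\widetilde{I}^\star}$; every loop in the $g_n$-graph projects to a loop in the $f_n$-graph of the same length, so Lemma~\ref{lemmaInterv} gives that every period of $g_n$ is a multiple of a period of $f_n$, and since $\Per(f_n)=\succs{n}$ is closed under multiplication by $\N$, the equality $\Per(g_n)=\succs{n}$ follows. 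Here the argument is genuinely simpler than in Theorem~\ref{theoremfirstexamplebcngraph} because there is no exceptional low period $2$ to rule out through a positive/negative loop analysis. Total transitivity then follows from Theorem~\ref{theoremtotallytransitivefromadrr}.

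Finally, to show $\lim_{n\to\infty} h(g_n)=0$ we apply Proposition~\ref{propositionmonotoneTopEnt} and Theorem~\ref{theoremrome} using a rome directly inherited from the one used in the analogous proposition for $F_n$: every simple path of the $f_n$-rome passing through $\bigemap{\widetilde{I}^\star}$ lifts to at most $m\cdot(t+1)$ simple paths of the $g_n$-rome of the same length, so the characteristic polynomial of the Markov matrix of $g_n$ differs from that of $F_n$ only by replacing a few coefficients by bounded multiples (the bound depending on the topology of $G$ but not on $n$). The estimation at the end of the proof of Theorem~\ref{theoremexampleBCNintroduction} then carries over verbatim to yield $h(g_n)\to 0$. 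The main technical obstacle is the combinatorial bookkeeping needed to verify that the intertwining~\eqref{intertiwining} provides, for every $n\ge 5$, three pairwise disjoint basic intervals $\widetilde{I}^0,\widetilde{I}^1,\widetilde{I}^\star$ with the required $F_n$-covering relations and such that the resulting splicing produces an irreducible non-permutation Markov graph with the same set of periods as $f_n$; once this is checked the remainder of the proof is parallel to Theorems~\ref{theoremfirstexamplebcngraph} and~\ref{theoremexamplemontevideugraph}.
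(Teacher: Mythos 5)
Your proposal is correct and follows essentially the same route as the paper's proof: the same splicing construction via Lemma~\ref{lemmaPhiPsi} with $\BIgraph{y_3,y_4}$, $\BIgraph{y_5,y_6}$, $\BIgraph{y_7,y_8}$ in the roles of your $\widetilde{I}^1$, $\widetilde{I}^\star$, $\widetilde{I}^3$, the same Markov partition $R_n$, Lemma~\ref{convertingtoexpansiveMM} plus Theorem~\ref{theoremTransitivityexpansivemap} for transitivity, the same projection argument for $\Per(g_n)=\Per(f_n)=\succs{n}$ (simplified since $\succs{n}$ is closed under multiplication by $\N$), and the same rome computation with coefficients multiplied by a constant depending only on $m$, hence $h(g_n)\to 0$. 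The only nit is your non-permutation witness: $\widetilde{I}^\star$ is not an $R_n$-basic interval (its pieces $U_j$ each $g_n$-cover only $\widetilde{I}^3$), so use instead, e.g., $\widetilde{I}^0$, which $g_n$-covers all $m\ge 5$ intervals $L_i$.
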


Before proving Theorem~\ref{theoremexampleBCNintroduction} we will
study the Markov graph modulo 1 of the liftings $F_n.$

%%%%%%%%%%%%%%%%%%%%%%%%%%%%%%%%%%%%%%%%%%%%%%%%%%%%%%%%%%%%%%%%
%%%% Subfigure of Figure~\ref{graphlemmafngraphmontevideu} %%%%
%%%%%%%%%%%%%%%%%%%%%%%%%%%%%%%%%%%%%%%%%%%%%%%%%%%%%%%%%%%%%%%%
\long\def\subfigurefngraphModOne{%
\filldraw[ultra thick, draw=black!60, fill=black!15] (0.2,7.45) rectangle  (2.8,1.95);
\node[place](y1y2)      at (9.5,7)   {$\BIclass{y_1, y_2}$};
\node[place](y7y8)      at (9.5,2.8) {$\BIclass{y_7,y_8}$};
\node[rotate=90](dots)  at (9.5,1.4) {$\cdots$};
\node[place](y2n3y2n2)  at (9.5,0)   {$\BIclass{y_{2n-3}, y_{2n-2}}$};
\node[place](xny1)      at (6.8,4.4)  {$\BIclass{x_n, y_1}$};
\node[place](xn1y3)     at (6.8,3.2)  {$\BIclass{x_{n+1},y_{3}}$};
\node[rotate=90](dots2) at (6.8,2)    {$\cdots$};
\node[place, double, pattern color=black!30, pattern=north east lines](x2n2y2n3) at (6.8,0.8) {$\BIclass{x_{2n-2},y_{2n-3}}$};
\node[place](y0xn)      at (4.4,6)    {$\BIclass{y_0,x_n}$};
\node[place](y2xn1)     at (4.4,4.5)  {$\BIclass{y_2,x_{n+1}}$};
\node[rotate=90](dots4) at (4.4,3.15)    {$\cdots$};
\node[place, double, pattern color=black!30, pattern=north east lines](y2n2x2n1) at (4.4,1.8) {$\BIclass{y_{2n-2},x_{2n-1}}$};
\node[place](x0x1)      at (1.5,1)    {$\BIclass{x_0,x_1}$};
\node[place](x1x2)      at (1.5,2.5)  {$\BIclass{x_1,x_2}$};
\node[rotate=90](dots3) at (1.5,4)    {$\cdots $};
\node[place](xn2xn1)    at (1.5,5.5)  {$\BIclass{x_{n-2}, x_{n-1}}$};
\node[place](xn1y0)     at (1.5,7)    {$\BIclass{x_{n-1},y_0}$};
\draw[-, semithick, double] (x2n2y2n3.south)+(0pt,-1pt) .. controls (6.8,0.2) .. (6.5,0.2) -- (4,0.2);
\draw[post, double] (y2n2x2n1.south)+(0pt,-1pt) -- (4.4,0.8) .. controls (4.4,0.2) .. (4,0.2) -- (1,0.2) .. controls (0.5, 0.2) .. (0.5,0.4) -- (0.5,1.945);
\path[post]
    (xn2xn1.east)+(1pt,0)    edge ([yshift=1pt]y0xn.west)
    (y7y8.south)+(0,-1pt)    edge (dots.east)
    (dots.west)+(0,-1pt)     edge (y2n3y2n2.north)
    (y0xn.south)+(0, -1pt)   edge (y2xn1.north)
    (y2xn1.south)+(0,-1pt)   edge (dots4.east)
    (dots4.west)+(0,-1pt)    edge (y2n2x2n1.north)
    (x0x1.north)+(0,1pt)     edge (x1x2.south)
    (x1x2.north)+(0,1pt)     edge (dots3.west)
    (dots3.east)+(0,1pt)     edge (xn2xn1)
    (xn2xn1.north)+(0,1pt)   edge (xn1y0.south)
    (xny1.south)+(0,-1pt)    edge (xn1y3.north)
    (xn1y3.south)+(0,-1pt)   edge (dots2.east)
    (dots2.west)+(0,-1pt)    edge (x2n2y2n3.north)
    (xn1y0.east)+(1pt,4pt)   edge ([yshift=4pt]y1y2.west)
    (x2n2y2n3.west)+(-1pt,0) edge (x0x1.east);
\draw[post] (7.8, 3.573) .. controls (7.675,3.7) .. (7.675, 4) -- (7.675, 5.5) .. controls (7.625, 6) .. (7.125, 6) -- (y0xn.east);
\draw[post] (9.3,0.36) .. controls (8.15,3.5) .. ([xshift=2pt]xny1.south);
\draw[post] ([xshift=4pt,yshift=1pt]y2n2x2n1.north) .. controls ([xshift=10pt,yshift=4em]y2n2x2n1.north) .. (xny1.west);
\draw[post] (xn1y0.east)+(1pt,-1pt) .. controls (6,6.9) .. (xny1.north);
\draw[post] (y2n2x2n1.west)+(-1pt,0) .. controls (3,1.8) .. (3,3) .. controls (3,5.2) .. ([yshift=-1pt]y0xn.west);
} %%% End of \def\subfigurefngraphModOne
\begin{proposition}[$\calB(P_n \cup Q_n)$ and the $F_n$-Markov graph modulo 1]\label{propositionmarkovgraphexampleBCNintroduction}
In the assumptions of Theorem~\ref{theoremexampleBCNintroduction} we have:
\begin{enumerate}[(a)]
\item The Markov graph modulo 1 of $F_n$ is:
\begin{center}\small
\tikzstyle{place}=[rectangle,draw=black!50,fill=black!0,thick]
\tikzstyle{post}=[->,shorten >=1pt,>=stealth,semithick]
\medskip
\hspace*{-1.5em}\begin{tikzpicture}
\subfigurefngraphModOne
\node[place](y3y4)      at (9.5,5.6)   {$\BIclass{y_3, y_4}$};
\node[place](y5y6)      at (9.5,4.2) {$\BIclass{y_5,y_6}$};
\path[post]
    (y1y2.south)+(0,-1pt) edge (y3y4.north)
    (y3y4.south)+(0,-1pt) edge (y5y6.north)
    (y5y6.south)+(0,-1pt) edge (y7y8.north);
\end{tikzpicture}
\end{center}
where the double arrows arriving to the the box in grey mean that
there is an arrow arriving to each interval in the box.

\item $h(f_n) = \log \rho_n,$ where $\rho_n > 1$ is the largest root
of the polynomial
\[
T_n(x) = \bigl(x^{4n-2}-1\bigr)(x-1) -2x^n\bigl(x^{2n-1} - 1\bigr).
\]
\end{enumerate}
\end{proposition}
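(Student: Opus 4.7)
The plan is to treat parts (a) and (b) separately but using the same underlying computation of images.

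For part (a), I will enumerate the $4n-2$ basic intervals (modulo~1) explicitly from the intertwining pattern \eqref{intertiwining}: the $n-1$ ``$x$–$x$'' intervals $[x_j,x_{j+1}]$ for $j=0,\dots,n-2$, the mixed interval $[x_{n-1},y_0]$, the $n-1$ ``$y$–$y$'' intervals $[y_{2k-1},y_{2k}]$ for $k=1,\dots,n-1$, the $n$ ``$y$–$x$'' intervals $[y_{2k},x_{n+k}]$ for $k=0,\dots,n-1$, and the $n-1$ ``$x$–$y$'' intervals $[x_{n+k},y_{2k+1}]$ for $k=0,\dots,n-2$. Then I compute $F_n$-images using $F_n(x_i)=x_{i+1}$, $F_n(y_i)=y_{i+2}$, and the fact that $F_n$ is monotone on each basic interval. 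Most basic intervals map to a single basic interval producing the four chains shown in the graph. The branching vertices are exactly those whose image spans more than one basic interval: $[x_{n-2},x_{n-1}]\mapsto[x_{n-1},x_n]$ splits as $[x_{n-1},y_0]\cup[y_0,x_n]$; $[x_{n-1},y_0]\mapsto[x_n,y_2]$ splits as $[x_n,y_1]\cup[y_1,y_2]$; $[x_{2n-2},y_{2n-3}]\mapsto[x_{2n-1},y_{2n-1}]=[x_0,y_0]+1$ covers the $n$ classes in (and below) the grey box; $[y_{2n-3},y_{2n-2}]\mapsto[y_{2n-1},y_{2n}]=[y_0,y_1]+1=([y_0,x_n]\cup[x_n,y_1])+1$ covers two classes; and the critical case $[y_{2n-2},x_{2n-1}]$, where $F_n$ is necessarily \emph{decreasing} (since $x_{2n}<y_{2n}$ despite $y_{2n-2}<x_{2n-1}$), maps onto $[x_{2n},y_{2n}]=[x_1,y_1]+1$ and so covers the $n+1$ classes $\BIclass{x_1,x_2},\dots,\BIclass{x_{n-1},y_0},\BIclass{y_0,x_n},\BIclass{x_n,y_1}$. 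This gives exactly the graph drawn.

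For part (b), first I check that each image has length strictly less than~$1$ (the two critical cases $[x_{2n-2},y_{2n-3}]$ and $[y_{2n-2},x_{2n-1}]$ give images of lengths $y_0<1$ and $y_1-x_1<1$ respectively), so $P_n\cup Q_n$ is a short Markov partition; hence by Propositions~\ref{Markov partitionprojectiontoSI} and~\ref{propositionmonotoneTopEnt}, $f_n$ is a Markov map and $h(f_n)=\log\sigma(M_n)$, where $M_n$ is the transition matrix of the Markov graph modulo~1 of~$F_n$. Inspection of the graph in~(a) shows that $M_n$ is non-negative, irreducible (from every vertex one can reach every other), and not a permutation matrix, so Perron--Frobenius gives $\sigma(M_n)>1$ equal to the largest real root of the characteristic polynomial~$\chi_n$.

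To compute $\chi_n$ I apply the rome method (Theorem~\ref{theoremrome}) with the natural rome $R=\{r_1,r_2\}=\{\BIclass{x_{2n-2},y_{2n-3}},\BIclass{y_{2n-2},x_{2n-1}}\}$ (the two ``fan-out'' vertices of the graph); this is a rome because every cycle must pass through at least one branching vertex, and all branches eventually feed into $r_1$ or $r_2$ through the four chains. Every simple path from $r_i$ to $r_j$ is parameterised by its initial neighbour and by the (at most two) binary choices made at the branch points $\BIclass{x_{n-2},x_{n-1}}$, $\BIclass{x_{n-1},y_0}$, $\BIclass{y_{2n-3},y_{2n-2}}$, so the generating functions $a_{ij}(x)=\sum_p x^{-\ell(p)}$ can be enumerated in closed form. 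Setting $u=x^{-1}$, the lengths organise into the clean pattern
\[
a_{11}=A+u^{2n-1},\qquad a_{22}=A-u^{2n-1},\qquad a_{12}=uA,\qquad a_{21}=u^{-1}A,
\]
where $A=\sum_{k=n}^{3n-2}u^{k}=u^{n}(1-u^{2n-1})/(1-u)$, the extra $\pm u^{2n-1}$ in $a_{11},a_{22}$ recording that $\BIclass{x_{n-1},y_0}$ is reached by $r_1$ but not by $r_2$, while the $\BIclass{y_0,x_n}$, $\BIclass{x_n,y_1}$ adjacencies of $r_2$ contribute the shift by $u$ in $a_{12},a_{21}$.

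The key algebraic payoff is the cancellation $a_{11}a_{22}-a_{12}a_{21}=(A+u^{2n-1})(A-u^{2n-1})-uA\cdot u^{-1}A=-u^{4n-2}$, so by Theorem~\ref{theoremrome}
\[
\chi_n(x)=x^{4n-2}\det(M_R(x)-\mathbf{I}_2)=x^{4n-2}\bigl(1-u^{4n-2}-2A\bigr)=(x^{4n-2}-1)-\frac{2x^{n}(x^{2n-1}-1)}{x-1},
\]
i.e.\ $(x-1)\chi_n(x)=T_n(x)$. Since $\sigma(M_n)=\rho_n>1$ is the largest real root of $\chi_n$ and $1$ is a root of $x-1$, it is also the largest root of $T_n$, proving~(b). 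I expect the only real obstacle to be the bookkeeping of simple paths through the four chains and three branch points; everything else (the Perron--Frobenius step, the factorisation identity) is then forced by the $A\pm u^{2n-1}$ / $uA$ / $u^{-1}A$ structure revealed by the enumeration.
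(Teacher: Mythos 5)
Your proposal is correct and follows essentially the same route as the paper: part (a) by direct computation of the $F_n$-images of the $4n-2$ basic interval classes (identifying the same branching vertices, including the decreasing interval $[y_{2n-2},x_{2n-1}]$), and part (b) by shortness of the partition plus the rome $\{\BIclass{x_{2n-2},y_{2n-3}},\BIclass{y_{2n-2},x_{2n-1}}\}$, whose entries $A+u^{2n-1},\,A-u^{2n-1},\,uA,\,u^{-1}A$ coincide with the paper's matrix $M_{\textsf{Rom}_n}(x)$ written with $\alpha(x)$, leading to the same identity $(x-1)\chi_n(x)=T_n(x)$. Your $u$-parameterisation and the cancellation $a_{11}a_{22}-a_{12}a_{21}=-u^{4n-2}$ are just a slightly tidier bookkeeping of the same computation.
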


\begin{proof}
The proof that the Markov graph modulo 1 of $F_n$ is the one depicted
in (a) follows easily from the computation of the images of the basic
intervals.
To do this recall that, for every $i,\ell \in \Z,$
$F_n(x_i) = x_{i+1},\ F_n(y_i) = y_{i+2},$
$x_{i + (2n-1)\ell} = x_i + \ell$ and $y_{i + (2n-1)\ell} = y_i + \ell.$
Moreover, $F_n$ is strictly monotone (in fact affine)
between consecutive points of $P_n \cup\; Q_n.$
Then, by using \eqref{intertiwining} to determine the basic intervals
we get
\begin{enumerate}[(i)]
\item $F_n([x_i, x_{i+1}]) = [x_{i+1},x_{i+2}]$ for $i \in \{0,1,\dots,n-3\}.$
\item $F_n([x_{n-2},x_{n-1}]) = [x_{n-1},x_{n}] = [x_{n-1}, y_0]\cup [y_0,x_n].$
\item $F_n([x_{n-1}, y_0]) = [x_n, y_2] = [x_n, y_1]\cup [y_1,y_2].$
\item $F_n([y_{2i}, x_{n+i}]) = [y_{2(i+1)}, x_{n+i+1}]$ for $i \in \{0,1,\dots,n-2\}.$
\item $\begin{multlined}[t][0.9\textwidth]
         F_n([y_{2n-2},x_{2n-1}]) = [x_{2n}, y_{2n}] = [x_1, y_1] + 1 =\\
             \left(\left(\bigcup\limits_{i=1}^{n-2} [x_i, x_{i+1}]\right) \cup
             [x_{n-1}, y_0] \cup [y_0, x_n] \cup [x_n, y_1]\right) + 1 \in \\
             \left(\bigcup\limits_{i=1}^{n-2} \BIclass{x_i, x_{i+1}}\right) \cup
             \BIclass{x_{n-1}, y_0}\cup
             \BIclass{y_0, x_n} \cup
             \BIclass{x_n, y_1}.
\end{multlined}$\newline
Moreover, $\BIclass{y_{2n-2},x_{2n-1}}$ is the only class of basic intervals
where $F_n$ is decreasing.
\item $F_n([x_{n+i}, y_{2i+1}]) = [x_{n+(i+1)}, y_{2(i+1)+1}]$ for $i \in \{0,1,\dots,n-3\}.$
\item $\begin{multlined}[t][0.85\textwidth]
          F_n([x_{2n-2},y_{2n-3}]) = [x_{2n-1}, y_{2n-1}] = [x_0, y_0] + 1 = \\
             \left(\bigcup\limits_{i=0}^{n-2} \bigl([x_i, x_{i+1}] + 1\bigr)\right) \cup
             \bigl([x_{n-1}, y_0] + 1\bigr) \in \\
             \left(\bigcup\limits_{i=0}^{n-2} \BIclass{x_i, x_{i+1}}\right) \cup \BIclass{x_{n-1}, y_0}.
\end{multlined}$
\item $F_n([y_{2i+1},y_{2(i+1)}]) = [y_{2(i+1)+1},y_{2(i+2)}]$ for $i \in \{0,1,\dots,n-3\}.$
\item $\begin{multlined}[t][0.85\textwidth]
          F_n([y_{2n-3},y_{2n-2}]) = [y_{2n-1},y_{2n}] = [y_0,y_1] + 1 =\\
                \bigl([y_0, x_n]+1\bigr) \cup \bigl([x_n,y_1]+1\bigr) \in \BIclass{y_0, x_n} \cup \BIclass{x_n,y_1}.
\end{multlined}$
\end{enumerate}
Then, (a) (the Markov graph modulo 1 of $F_n$) is a direct translation
of the above list of images to the language of combinatorial graphs.

Now we prove (b). In this case, clearly,
\[
\textsf{Rom}_n = \{\textsf{r}_1 = \BIclass{x_{2n-2},y_{2n-3}}, \textsf{r}_2 = \BIclass{y_{2n-2},x_{2n-1}}\}
\]
as a rome of two elements
(being their elements marked in (a) with a box with double
border and sloping lines background pattern).
Then, the matrix $M_{\textsf{Rom}_n}(x)$ is:
{\small\[
 \begin{pmatrix*}[r]
   \sum\limits_{i=0}^{n-1} x^{-(n+i)} + \sum\limits_{i=0}^{n-1} x^{-(2n-1+i)}              & \sum\limits_{i=1}^{n-1}x^{-(n+i)} + \sum\limits_{i=0}^{n-1} x^{-(2n+i)}\\[3ex]
   x^{-(n-1)} + \sum\limits_{i=0}^{n-2} x^{-(n+i)} + \sum\limits_{i=0}^{n-2} x^{-(2n-1+i)} & x^{-n} + \sum\limits_{i=1}^{n-2}x^{-(n+i)} + \sum\limits_{i=0}^{n-2} x^{-(2n+i)}
\end{pmatrix*}
\]}\[
\hspace*{4em}= \begin{pmatrix*}[r]
 x^{-(2n-1)}               + \alpha(x) & - x^{-n} + x^{-(3n-1)} + \alpha(x)\\
 x^{-(n-1)}  - x^{-(3n-2)} + \alpha(x) & - x^{-(2n-1)}          + \alpha(x)
\end{pmatrix*}
\]
with
\[
 \alpha(x) := \sum\limits_{i=0}^{2n-2} x^{-(n+i)}.
\]
Then, by Theorem~\ref{theoremrome}, the characteristic polynomial of
the Markov matrix of $F_n$ is
\begin{multline*}
 (-1)^{4n-4} x^{4n-2} \det\bigl(M_{\textsf{Rom}_n}(x) - \mathbf{I}_{2}\bigr) =\\[2ex]
 \frac{\bigl(x^{4n-2}-1\bigr)(x-1) -2x^n\bigl(x^{2n-1} - 1\bigr)}{x-1}.
\end{multline*}
Therefore, (b) holds.
\end{proof}

\begin{proof}[Proof of Theorem~\ref{theoremexampleBCNintroduction}]
In a similar way to the proof of Theorem~\ref{theoremfirstexamplebcncircle}
we see that $Q_n$ and $P_n$ are twist lifted periodic orbits of $F_n$
both of period $2n-1$ such that
$Q_n$ has rotation number $\tfrac{1}{2n-1}$ and
$P_n$ has rotation number $\tfrac{2}{2n-1}.$

The proof that $\Rot(F_n) = \left[\tfrac{1}{2n-1}, \tfrac{2}{2n-1}\right]$
also follows as in Theorem~\ref{theoremfirstexamplebcncircle}
except that in this example the upper and lower maps are as follows:
There exists a unique
$u^n_l \in \bigl(x_{2n-2},y_{2n-3}\bigr)$ such that
\[ F_n\bigl(u^n_l\bigr) = x_1 + 1 = F_n\bigl(x_0\bigr) + 1 = F_n(1) \]
(see (vii) from the proof of Proposition~\ref{propositionmarkovgraphexampleBCNintroduction}).
Then,
\begin{multline*}
 (F_n)_l(x) = \inf\set{F_n(y)}{y \ge x} = \\
 \begin{cases}
    F_n(x)& \text{for $x \in \bigl[0, u^n_l\bigr]$,}\\
    x_1+1 & \text{for $x \in \bigl[u^n_l,1\bigr]$,}\\
    (F_n)_l\bigl(x - \floor{x}\bigr) + \floor{x} & \text{if $x \notin [0,1].$}
 \end{cases}
\end{multline*}
Also, there exists a unique $u^n_u \in \bigl(x_{n-1},y_0)$ such that
$F_n\bigl(u^n_u\bigr) = y_{1} = F_n\bigl(y_{2n-2}) - 1$
(see (iii) and (ix) from the proof of Proposition~\ref{propositionmarkovgraphexampleBCNintroduction}).
Then,
\begin{multline*}
 (F_n)_u(x) = \sup\set{F_n(y)}{y \le x} = \\
 \begin{cases}
    y_1   & \text{for $x \in \bigl[0, u^n_u\bigr]$,}\\
    F_n(x)   & \text{for $x \in \bigl[u^n_u,y_{2n-2}\bigr]$,}\\
    y_1+1 & \text{for $x \in \bigl[y_{2n-2}, 1\bigr]$,}\\
    (F_n)_u\bigl(x - \floor{x}\bigr) + \floor{x} & \text{if $x \notin [0,1].$}
 \end{cases}
\end{multline*}
Then, as in the previous two examples, we have
$P_n \cap [0,1] \subset [u^n_u,y_{2n-2}],$
$(F_n)_u\evalat{P_n} = F_n\evalat{P_n},$
$\rho\bigl((F_n)_u\bigr) = \rho_{F_n}(P_n) = \tfrac{2}{2n-1},$
$Q_n \cap [0,1] \subset [0, u^n_l],$
$(F_n)_l\evalat{Q_n} = F_n\evalat{Q_n},$
and
$\rho\bigl((F_n)_l\bigr) = \rho_{F_n}(Q_n) = \tfrac{1}{2n-1}.$
Consequently, from Theorem~\ref{theoremrotationintwathermap},
$\Rot(F_n) = \left[\tfrac{1}{2n-1}, \tfrac{2}{2n-1}\right].$

Now we prove that $\Per(f_n) = \succs{n}.$
Observe that
\[
M\Bigl(\tfrac{1}{2n-1}, \tfrac{2}{2n-1}\Bigr) \supset \succs{2n}
\]
because $\len(\Rot(F_n))=\tfrac{1}{2n-1},$ and
\[
 \{2n-1\} \subset
 Q_{F_n}\Bigl(\tfrac{1}{2n-1}\Bigr) \cup  Q_{F_n}\Bigl(\tfrac{2}{2n-1}\Bigr)
 \subset (2n-1)\N \subset \succs{2n-1}.
\]
On the other hand, in view of Remark~\ref{Farey-setofperiods-exampleBCNintroduction},
\[
 M\Bigl(\tfrac{1}{2n-1}, \tfrac{2}{2n-1}\Bigr) \cap \{1,2,\dots, 2n-1\} =
 \{n, n+1,\dots , 2n-2\}.
\]
Consequently, by Theorem~\ref{theoremMisiurewicz},
\begin{multline*}
  \Per(f_{n}) =
  Q_{F_n}\Bigl(\tfrac{1}{2n-1}\Bigr) \cup M\Bigl(\tfrac{1}{2n-1}, \tfrac{2}{2n-1}\Bigr) \cup Q_{F_n}\Bigl(\tfrac{2}{2n-1}\Bigr) =\\
  \left(Q_{F_n}\Bigl(\tfrac{1}{2n-1}\Bigr) \cup Q_{F_n}\Bigl(\tfrac{2}{2n-1}\Bigr)\right)
     \cup \left( M\Bigl(\tfrac{1}{2n-1}, \tfrac{2}{2n-1}\Bigr) \cap \{1,2,\dots, 2n-1\}\right)
     \cup\\ \hspace*{17em}\left( M\Bigl(\tfrac{1}{2n-1}, \tfrac{2}{2n-1}\Bigr) \cap \succs{2n}\right) \supset\\
  \{2n-1\} \cup  \{n, n+1,\dots , 2n-2\} \cup \succs{2n}  = \succs{n} \supset \\
  Q_{F_n}\Bigl(\tfrac{1}{2n-1}\Bigr) \cup M\Bigl(\tfrac{1}{2n-1}, \tfrac{2}{2n-1}\Bigr) \cup Q_{F_n}\Bigl(\tfrac{2}{2n-1}\Bigr).
\end{multline*}
So, clearly,
$\bc(f_n) = \sbc(f_n) = n$ and $\lim_{n\to\infty} \bc(f_n) = \infty$.

Next we show that $f_n$ is totally transitive.
As in the previous examples,
$P_n \cup\; Q_n$ is a short Markov partition with respect to $F_n.$
Then, $f_n$ is an expansive Markov map with respect to the Markov
partition $\bigemap{P_n \cup Q_n},$
and the transition matrix of the Markov graph of $f_n$
with respect to $\bigemap{P_n \cup Q_n}$
coincides with the transition matrix of the Markov graph modulo 1
of $F_n$ with respect to $P_n \cup Q_n.$ Moreover,
this transition matrix is non-negative and irreducible,
and from
Proposition~\ref{propositionmarkovgraphexampleBCNintroduction}(a)
it follows that there exists a vertex in the
Markov graph modulo 1 of $F_n$ which is the beginning of more than
one arrow (for instance $\BIclass{x_{2n-2},y_{2n-3}}$).
That is, the transition matrix of the Markov graph of $f_n$
with respect to $\bigemap{P_n \cup Q_n}$ is not
a permutation matrix.
Then, $f_n$ is transitive by Theorem~\ref{theoremTransitivityexpansivemap}
and $\Per(f_n)$ is cofinite because $\Per(f_n) = \succs{n}.$
Hence, $f_n$ is totally transitive by Theorem~\ref{theoremtotallytransitivefromadrr}.

To end the proof of the theorem we need to show that
$\lim_{n\to\infty} h(f_n) = 0.$
With the notation of Proposition~\ref{propositionmarkovgraphexampleBCNintroduction}(b)
we have $\rho_n > 1$ and
\[
  0 = T_n(\rho_n)  = \bigl(\rho_n^{4n-2}-1\bigr)(\rho_n-1) -2\rho_n^n\bigl(\rho_n^{2n-1} - 1\bigr),
\]
which is equivalent to
\[
 \rho_n^{4n-2}(\rho_n-1) =
 2\rho_n^n\bigl(\rho_n^{2n-1} - 1\bigr) + (\rho_n-1).
\]
So, for $x > 1,$ we consider the equation
\[
 x^{4n-2}(x-1) = 2x^n\bigl(x^{2n-1} - 1\bigr) + (x-1)
 \Longleftrightarrow
 x^{n-1} = 2\frac{x^{2n-1} - 1}{x^{2n-1}(x-1)} + \frac{1}{x^{3n-1}}.
\]
Observe that
\[
 2\frac{x^{2n-1} - 1}{x^{2n-1}(x-1)} + \frac{1}{x^{3n-1}} = \frac{2\tfrac{x^{2n-1}-1}{x^{2n-1}} + \tfrac{x-1}{x^{3n-1}}}{x-1} < \frac{3}{x-1}.
\]
Now we proceed as in the proof of
Theorem~\ref{theoremfirstexamplebcncircle}
(see also the figure in page~\pageref{fig:arrels}):
\begin{enumerate}[(i)]\label{zerosarguments}
\item The map $x \mapsto \tfrac{3}{x-1}$ is strictly
      decreasing on the interval $(1,+\infty)$,
      $\lim_{x\to1^+} \tfrac{3}{x-1} = +\infty$ and
      $\lim_{x\to\infty} \tfrac{3}{x-1} = 0$
\item For every $n \ge 3$ and every $x \ge 1$,
      the map $x \mapsto x^{n-1}$ is strictly increasing and
      $x^{n-1}\evalat{x=1} = 1.$
\item For every $n,m \in \N,$  $3 \le n < m$ and $x > 1,$
      $x^{n-1} < x^{m-1}.$
\end{enumerate}
Then, for each $n \ge 3,$ there exists a unique real
number $\gamma_n > 1$ such that
$\gamma_n^{n-1} = \tfrac{3}{\gamma_n -1}$ and
$x^{n-1} > \tfrac{3}{x-1}$ for every $x > \gamma_n,$
the sequence $\{\gamma_n\}_n$ is strictly decreasing and
$\lim_{n\to\infty} \gamma_n = 1.$
Hence,
\[
 \frac{x^{4n-2}(x-1)}{2x^n\bigl(x^{2n-1} - 1\bigr) + (x-1)} =
 \frac{x^{n-1}}{2\tfrac{x^{2n-1} - 1}{x^{2n-1}(x-1)} + \tfrac{1}{x^{3n-1}}} >
 \frac{x^{n-1}}{\tfrac{3}{x-1}} > 1
\]
for every $x > \gamma_n.$
Consequently, $T_n(x) > 0$ for every $x > \gamma_n$ and, hence,
$\rho_n \le \gamma_n$ for every $n \ge 3,$ and
$
 \lim_{n\to\infty} \log\rho_n  \le \lim_{n\to\infty} \log\gamma_n = 0.
$
\end{proof}

\begin{proof}[Proof of Theorem~\ref{theoremexampleBCNintroductiongraph}]
As in the proof of proof of Theorem~\ref{theoremfirstexamplebcngraph}
we may assume that $G \ne \SI$ since otherwise
Theorem~\ref{theoremexamplemontevideucircle} already gives the desired
sequence of maps.

The proof in the case $G \ne \SI$ goes along the lines of the proof of
Theorems~\ref{theoremfirstexamplebcngraph}
and~\ref{theoremexamplemontevideugraph},
and most of the details will be omitted.

We fix a circuit $C$ of $G$ and an interval $I \subset C$ such that
$I \cap V(G) = \emptyset.$ Also, we choose a homeomorphism
$\map{\eta}{\SI}[C]$ such that
\begin{align*}
 & C\mkern-2mu\setminus\mkern-3mu\Int(I) = \BIgraph{y_5, y_6},\text{ and}\\
 & I \supset \bigeta{\bigemap{P_n \cup Q_n}} \cup \mspace{-83mu}
     \bigcup_{\mspace{120mu}\begin{subarray}{l}
                 [x,y] \in \SBI[P_n \cup Q_n]\\
                 [x,y] \notin \BIclass{y_5, y_6}
              \end{subarray}}                  \BIgraph{x,y}.
\end{align*}
where, as in Theorem~\ref{theoremexamplemontevideugraph},
$\BIgraph{x,y}$ denotes the convex hull (in $C$) of the set
$\{\etaemap{x}, \etaemap{y}\}.$
Observe that
$\BIgraph{y_5, y_6}$ plays the role of $\widetilde{I}_2$
in the proof of Theorem~\ref{theoremfirstexamplebcngraph}
(see Figure~\ref{figuregraphGandmapgnfirstexampleBCN}) and consequently,
by Proposition~\ref{propositionmarkovgraphexampleBCNintroduction}(a),
$\BIgraph{y_7, y_8}$ plays the role of the interval $\widetilde{I}_3$ while
$\BIgraph{y_{2j+1}, y_{2j+2}}$ play the role of $\widetilde{I}_j$ for $j = 0,1.$
Note that all the intervals are well defined since $n \ge 5$
and they are pairwise disjoint because of the ordering of points defined
in Theorem~\ref{theoremexampleBCNintroduction}.

We set
$X := G \setminus \Int(I) \supset \BIgraph{y_5, y_6},$
and
$V(X) = V(G) \cup \{a,b\}$
with
$a := \etaemap{y_5}$ and $b := \etaemap{y_6}.$
Then, as before, we use Lemma~\ref{lemmaPhiPsi} for the subgraph $X$
(see Figure~\ref{mapsfromarr}).
Let $m=m(X, a, b) \ge 5$ be odd,
consider the partition $0=s_0<s_1<\cdots <s_m=1,$ and
let the maps
$\map{\varphi_{a, b}}{[0,1]}[X]$ and
$\map{\psi_{a, b}}{X}[{[0,1]}]$
be as in Lemma~\ref{lemmaPhiPsi}.
Also, we define two arbitrary but fixed homeomorphisms
$\map{\zeta}{[0,1]}[{\BIgraph{y_7, y_8}}]$  and
$\map{\xi}{\BIgraph{y_3, y_4}}[{[0,1]}]$
such that
\[
  \zeta(0) = \etaemap{y_7},\ \zeta(1) = \etaemap{y_8},
  \ \xi\bigl(\etaemap{y_3}\bigr) = 0 \text{ and }
  \xi\bigl(\etaemap{y_4}\bigr) = 1
\]
(see Figure~\ref{figuregraphGandmapgnfirstexampleBCN} for an analogous situation).

With all these definitions, for $n \ge 5$ we set
\[
  g_n(x) := \begin{cases}
      \varphi_{a,b} (\xi(x)) & \text{if $x \in \BIgraph{y_3, y_4}$;}\\
      \zeta \bigl(\psi_{a,b}(x)\bigr)  & \text{if $x\in X$;}\\
      (\eta \circ f_n \circ \eta^{-1})(x) & \text{if $x \in I\setminus\Int\BIgraph{y_3, y_4},$}
\end{cases}
\]
and, as in the proof of Theorem~\ref{theoremfirstexamplebcngraph}
we can easily show that $g_n$ is a Markov map
with respect to the partition
\begin{multline*}
R_n = \bigeta{\bigemap{Q_n \cup P_n}} \cup
      \set{\xi^{-1}(s_i)}{i \in \{0,1,\dots,m\}} \cup \\
      \set{\varphi_{a,b}(s_i)}{i \in \{0,1,\dots,m\}},
\end{multline*}
whose $R_n$-basic intervals are:
\begin{align*}
&\Bigl\{\BIgraph{x,y} \,\colon \\
&\hspace*{1.5em}[x,y] \in \bigSBI{P_n \cup Q_n} \setminus \bigl( \BIclass{y_3, y_4} \cup \BIclass{y_5, y_6} \bigr)\\
&\hspace*{17em}\Bigr\} \subset I\setminus\Int\BIgraph{y_3, y_4},\\
& \set{L_i := \xi^{-1}([s_i,s_{i+1}])}{i\in\{0,1,\dots,m-1\}} \subset \BIgraph{y_3, y_4}, \text{ and}\\
&\ \{U_0, U_1,\dots,U_t\} = \set{\varphi_{a,b}\bigl([s_i,s_{i+1}]\bigr)}{i\in\{0,1,\dots,m-1\}} \subset X.
\end{align*}

Next we will derive the Markov graph of $g_n$ with respect to $R_n$
(recall that it can be obtained from the Markov graph modulo 1 of
$F_n$ with respect to $P_n \cup Q_n,$
which coincides with the Markov graph of $f_n$ with respect to
$\bigemap{P_n \cup Q_n}$ provided that we identify
$\BIgraph{x,y}$ with $\bigemap{\BIclass{x,y}} = \emap{[x,y]}$ and this
with $\BIclass{x,y}$ for every $[x,y] \in \bigSBI{P_n \cup Q_n}$ ---
see Proposition~\ref{propositionmarkovgraphexampleBCNintroduction}(a)):
\begin{center}\small
\tikzstyle{place}=[rectangle,draw=black!50,fill=black!0,thick]
\tikzstyle{post}=[->,shorten >=1pt,>=stealth,semithick]
\medskip
\begin{tikzpicture}
\filldraw[thick, draw=black!40, fill=black!10, decorate, decoration={zigzag, amplitude=1pt, segment length=2pt}] (7.8,6.1) rectangle  (12.2,3.7);
\let\BIclass\BIgraph
\subfigurefngraphModOne
\node[place](l0)        at (11.7,5.6) {$L_0$};
\node[place](l1)        at (11,5.6)   {$L_1$};
\node[place](l2)        at (10.2,5.6) {$L_2$};
\node(dotsl)            at (9.45,5.6) {$\cdots$};
\node[place](lm1)       at (8.5,5.6)  {$L_{m-1}$};
\node[place](u0)        at (11.6, 4.2) {$U_0$};
\node[place](u1)        at (10.9,4.2)  {$U_1$};
\node[place](ur)        at (9.3,4.2)   {$U_t$};
\node(dotsu)            at (10.1,4.2)  {$\cdots$};
\path[post]
    (y1y2.south)+(0,-1pt) edge (lm1.north)
    (y1y2.south)+(0,-1pt) edge (l2.north)
    (y1y2.south)+(0,-1pt) edge (l1.north)
    (y1y2.south)+(0,-1pt) edge (l0.north)
    (lm1.south) +(0,-1pt) edge (ur.north)
    (l2.south)  +(0,-1pt) edge (u1.north)
    (l1.south)  +(0,-1pt) edge (u1.north)
    (l0.south)  +(0,-1pt) edge (u0.north)
    (u0.south)  +(0,-1pt) edge (y7y8.east)
    (u1.south)  +(0,-1pt) edge (y7y8.north)
    (ur.south)  +(0,-1pt) edge (y7y8.north);
\end{tikzpicture}
\end{center}
(the part of the Markov graph of $g_n$ with respect to $R_n$
which differs from the Markov graph modulo 1 of $F_n$ with respect to
$P_n \cup Q_n$ is shown inside a grey box with a zigzag border).

As before, by
Lemma~\ref{convertingtoexpansiveMM} and
Theorem~\ref{theoremTransitivityexpansivemap},
the map $g_n$ can be modified
without altering $g_n\evalat{R_n}$ and $g_n(K)$
for every $K \in \SBI[R_n]$ to become
$R_n$-expansive and transitive.
Moreover,
\[ \Per(f_n) = \succs{n} = \bigcup_{w\in\succs{n}} w\cdot\N = \Per(g_n) \]
(see the proof of Theorem~\ref{theoremfirstexamplebcngraph} but
here,  as in the previous example, the situation is much simpler
because $2 \notin \Per(f_n)$).
Consequently, $\Per(g_n)$ is cofinite and,
by Theorem~\ref{theoremtotallytransitivefromadrr},
$g_n$ is totally transitive.

Now we will estimate $h(g_n)$ with the same techniques as before
to show that $\lim_{n\to\infty} h(g_n) = 0.$
We use the rome which corresponds to the one used in the proof of
Proposition~\ref{propositionmarkovgraphexampleBCNintroduction}(b):
\[
\widetilde{\textsf{Rom}}_n = \{\tilde{\textsf{r}}_1 = \BIgraph{x_{2n-2},y_{2n-3}}, \tilde{\textsf{r}}_2 = \BIgraph{y_{2n-2},x_{2n-1}}\}.
\]
Then, we see by direct inspection that the matrix $M_{\widetilde{\textsf{Rom}}_n}(x)$ is:
{\small\begin{multline*}
 \hspace*{-1em}\begin{pmatrix*}[r]
   \sum\limits_{i=0}^{n-1} x^{-(n+i)} + m\mkern-5mu \sum\limits_{i=0}^{n-1} x^{-(2n-1+i)}             & \sum\limits_{i=1}^{n-1}x^{-(n+i)} + m\mkern-5mu\sum\limits_{i=0}^{n-1} x^{-(2n+i)}\\[3ex]
   x^{-(n-1)} + \sum\limits_{i=0}^{n-2} x^{-(n+i)} + m\mkern-5mu\sum\limits_{i=0}^{n-2} x^{-(2n-1+i)} & x^{-n} + \sum\limits_{i=1}^{n-2}x^{-(n+i)} + m\mkern-5mu\sum\limits_{i=0}^{n-2} x^{-(2n+i)}
\end{pmatrix*}\\
= \begin{pmatrix*}[r]
   \sum\limits_{i=0}^{n-1} x^{-(n+i)} + m \sum\limits_{i=-1}^{n-2} x^{-(2n+i)} & \sum\limits_{i=1}^{n-1}x^{-(n+i)} + m\sum\limits_{i=0}^{n-1} x^{-(2n+i)}\\[3ex]
   \sum\limits_{i=-1}^{n-2} x^{-(n+i)} + m\sum\limits_{i=-1}^{n-3} x^{-(2n+i)} & \sum\limits_{i=0}^{n-2}x^{-(n+i)} + m\sum\limits_{i=0}^{n-2} x^{-(2n+i)}
\end{pmatrix*}
\end{multline*}}
By Theorem~\ref{theoremrome}, the characteristic polynomial of
the Markov matrix of $g_n$ is
\[
 (-1)^{4n-5+m+t} x^{4n-3+m+t} \det\bigl(M_{\widetilde{\textsf{Rom}}_n}(x) - \mathbf{I}_{2}\bigr)
    = \pm x^{m+t-1}\frac{\widetilde{T}_n(x)}{x-1}
\]
with
\[
  \widetilde{T}_n(x)  = \bigl(x^{4n-2}-m\bigr)(x-1) - x^{2n-1}\bigl(2x^{n}-x-1\bigr) - m x^{n}\bigl(x^{n-1}(x+1)-2\bigr).
\]
To show that $\lim_{n\to\infty} h(f_n) = 0$, as we did before,
we consider the equation
\begin{multline*}
 x^{4n-2}(x-1) = x^{2n-1}\bigl(2x^{n}-x-1\bigr) + m x^{n}\bigl(x^{n-1}(x+1)-2\bigr) + m(x-1)\\
 \Longleftrightarrow\qquad
 x^{n-1} = \frac{2x^{n}-x-1}{x^n(x-1)} + m \frac{x^{n-1}(x+1)-2}{x^{2n-1}(x-1)} + \frac{m}{x^{3n-1}}
\end{multline*}
for $x > 1.$ Moreover,
\begin{multline*}
 \frac{2x^{n}-x-1}{x^n(x-1)} + m \frac{x^{n-1}(x+1)-2}{x^{2n-1}(x-1)} + \frac{m}{x^{3n-1}} <\\[1ex]
 \frac{\tfrac{2x^{n}-x-1}{x^{n}} + 2m \tfrac{x^{n}-1}{x^{2n-1}} + m \tfrac{x-1}{x^{3n-1}}}{x-1} <
 \frac{2 + 2m + m}{x-1}.
\end{multline*}
So, as in the proof of Theorems~\ref{theoremfirstexamplebcncircle}
and~\ref{theoremexampleBCNintroduction}
(see the figure and the arguments in pages~\pageref{fig:arrels}
and~\pageref{zerosarguments}),
for every $n \ge 5$ there exists a unique real
number $\gamma_n > 1$ such that
$\gamma_n^{n-1} = \tfrac{3m+2}{\gamma_n -1}$ and
$x^{n-1} > \tfrac{3m+2}{x-1}$ for every $x > \gamma_n,$
the sequence $\{\gamma_n\}_n$ is strictly decreasing and
$\lim_{n\to\infty} \gamma_n = 1.$
Thus, for every $x > \gamma_n,$
\begin{multline*}
 \frac{x^{4n-2}(x-1)}{x^{2n-1}\bigl(2x^{n}-x-1\bigr) + m x^{n}\bigl(x^{n-1}(x+1)-2\bigr) + m(x-1)} = \\
 \frac{x^{n-1}}{\tfrac{2x^{n}-x-1}{x^n(x-1)} + m \tfrac{x^{n-1}(x+1)-2}{x^{2n-1}(x-1)} + \tfrac{m}{x^{3n-1}}} >
 \frac{x^{n-1}}{\tfrac{3m+2}{x-1}} > 1,
\end{multline*}
which implies that $\widetilde{T}_n(x) > 0$ for every $x > \gamma_n.$
Hence,
if $\tilde{\rho}_n > 1$ denotes the largest root of $\widetilde{T}_n(x),$
it follows that $\tilde{\rho}_n \le \gamma_n$ and, consequently,
\[
 \lim_{n\to\infty} h(f_n) = \lim_{n\to\infty} \log\tilde{\rho}_n  \le \lim_{n\to\infty} \log\gamma_n = 0
\]
by Proposition~\ref{propositionmonotoneTopEnt}.
\end{proof}

%%%%%%%%%%%%%%%%%%%%%%%%%%%%%%%%%%%%%%%%%%%%%%%%%%%%%%%%%%%%%%%%%%%%%%
%%%%%%%%%%%%%%%%%%%%%%%%%%%%%%%%%%%%%%%%%%%%%%%%%%%%%%%%%%%%%%%%%%%%%%
%%%%%%%%%%%%%%%%%%%%%%%%%%%%%%%%%%%%%%%%%%%%%%%%%%%%%%%%%%%%%%%%%%%%%%
\section{Proof of Theorem~\ref{MTS1}}\label{sectiontheorem}

\begin{proof}[Proof of Theorem~\ref{MTS1}]
Fix $L\in \N,$ $L > 8.$
Since $\lim_{n\to\infty} h(f_n) = 0,$
there exists $N\in \N$ such that
\[
   h(f_n) < \frac{3\log \sqrt{2}}{L}.
\]
for every $n \ge N.$
In the rest of the proof we consider a fixed but arbitrary $n \ge N$
and we denote $\Rot(F_n) = [c_n, d_n].$

We claim that
\begin{equation}\label{LaPartM}
  M(c_n,d_n) \subset \succs{L+1} = \set{k\in \N}{k \ge L+1}.
\end{equation}
To prove this note that for every $q \in M(c_n,d_n)$
there exists  $\tfrac{r}{s} \in (c_n,d_n)$ with $r\in\Z$ and $s\in \N$
coprime such that $q = \ell s$ with $\ell \in \N.$
In this situation,
$h(f_n) \ge \tfrac{\log 3}{s}$ by \cite[Corollary~4.7.7]{alm}.
Hence,
\[
 \frac{\log 3}{q} \le \frac{\log 3}{s} \le h(f_n) <
           \frac{3\log \sqrt{2}}{L} < \frac{\log 3}{L}.
\]
Consequently, $q > L$ and the claim holds.

From the claim we get that
$
\Int\left(\Rot(F_n)\right) \cap \set{k/L}{k \in \Z} = \emptyset.
$
This implies that
$\len\left(\Rot(F_n)\right) \le 1/L$ for every $n \ge N.$
So, it follows that $\lim_{n\to\infty} \len\left(\Rot(F_n)\right) = 0.$

By Theorem~\ref{theoremMisiurewicz} and the above claim,
\begin{equation}\label{SetOfperiods}
\begin{split}
 \Per(F_n) & = Q_{F_n}(c_n) \cup M(c_n,d_n) \cup Q_{F_n}(d_n)\\
           & \subset \succs{L+1} \cup Q_{F_n}(c_n) \cup Q_{F_n}(d_n).
\end{split}
\end{equation}

In view of the above inclusion for the set $\Per(f_n)$ we need to study
the intersections
\[
  \{1,2,\dots,L\} \cap Q_{F_n}(c_n)
  \andq
  \{1,2,\dots,L\} \cap Q_{F_n}(d_n).
\]
We will divide this study in three claims, according to
different situations for $c_n$ and $d_n.$

\begin{autocase}[Claim]{1} If $\alpha \notin \Q$ then  $\{1,2,\dots,L\} \cap Q_{F_n}(\alpha) = \emptyset.$\end{autocase}
This claim follows immediately from the definition of $Q_{F_n}(\alpha) = \emptyset.$

\begin{autocase}[Claim]{2}
Assume that $\alpha = \tfrac{r}{s}$ with $r\in\Z$ and $s\in \N$ coprime, and $s \ge L.$
Then, $\{1,2,\dots,L\} \cap Q_{F_n}(\alpha) \subset \{L\} \cap \{s\}.$
\end{autocase}
Again by the definition of $Q_{F_n}(\alpha),$ in this case we have
\[
  Q_{F_n}(\alpha) = \set{s k}{k\in\N \text{ and } k \leso{\Sho} s_{\alpha}} \subset s\N = \set{s k}{k\in\N}.
\]
Since $s \ge L,$ for every $k\in\N,\ k \ge 2$ we have
$
 sk \ge 2L > L.
$
Hence,
\[
  \{1,2,\dots,L\} \cap Q_{F_n}(\alpha) \subset \{1,2,\dots,L\} \cap \set{s k}{k\in\N} = \{L\} \cap \{s\}.
\]

\begin{autocase}[Claim]{3}
Assume that $\alpha = \tfrac{r}{s}$ with $r\in\Z$ and $s\in \N$ coprime, and $s < L.$
Then, $\Card\left(\{L-2, L-1, L\} \cap Q_{F_n}(\alpha)\right) \le 1$ and
\[
  \{1,2,\dots,L-1\} \cap Q_{F_n}(\alpha) \subset \set{s \cdot 2^\ell}{\ell\in\{0,1,2,\dots,m\}}
\]
where $m \ge 0$ is the integer part of $\log_2\left(\tfrac{L-1}{s}\right).$
\end{autocase}
To prove this claim assume first that $Q_{F_n}(\alpha)$ contains
an element of the form
$s \cdot t \cdot 2^\ell$ with $t \ge 3$ odd and $\ell \in \Z^+.$
From the definition of $Q_{F_n}(\alpha)$ it follows that then
the map $F_n^{s}-r$ has a periodic point of period $t \cdot 2^\ell$
(as a map of the real line).
Hence, by Lemmas~4.4.15 and 4.4.16 and Theorem~3.12.17 of \cite{alm}
(see also \cite[page~264]{alm}),
\[
  h(f_n) = \frac{1}{s} h(f_n^{s}) \ge \frac{1}{s} \frac{1}{2^\ell} \log \lambda_t
\]
where $\lambda_t$ is the largest root of the polynomial $x^t - 2x^{t-2} - 1.$
It is well known that $\lambda_t  > \sqrt{2}$ (see \cite[page~232]{alm}). So,
\[
 \frac{3\log \sqrt{2}}{L}> h(f_n) >\frac{\log \sqrt{2}}{s 2^\ell}
\]
which implies $s \cdot t \cdot 2^\ell \ge s\cdot 3\cdot2^\ell > L.$
So, for every set $\mathsf{A} \subset \{1,2,\dots,L\},$
\begin{equation}\label{inclusions}
\begin{split}
  \mathsf{A} \cap Q_{F_n}(\alpha)
     &\subset \mathsf{A} \cap s \N \\
     &= \mathsf{A} \cap \left(s \N\setminus \set{s \cdot t \cdot 2^\ell}{\ell\in\Z^+ \text{ and } t \ge 3 \text{ odd}}\right) \\
     &= \mathsf{A} \cap \set{s \cdot 2^\ell}{\ell\in\Z^+}.
\end{split}
\end{equation}

Since $s < L$ and $m$ is the integer part of
$\log_2\left(\tfrac{L-1}{s}\right)$ it follows that $m \ge 0$ and
\begin{equation}\label{counting}
  2^m \le \tfrac{L-1}{s} < 2^{m+1}.
\end{equation}
Then, from \eqref{inclusions} with $\mathsf{A} = \{1,2,\dots,L-1\}$ we obtain
\begin{multline*}
  \{1,2,\dots,L-1\} \cap Q_{F_n}(\alpha)
      \subset \{1,2,\dots,L-1\} \cap \set{ s \cdot 2^\ell}{\ell\in\Z^+} \\
      = \set{s \cdot 2^\ell}{\ell\in\{0,1,2,\dots,m\}},
\end{multline*}
which proves the second statement of the claim.

Now we will prove the first one.
We start by assuming that $s \cdot 2^m \le L-3.$
From \eqref{counting} we have
\[
 s \cdot 2^{m+2} = 2 \left(s \cdot 2^{m+1} \right) \ge 2L > L.
\]
Consequently, by \eqref{inclusions} with $\mathsf{A} = \{L-2, L-1, L\},$
\begin{multline*}
  \{L-2, L-1, L\} \cap Q_{F_n}(\alpha) \subset \\
      \{L-2, L-1, L\} \cap \set{s \cdot 2^\ell}{\ell\in\Z^+} \subset \\
      \{L-2, L-1, L\} \cap \{s \cdot 2^{m+1}\}.
\end{multline*}
Now we assume that $s \cdot 2^m \in \{L-2, L-1\}.$
Then,
\[
 s \cdot 2^{m+1} = 2 \left(s \cdot 2^m \right) \ge 2(L-2) = L + (L-4) > L + 4
\]
because $L > 8.$
Consequently, again by \eqref{inclusions} with $\mathsf{A} = \{L-2, L-1, L\},$
\begin{multline*}
  \{L-2, L-1, L\} \cap Q_{F_n}(\alpha) \subset \\
      \{L-2, L-1, L\} \cap \set{s \cdot 2^\ell}{\ell\in\Z^+} = \\
      \{L-2, L-1, L\} \cap \set{s \cdot 2^\ell}{\ell\in\{0,1,2,\dots,m\}} = \\
      \{L-2, L-1, L\} \cap \{s \cdot 2^m\}
\end{multline*}
whenever $m=0$ or $m > 0 $ and $s \cdot 2^{m-1} \le L-3.$
Now we have to show that we cannot simultaneously have
$m > 0$ and $s \cdot 2^{m-1} \ge L-2.$
Otherwise, as above,
\[
  L-1 \ge s \cdot 2^m = 2\left(s \cdot 2^{m-1}\right) \ge 2(L-2) > L + 4;
\]
a contradiction.
This ends the proof of Claim~3.

From the above three claims we obtain
\begin{equation}\label{toquote}
  \Card\left(\{L-2, L-1, L\} \cap \left( Q_{F_n}(c_n) \cup Q_{F_n}(d_n)\right)\right) \le 2
\end{equation}
and, consequently, $\{L-2, L-1, L\} \not\subset Q_{F_n}(c_n) \cup Q_{F_n}(d_n).$
Thus,
\[
   \{L-2, L-1, L\} \not\subset \Per(f_n)
\]
by \eqref{SetOfperiods}. So, for every $n \ge N$ we set
\begin{align*}
 \kappa_n & := \min \left( \{L-2, L-1, L\} \setminus \Per(f_n) \right),\text{ and}\\
 \nu_n & := \min \left( \Per(f_n) \cap \succs{\kappa_n + 1} \right).
\end{align*}

The inequality~\eqref{toquote} is crucial for this
proof. It allows us to define
$\kappa_n$ and, hence, $\nu_n$ and tells us that $\sbc(f_n) \ge \nu_n$
(because, as we will see, $\nu_n - 1 \notin \Per(f_n)$).
This is implicitly used in the rest of the proof of the theorem.

To end the proof of the theorem it is enough to show that
$\nu_n \in \sbcset(f_n)$ for every $n \ge N.$
Indeed, by Definition~\ref{SCB}, $\bc(f_n)$ exists and
\begin{equation}\label{oftenusefulbound}
 L-1 \le \kappa_n + 1 \le \nu_n \le \bc(f_n)
\end{equation}
for every $n \ge N.$
Consequently, $\lim_{n\to\infty} \bc(f_n) = \infty.$

Let us prove that $\nu_n \in \sbcset(f_n)$ for every $n \ge N.$
By Definition~\ref{SCB} we have to show that $\nu_n \in \Per(f),$
$\nu_n > 2,$ $\nu_n-1 \notin \Per (f)$ and
\begin{equation}\label{TheDensity}
\Card\bigl(\{1,\ldots, \nu_n-2\}\cap \Per(f)\bigr) \le 2 \log_2(\nu_n-2).
\end{equation}
Since $L > 8,$ from the definition of $\nu_n$ we get
\[
 7 < L-1 \le \nu_n \in \Per(f_n).
\]

The following claim will be useful in the rest of the proof.
It improves the knowledge of the set
$\{1,\ldots, \nu_n-2\}\cap \Per(f).$
\begin{autocase}[Claim]{4}
$\{\kappa_n, \kappa_n + 1,\dots, \nu_n-1\} \cap \Per(f_n) = \emptyset.$
\end{autocase}
When $\nu_n = \kappa_n + 1,$ the claim holds because
$\nu_n - 1 = \kappa_n \notin \Per(f_n)$
by the definition of $\kappa_n.$
Now we prove the claim in the case $\nu_n > \kappa_n + 1.$
We have
$\{\kappa_n + 1,\dots, \nu_n-1\} \subset \succs{\kappa_n + 1}$
and, hence,
\[ \{\kappa_n + 1,\dots, \nu_n-1\} \cap \Per(f_n) = \emptyset \]
by the minimality of $\nu_n.$
Moreover, $\kappa_n \notin \Per(f_n)$ by definition.
Hence,
\[
   \{\kappa_n, \kappa_n + 1,\dots, \nu_n-1\} \cap \Per(f_n) = \emptyset,
\]
which ends the proof of Claim~4.

Claim~4, in particular, tells us that $\nu_n-1 \notin \Per(f_n).$
Hence, to end the proof of $\nu_n \in \sbcset(f_n),$
we have to prove the inequality \eqref{TheDensity}.
By Claim~4, \eqref{SetOfperiods} and \eqref{LaPartM}
(notice that $\kappa_n - 1 \le L-1$ because, by definition, $\kappa_n \le L$),
\begin{multline*}
  \Card\left(\{1,\dots, \nu_n-2\} \cap \Per(f_n)\right) = \\
  \Card\left(\{1,\dots, \kappa_n-1\} \cap \Per(f_n)\right) = \\
  \Card\left(\{1,\dots, \kappa_n-1\} \cap \left(Q_{F_n}(c_n) \cup Q_{F_n}(d_n)\right)\right) \le \\
  \Card\left(\{1,\dots, \kappa_n-1\} \cap Q_{F_n}(c_n) \right) + \Card\left(\{1,\dots, \kappa_n-1\} \cap Q_{F_n}(d_n)\right).
\end{multline*}
So, to prove \eqref{TheDensity} it is enough to show that
\begin{equation}\label{finaldensitybound}
\begin{split}
\Card\left(\{1,\dots, \kappa_n-1\} \cap Q_{F_n}(c_n) \right) +
  & \Card\left(\{1,\dots, \kappa_n-1\} \cap Q_{F_n}(d_n)\right) \\
  & \le 2 \log_2(\nu_n-2).
\end{split}
\end{equation}
To this end, we have to compute appropriate upper bounds of
the two summands in the last expression.

Again, let $\alpha \in \{c_n, d_n\}$ denote
an arbitrary endpoint of $\Rot(F_n).$
In the assumptions of Claims~1 and 2 we have
\begin{equation}\label{densityboundCases12}
  \Card\left(\{1,\dots, \kappa_n-1\} \cap Q_{F_n}(\alpha)\right)
    \le \Card\left(\{1,\dots, L\} \cap Q_{F_n}(\alpha)\right) =\emptyset.
\end{equation}
Now suppose that the assumptions of Claim~3 hold.
We want to prove the following estimate:
\begin{equation}\label{densityboundCase3}
\begin{split}
  \Card\left(\{1,\dots, \kappa_n-1\} \cap Q_{F_n}(\alpha)\right)
    & \le \log_2\left(\tfrac{\nu_n - 2}{s}\right) + 1 \\
    & \le \log_2\left(\nu_n - 2\right) + 1.
\end{split}
\end{equation}
Assume first that $s \cdot 2^m \le \nu_n - 2.$
Then, by the second statement of Claim~3, we get
\begin{multline*}
  \Card\left(\{1,\dots, \kappa_n-1\} \cap Q_{F_n}(\alpha)\right) \le\\
  \Card\left(\{1,\dots, L-1\} \cap Q_{F_n}(\alpha)\right) \le m+1
     \le \log_2\left(\tfrac{\nu_n - 2}{s}\right) + 1.
\end{multline*}
Now assume that $\nu_n - 2 < s \cdot 2^m.$
By \eqref{oftenusefulbound} and \eqref{counting},
\[ L - 3 \le \kappa_n-1 \le \nu_n - 2 < s \cdot 2^m \le L-1. \]
Consequently, by \eqref{inclusions},
\begin{multline*}
  \Card\left(\{1,\dots, \kappa_n-1\} \cap Q_{F_n}(\alpha)\right) \le \\
  \Card\left(\{1,\dots, \kappa_n-1\} \cap \set{s \cdot 2^\ell}{\ell\in\Z^+}\right) \le \\
   \Card\set{s \cdot 2^\ell}{\ell\in\{0,1,2,\dots,m-1\}} = m.
\end{multline*}
Moreover, $s \cdot 2^{m-1} \le \nu_n - 2$ since, otherwise,
from the above inequalities and using again the fact that $L > 8$
we obtain
\[
 L + 2 < L + (L - 6) = 2(L-3) \le 2\left(\nu_n - 2\right) < 2\cdot s \cdot 2^{m-1} = s \cdot 2^m \le L-1;
\]
a contradiction.
So, $m-1 \le \log_2\left(\tfrac{\nu_n - 2}{s}\right).$
Putting all together we get,
\[
  \Card\left(\{1,\dots, \kappa_n-1\} \cap Q_{F_n}(\alpha)\right) \le m \le \log_2\left(\tfrac{\nu_n - 2}{s}\right) + 1.
\]
This ends the proof of \eqref{densityboundCase3}.

Now we are ready to prove \eqref{finaldensitybound}.
First assume that at most one of the endpoints of $\Rot(F_n)$
satisfies the assumptions of Claim~3.
By \eqref{densityboundCases12} and \eqref{densityboundCase3},
\begin{multline*}
  \Card\left(\{1,\dots, \kappa_n-1\} \cap Q_{F_n}(c_n) \right) + \Card\left(\{1,\dots, \kappa_n-1\} \cap Q_{F_n}(d_n)\right) \le \\
  \log_2\left(\nu_n - 2\right) + 1 < 2\log_2\left(\nu_n - 2\right)
\end{multline*}
because $7 < \nu_n$ implies $\log_2(\nu_n-2) > 1.$

It remains to consider the case when both endpoints of
$\Rot(F_n) = [c_n, d_n]$ satisfy the assumptions of Claim~3.
That is,
$c_n = \tfrac{r_n}{s_n}$ with $r_n\in\Z$ and $s_n\in \N$ coprime,
$d_n = \tfrac{q_n}{t_n}$ with $q_n\in\Z$ and $t_n\in \N$ coprime, and $s_n,t_n \le L-1.$
Observe that if $s_n,t_n \le 3,$ from above and the fact that $L > 8$ we get
\[
  \tfrac{1}{6} \le d_n - c_n = \len\left(\Rot(F_n)\right) \le \tfrac{1}{L} < \tfrac{1}{8};
\]
a contradiction. Hence, either $s_n$ or $t_n$ is larger than 3.
Assume for definiteness that $s_n \ge 4.$
Then, by \eqref{densityboundCase3},
\begin{multline*}
  \Card\left(\{1,\dots, \kappa_n-1\} \cap Q_{F_n}(c_n) \right) + \Card\left(\{1,\dots, \kappa_n-1\} \cap Q_{F_n}(d_n)\right) \le \\
  \log_2\left(\tfrac{\nu_n - 2}{s_n}\right) + \log_2(\nu_n - 2) + 2 \le \\
     \log_2\left(\tfrac{\nu_n - 2}{4}\right) + \log_2(\nu_n - 2) + 2 =  %% \\ 2\log_2(\nu_n - 2) + 2 - \log_2(4) =
     2\log_2(\nu_n - 2).
\end{multline*}
This ends the proof of \eqref{finaldensitybound} and, hence,
that $\nu_n \in \sbcset(f_n).$
\end{proof}

%%% Insert bibliography
\bibliographystyle{plain}
\bibliography{CofinitenessBoundary-ABG}

\def\soft#1{\leavevmode\setbox0=\hbox{h}\dimen7=\ht0\advance \dimen7
  by-1ex\relax\if t#1\relax\rlap{\raise.6\dimen7
  \hbox{\kern.3ex\char'47}}#1\relax\else\if T#1\relax
  \rlap{\raise.5\dimen7\hbox{\kern1.3ex\char'47}}#1\relax \else\if
  d#1\relax\rlap{\raise.5\dimen7\hbox{\kern.9ex \char'47}}#1\relax\else\if
  D#1\relax\rlap{\raise.5\dimen7 \hbox{\kern1.4ex\char'47}}#1\relax\else\if
  l#1\relax \rlap{\raise.5\dimen7\hbox{\kern.4ex\char'47}}#1\relax \else\if
  L#1\relax\rlap{\raise.5\dimen7\hbox{\kern.7ex
  \char'47}}#1\relax\else\message{accent \string\soft \space #1 not
  defined!}#1\relax\fi\fi\fi\fi\fi\fi}
\begin{thebibliography}{10}

\bibitem{AKM}
R.~L. Adler, A.~G. Konheim, and M.~H. McAndrew.
\newblock Topological entropy.
\newblock {\em Trans. Amer. Math. Soc.}, 114:309--319, 1965.

\bibitem{arr}
Ll. Alsed{\`a}, M.~A. del R{\'{\i}}o, and J.~A. Rodr{\'{\i}}guez.
\newblock A splitting theorem for transitive maps.
\newblock {\em J. Math. Anal. Appl.}, 232(2):359--375, 1999.

\bibitem{adrr}
Ll. Alsed{\`a}, M.~A. del R{\'{\i}}o, and J.~A. Rodr{\'{\i}}guez.
\newblock A note on the totally transitive graph maps.
\newblock {\em Internat. J. Bifur. Chaos Appl. Sci. Engrg.}, 11(3):841--843,
  2001.

\bibitem{ARR2003-survey}
Ll. Alsed{\`a}, M.~A. Del~R{\'{\i}}o, and J.~A. Rodr{\'{\i}}guez.
\newblock A survey on the relation between transitivity and dense periodicity
  for graph maps.
\newblock {\em J. Difference Equ. Appl.}, 9(3-4):281--288, 2003.
\newblock Dedicated to Professor Alexander N. Sharkovsky on the occasion of his
  65th birthday.

\bibitem{adrr2}
Ll. Alsed{\`a}, M.~A. del R{\'{\i}}o, and J.~A. Rodr{\'{\i}}guez.
\newblock Transitivity and dense periodicity for graph maps.
\newblock {\em J. Difference Equ. Appl.}, 9(6):577--598, 2003.

\bibitem{ablm}
Lluis Alsed{\`a}, Stewart Baldwin, Jaume Llibre, and Micha{\l} Misiurewicz.
\newblock Entropy of transitive tree maps.
\newblock {\em Topology}, 36(2):519--532, 1997.

\bibitem{almm}
Llu{\'{\i}}s Alsed{\`a}, Jaume Llibre, Francesc Ma{\~n}osas, and Micha{\l}
  Misiurewicz.
\newblock Lower bounds of the topological entropy for continuous maps of the
  circle of degree one.
\newblock {\em Nonlinearity}, 1(3):463--479, 1988.

\bibitem{alm}
Llu{\'{\i}}s Alsed{\`a}, Jaume Llibre, and Micha{\l} Misiurewicz.
\newblock {\em Combinatorial dynamics and entropy in dimension one}, volume~5
  of {\em Advanced Series in Nonlinear Dynamics}.
\newblock World Scientific Publishing Co., Inc., River Edge, NJ, second
  edition, 2000.

\bibitem{BBCDS}
J.~Banks, J.~Brooks, G.~Cairns, G.~Davis, and P.~Stacey.
\newblock On {D}evaney's definition of chaos.
\newblock {\em Amer. Math. Monthly}, 99(4):332--334, 1992.

\bibitem{bc}
Louis Block and Ethan~M. Coven.
\newblock Topological conjugacy and transitivity for a class of piecewise
  monotone maps of the interval.
\newblock {\em Trans. Amer. Math. Soc.}, 300(1):297--306, 1987.

\bibitem{bgmy}
Louis Block, John Guckenheimer, Micha{\l} Misiurewicz, and Lai~Sang Young.
\newblock Periodic points and topological entropy of one-dimensional maps.
\newblock In {\em Global theory of dynamical systems ({P}roc. {I}nternat.
  {C}onf., {N}orthwestern {U}niv., {E}vanston, {I}ll., 1979)}, volume 819 of
  {\em Lecture Notes in Math.}, pages 18--34. Springer, Berlin, 1980.

\bibitem{Bkh83}
A.~M. Blokh.
\newblock On transitive mappings of one-dimensional branched manifolds.
\newblock In {\em Differential-difference equations and problems of
  mathematical physics ({R}ussian)}, pages 3--9, 131. Akad. Nauk Ukrain. SSR,
  Inst. Mat., Kiev, 1984.

\bibitem{Bkh87}
A.~M. Blokh.
\newblock The connection between entropy and transitivity for one-dimensional
  mappings.
\newblock {\em Uspekhi Mat. Nauk}, 42(5(257)):209--210, 1987.

\bibitem{g}
F.~R. Gantmacher.
\newblock {\em The theory of matrices. {V}ol. 1}.
\newblock AMS Chelsea Publishing, Providence, RI, 1998.
\newblock Translated from the Russian by K. A. Hirsch, Reprint of the 1959
  translation.

\bibitem{Ito}
R.~Ito.
\newblock Rotation sets are closed.
\newblock {\em Math. Proc. Cambridge Philos. Soc.}, 89(1):107--111, 1981.

\bibitem{KS}
Sergi{\u\i} Kolyada and {\soft{L}}ubom{\'{\i}}r Snoha.
\newblock Some aspects of topological transitivity---a survey.
\newblock In {\em Iteration theory ({ECIT} 94) ({O}pava)}, volume 334 of {\em
  Grazer Math. Ber.}, pages 3--35. Karl-Franzens-Univ. Graz, Graz, 1997.

\bibitem{m}
Micha{\l} Misiurewicz.
\newblock Periodic points of maps of degree one of a circle.
\newblock {\em Ergodic Theory Dynamical Systems}, 2(2):221--227 (1983), 1982.

\end{thebibliography}
\end{document}